\def\today{\number\day\space\ifcase\month\or   January\or February\or
   March\or April\or May\or June\or   July\or August\or September\or
   October\or November\or December\fi\   \number\year}
\theoremstyle{definition}
\newtheorem{lma}{Lemma}[section]
\newaliascnt{thmCt}{lma}
\newtheorem{thm}[thmCt]{Theorem}
\newaliascnt{corCt}{lma}
\newtheorem{cor}[corCt]{Corollary}
\newaliascnt{propCt}{lma}
\newtheorem{prop}[propCt]{Proposition}
\newtheorem*{thm*}{Theorem}
\newtheorem*{cor*}{Corollary}
\newtheorem*{prop*}{Proposition}
\newcounter{theoremintro}
\newtheorem{thmintro}[theoremintro]{Theorem}
\newtheorem{corintro}[theoremintro]{Corollary}
\newtheorem{egintro}[theoremintro]{Example}
\newaliascnt{pgrCt}{lma}
\newaliascnt{dfCt}{lma}
\newtheorem{df}[dfCt]{Definition}
\newaliascnt{remCt}{lma}
\newtheorem{rem}[remCt]{Remark}
\newaliascnt{remsCt}{lma}
\newaliascnt{egCt}{lma}
\newtheorem{eg}[egCt]{Example}
\newaliascnt{egsCt}{lma}
\newaliascnt{qstCt}{lma}
\newaliascnt{pbmCt}{lma}
\newaliascnt{notaCt}{lma}
\newcommand{\beq}{\begin{equation}}
\newcommand{\eeq}{\end{equation}}
\newcommand{\beqa}{\begin{eqnarray*}}
\newcommand{\eeqa}{\end{eqnarray*}}
\newcommand{\bal}{\begin{align*}}
\newcommand{\eal}{\end{align*}}
\newcommand{\bi}{\begin{itemize}}
\newcommand{\ei}{\end{itemize}}
\newcommand{\be}{\begin{enumerate}}
\newcommand{\ee}{\end{enumerate}}
\newcommand{\ep}{\varepsilon}
\newcommand{\zt}{\zeta}
\newcommand{\Q}{{\mathbb{Q}}}
\newcommand{\Z}{{\mathbb{Z}}}
\newcommand{\R}{{\mathbb{R}}}
\newcommand{\C}{{\mathbb{C}}}
\newcommand{\N}{{\mathbb{N}}}
\newcommand{\Hi}{{\mathcal{H}}}
\newcommand{\K}{{\mathcal{K}}}
\newcommand{\B}{{\mathcal{B}}}
\newcommand{\U}{{\mathcal{U}}}
\newcommand{\T}{{\mathbb{T}}}
\newcommand{\D}{{\mathcal{D}}}
\newcommand{\Ot}{{\mathcal{O}_2}}
\newcommand{\OI}{{\mathcal{O}_{\I}}}
\newcommand{\OIst}{{\mathcal{O}_{\I}^{\mathrm{st}}}}
\newcommand{\Cu}{{\mathrm{Cu}}}
\newcommand{\sr}{{\mathrm{sr}}}
\newcommand{\dr}{{\mathrm{dr}}}
\newcommand{\RR}{{\mathrm{RR}}}
\newcommand{\id}{{\mathrm{id}}}
\newcommand{\spec}{{\mathrm{sp}}}
\newcommand{\diag}{{\mathrm{diag}}}
\newcommand{\Aut}{{\mathrm{Aut}}}
\newcommand{\Hom}{{\mathrm{Hom}}}
\newcommand{\Ad}{{\mathrm{Ad}}}
\newcommand{\Ann}{{\mathrm{Ann}}}
\newcommand{\dimRok}{{\mathrm{dim}_\mathrm{Rok}}}
\newcommand{\cdimRok}{{\mathrm{dim}_\mathrm{Rok}^\mathrm{c}}}
\newcommand{\dimnuc}{{\mathrm{dim}_\mathrm{nuc}}}
\newcommand{\ifo}{if and only if }
\newcommand{\ca}{$C^*$-algebra}
\newcommand{\uca}{unital $C^*$-algebra}
\newcommand{\Rp}{Rokhlin property}
\newcommand{\Rdim}{Rokhlin dimension}
\newcommand{\I}{\infty}
\title[]{Rokhlin dimension: duality, tracial properties, and crossed products}
\thanks{The first named author was partially supported by the D.~K.~Harrison Prize from the University
of Oregon, by the Deutsche Forschungsgemeinschaft (SFB 878), and by a Postdoctoral Research Fellowship
from the Humboldt Foundation. This research was supported by GIF grant 1137/2011, and by Israel Science Foundation Grant 476/16.}
\subjclass[2010]{Primary: 46L55; Secondary: 46L35, 46L80.}
\keywords{$C^*$-algebra, Rokhlin dimension, crossed product, central sequence algebra, $K$-theory, strongly self-absorbing $C^*$-algebra.}
\author[Eusebio Gardella]{Eusebio Gardella}
\address{Eusebio Gardella
Mathematisches Institut, Fachbereich Mathematik und Informatik der
Universit\"at M\"unster, Einsteinstrasse 62, 48149 M\"unster, Germany.}
\email{gardella@uni-muenster.de}
\urladdr{www.math.uni-muenster.de/u/gardella/}
\author{Ilan Hirshberg}
\address{Ilan Hirshberg
Department of Mathematics, Ben-Gurion University of the Negev, Be’er
Sheva 8410501, Israel.}
\email{ilan@math.bgu.ac.il}
\urladdr{www.math.bgu.ac.il/~ilan/}
\author{Luis Santiago}
\address{Luis Santiago
Institute of Mathematics, University of Aberdeen, Fraser Noble Building, Aberdeen
AB24 3UE, UK.}
\email{lmoreno@abdn.ac.uk}
\urladdr{www.homepages.abdn.ac.uk/lmoreno/pages/index.html}
\begin{document}

\begin{abstract}
We study compact group actions with finite Rokhlin dimension, particularly in relation to
crossed products. For example, we characterize
the duals of such actions, generalizing previous partial results for the Rokhlin property. As
an application, we determine the ideal structure of their crossed products.
Under the assumption of so-called commuting
towers, we show that taking crossed products by such actions preserves a
number of relevant classes of $C^*$-algebras, including: $D$-absorbing
$C^*$-algebras, where $D$ is a strongly self-absorbing $C^*$-algebra, stable
$C^*$-algebras, $C^*$-algebras with finite nuclear
dimension (or decomposition rank), $C^*$-algebras with finite stable rank
(or real rank), and $C^*$-algebras whose $K$-theory is either trivial, rational, or $n$-divisible for $n\in\N$.
The combination of nuclearity and the UCT is also shown to be preserved by these actions. Some of
these results are new even in the well-studied case of the Rokhlin property.
Additionally, and under some technical
assumptions, we show that finite Rokhlin dimension with commuting towers
implies the (weak) tracial Rokhlin property.

At the core of our arguments is a certain local
approximation of the crossed product by a continuous $C(X)$-algebra with
fibers that are stably isomorphic to the underlying
algebra. The space $X$ is computed in some cases of interest, and we use its
description to construct a $\Z_2$-action on a unital AF-algebra and on a unital Kirchberg
algebra satisfying the UCT, whose Rokhlin dimensions with and without commuting
towers are finite but do not agree.
\end{abstract}

\maketitle

\tableofcontents

\renewcommand*{\thetheoremintro}{\Alph{theoremintro}}
\section*{Introduction}

The goal of this paper is to study a variety of structural properties for actions of finite
(and more generally compact) groups on $C^*$-algebras with finite Rokhlin dimension. The
concept of Rokhlin dimension for actions of finite groups and actions of the integers was
introduced by the second author, Winter and Zacharias in \cite{HirWinZac_rokhlin_2015} as
a generalization of the well-studied Rokhlin property for group actions. (See
\cite{HerJon_models_1983}, \cite{Izu_finiteI_2004}, and \cite{OsaPhi_crossed_2012} for finite
group actions with the Rokhlin property, and \cite{HirWin_rokhlin_2007}, \cite{Gar_crossed_2017},
and \cite{Gar_compact_2018} for compact group actions.) The Rokhlin property can be viewed as
a regularity condition for the group action, which can be used to show that various structural
properties pass from a $C^*$-algebra to its crossed product; see for example \cite{HirWin_rokhlin_2007},
\cite{OsaPhi_crossed_2012}, \cite{San_crossed_2015}, \cite{Gar_crossed_2017}. However, particularly
in the case of finite group actions, the Rokhlin property is a very restrictive hypothesis to place
on the action, as it implies that the unit of the algebra can be written nontrivially
as a sum of projections indexed by the group. In the context of Rokhlin dimension, the tower of
projections indexed by the group is replaced by several towers consisting of positive contractions,
each of which is indexed by the group. In this formulation, Rokhlin dimension zero
agrees with the Rokhlin property, while the higher values allow for greater flexibility.
In particular, actions with finite Rokhlin dimension may exist, and even be generic,
on algebras that do not admit any action with the (tracial) Rokhlin property.

A different and earlier weakening of the Rokhlin property is the tracial Rokhlin property
(\cite{Phi_tracial_2011}), in which the Rokhlin projections are not required to add up to 1, but rather
just up to a small error in trace. This property can be shown to hold for many actions of interest which do not
have the Rokhlin property. Nevertheless, as it demands the existence
of projections, there are many \ca s that do not admit any such action.
There exist a number of weakenings of the tracial Rokhlin property,
in which projections are replaced by positive elements; see \cite{Arc_crossed_2008},
\cite{HirOro_tracially_2013} and \cite{MatSat_stability_2012}. In general, there is no particular reason
for the tracial Rokhlin property (in either its weak version, with positive elements, or strong version, with
projections) to imply finite Rokhlin dimension or vice versa, and one of our goals is
to find sufficient conditions for such implications to hold.

The main interest in establishing that an action has finite Rokhlin dimension, from the standpoint of structure 
and classification of nuclear $C^*$-algebras, stems from the fact that this can be used to show that various 
structural properties of interest pass to crossed products; see for example \cite{HirWinZac_rokhlin_2015},
\cite{HirPhi_rokhlin_2015} and \cite{Gar_regularity_2017}. We briefly review the main structural properties of 
interest, particularly in the context of the Elliott classification program, which has been essentially resolved 
recently (\cite{EllGonLinNiu_classification_2015,TikWhiWin_quasidiagonality_2017}), and refer the reader 
to~\cite{EllTom_regularity_2008} for a survey the Elliott program and regularity properties in this context. 
A separable unital infinite dimensional $C^*$-algebra $D$ is said to be \emph{strongly self-absorbing} 
(\cite{TomWin_strongly_2007}) if $D \cong D \otimes D$, and furthermore the first coordinate embedding 
$x \mapsto x \otimes 1$ is approximately unitarily equivalent to an isomorphism. (Any strongly self-absorbing 
$C^*$-algebra has to be nuclear regardless of which tensor product is a-priori used in the definition, hence 
there is no ambiguity.)  If $D$ is a strongly self-absorbing $C^*$-algebra, then a $C^*$-algebra $A$ is said 
to be $D$-\emph{stable} or $D$-\emph{absorbing} if $A \cong A \otimes D$. The \emph{Jiang-Su algebra}, denoted 
$\mathcal{Z}$, is a particularly important strongly-self absorbing $C^*$-algebra. The explicit construction 
was first introduced in \cite{JiaSu_simple_1999}, however it can be better characterized as an initial object 
in the category of strongly self-absorbing $C^*$-algebras, in the sense that $D \cong D \otimes \mathcal{Z}$ 
for any strongly self-absorbing $C^*$-algebra $D$ (\cite{Win_strongly_2011}). The Jiang-Su algebra is 
$KK$-equivalent to the complex numbers, and tensoring by it does not change the Elliott invariant of a simple 
separable nuclear $C^*$-algebra, assuming its $K_0$-group is unperforated, and indeed, $\mathcal{Z}$-stability 
turned out to be an essential ingredient in classifiability. Capping extensive work on the Toms-Winter 
conjecture, for simple separable unital nuclear $C^*$-algebras, $\mathcal{Z}$-stability was recently shown 
to be equivalent to finite \emph{nuclear dimension} (\cite{CETWW_2019}), another key hypothesis for classification. 
Nuclear dimension (\cite{KirWin_covering_2004,WinZac_dim_nuc_2010}) is a generalization of covering dimension 
for nuclear $C^*$-algebras. Recall that a $C^*$-algebra $A$ is nuclear if for any finite set $F \subseteq A$ 
and any $\varepsilon>0$ there exists a finite dimensional algebra $B$ and a diagram
$$
\xymatrix{
A\ar[rr]^{\id}\ar[dr]_{\varphi} & & A \\
&  B \ar[ur]_{\psi} &
}
$$
such that $\varphi$ and $\psi$ are completely positive maps and $\|\psi(\varphi(a)) - a\|<\varepsilon$ for all $a \in F$. 
A completely positive map $\theta$ between $C^*$-algebras is said to be an \emph{order zero} map if preserves 
orthogonality, that is, whenever $x,y$ are positive elements such that $xy =0$ then $\theta(x)\theta(y) = 0$. 
Returning to the diagram above, the map $\psi\colon B \to A$ is said to be $d$-decomposable if one can 
decompose $B$ as a direct sum $B \cong \bigoplus_{k=0}^d B_k$ of $C^*$-subalgebras such that the restriction 
of $\psi$ to each summand is order zero. Such approximations can always be found (\cite{HirKirWhi_2012}), 
if one does not bound the number of summands, and $A$ is said to have nuclear dimension at most $d$ if one can always 
find decompositions as above involving at most $d+1$ summands. Other 
structural properties we consider include finite real rank and stable rank, other notions of dimension which 
were introduced much earlier; we refer the reader to Blackadar's book for a reference (\cite{Bla_ktheory_1998}). 
We note that for simple separable nuclear unital and stably finite $C^*$-algebras, R{\o}rdam 
(\cite{Ror_Z-absorbing_2004}) showed that $\mathcal{Z}$-stability implies stable rank 1, obtained conditions for 
real rank 0, and also showed that the Cuntz semigroup is weakly unperforated. The Cuntz semigroup is a semigroup 
constructed out of positive elements in a way which is analogous to the construction of the Murray-von-Neumann 
semigroup of projections. Weak unperforation of the Cuntz semigroup, for simple separable nuclear unital 
$C^*$-algebras, is conjectured to be equivalent to $\mathcal{Z}$-stability, and this was verified under certain 
conditions on the trace space (\cite{KirRor_2014,Sat_2012,TomWhiWin_2015}), and for
certain $C^*$-algebras with stable rank one (\cite{Thi_ranks_2017}).
The Universal Coefficient Theorem (UCT) of Rosenberg and Schochet (\cite{RosSch_UCT_1987}) states that for 
$C^*$-algebras $A$ in a suitable ``bootstrap'' class of $C^*$-algebras, for any separable $C^*$-algebra $B$, 
the canonical map 
$KK(A,B) \to \Hom(K_*(A),K_*(B))$ is surjective, and the kernel is furthermore identified with an appropriate 
Ext group. The UCT plays a central role in Elliott's program, and it is a major open problem whether any separable 
nuclear $C^*$-algebra satisfies the UCT. This is known to be equivalent to the question of whether the UCT is 
preserved under crossed products by the circle or finite cyclic groups (\cite[23.15.12]{Bla_ktheory_1998}).

Recognizing when a given action has finite Rokhlin dimension is
not always straightforward (and computing its actual dimension is even more challenging). When the acting group is abelian, one of our main
results allows one to detect the Rokhlin dimension of an action $\alpha\colon G\to\Aut(A)$ by looking at its dual
$\widehat{\alpha}\colon\widehat{G}\to\Aut(A\rtimes_\alpha G)$. To this end, we define (see \autoref{df:repdim})
the \emph{representability dimensions}, $\dim_{\mathrm{rep}}(\beta)$ and $\dim^{\mathrm{c}}_{\mathrm{rep}}(\beta)$, with and without commuting colors,
of a discrete group action $\beta\colon \Gamma\to \Aut(B)$, and prove the following:

\begin{thmintro}(See \autoref{thm:duality})
Let $G$ be a compact abelian group, let $A$ be a \ca, and let $\alpha\colon G\to\Aut(A)$ be an action. Then
\[\dimRok(\alpha)=\dim_\mathrm{rep}(\widehat{\alpha}) \ \ \mbox{ and } \ \ \cdimRok(\alpha)=\dim_\mathrm{rep}^{\mathrm{c}}(\widehat{\alpha}).\]
\end{thmintro}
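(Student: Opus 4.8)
The plan is to prove both equalities by unwinding each side into a statement about sequence algebras and then observing that the two kinds of data are, essentially, literally the same once one passes through the inclusion $A\subset A\rtimes_\alpha G$. Recall that $\dimRok(\alpha)\le d$ means there are $d+1$ equivariant completely positive contractive (c.p.c.) order zero maps $\varphi^{(0)},\dots,\varphi^{(d)}\colon (C(G),\mathrm{Lt})\to (F_\infty(A),\alpha_\infty)$ with $\sum_{l}\varphi^{(l)}(1)=1$, where $F_\infty(A)$ is the central sequence algebra and $\mathrm{Lt}$ is left translation; for $\cdimRok$ one additionally requires the ranges to commute. The first move is the Fourier transform $C(G)\cong C^*(\widehat G)$, under which $\mathrm{Lt}_s$ sends the canonical unitary $u_\chi$ to $\overline{\chi(s)}\,u_\chi$. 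Thus the tower data is a system of order zero maps whose values $w^{(l)}_\chi:=\varphi^{(l)}(u_\chi)$ satisfy the transformation law $\alpha_{\infty,s}(w^{(l)}_\chi)=\overline{\chi(s)}\,w^{(l)}_\chi$. Unwinding \autoref{df:repdim}, a witness for $\dim_{\mathrm{rep}}(\widehat\alpha)\le d$ is a system of $d+1$ c.p.c. order zero maps $\psi^{(l)}\colon C^*(\widehat G)\to B_\infty$ into the sequence algebra of $B:=A\rtimes_\alpha G$ that are $\widehat\alpha_\infty$-invariant, satisfy $\sum_l\psi^{(l)}(1)=1$, and intertwine $\widehat\alpha$ in the sense that $\psi^{(l)}(u_\chi)\,b=\widehat\alpha_\chi(b)\,\psi^{(l)}(u_\chi)$ for all $b\in B$ (with commuting ranges in the ``$\mathrm c$'' variant). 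The whole theorem will follow once these two systems are matched, using the internal implementing unitaries $u_s\in M(A\rtimes_\alpha G)$ of the compact action, which satisfy $u_s a u_s^*=\alpha_s(a)$ and $\widehat\alpha_\chi(u_s)=\chi(s)u_s$.

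Next I would set up the correspondence explicitly in both directions. For $\dim_{\mathrm{rep}}(\widehat\alpha)\le\dimRok(\alpha)$: given a Rokhlin tower $\{w^{(l)}_\chi\}\subset F_\infty(A)$ with the character transformation law, view these same elements inside $B_\infty$ via $A\subset B$. Since $\widehat\alpha$ fixes $A$, they are $\widehat\alpha_\infty$-invariant; and the intertwining property holds because on $A$ they commute (they lie in $A'$), while on the canonical unitaries the transformation law rearranges to $w^{(l)}_\chi u_s=\chi(s)u_s w^{(l)}_\chi=\widehat\alpha_\chi(u_s)w^{(l)}_\chi$, and $A$ together with $\{u_s\}$ generates $B$. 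Order zero, the partition of unity, and commutation of ranges transfer verbatim. Conversely, for $\dimRok(\alpha)\le\dim_{\mathrm{rep}}(\widehat\alpha)$: given representing data $\psi^{(l)}$, put $w^{(l)}_\chi:=\psi^{(l)}(u_\chi)$. Intertwining against $a\in A$ forces $w^{(l)}_\chi\in A'$, intertwining against $u_s$ reverses the above computation to yield $\alpha_{\infty,s}(w^{(l)}_\chi)=\overline{\chi(s)}\,w^{(l)}_\chi$, and the $\widehat\alpha_\infty$-invariance places $w^{(l)}_\chi$ in the fixed point algebra of the dual action. Thus $\{w^{(l)}_\chi\}$ is a Rokhlin tower for $\alpha$, and again all order zero, partition of unity, and commutation data are preserved, so the argument handles $\dimRok$ and $\cdimRok$ uniformly.

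The hard part will be the one genuinely nonformal ingredient used in the backward direction: identifying the fixed point algebra of the dual action at the level of the sequence algebra, namely $(B_\infty)^{\widehat\alpha}=A_\infty$ (compatibly with the central sequence algebras). At the level of $B$ itself this is the elementary fact $(A\rtimes_\alpha G)^{\widehat\alpha}=A$, implemented by the canonical conditional expectation $E\colon B\to A$ obtained by averaging $\widehat\alpha$ over $\widehat G$. When $G$ is finite, $\widehat G$ is finite and this passes to sequence algebras without difficulty. For a general compact abelian $G$, however, $\widehat G$ is an infinite discrete group, and an element of $B_\infty$ fixed by every $\widehat\alpha_\chi$ need not be represented by a sequence of genuinely fixed elements; realizing it inside $A_\infty$ requires a reindexation/approximation argument that controls only finitely many characters at a time and exploits that $E$ is the canonical expectation. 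Once this fixed point identification is in place, the remaining verifications are routine, and the two equalities of the theorem drop out simultaneously.
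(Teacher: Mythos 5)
Your plan rests on two claims: that $A$ sits inside $B=A\rtimes_\alpha G$, so that tower data can be transported back and forth through $B_\infty$, and that the dual action recovers $A$ as its fixed point algebra, i.e.\ $(B)^{\widehat\alpha}=A$ together with a sequence-algebra version $(B_\infty)^{\widehat\alpha_\infty}=A_\infty$. Both claims are false as soon as $G$ is infinite, and the theorem is stated for arbitrary compact abelian $G$ (the paper stresses that the absence of cardinality and finiteness assumptions is part of the point). When $G$ is infinite, $A\rtimes_\alpha G$ is non-unital and $A$ embeds only into the multiplier algebra $M(A\rtimes_\alpha G)$, not into the crossed product itself; so ``viewing the same elements inside $B_\infty$'' is not available, since \autoref{df:repdim} requires contractions lying in $B$. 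Worse, the ``elementary fact'' $(A\rtimes_\alpha G)^{\widehat\alpha}=A$ fails: for $A=\C$ with the trivial $\T$-action, $B=C^*(\T)\cong c_0(\Z)$ and the dual action of $\widehat{\T}=\Z$ is translation, whose fixed point algebra is $\{0\}$, not $\C$. There is also no conditional expectation ``obtained by averaging $\widehat\alpha$ over $\widehat G$'', because $\widehat G$ is an infinite discrete group and no such average exists. So the cornerstone of your backward direction is not a reindexation technicality to be supplied later; it is a false statement. This is precisely why the paper's proof of \autoref{thm:duality} never goes near fixed point algebras of the dual action: it works with crossed products, where duality genuinely holds, combining Takai duality with \autoref{cpcequivCP} (crossed products of equivariant order zero maps), \autoref{lma:StepDuality} (via Kirchberg's $\varepsilon$-test), the invariance $\dimRok(\alpha\otimes\Ad(\lambda))=\dimRok(\alpha)$, and \autoref{thm:eqAppRepColors}.

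For finite $G$ your outline is viable --- it is essentially Izumi's duality argument (Theorem~3.8 of \cite{Izu_finiteI_2004}) promoted to higher dimension, and there the averaging over the finite dual does identify $(B_\infty)^{\widehat\alpha_\infty}$ with $A_\infty$ --- but even in that case several steps you label as routine are theorems in the paper. The passage between the approximate data of \autoref{df:repdim} and exact order zero maps $C^*(\widehat G)\to B_\infty$ is \autoref{thm:eqAppRepColors}, whose proof needs the order zero representation machinery of \autoref{prop:ozRepozMor} and \autoref{prop:cpcozCP}; it is not an unwinding of definitions. The normalizations also do not transfer ``verbatim'': approximate representability yields $\bigl(\sum_j \rho_j(1)^2\bigr)b=b$ (note the squares), whereas Rokhlin towers satisfy $\sum_l \varphi^{(l)}(1)=1$, so one must pass to square roots by order zero functional calculus (\autoref{lma:FunctCalcOz}), exactly as the paper does. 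Finally, since no separability is assumed, Rokhlin dimension is defined by quantifying over all $\sigma$-unital invariant subalgebras $D\subseteq A$ and using the continuous parts $F_\alpha(D,A)$ (\autoref{df:Rdim}); your use of a single central sequence algebra ignores both this quantification and the continuity issue for the induced action on sequence algebras.
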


Theorem~A is used to show that actions with finite Rokhlin dimension have full strong Connes spectrum, and hence that ideals in the crossed
product are induced by invariant ideals in the algebra; see \autoref{prop: FullConnesSpec} and \autoref{cor: MorEquiv}.

Actions with finite Rokhlin dimension are very closely connected to free actions on spaces. For example, it was shown in Lemma~2.3 of~\cite{HirPhi_rokhlin_2015}
and Theorem~4.5 of~\cite{Gar_rokhlin_2017} that for a finite dimensional metrizable space $X$, an action of a compact group $G$ on $C_0(X)$ has finite
Rokhlin dimension if and only if the induced action of $G$ on $X$ is free. The connections go far beyond the commutative case, at least in the formulation of
Rokhlin dimension with commuting towers. Indeed, for every compact group $G$ and every $d\geq 0$, there exists a universal compact free $G$-space $X_{G,d}$
such that an action $\alpha\colon G\to\Aut(A)$ of $G$ on a \ca\ $A$ satisfies $\cdimRok(\alpha)\leq d$ if and only if there exists an asymptotically
central equivariant homomorphism from $C(X_{G,d})$ into $A$; see \autoref{thm:XRpRdim}
(for finite groups, this is implicit in Lemma~1.9 of~\cite{HirPhi_rokhlin_2015}).
This observation was used in Theorem~4.6 of~\cite{HirPhi_rokhlin_2015} to show that there do not exist actions of non-trivial compact Lie groups on the Jiang-Su algebra $\mathcal{Z}$
or the Cuntz algebra $\OI$ with finite Rokhlin dimension with commuting towers.

In this work, we take these ideas further. By identifying the spaces $X_{G,d}$ as simplicial complexes, we prove the following:

\begin{thmintro}(See \autoref{thm: RdimwTRp})
Let $A$ be an infinite dimensional, simple, finite, unital \ca\ with strict comparison
and at most countably many extreme quasitraces. Let $G$ be a finite group and let $\alpha\colon G\to
\Aut(A)$ be an action. If $\cdimRok(\alpha)<\I$, then $\alpha$ has the weak tracial Rokhlin property.
\end{thmintro}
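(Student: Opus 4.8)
The plan is to pass to the central sequence algebra and convert the $(d+1)$ commuting Rokhlin towers into a \emph{single} equivariant tower whose defect is Cuntz-small against a prescribed nonzero positive element. Fix a finite set $F\S A$, a tolerance $\ep>0$, and a nonzero positive $x\in A$, and recall from \autoref{thm:XRpRdim} that $\cdimRok(\alpha)\leq d$ yields an asymptotically central, equivariant unital $*$-\hm\ $\ph\colon C(X_{G,d})\to F(A)$, where $F(A)=A_\infty\cap A'$ (in its uniform-tracial version) and $X_{G,d}$ is the $(d+1)$-fold join of $G$, a free $G$-simplicial complex of dimension $d$. The weak tracial Rokhlin property asks for pairwise orthogonal, approximately central positive contractions $(e_g)_{g\in G}$ with $\alpha_h(e_g)\approx e_{hg}$ and $1-\sum_g e_g\precsim x$. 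Since such a tower can be manufactured inside $\ph(C(X_{G,d}))$ and extracted into $A$ by a standard reindexing argument, it suffices to build a single equivariant tower $(h_g)_{g\in G}$ of positive functions on $X_{G,d}$ whose defect $D=1-\sum_g h_g$ satisfies $\ph(D)\precsim x$. By strict comparison this reduces to the estimate $d_\tau(\ph(D))<\inf_\sigma d_\sigma(x)$ for every limit quasitrace $\tau$, the infimum being strictly positive by simplicity of $A$ together with lower semicontinuity of $\sigma\mapsto d_\sigma(x)$ on the compact quasitrace space.

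The second step is a reduction to invariant measures. Each limit quasitrace $\tau$ restricts through $\ph$ to a Radon measure $\mu_\tau$ on $X_{G,d}$, and $d_\tau(\ph(D))\leq\mu_\tau(\supp D)$. Since the tower is equivariant, $\sum_g h_g$ (hence $D$) is $\alpha$-invariant, so $d_\tau(\ph(D))$ is unchanged if $\tau$ is replaced by its average $\tfrac{1}{|G|}\sum_h\tau\circ\alpha_h$; I may therefore assume throughout that $\mu_\tau$ is a $G$-invariant probability measure. Thus the problem becomes purely geometric: construct an equivariant tower on the free $G$-simplicial complex $X_{G,d}$ whose defect has arbitrarily small mass for \emph{every} $G$-invariant probability measure.

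For the construction I would exploit that freeness makes $X_{G,d}\to X_{G,d}/G$ a covering of a $d$-dimensional complex. Since the action is free and simplicial, after a $G$-invariant subdivision $X_{G,d}$ admits an equivariant fundamental domain whose $G$-translates tile the complex and overlap only along a $G$-invariant subcomplex $W$ of dimension $\le d-1$. Labelling the $|G|$ sheets by a continuous section and placing an equivariant bump on each $g$-translate of the interior produces orthogonal, equivariant positive contractions $(e_g)_{g\in G}$ that sum to $1$ away from an arbitrarily thin collar of $W$; the defect is then supported in a neighborhood of the lower-dimensional $G$-invariant subcomplex $W$.

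The main obstacle is that an invariant measure may concentrate on the wall $W$, so merely thinning the collar does not make $\mu_\tau(\supp D)$ small for all $\tau$ at once. I would resolve this by induction on the dimension of the complex: $W$ is itself a free $G$-simplicial complex of dimension $\le d-1$, so the inductive hypothesis supplies an equivariant tower on $W$ that is trace-small for all invariant measures on $W$; using that the collar is equivariantly a product $W\times(-1,1)$, I would graft this tower onto the bulk tower and interpolate, so that mass concentrated near $W$ is absorbed by the $W$-tower (its pushforward to $W$ being invariant) while mass on the open top cells is handled by the bulk tower. The recursion terminates at the $0$-skeleton, where the action is free on points and the tower is exact, so the combined defect can be made uniformly trace-small. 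Tracking this uniformity over the whole quasitrace space is where the countability of extreme quasitraces enters: it lets me reduce the required estimate to the extreme boundary and, via strict comparison and reindexing, pass the uniform trace bound to an honest subequivalence $1-\sum_g e_g\precsim x$ in $A$. Carrying out the grafting while preserving orthogonality and equivariance, and verifying the uniformity, is the technical heart of the argument.
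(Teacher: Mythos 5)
Your overall architecture (pass to the central sequence algebra via \autoref{thm:XRpRdim}, build a tower of functions on the free $G$-complex $X_{G,d}$, control the defect by quasitraces, then lift by Choi--Effros and finish with strict comparison) is the same as the paper's. However, the geometric core of your argument, as you have formulated it, is impossible, and this is a genuine gap. You reduce to the statement: there is an equivariant family $(h_g)_{g\in G}$ of pairwise orthogonal positive contractions in $C(X_{G,d})$ whose defect $D=1-\sum_{g\in G} h_g$ has arbitrarily small mass in \emph{every} $G$-invariant probability measure. For $d\geq 1$ the join $X_{G,d}=G\ast\cdots\ast G$ is connected, so if $|G|\geq 2$ no such family can have $D\equiv 0$: otherwise the sets $\{h_g>0\}$ would be $|G|$ nonempty, pairwise disjoint open sets covering a connected space. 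Hence $V=\{D>0\}$ is a nonempty open set, and it is $G$-invariant because $D$ is. Picking any $z\in V$, the orbit measure $\frac{1}{|G|}\sum_{g\in G}\delta_{gz}$ is a $G$-invariant probability measure giving $V$, and a fortiori $\supp D$, mass $1$. So no single equivariant tower is small uniformly over all invariant measures: the supremum you are trying to make small always equals $1$, no matter how cleverly the grafting and collar-thinning are done.

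This also shows that you have misplaced where countability of the extreme quasitraces enters. It is not used to ``reduce to the extreme boundary'' (that reduction is harmless and needs no countability); it is needed because the tower must be constructed \emph{after} fixing the countable family of measures $(\mu_n)_{n\in\N}$ induced by the extreme quasitraces, and adapted to exactly that family --- the quantifiers must be ordered ``measures first, tower second.'' This is precisely what the paper's \autoref{thm:FreeActionCellComplex} provides: given countably many finite Borel measures on $X$, there is an open $U$ with pairwise disjoint translates such that $X\setminus\bigcup_{g\in G}gU$ is $\mu_n$-null for every $n$. Its proof is a skeletal induction somewhat like the one you sketch, but the inductive choices use countability in an essential way: among an uncountable, pairwise disjoint family of candidate ``cut sets,'' only countably many can carry positive mass for countably many finite measures, so cuts can be chosen null for all $\mu_n$ simultaneously. (The paper explicitly remarks that it is unknown whether the conclusion survives dropping countability; your proposal would prove the uniform statement, which the example above refutes.) With the quantifiers reordered in this way, the remaining steps of your outline --- averaging to invariant measures, Choi--Effros lifting, strict comparison against $\inf_\sigma d_\sigma(x)>0$, and Lemma~1.16 of \cite{Phi_tracial_2011} for condition (e) --- do go through as in the paper.
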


We say a few words about the proof of Theorem~B. Let $X_{G,d}$ be the universal free $G$-space associated to $G$ and $d=\cdimRok(\alpha)$. For the sake
of argument, suppose that there is a unital, central and equivariant inclusion $C(X_{G,d})\to A$. The restriction of each (extremal) quasitrace on $A$ to
$C(X_{G,d})$ induces a Borel probability measure on $X_{G,d}$. Let $\{\mu_n\}_{n\in\N}$ be the collection of such measures. In order to prove that
the given action has the weak tracial Rokhlin property (see \autoref{df: tRp wtRp}), it suffices to find an open set $U\subseteq X_{G,d}$ such that
$gU\cap hU=\emptyset$ for $g,h\in G$ with $g\neq h$, and $\mu_n\left(X_{G,d}\setminus \bigcup\limits_{g\in G}gU\right)$ is small for all $n\in\N$. (Given
such a set, one chooses a positive contraction supported on $U$, and considers its $G$-translates.) The existence of such an open set is proved in
\autoref{thm:FreeActionCellComplex}.

The argument described above breaks down in the absence of traces. However, different methods yield an even
stronger result in this case. Indeed, when
$A$ is simple, exact, and has strict comparison, it has no nonzero traces if and only if it is purely infinite.
In the nuclear case, it must therefore
be a Kirchberg algebra. In this context, we use a lemma of Kishimoto from \cite{Kis_outer_1981} to prove that a
number of generally inequivalent notions,
actually coincide for actions on Kirchberg algebras.

\begin{thmintro} (See \autoref{thm:outertRp})
Let $G$ be a finite group, let $A$ be a Kirchberg algebra, and let $\alpha\colon G\to\Aut(A)$ be an action. Then
the following are equivalent:
\be
\item $\alpha$ has the (weak or strong) tracial Rokhlin property;
\item $\dimRok(\alpha)\leq 1$ (or just $\dimRok(\alpha)<\I$);
\item $\alpha_g$ is not inner for all $g\in G\setminus\{1\}$.
\ee\end{thmintro}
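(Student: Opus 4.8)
The plan is to treat condition (3) as the hub: I would show that each of the two Rokhlin-type hypotheses forces pointwise outerness, and conversely that in the purely infinite setting pointwise outerness is already strong enough to reconstruct both of them through Kishimoto's lemma. The implications (1)$\Rightarrow$(3) and (2)$\Rightarrow$(3) are the easy ones. That the (weak) tracial Rokhlin property implies pointwise outerness is classical. For (2)$\Rightarrow$(3), a nontrivial inner $\alpha_g=\Ad u$ (with $u\in F$) would force the relations $\alpha_g(f_1)\approx f_g$ and $f_1f_g\approx 0$ of an approximately central tower to give $f_1^2\approx f_1\,u f_1 u^*\approx 0$, hence every $f_h^{(\ell)}\approx 0$, contradicting $\sum_{g,\ell}f_g^{(\ell)}=1$; so finite Rokhlin dimension implies outerness. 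Since the strong tracial Rokhlin property trivially implies the weak one, it remains to establish (3)$\Rightarrow$(1) in its strong form and (3)$\Rightarrow$(2). Throughout I would use that a Kirchberg algebra is purely infinite and simple, so it has no nonzero traces and its Cuntz comparison of positive elements is trivial: every positive element is Cuntz subequivalent to every nonzero positive element. Consequently the small-complement condition $1-\sum_g f_g\precsim x$ of \autoref{df: tRp wtRp} is automatic once $x\neq 0$, which is exactly why mere outerness ought to suffice.

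For (3)$\Rightarrow$(1), assume each $\alpha_g$ with $g\neq 1$ is outer, and fix a finite set $F$, a tolerance $\ep>0$, and a nonzero positive $x$. The engine is Kishimoto's lemma from \cite{Kis_outer_1981}: for a single outer automorphism of a simple \ca\ one finds, inside any prescribed hereditary subalgebra, a norm-one positive element approximately orthogonal to its image. Applying this successively to the finitely many $\alpha_g$ ($g\neq 1$) and passing to a central sequence argument, I would produce a positive contraction $a\in\ov{xAx}$ that is approximately central relative to $F$ and satisfies $\|a\,\alpha_g(a)\|<\ep$ for all $g\neq 1$. Then $\{\alpha_g(a)\}_{g\in G}$ is an approximately central, approximately orthogonal tower permuted by the action, which already yields the weak tracial Rokhlin property since the complement condition is free by pure infiniteness. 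To reach the strong version I would invoke Zhang's theorem that purely infinite simple \cas\ have real rank zero: replacing $a$ by a projection $p\in\ov{aAa}$ gives orthogonal, $G$-permuted, approximately central projections $\{\alpha_g(p)\}_{g\in G}$, establishing (1) in its strong form.

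For (3)$\Rightarrow$(2), I would assemble a two-color Rokhlin system summing to the unit out of the same Kishimoto data. The first color is the tower $\{\alpha_g(a)\}_g$; its translate-sum $c=\sum_g\alpha_g(a)$ is approximately central and $G$-invariant, and the second color must cover the invariant complement $1-c$. Working inside the approximately invariant hereditary subalgebra cut out by $1-c$, and again using Kishimoto's lemma together with the room afforded by pure infiniteness, I would arrange a second approximately central, $G$-permuted, orthogonal tower of positive contractions whose translate-sum fills the gap, so that the two colors sum to $1$ up to $\ep$. Two colors suffice; a single color would amount to the Rokhlin property ($\dimRok(\alpha)=0$), which fails in general for $K$-theoretic reasons, so $\dimRok(\alpha)\le 1$ is best possible.

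The \emph{main obstacle} is the Kishimoto step in (3)$\Rightarrow$(1): producing one positive element that is simultaneously approximately central with respect to $F$ and approximately orthogonal to \emph{all} of its nontrivial translates $\alpha_g(a)$ at once, rather than to one automorphism at a time. This requires iterating Kishimoto's lemma carefully over the group while keeping the approximate orthogonality and centrality under control, and is most cleanly carried out in the central sequence algebra $A_\infty\cap A'$ with its induced $G$-action. A secondary difficulty appears in (3)$\Rightarrow$(2), namely splitting the invariant complement $1-c$ into a genuinely $G$-permuted orthogonal tower summing to it: for projections this would collide with $K$-theoretic divisibility, but for positive contractions pure infiniteness supplies enough flexibility to do it with a single extra color.
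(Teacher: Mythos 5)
Your easy implications are fine: the argument that finite Rokhlin dimension forces pointwise outerness (put the implementing unitary into the finite set $F$ and deduce $f_1^{(\ell)}\approx 0$ for every tower) is correct, and the implication from the weak tracial Rokhlin property to outerness is indeed classical. The genuine gap is in the hub implication (3)$\Rightarrow$(1), which is the only implication the paper proves from scratch (for the equivalence of (2) and (3) it simply cites Theorem~4.20 of~\cite{Gar_rokhlin_2017}, whereas your direct argument only covers the direction (2)$\Rightarrow$(3)). Kishimoto's Lemma~3.2 of~\cite{Kis_outer_1981}, applied in $A$, produces a norm-one positive element of a prescribed hereditary subalgebra that is approximately orthogonal to its translates, but it gives no approximate centrality whatsoever; and your proposed fix, to run the lemma ``in the central sequence algebra $A_\infty\cap A'$ with its induced $G$-action,'' is not available off the shelf. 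To apply Kishimoto's lemma there you would need to know both that $A_\infty\cap A'$ is a simple (indeed purely infinite) \ca\ --- a nontrivial theorem of Kirchberg, valid for Kirchberg algebras --- and, crucially, that the automorphisms it inherits from $\alpha_g$ are still outer for $g\neq 1$. Outerness does not pass to central sequence algebras by any elementary argument; this transfer is essentially the heart of the matter, and you neither prove it nor cite a result providing it. Your projection upgrade has the same defect: Zhang's theorem lets you replace $a$ by a projection $p\in\ov{aAa}$, but approximate centrality does not survive this functional calculus (the estimates depend on spectral gaps you do not control), so the strong form does not follow as claimed unless the whole construction already lives in a central sequence algebra where elements are exactly central.

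The paper circumvents all of this with one structural input you never invoke: by Theorem~5.1 of~\cite{GolIzu_quasifree_2011}, pointwise outerness yields an \emph{equivariant} isomorphism of $(A,\alpha)$ with $\left(A\otimes\bigotimes_{n\geq 1}\OI,\ \alpha\otimes\bigotimes_{n\geq 1}\gamma\right)$, where $\gamma$ is a pointwise outer (quasi-free) action on $\OI$. One then approximates $F$ and $x$ inside a finite tensor stage, applies Kishimoto's lemma to $\gamma$ \emph{inside a fresh tensor factor} $\OI$, and uses real rank zero of $\OI$ to cut down to a projection $q$ with $\|q\gamma_g(q)\|<\ep$ for $g\neq 1$; the tower $e_g=1\otimes\cdots\otimes 1\otimes\gamma_g(q)$ is then \emph{exactly} permuted by $\alpha$ and \emph{exactly} commutes with the chosen approximants, so centrality, equivariance and the projection form all come for free, while conditions (d) and (e) of \autoref{df: tRp wtRp} follow from pure infiniteness and the tensor picture. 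Without this absorption theorem (or an equivalent central-sequence outerness result), your outline for (3)$\Rightarrow$(1) does not close, and the same applies to the second color in your sketch of (3)$\Rightarrow$(2), where producing a $G$-permuted orthogonal tower whose sum fills a prescribed invariant complement is precisely the kind of step that needs such machinery.
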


Suppose that $\alpha\colon G\to\Aut(A)$ is an action with $\cdimRok(\alpha)\leq d<\I$. With $X_{G,d}$ as above, it can be shown that the crossed
product $A\rtimes_\alpha G$ can be locally approximated by a continuous $C^*$-bundle with base space $X_{G,d}/G$ and fibers isomorphic to $A\otimes\K(L^2(G))$;
see \autoref{thm:CXGalgebra} and \autoref{prop:ApproxCP}. 
This fact allows us to transfer a number of regularity properties, many of which are relevant
from the point of view of classification, from $A$ to $A\rtimes_\alpha G$ and $A^\alpha$. Indeed, we show:

\begin{thmintro}(See \autoref{thm:preservationCP} and \autoref{thm:StrCompRdim1})
Let $G$ be a second countable compact group with $\dim(G)<\I$, let $A$ be a \ca, and let $\alpha\colon G\to\Aut(A)$ be an action
with finite Rokhlin dimension (with commuting towers). The following properties pass from $A$ to $A\rtimes_\alpha G$ and $A^\alpha$:
\be
\item Absorbing a given strongly self-absorbing \ca\ (\cite{TomWin_strongly_2007}).
\item Having finite nuclear dimension (or decomposition rank).
\item Having finite stable rank (or real rank).
\item Satisfying the UCT, being nuclear and having (uniquely) divisible $K$-theory, or trivial $K$-theory.
\item Being separable, nuclear, and satisfying the UCT.
\item If $A$ has ``no $K_1$-obstructions" (see \autoref{df:noK1obstr}),
and $\cdimRok(\alpha)\leq 1$, having almost unperforated Cuntz semigroup.
\ee\end{thmintro}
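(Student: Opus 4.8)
The plan is to deduce every item from the local approximation of the crossed product by a continuous $C(X)$-algebra established in \autoref{thm:CXGalgebra} and \autoref{prop:ApproxCP}. Writing $d=\cdimRok(\alpha)$ and $X=X_{G,d}/G$, that result exhibits $A\rtimes_\alpha G$ as locally approximated by a continuous $C(X)$-algebra whose fibers are isomorphic to $A\otimes\K(L^2(G))$. Since $\dim(G)<\I$, the base space $X$ is finite-dimensional, and this is precisely the feature that makes the relevant permanence theorems applicable. The common skeleton of the argument is thus a three-step passage for each property $P$ on the list: first, $P$ is invariant under stable isomorphism, so the fibers $A\otimes\K(L^2(G))$ inherit $P$ from $A$; second, $P$ passes from the fibers to a continuous $C(X)$-algebra over the finite-dimensional base $X$; and third, $P$ is stable under the local approximation relation of \autoref{prop:ApproxCP}, and therefore holds for $A\rtimes_\alpha G$.

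The first step is immediate for every item, as all listed properties are Morita invariant, or at least invariant under tensoring with $\K$. The second step is where the individual structure theory enters, and I would invoke the appropriate $C(X)$-algebra permanence result for each property: for $\D$-absorption, the theorem that a separable continuous $C(X)$-algebra over a finite-dimensional space with $\D$-absorbing fibers is itself $\D$-absorbing; for nuclear dimension and decomposition rank, the bundle estimates bounding $\dimnuc$ and $\dr$ of the total algebra in terms of $\dim X$ and the fiberwise values; for stable and real rank, the corresponding continuous-field estimates; and for the $K$-theoretic statements (divisible, uniquely divisible, or trivial $K$-theory) together with the combination of nuclearity and the UCT, the fact that the UCT class is closed under extensions and countable inductive limits, combined with a filtration of the $C(X)$-algebra by preimages of a finite CW structure on $X$, exhibiting it through iterated extensions whose subquotients have the same $K$-theory as the fiber. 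The third step uses that each of these properties is preserved under the local approximation of \autoref{prop:ApproxCP}; for finiteness of $\dimnuc$, $\dr$, $\sr$, $\RR$, and for $\D$-absorption this is standard, while for the $K$-theory and UCT assertions it follows from continuity of $K$-theory under the relevant limits.

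To pass from the crossed product to the fixed-point algebra $A^\alpha$, I would use that $A^\alpha$ is canonically isomorphic to the hereditary subalgebra of $A\rtimes_\alpha G$ cut down by the projection onto the trivial isotypic subspace (for compact $G$, the corner by the averaging projection $\int_G u_g\,dg$). Each property on the list either passes to hereditary subalgebras or is Morita invariant with the relevant corner full, so the conclusion for $A^\alpha$ follows at once from the conclusion for $A\rtimes_\alpha G$.

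The main obstacle is the $K$-theoretic portion, item (4), together with the UCT in item (5): unlike $\dimnuc$, $\sr$, and $\D$-absorption, divisibility or vanishing of $K$-theory and membership in the UCT class are not visibly local conditions, so controlling them requires genuinely exploiting the global $C(X)$-algebra structure rather than the mere local approximation. Here the finite CW filtration of $X$ is essential, as it reduces the computation to a finite sequence of extensions of algebras with known $K$-theory, to which Mayer--Vietoris or the associated spectral sequence applies; finiteness of $\dim X$, equivalently $\dim(G)<\I$, guarantees that the filtration terminates. Finally, item (6) does not follow from the generic bundle argument and is treated separately in \autoref{thm:StrCompRdim1}: under the additional hypotheses $\cdimRok(\alpha)\leq 1$ and the absence of $K_1$-obstructions (\autoref{df:noK1obstr}), almost unperforation of the Cuntz semigroup is transferred using the sharper structure available in the Rokhlin-dimension-one case.
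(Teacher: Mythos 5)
Your skeleton---fibers inherit each property from $A$ by stable invariance, bundle permanence over the finite-dimensional base $X/G$, then transfer back along the map of \autoref{prop:ApproxCP}---is indeed the paper's skeleton, and for items (1)--(2) (and the crossed-product half of (3)) your plan matches \autoref{thm:P1} and \autoref{prop:P2}. But there are genuine gaps in the remaining steps. Most seriously, your third step for items (4)--(5) does not work as described: \autoref{prop:ApproxCP} provides a \emph{positively existential embedding} $\iota\colon A\rtimes_\alpha G\hookrightarrow C(X,A)\rtimes G$, not an inductive-limit-type approximation, so the properties must be pulled back from a larger algebra to a subalgebra. No ``continuity of $K$-theory under limits'' accomplishes this: $K$-theory does not behave continuously with respect to sequence algebras, and subalgebras of UCT algebras are not known to satisfy the UCT (every separable nuclear \ca\ embeds unitally into $\Ot$, so such a permanence would settle the UCT problem; cf.\ \autoref{rmk:nonCommTws}). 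What is actually needed is that positively existential embeddings induce \emph{injective} maps on $K$-theory and pull back (unique) divisibility---parts (7)--(8) of \autoref{prop:P2}, proved by lifting Murray--von Neumann equivalences along the maps $\varphi\colon B_0\to (A\rtimes_\alpha G)_\I$---and that the separable nuclear UCT class is closed under such embeddings, which is the substantial \autoref{thm:UCTP2} (replace both algebras by $KK$-equivalent Kirchberg algebras via Dadarlat's $\mathcal{O}_\infty^{\mathrm{st}}$-trick, use Arveson extension to keep the embedding positively existential, then invoke weak semiprojectivity). Second, your bundle-level argument for (4)--(5) via a finite CW filtration of $X/G$ and Mayer--Vietoris fails for general compact $G$ with $\dim(G)<\I$: for profinite $G$, say, the space $X/G$ is finite-dimensional compact metrizable but carries no CW structure, and the bundle admits no finite filtration by trivial pieces. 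The paper instead uses Dadarlat's fiberwise $KK$-equivalence theorems \cite{Dad_fiberwise_2009}, which apply to continuous $C(Y)$-algebras over arbitrary finite-dimensional compact metrizable $Y$; this is what items (7)--(10) of \autoref{thm:P1} rest on.

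Third, your passage from $A\rtimes_\alpha G$ to $A^\alpha$ via the averaging corner breaks down for item (3): stable rank and real rank are not monotone under passing to corners, even full ones ($C(X)$ is a full corner of $M_n(C(X))$, which has much smaller stable rank). Worse, for infinite $G$ the crossed product is automatically stable (\autoref{cor:GinfRRsr}), hence has $\mathrm{sr}\leq 2$ and $\RR\leq 1$ regardless of $A$, so finiteness of its ranks carries no information whatsoever about $A^\alpha$. The paper runs the argument in the opposite direction: it establishes the conclusions for $A^\alpha$ first, by applying the (P1)/(P2) machinery to the fixed-point algebra, and then deduces the crossed-product case from the Morita equivalence of $A\rtimes_\alpha G$ with $A^\alpha\otimes\K(L^2(G))$, using that the relevant properties are stable invariants and that $\mathrm{sr}(B\otimes\K)\leq\mathrm{sr}(B)$ and $\RR(B\otimes\K)\leq\RR(B)$---inequalities which go the right way only in that direction. (Finally, you defer item (6) entirely; this is consistent with the statement's pointer to \autoref{thm:StrCompRdim1}, but note its proof needs two further ingredients: the Antoine--Bosa--Perera comparison theorem over one-dimensional bases, and the fact that positively existential embeddings induce order embeddings of Cuntz semigroups, \autoref{lma:PosExEmbOrdEmbCu}.)
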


See also \cite{GarLup_equivariant_2016} for other permanence results.

Recall (see \cite{EllGonLinNiu_classification_2015} and \cite{TikWhiWin_quasidiagonality_2017}) that separable, simple
\uca s with finite nuclear dimension and satisfying the UCT, are classified by their Elliott invariant (which essentially
consists of $K$-theory and traces; see \cite{EllTom_regularity_2008} for a survey), and the same
is true in this case for $A\otimes\K$. 
We obtain the following consequence of Theorem~D:

\begin{corintro}
Let $G$ be a compact group of finite covering dimension, let $A$ be a \uca, and let $\alpha\colon G\to\Aut(A)$ be an action
with $\cdimRok(\alpha)<\infty$. If $A$ is in the classifiable class mentioned above, then
so are $A\rtimes_\alpha G$ and $A^\alpha$.
\end{corintro}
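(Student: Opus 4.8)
The plan is to verify that $A\rtimes_\alpha G$ and $A^\alpha$ lie within the scope of the classification theorem recalled just above the statement, and then to invoke that theorem directly. First I would unpack what the hypothesis provides. By the classification result of \cite{EllGonLinNiu_classification_2015} and \cite{TikWhiWin_quasidiagonality_2017}, for a separable, simple, unital \ca\ the property of being classified by the Elliott invariant is equivalent to being nuclear with finite nuclear dimension and satisfying the UCT; so the hypothesis on $A$ gives us that $A$ is separable, simple, unital, has finite nuclear dimension, is nuclear, and satisfies the UCT. The goal is then to show that $A\rtimes_\alpha G$ and $A^\alpha$ share all of these properties (up to the unital-versus-stable distinction for the crossed product).

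The regularity properties transfer almost immediately via Theorem~D. Since $\dim(G)<\I$ and $\cdimRok(\alpha)<\I$, parts (2) and (5) of \autoref{thm:preservationCP} apply and show that finite nuclear dimension, nuclearity, and the UCT all pass from $A$ to both $A\rtimes_\alpha G$ and $A^\alpha$. Separability of $A^\alpha$ is immediate from the inclusion $A^\alpha\subseteq A$, and separability of the crossed product follows from separability of $A$ together with second countability of $G$. The unit of $A$ lies in $A^\alpha$, so $A^\alpha$ is unital; and when $G$ is finite, $A\rtimes_\alpha G$ is unital as well.

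The remaining ingredient is simplicity. Here I would use the ideal-structure analysis flowing from Theorem~A: because $\cdimRok(\alpha)<\I$, the action $\alpha$ has full strong Connes spectrum, so by \autoref{cor: MorEquiv} every ideal of $A\rtimes_\alpha G$ is induced by a $G$-invariant ideal of $A$; since $A$ is simple, its only invariant ideals are $\{0\}$ and $A$, forcing $A\rtimes_\alpha G$ to be simple. The same circle of results exhibits $A^\alpha$ as Morita equivalent to $A\rtimes_\alpha G$, so $A^\alpha$ is simple too. For finite $G$ this already places $A\rtimes_\alpha G$ and $A^\alpha$ squarely in the hypotheses of the classification theorem, and the conclusion follows. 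For infinite compact $G$ the crossed product need not be unital, but it is stably isomorphic to the unital classified algebra $A^\alpha$ by Brown--Green--Rieffel, so it is classified by the $A\otimes\K$ form of the theorem.

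The step I expect to be the main obstacle is simplicity, specifically making the Connes-spectrum and Morita-equivalence machinery available for \emph{general} compact $G$ of finite covering dimension rather than only the compact abelian case in which Theorem~A is phrased; the regularity transfer is a direct appeal to Theorem~D, so the real care lies in establishing that the ideals of the crossed product are still governed by the invariant ideals of $A$ in this broader generality, and in keeping the unital and stable formulations of the classification theorem properly aligned.
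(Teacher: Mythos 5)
Your overall route is the one the paper intends: the paper gives no separate proof of this corollary, deriving it from Theorem~D --- concretely, the finite-nuclear-dimension and UCT clauses of \autoref{thm:preservationCP} --- together with the recalled classification theorem, and your bookkeeping for the nonunital case (stability of $A\rtimes_\alpha G$ for infinite $G$, which is \autoref{cor:GinfRRsr}, plus Brown--Green--Rieffel and the ``$A\otimes\K$'' form of the classification theorem) is exactly the intended reading of the remark preceding the statement. On the regularity-transfer side, your proposal and the paper agree.

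The genuine gap is the one you flagged but did not close: simplicity for nonabelian $G$. \autoref{thm:duality}, \autoref{prop: FullConnesSpec} and \autoref{cor: MorEquiv} are stated and proved only for compact \emph{abelian} groups, whereas the corollary concerns arbitrary compact $G$ of finite covering dimension; as written, your argument does not cover even a nonabelian finite group, and naming an obstacle is not the same as resolving it. The gap can be closed with tools the paper does provide, with no Connes-spectrum input. By \autoref{thm:XRpRdim} and \autoref{prop:ApproxCP}, the canonical map $\iota\colon A\rtimes_\alpha G\to C(X,A)\rtimes_\gamma G$ is a positively existential embedding, and such embeddings reflect ideal membership: if $\iota(b)$ lies in the closed ideal generated by $\iota(a)$, then $b$ lies in the closed ideal generated by $a$ (take finitely many witnesses, place them together with $a,b$ in separable subalgebras, and pull back along a homomorphism $\varphi\colon B_0\to (A\rtimes_\alpha G)_\I$ exactly as in the proof of \autoref{lma:PosExEmbOrdEmbCu}). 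By \autoref{cor:freeCXGalgebra}, composing $\iota$ with any fiber map $\pi_z$ yields the injective homomorphism $\theta\colon A\rtimes_\alpha G\to A\otimes\K(L^2(G))$, and the fibers $A\otimes\K(L^2(G))$ are simple; hence for $a\neq 0$ the element $\iota(a)$ is nonzero, and therefore full, in every fiber, so the ideal it generates is all of $C(X,A)\rtimes_\gamma G$ (the quotient by that ideal is a $C(X/G)$-algebra with vanishing fibers, hence zero). Reflecting back, every nonzero positive element of $A\rtimes_\alpha G$ is full, i.e.\ $A\rtimes_\alpha G$ is simple; and $A^\alpha$ is simple in turn, being isomorphic to the corner $p(A\rtimes_\alpha G)p$ for the averaging projection $p\in A\rtimes_\alpha G$ (equivalently, one can invoke the Morita equivalence with $A^\alpha\otimes\K(L^2(G))$ recalled in the proof of \autoref{thm:preservationCP}). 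In fairness, the paper never spells out this step either --- its own ideal-structure results are abelian-only --- so you located a real soft spot in the exposition; but a complete proof of the statement as written has to fill it.
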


The free $G$-spaces $X_{G,d}$ can be explicitly computed, although their description is not always simple to state.
For $G=\Z_2$, the space $X_{\Z_2,d}$ is equivariantly isomorphic to $S^d$ with the antipodal
action; see \autoref{lma:S1rotation}. We use this description to produce the following example:

\begin{egintro}(See \autoref{eg:cdimRokdimRokDifferent})
There exist a unital Kirchberg algebra satisfying the UCT and an action $\alpha\colon \Z_2\to \Aut(A)$ with $\cdimRok(\alpha)=2$ and
$\dimRok(\alpha)=1$.
\end{egintro}

There is a similar example on a simple AF-algebra; see \autoref{eg:cdimRok2}. These are the first examples of actions whose Rokhlin dimensions
with and without commuting towers are both finite but do not agree. Similar examples for actions of $\Z$ are presently not known.
Finally, the explicit description of the spaces $X_{\Z_2,d}$ is used to show that in some cases of interest, the Rokhlin dimension with commuting
towers of a given action is either zero or infinite; see \autoref{prop:OtwoRp} and \autoref{thm:UHFRp}.
\newline

\textbf{Acknowledgements:} The authors would like to thank N.~Christopher Phillips for sharing with them
some unpublished material that led to \autoref{thm:FreeActionCellComplex}. The first named author
thanks Etienne Blanchard, Marius Dadarlat and Martino Lupini for helpful correspondence, and Hannes Thiel for calling our attention
to the references \cite{NagOsaPhi_ranks_2001} and \cite{Sud_stable_2005}. We particularly thank Shirly Geffen for thoroughly
reading this paper, and for pointing out numerous typos and mistakes. Parts of this work were
completed while the authors were attending the following focus semesters: the \emph{Thematic Program on Abstract Harmonic Analysis, Banach and Operator
Algebras} at the Fields Institute, Toronto, Canada, in January-June 2014; the \emph{Focus Programme on $C^*$-algebras} at
the Universit\"at M\"unster, M\"unster, Germany, in April-July 2015; and \emph{Classification of operator algebras: complexity, rigidity, and dynamics} at
the Institut Mittag-Leffler, Sweden, in January-April 2016. The financial support provided by these programs is gratefully acknowledged.
Further work was conducted during visits of the first named author to Aberdeen in May 2015, and to the second named author in October 2016,
and of the second named author to M\"unster in June 2015. The authors thank the hosting math departments for the hospitality.

\section{Finite Rokhlin dimension and duality}

We begin by introducing some notation and terminology.

\begin{df}\label{df:SeqAlgs}
Let $A$ be a \ca. Let $\ell^\I(\N,A)$ denote the set of all bounded sequences in $A$ with the supremum norm
and pointwise operations. Set
\[c_0(\N,A)=\{(a_n)_{n\in\N}\in\ell^\I(\N,A)\colon \lim_{n\to\I}\|a_n\|=0\}.\]
Then $c_0(\N,A)$ is an ideal in $\ell^\I(\N,A)$, and we denote the quotient
$\ell^\I(\N,A)/c_0(\N,A)$ by $A_\I$.
We write $\eta_A\colon \ell^\I(\N,A)\to A_\I$ for the quotient map. We identify $A$ with the subalgebra of
$\ell^\I(\N,A)$ consisting of the constant sequences, and with a subalgebra of $A_\I$ by
taking its image under $\eta_A$. If $D$ is any subalgebra of $A$, then $A_\I\cap D'$ denotes the relative
commutant of $D$ inside of $A_\I$.

For a subalgebra $D\subseteq A$, write $\Ann(D,A_\I)$ for the annihilator of $D$ in $A_\I$, which is an ideal in $A_\I\cap D'$.
Following Kirchberg (\cite{Kir_central_2006}), we set
\[F(D,A)=(A_\I\cap D')/\Ann(D,A_\I),\]
and write $\kappa_{D,A}\colon A_\I\cap D'\to F(D,A)$ for the quotient map. When $D=A$, we abbreviate $F(A,A)$ to $F(A)$, and $\kappa_{A,A}$
to $\kappa_A$. Observe that $F(D,A)$ is unital whenever $D$ is $\sigma$-unital.

If $\alpha\colon G\to\Aut(A)$ is an action of $G$ on $A$, and $D$ is an $\alpha$-invariant subalgebra of $A$, then there
are (not necessarily continuous) actions
of $G$ on $\ell^\I(\N,A)$, on $A_\I$, on $A_\I\cap D'$, and on $F(D,A)$, respectively denoted, with a slight
abuse of notation, by $\alpha^\I$, $\alpha_\I$, $\alpha_\I$ and $F(\alpha)$. Following Kishimoto (\cite{Kis_oneparameter_1996}),
we set
\[\ell^\I_\alpha(\N,A)=\{a\in \ell^\I(\N,A)\colon g\mapsto \alpha^\I_g(a) \mbox{ is continuous}\}.\]
We also set $A_{\I,\alpha}=\eta_A(\ell^\I_\alpha(\N,A))$ and $F_{\alpha}(A)=\kappa_{D,A}(A_{\I,\alpha}\cap D')$.
By construction, $A_{\I,\alpha}$ and $F_{\alpha}(D,A)$ are invariant under $\alpha_\I$ and $F(\alpha)$, respectively, so the restrictions
of $\alpha_\I$ and $F(\alpha)$ to $A_{\I,\alpha}$ and $F_{\alpha}(D,A)$, which we also denote by $\alpha_\I$ and $F(\alpha)$, are continuous.
Again, $F_\alpha(D,A)$ is unital whenever $D$ is $\sigma$-unital.
\end{df}

\begin{rem}
In the context of the definition above, the algebra $A_{\I,\alpha}$ also agrees with the subalgebra of $A_\I$ where
the induced action $\alpha_\I$ acts continuously, by the main theorem of \cite{Bro_continuity_2000}.
\end{rem}

Given a compact group $G$, we denote by $\verb'Lt'\colon G\to\Aut(C(G))$ the
action of left translation. If $H$ is a subgroup of $G$, we also denote by $\verb'Lt'\colon H\to \Aut(C(G))$
the restriction of $\verb'Lt'$ to $H$.

We reproduce the definition of Rokhlin dimension for compact group actions as it appears in Definition~3.2 of~\cite{Gar_rokhlin_2017}.
Recall that a completely positive contractive map $\varphi\colon C\to A$ between \ca s is said to have
\emph{order zero} if $\varphi(c_1)\varphi(c_2)=0$ whenever $c_1,c_2\in C_+$ satisfy $c_1c_2=0$. Given a completely positive
contractive order zero map $\varphi\colon C\to A$, Theorem~2.3 in~\cite{WinZac_completely_2009} asserts that there exist
a positive contraction $h\in A^{\ast\ast}$ and a homomorphism $\pi\colon C_0((0,1]]\otimes C\to A$ such that
$\varphi(c)=h\pi(\id_{(0,1]}\otimes c)$ for all $c\in C$. In this case, we write $\varphi=h\pi$ for short.

\begin{df}\label{df:Rdim}
Let $G$ be a compact group, let $A$ be a \ca,
and let $\alpha\colon G\to\Aut(A)$ be a continuous action. We say that $\alpha$ has \emph{\Rdim\ $d$}, if
$d$ is the least integer such that for every $\sigma$-unital $\alpha$-invariant subalgebra $D\subseteq A$,
there exist equivariant completely positive contractive order zero maps
$$\varphi_0,\ldots,\varphi_d\colon (C(G),\texttt{Lt})\to (F_\alpha(D,A),F(\alpha))$$
such that $\varphi_0(1)+\ldots+\varphi_d(1)=1$.
We denote the \Rdim\ of $\alpha$ by $\dimRok(\alpha)$. If no integer $d$ as above exists, we say that
$\alpha$ has \emph{infinite Rokhlin dimension}, and write $\dimRok(\alpha)=\I$.

If one can always choose the maps $\varphi_0,\ldots,\varphi_d$ to have commuting ranges, then we say that
$\alpha$ has \emph{Rokhlin dimension $d$ with commuting towers}, and write $\cdimRok(\alpha)=d$.\end{df}

\begin{rem} For any compact group action $\alpha$, we always have $\dimRok(\alpha)\leq \cdimRok(\alpha)$.
The inequality can be strict (see Example~4.8 in~\cite{Gar_rokhlin_2017}), even when both dimensions are finite
(see \autoref{eg:cdimRok2} and \autoref{eg:cdimRokdimRokDifferent}).
\end{rem}

It is straightforward to check that when $G$ is finite and $A$ is separable, \autoref{df:Rdim} agrees with Definition~1.14 in
\cite{HirPhi_rokhlin_2015}.


We need a series of easy lemmas about (equivariant) order zero maps. The following is
Proposition~2.3 in~\cite{Gar_regularity_2017}. The part about the dual coaction can be
proved by considering the induced homomorphism from the cone as in the proof of
Proposition~2.3 in~\cite{Gar_regularity_2017}, and using Theorem~3.5 in~\cite{RaeSinWil_equivariant_1989}.

\begin{lma}\label{cpcequivCP}
Let $G$ be a locally compact group, let $A$ and $B$ be \ca s, and let $\alpha\colon G\to\Aut(A)$ and $\beta\colon G\to\Aut(B)$
be actions. Given an equivariant completely positive contractive order zero map $\rho\colon A\to B$, the expression
\[\sigma(\xi)(g)=\rho(\xi(g)),\]
for $\xi\in L^1(G,A,\alpha)$ and $g\in G$, determines a completely positive contractive order zero map $\sigma\colon A\rtimes_\alpha G\to B\rtimes_\beta G$.
Moreover, when $G$ is amenable, this map is equivariant with respect to the
dual coactions of $G$ on $A\rtimes_\alpha G$ and $B\rtimes_\beta G$.
\end{lma}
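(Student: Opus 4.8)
The plan is to construct the order zero map $\sigma$ directly and verify its properties via the standard correspondence between order zero maps and homomorphisms from the cone. Recall that a completely positive contractive order zero map $\rho\colon A\to B$ corresponds bijectively to a homomorphism $\pi_\rho\colon C_0((0,1])\otimes A\to B$ via $\pi_\rho(\mathrm{id}_{(0,1]}\otimes a)=\rho(a)$, where $\mathrm{id}_{(0,1]}$ denotes the generator $t\mapsto t$ of $C_0((0,1])$. The equivariance of $\rho$ translates into equivariance of $\pi_\rho$ for the action $\mathrm{id}\otimes\alpha$ on the cone $C_0((0,1])\otimes A$ and $\beta$ on $B$. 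This is exactly how Proposition~2.3 in~\cite{Gar_regularity_2017} is set up, and I would follow that template.

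First I would pass to the cones. The equivariant homomorphism $\pi_\rho\colon (C_0((0,1])\otimes A,\,\mathrm{id}\otimes\alpha)\to(B,\beta)$ induces, by functoriality of the full crossed product, a homomorphism
\[
\pi_\rho\rtimes G\colon \bigl(C_0((0,1])\otimes A\bigr)\rtimes_{\mathrm{id}\otimes\alpha} G\longrightarrow B\rtimes_\beta G.
\]
Next I would use the canonical identification $\bigl(C_0((0,1])\otimes A\bigr)\rtimes_{\mathrm{id}\otimes\alpha} G\cong C_0((0,1])\otimes(A\rtimes_\alpha G)$, which holds because the group acts trivially on the cone factor; concretely this is the statement that crossed products commute with tensoring by a fixed algebra carrying the trivial action. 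Composing, I obtain a homomorphism $C_0((0,1])\otimes(A\rtimes_\alpha G)\to B\rtimes_\beta G$, and the corresponding order zero map is the desired $\sigma$. Unwinding the identifications on the level of the generator $\mathrm{id}_{(0,1]}\otimes\xi$ for $\xi\in L^1(G,A,\alpha)$ gives precisely the formula $\sigma(\xi)(g)=\rho(\xi(g))$, so the two constructions agree. Contractivity of $\sigma$ is automatic since $\pi_\rho\rtimes G$ is a homomorphism and the identifications are isometric.

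For the statement about the dual coaction, I would invoke Theorem~3.5 in~\cite{RaeSinWil_equivariant_1989}, which provides naturality of the dual coaction under homomorphisms arising from equivariant maps; when $G$ is amenable, full and reduced crossed products coincide, so the dual coactions on $A\rtimes_\alpha G$ and $B\rtimes_\beta G$ are well-behaved and the induced map $\pi_\rho\rtimes G$ intertwines them. Since the cone factor carries the trivial coaction, the identification $C_0((0,1])\otimes(A\rtimes_\alpha G)$ respects the dual coaction, and equivariance of $\sigma$ follows by restricting attention to the generator of the cone. The main obstacle is purely bookkeeping: one must check that the identification of the iterated crossed product with the tensor product is itself equivariant for the dual coactions, and that passing through the cone does not disturb the coaction structure. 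This is where amenability is genuinely used, and it is the only place where more than formal manipulation is required; everything else is a routine application of the cone picture for order zero maps together with functoriality of crossed products.
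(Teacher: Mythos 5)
Your proposal is correct and takes essentially the same route as the paper: the paper disposes of the first part by citing Proposition~2.3 in~\cite{Gar_regularity_2017}, whose proof is exactly your cone argument (pass to the equivariant homomorphism $C_0((0,1])\otimes A\to B$, apply functoriality of the crossed product, and identify $\bigl(C_0((0,1])\otimes A\bigr)\rtimes_{\mathrm{id}\otimes\alpha}G$ with the cone over $A\rtimes_\alpha G$), and it handles the dual coaction by the same appeal to Theorem~3.5 in~\cite{RaeSinWil_equivariant_1989} that you make.
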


In the next lemma, $f(\rho)$ denotes the order zero map obtained from $\rho$ using continuous
functional calculus as in Section~2 of~\cite{WinZac_completely_2009}. Explicitly, if $\rho=h\pi$ is a decomposition
as in Theorem~2.3 in \cite{WinZac_completely_2009} (see the comments above \autoref{df:Rdim}), then $f(\rho)=f(h)\pi$.

\begin{lma}\label{lma:FunctCalcOz}
Let $A$ and $B$ be \ca s, let $G$ be a locally compact group, and let
$\alpha\colon G\to\Aut(A)$ and $\beta\colon G\to \Aut(B)$ be actions.
Let $\rho\colon A\to B$ be an equivariant completely positive contractive order
zero map, and let $f\in C_0((0,1])$ be a positive function.
Let $f(\rho)\colon A\to B$ be the completely
positive order zero map obtained from Corollary~3.2 in~\cite{WinZac_completely_2009}.
Then $f(\rho)$ is equivariant.
\end{lma}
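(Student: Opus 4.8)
The plan is to use the Winter--Zacharias correspondence between completely positive contractive order zero maps and $*$-homomorphisms out of a cone, which reduces the assertion to the essentially trivial equivariance of a homomorphism checked on generators, and in particular avoids the multiplier-algebra bookkeeping that a direct argument through the structure theorem $\rho=h\pi$ would require. By Corollary~3.1 in~\cite{WinZac_completely_2009}, there is a unique $*$-homomorphism $\Phi\colon C_0((0,1])\otimes A\to B$ with $\rho(a)=\Phi(\iota\otimes a)$ for all $a\in A$, where $\iota\in C_0((0,1])$ is given by $\iota(t)=t$; and by Corollary~3.2 in~\cite{WinZac_completely_2009} the functional calculus $f(\rho)$ is precisely the order zero map $f(\rho)(a)=\Phi(f\otimes a)$ for $a\in A$ (note $f\otimes a\in C_0((0,1])\otimes A$ since $f(0)=0$). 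Equip the cone with the action $\widetilde\alpha\colon G\to\Aut(C_0((0,1])\otimes A)$ given by $\widetilde\alpha_g=\id_{C_0((0,1])}\otimes\,\alpha_g$.

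First I would observe that $C_0((0,1])\otimes A$ is generated as a $C^*$-algebra by the set $\{\iota\otimes a\colon a\in A\}$: indeed $\iota$ generates $C_0((0,1])$, since the $C^*$-subalgebra it generates contains every power $\iota^n$, hence every polynomial vanishing at $0$, and these are dense by Stone--Weierstrass. I would then show that $\Phi$ is $G$-equivariant with respect to $\widetilde\alpha$ and $\beta$. Fixing $g\in G$, the maps $\Phi\circ\widetilde\alpha_g$ and $\beta_g\circ\Phi$ are both $*$-homomorphisms from $C_0((0,1])\otimes A$ into $B$, and on the above generators they agree:
\[\Phi(\widetilde\alpha_g(\iota\otimes a))=\Phi(\iota\otimes\alpha_g(a))=\rho(\alpha_g(a))=\beta_g(\rho(a))=\beta_g(\Phi(\iota\otimes a)),\]
where the equivariance of $\rho$ is used in the middle equality. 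Since two $*$-homomorphisms agreeing on a generating set coincide, $\Phi\circ\widetilde\alpha_g=\beta_g\circ\Phi$. Finally, for $a\in A$ and $g\in G$,
\[f(\rho)(\alpha_g(a))=\Phi(f\otimes\alpha_g(a))=\Phi(\widetilde\alpha_g(f\otimes a))=\beta_g(\Phi(f\otimes a))=\beta_g(f(\rho)(a)),\]
which is exactly the desired equivariance.

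The group enters only through the individual automorphisms $\alpha_g$ and $\beta_g$, so no continuity of the actions is needed and the verification is carried out one group element at a time. The only points requiring genuine care are the identification of the generators of the cone and the confirmation that the functional calculus $f(\rho)$ of Corollary~3.2 in~\cite{WinZac_completely_2009} is indeed the map $a\mapsto\Phi(f\otimes a)$; once these are in place there is no real obstacle, which is precisely why the cone description is preferable here to the $\rho=h\pi$ structure theorem, where one would instead have to separately track the equivariance of the supporting homomorphism $\pi$ and of the positive element $h$ inside $M(C^*(\rho(A)))$.
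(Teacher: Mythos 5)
Your proof is correct and takes essentially the same route as the paper: both pass to the cone homomorphism $\Phi\colon C_0((0,1])\otimes A\to B$, with $C_0((0,1])$ carrying the trivial $G$-action, establish its equivariance, and then read off equivariance of $f(\rho)(a)=\Phi(f\otimes a)$. The only real difference is that the paper cites Corollary~2.10 of~\cite{Gar_rokhlin_2017} for the transfer of equivariance between the order zero map and its cone homomorphism (applying it once more to the homomorphism $\pi_f$ associated to $f(\rho)$), whereas you verify it directly on the generators $\iota\otimes a$; the one small caveat is that deducing that $\{\iota\otimes a\colon a\in A\}$ generates the cone needs slightly more than ``$\iota$ generates $C_0((0,1])$'' (products of generators only give $\iota^n\otimes a_1\cdots a_n$, so one needs a factorization or approximate-unit step to reach $\iota^n\otimes a$ for a single $a$), although this can be bypassed entirely by the uniqueness clause of Corollary~3.1 in~\cite{WinZac_completely_2009}, which you already invoke: $\Phi\circ\widetilde{\alpha}_g$ and $\beta_g\circ\Phi$ are both cone homomorphisms of the order zero map $\rho\circ\alpha_g=\beta_g\circ\rho$, hence equal.
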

\begin{proof}
Denote by $\pi\colon C_0((0,1])\otimes A\to B$ the homomorphism induced by $\rho$
as in the conclusion of Theorem~2.3 in~\cite{WinZac_completely_2009}. Give $C_0((0,1])$ the
trivial $G$-action. Then $\pi$ is equivariant by Corollary~2.10 in~\cite{Gar_rokhlin_2017}.

The homomorphism $\pi_f\colon C_0((0,1])\otimes A\to B$ determined by $f(\rho)$
is determined by $\pi_f(\id_{(0,1]}\otimes a)=\pi(f\otimes a)$ for $a\in A$. It is
clear that $\pi_f$ is also equivariant, and again by Corollary~2.10 in~\cite{Gar_rokhlin_2017},
it follows that $f(\rho)$ is equivariant.
\end{proof}

We will need the notion of an order zero representation of a group, which we define below. It generalizes the notion
of a unitary representation, in the same way that order zero maps generalize $\ast$-homomorphisms.

\begin{df}\label{df:ozRepGp}
Let $G$ be a locally compact group, and let $B$ be a \ca. We say that a strongly continuous
function $u\colon G\to B$ is an \emph{order zero representation} of $G$ on $B$ if the following conditions
are satisfied:
\be\item $u_g$ is a normal contraction for all $g\in G$, and $u_1$ is positive;
\item $u_gu_h=u_1u_{gh}$ for all $g,h\in G$; and
\item $u_g^*=u_{g^{-1}}$ for all $g\in G$.\ee
\end{df}

The next result shows how to ``dilate'' an order zero representation to a unitary representation, similarly
to how one dilates an order zero map to a homomomorphism.

\begin{prop}\label{prop:ozRepozMor}
Let $G$ be a locally compact group, let $B$ be a \ca, and let $u\colon G\to B$ be an order zero representation.
Then there exist a projection $p\in B^{\ast\ast}$ and a unitary representation
$v\colon G\to \U(pB^{\ast\ast}p)$ commuting with $u_1$, such that $u_g=u_1v_g$ for all $g\in G$.
\end{prop}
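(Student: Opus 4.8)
The plan is to mimic the classical correspondence between unitary representations and $\ast$-homomorphisms from $C^*(G)$, but at the level of order zero maps, using the structure theory of order zero maps from \cite{WinZac_completely_2009}. First I would use \autoref{prop:ozRepozMor}'s hypotheses to pass from the order zero representation $u$ to an order zero map from the group algebra. Concretely, an order zero representation $u\colon G\to B$ should integrate to a completely positive contractive order zero map $\rho\colon C^*(G)\to B$: the relations $u_g u_h = u_1 u_{gh}$, $u_g^* = u_{g^{-1}}$, and normality of each $u_g$ are precisely the order-zero analogues of the multiplicativity, $\ast$-preservation, and the requirement that the image be normal that one needs to define a genuine $\ast$-homomorphism after ``dividing out'' by the positive part. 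I would verify that the integrated map $\rho(f) = \int_G f(g)\, u_g\, dg$ (for $f\in C_c(G)$, extended by continuity) is completely positive, contractive, and order zero.

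Next I would invoke the structure theorem for order zero maps (Theorem~2.3 in \cite{WinZac_completely_2009}): any c.p.c.\ order zero map $\rho\colon C^*(G)\to B$ factors as $\rho = h\,\pi$, where $h$ is a positive contraction in the multiplier algebra of the hereditary subalgebra generated by the range (commuting with that range), and $\pi\colon C^*(G)\to B^{**}$ (or into the appropriate corner) is a genuine $\ast$-homomorphism with $h$ in its commutant. Applying this at the level of $C^*(G)$ and then restricting $\pi$ to the canonical unitaries $g\mapsto \lambda_g$ in $M(C^*(G))$ yields a unitary representation $v\colon G\to \U(pB^{**}p)$, where $p$ is the support projection of $\pi$ (equivalently of $\rho$). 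Setting $p$ to be this support projection and defining $v_g := \pi(\lambda_g)$, the factorization gives $u_g = \rho(\text{point mass at }g) = u_1 v_g$, since the positive part $h$ precisely recovers $u_1 = \rho$ evaluated at the identity, and $u_1$ commutes with $v_g$ because $h$ commutes with the range of $\pi$.

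The main obstacle I expect is the rigorous passage from the order zero representation $u$ to the integrated order zero map $\rho$ on $C^*(G)$, and in particular verifying that $\rho$ is order zero (i.e.\ preserves orthogonality of positive elements) rather than merely completely positive and contractive. The relation $u_g u_h = u_1 u_{gh}$ is not the na\"ive multiplicativity, so one must check carefully that the convolution structure of $C^*(G)$ interacts correctly with this ``twisted'' multiplicativity to produce the order zero property; this is where the normality assumption in condition~(1) of \autoref{df:ozRepGp} does the essential work, ensuring each $u_g$ lies in a commutative $C^*$-subalgebra together with the supporting positive element. Once $\rho$ is in hand, I would handle the strong continuity of $v$ by noting that $g\mapsto u_g$ is strongly continuous by hypothesis and that $v_g = u_1^{-1} u_g$ on the corner $pB^{**}p$ (interpreting $u_1^{-1}$ via functional calculus on the support of $u_1$), so continuity of $v$ follows from that of $u$. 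The remaining verifications---that $v_g v_h = v_{gh}$, $v_g^* = v_{g^{-1}}$, and that $v$ takes unitary values in the corner---are then routine consequences of the factorization and the defining relations of $u$.
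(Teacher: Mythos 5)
Your strategy (integrate $u$ to a c.p.c.\ order zero map $\rho$ on $C^*(G)$, then apply the Winter--Zacharias structure theorem) inverts the logical order of the paper and, as written, has a genuine gap at precisely the point you flag as ``the main obstacle.'' The paper proves the proposition by a direct construction: it sets $h=u_1$, forms the spectral projections $p_t=\chi_{(t,1]}(h)$ in $B^{\ast\ast}$ and the support projection $p$, introduces the (unbounded) inverse $T$ of $ph$, defines $v_g=Tu_gp$ on $p(\Hi)$, and verifies by explicit inner-product computations that $v$ is a unitary representation with $u_g=hv_g$. The correspondence you want to start from --- that order zero representations integrate to c.p.c.\ order zero maps on $C^*(G)$ --- is stated in the paper as a \emph{consequence} of this proposition, and the companion integration result \autoref{prop:cpcozCP} is likewise deduced from it; so taking that correspondence as input is circular unless you prove it independently, which you do not. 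Saying that normality of the $u_g$ ``does the essential work'' is not an argument, and in fact normality is not the operative hypothesis.

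Here is concretely what an honest completion of your route needs and what is absent from the proposal: (i) $u_1$ commutes with every $u_g$ (put $h=1$ in $u_gu_h=u_1u_{gh}$); (ii) $u_g^*u_g=u_{g^{-1}}u_g=u_1^2$, so all $u_g$ are supported in the corner determined by the support projection of $u_1$; (iii) the identity $\rho(x)\rho(y)=u_1\rho(xy)$ for all $x,y\in C^*(G)$, obtained from $u_gu_h=u_1u_{gh}$ by a change of variables in the convolution integral --- it is this identity, not normality, that yields both complete positivity (from $(1\otimes u_1)\left[\rho(f_{ij})\right]\geq 0$ for positive matrices over $C^*(G)$) and the order zero property (since $ab=0$ gives $\rho(a)\rho(b)=u_1\rho(ab)=0$); and (iv) a cancellation lemma saying that if a self-adjoint element of the corner commutes with $u_1$, which is injective there, and its product with $u_1$ is positive, then it is itself positive. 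Moreover, for nondiscrete $G$ your phrase ``$\rho$ evaluated at the point mass at $g$'' is meaningless inside $C^*(G)$: point masses live in $M(C^*(G))$, so recovering $u_g=u_1v_g$ requires extending to multipliers and taking limits along an approximate identity. These steps constitute essentially the entire proof; what your proposal supplies around them is the routine part. So while the route could in principle be carried out, the proposal as it stands is a plan with its mathematical core missing rather than a proof.
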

\begin{proof}
We denote $h=u_1$, which is a positive contraction that commutes with $u_g$ for all $g\in G$.
For $t\in (0,1]$, use Borel functional calculus
to define a projection $p_t\in B^{\ast\ast}$ by $p_t=\chi_{(t,1]}(h)$. Then $p_t$ commutes with $u_g$ for all $g\in G$,
and the strong operator limit of $p_t$, as $t\to 0$, is $p_0$. We write $p$ in place of $p_0$.

Represent $B$ faithfully on a Hilbert space $\Hi$.
Denote by $T$ the (unbounded) inverse of $ph$, which is defined on $p(\Hi)$. Observe that $Tp_t$ is a bounded
and positive operator on $p_t(\Hi)$. For $g\in G$, set $V_g=Tu_g$, which is a possibly unbounded operator defined on all of $\Hi$.
Set $v_g=V_gp$, regarded as an operator on $p(\Hi)$. We claim that $v_g$ is a unitary (and, in particular, bounded).
To show this, let $t\in (0,1)$, and let $\xi,\eta\in \Hi$. We use that $u_g$ commutes with $Tp_t$ at the fourth step, to get
\begin{align*}
\langle v_gp_t(\xi),v_g p_t(\eta)\rangle &= \langle Tu_gp_t(\xi),v_gp_t(\eta)\rangle \\
&= \langle Tp_tu_gp_t(\xi),v_gp_t(\eta)\rangle \\
&= \langle u_gp_t(\xi),Tv_gp_t(\eta)\rangle \\
&= \langle p_t(\xi),Tu_g^*v_gp_t(\eta)\rangle \\
&= \langle p_t(\xi),v_{g^{-1}}v_gp_t(\eta)\rangle\\
&=\langle p_t(\xi),p_t(\eta)\rangle,
\end{align*}
since $v_gv_{g^{-1}}=Tu_gTu_{g^{-1}}=T^2u_1^2$ acts as a unit on $p_t(\Hi)$.
It follows that the restriction of $v_g$ to $p_t(\Hi)$ is a unitary, with inverse $v_{g^{-1}}$. A similar
computation shows that $g\mapsto v_g|_{p_t(\Hi)}$ is a unitary representation. Since $p_t\to p$
strongly as $t\to 0$, we conclude that $v_g$ is a unitary operator on $p(\Hi)$, and that the resulting
map $v\colon G\to \U(p(\Hi))$ is a unitary representation. In other words, $v_g\colon G\to \U(pB^{\ast\ast}p)$
is a unitary representation.

Finally, it is clear that $u_g=hv_g=v_gh$ for all $g\in G$, so the proof is finished.
\end{proof}

In particular, it follows that order zero representations of a group $G$ are in one-to-one correspondence with
(completely positive contractive) order zero maps from $C^*(G)$. Instead of giving a proof for this, we prove
a more general result for covariant representations of crossed products.

We denote by $A\rtimes_\alpha G$ the maximal crossed product of $A$ by $\alpha$. In the following proposition,
when $u$ is a unitary representation, the conclusion is precisely the universal property of the crossed
product.

\begin{prop}\label{prop:cpcozCP}
Let $G$ be a locally compact group, let $A$ be a \ca, and let $\alpha\colon G\to\Aut(A)$ be an action.
Let $B$ be another \ca, let $\varphi\colon A\to M(B)$ be a homomorphism, and let $u\colon G\to B$
be an order zero representation, satisfying $u_g\varphi(a)=\varphi(\alpha_g(a))u_g$
for all $g\in G$ and for all $a\in A$. Define a map
\[\psi\colon L^1(G,A,\alpha)\subseteq A\rtimes_\alpha G\to B\]
by $\psi(\xi)=\int_G \varphi(\xi(g))u_g\ dg$ for all $\xi\in L^1(G,A,\alpha)\subseteq A\rtimes_\alpha G$.
Then $\psi$ extends to a completely positive contractive order zero map $A\rtimes_\alpha G\to B$.\end{prop}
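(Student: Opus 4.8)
The plan is to reduce the statement to the Winter--Zacharias structure theorem for order zero maps by manufacturing a genuine covariant representation out of the pair $(\varphi,u)$. First I would apply \autoref{prop:ozRepozMor} to the order zero representation $u$, obtaining the support projection $p=\chi_{(0,1]}(h)\in M(B)^{\ast\ast}$ of $h:=u_1$ and a unitary representation $v\colon G\to\U(pM(B)^{\ast\ast}p)$ commuting with $h$ and satisfying $u_g=hv_g$ for all $g\in G$. Setting $g=1$ in the covariance relation gives $h\varphi(a)=\varphi(a)h$ for all $a\in A$, so $h$, and hence its support projection $p$, commutes with $\varphi(A)$. Thus $\pi:=p\varphi(\cdot)p$ is a $\ast$-homomorphism from $A$ into $pM(B)^{\ast\ast}p$, and the aim is to show that $(\pi,v)$ is a covariant pair, integrate it to a representation $\pi\rtimes v$ of the maximal crossed product, and then recognize $\psi$ as $h\cdot(\pi\rtimes v)$.

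The technical heart is the passage from the covariance of $(\varphi,u)$ to that of $(\pi,v)$, which requires cancelling $h$. Substituting $u_g=hv_g$ and using that $h$ commutes with $\varphi(A)$, the hypothesis $u_g\varphi(a)=\varphi(\alpha_g(a))u_g$ becomes $h\bigl(v_g\varphi(a)-\varphi(\alpha_g(a))v_g\bigr)=0$. Since $h$ is self-adjoint with support projection $p$, one has $hX=0$ if and only if $pX=0$, so this forces $p\bigl(v_g\varphi(a)-\varphi(\alpha_g(a))v_g\bigr)=0$. Compressing by $p$ on both sides and using $v_g=pv_gp$ together with $[\,p,\varphi(a)\,]=0$, one obtains $v_g\pi(a)=\pi(\alpha_g(a))v_g$. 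Hence $(\pi,v)$ is covariant, and the universal property of $A\rtimes_\alpha G$ yields a $\ast$-homomorphism $\pi\rtimes v\colon A\rtimes_\alpha G\to pM(B)^{\ast\ast}p$ with $(\pi\rtimes v)(\xi)=\int_G\pi(\xi(g))v_g\,dg$ for $\xi\in L^1(G,A,\alpha)$.

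It then remains to identify $\psi$ with an order zero map built from these data. For $\xi\in L^1(G,A,\alpha)$, commuting $h$ past $\varphi(\xi(g))$ and using $hp=h$ and $pv_g=v_g$ gives
\[\psi(\xi)=\int_G\varphi(\xi(g))\,hv_g\,dg=h\int_G\varphi(\xi(g))v_g\,dg=h\,(\pi\rtimes v)(\xi).\]
Since $h$ commutes with $p$, with $\varphi(A)$, and with each $v_g$, it commutes with the range of $\pi\rtimes v$; as $h$ is a positive contraction and $\pi\rtimes v$ is a $\ast$-homomorphism, Theorem~2.3 of~\cite{WinZac_completely_2009} shows that $x\mapsto h\,(\pi\rtimes v)(x)$ is a completely positive contractive order zero map whose restriction to $L^1(G,A,\alpha)$ is $\psi$, giving the desired extension. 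I expect the $h$-cancellation establishing covariance to be the main obstacle, since it is the only step where the bidual and the support projection of $h$ genuinely enter; the remaining point demanding care is confirming that the extension takes values in $B$ rather than merely in $M(B)$, which I would check on the dense subalgebra $L^1(G,A,\alpha)$, where $\psi(\xi)=\int_G\varphi(\xi(g))u_g\,dg$ manifestly lies in $M(B)$, invoking nondegeneracy of $\varphi$ to land in $B$.
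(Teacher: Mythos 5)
Your proof is correct and follows the same overall route as the paper's: apply \autoref{prop:ozRepozMor} to write $u_g=hv_g$ with $h=u_1$, turn $(\varphi,u)$ into a covariant pair involving $v$, integrate over the maximal crossed product, and recognize $\psi$ as $h$ times the integrated form, which is order zero because $h$ commutes with its range. The one place you genuinely diverge is the covariance step: the paper re-runs the Hilbert-space argument of \autoref{prop:ozRepozMor} (the unbounded operator $T$ and the projections $p_t$), whereas you cancel $h$ purely algebraically, using that $hX=0$ if and only if $pX=0$ for the support projection $p$ of the positive element $h$; this is cleaner and self-contained, and by compressing to $\pi=p\varphi(\cdot)p$ you also repair a slight imprecision in the paper, namely that $v_g$ is a unitary only on $pM(B)^{\ast\ast}p$, so covariance of the uncompressed pair $(v,\varphi)$ does not literally parse. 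One caveat about your closing remark: nondegeneracy of $\varphi$ is neither assumed in the statement nor sufficient to force values in $B$ (take $A=\C$, $G$ trivial, $\varphi$ unital, $u_1=1_{M(B)}$ with $B$ nonunital; then $\psi(1)=1_{M(B)}\notin B$). The correct observation is that whenever $\psi\left(L^1(G,A,\alpha)\right)\subseteq B$ --- as in the paper's applications, where $u$ takes values in $B$ itself, so that $\varphi(\xi(g))u_g\in M(B)B\subseteq B$ --- the contractive extension automatically stays in $B$, since $B$ is norm-closed in $M(B)$; the paper's own proof is equally silent on this point, so this does not affect the comparison.
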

\begin{proof}
Let $p\in B^{\ast\ast}$ be the projection
and let $v\colon G\to\U(pB^{\ast\ast}p)$ be the unitary representation provided by \autoref{prop:ozRepozMor}.
It is clear that $u_1$ commutes with the image of $\varphi$. Represent $B$ faithfully on a Hilbert space $\Hi$.
An argument similar to the one used
in the proof of \autoref{prop:ozRepozMor} (using the unbounded operator $T$ and the projections $p_t$),
shows that $(v,\varphi)$ is a covariant representation of $(G,A,\alpha)$. Let $\pi\colon A\rtimes_\alpha G\to \B(\Hi)$
be its integrated form, which on $L^1(G,A,\alpha)$ is given by $\pi(f)=\int_G \varphi(f(g))v_g\ dg$ for
all $f\in L^1(G,A,\alpha)$. Then $u_1$ commutes with the image of $\pi$, and it is clear that
$\pi(f)u_1=\psi(f)$ for all $f\in L^1(G,A,\alpha)$. It thus follows that $\psi$ extends to a completely positive
contractive order zero map, as desired.
\end{proof}

In the rest of this section, we characterize actions that are dual to compact groups actions with finite Rokhlin
dimension. As it turns out, there is a dimensional notion dual to Rokhlin
dimension (with and without commuting towers), which we call the representability dimension (with and without commuting colors);
see the definition below. The relationship between Rokhlin dimension and representability dimension is clarified in \autoref{thm:duality}.

\begin{df} \label{df:repdim}
Let $\Gamma$ be a discrete group, let $B$ be a \ca, and let $\beta\colon \Gamma\to\Aut(B)$ be an action.
Given $d\in\N$, we say that $\beta$ is \emph{approximately representable with $d$ colors}, and write
$\dim_{\mathrm{rep}}(\beta)\leq d$, if for every finite subsets $F\subseteq B$ and $K\subseteq \Gamma$,
and for every $\ep>0$, there
exist contractions $x_{\gamma}^{(j)}\in B$, for $\gamma\in\Gamma$ and $j=0,\ldots,d$, with $x_{1}^{(j)}$
positive for all $j=0,\ldots,1$, and satisfying:
\be\item $\left\|((x_{\gamma}^{(j)})^*x_{\gamma}^{(j)}-x_{\gamma}^{(j)}(x_{\gamma}^{(j)})^*)b\right\|<\ep$ for all $\gamma\in K$ and $b\in F$;
\item $\left\|(x_{\gamma}^{(j)}x_{\delta}^{(j)}-x_{1}^{(j)}x_{\gamma\delta}^{(j)})b\right\|<\ep$ for all $\gamma\in K$, $\delta\in \Gamma$, and $b\in F$;
\item $\left\|(\beta_\gamma(x_{\delta}^{(j)})-x_{\gamma\delta\gamma^{-1}}^{(j)})b\right\|<\ep$ for all $\gamma\in K$, $\delta\in\Gamma$, and $b\in F$;
\item $\left\|\left(\sum\limits_{j=0}^d (x_{1}^{(j)})^*x_{1}^{(j)}\right)b-b\right\|<\ep$ for all $b\in F$;
\item $\left\|x_{\gamma}^{(j)}b-\beta_\gamma(b)x_{\gamma}^{(j)}\right\|<\ep$
for all $\gamma\in K$, and $b\in F$.\ee
We write $\dim_{\mathrm{rep}}(\beta)$ for the smallest integer $d$ satisfying $\dim_{\mathrm{rep}}(\beta)\leq d$,
and call it the \emph{representability dimension} of $\beta$.

Similarly, given $d\in\N$, we say that $\beta$ is \emph{approximately representable with $d$ commuting colors}, and write
$\dim^{\mathrm{c}}_{\mathrm{rep}}(\beta)\leq d$, if for every finite subset $F\subseteq B$ and for every $\ep>0$, there
exist contractions $x_{\gamma}^{(j)}\in B$, for $\gamma\in\Gamma$ and $j=0,\ldots,d$, satisfying the conditions (1) through (5) above,
in addition to:
\be\item[(6)] $\left\|(x_{\gamma}^{(j)}x_{\delta}^{(k)}-x_{\delta}^{(k)}x_{\gamma}^{(j)})b\right\|<\ep$ \ee
for all $\gamma,\delta\in \Gamma$, for all $j,k=0,\ldots,d$, and for all $b\in F$.
As before, we write $\dim^{\mathrm{c}}_{\mathrm{rep}}(\beta)$ for the smallest integer $d$ satisfying $\dim^{\mathrm{c}}_{\mathrm{rep}}(\beta)\leq d$,
and call it the \emph{representability dimension with commuting colors} of $\beta$.
\end{df}

In the definition above, we get an equivalent notion if condition (5) is replaced by
\be\item[(5')] $\left\|(x_{\gamma}^{(j)}b-\beta_\gamma(b)x_{\gamma}^{(j)})c\right\|<\ep$
for all $\gamma\in K$, and $b,c \in F$.\ee
To see the equivalence, apply (5') to the finite set $\widetilde{F}=\bigcup_{\gamma\in K}\beta_\gamma(F\cup F^*)$,
so that
\[\beta_\gamma(b)x_{\gamma}^{(j)}c\approx_{\ep} \beta_\gamma(b)\beta_{\gamma}(c)x_\gamma^{(j)},\]
for all $b,c\in F$, since $\beta_\gamma(b)\in \widetilde{F}$.

\begin{rem}\label{rmk:RepDimApprRep}
In the context of the above definition, when $\Gamma$ is finite and $B$ is unital, then $\dim_{\mathrm{rep}}(\beta)=0$ if and only if
$\beta$ is approximately representable in the sense of Definition~3.6 and Remark~3.7 in~\cite{Izu_finiteI_2004}.
More generally, our notion of representability dimension zero generalizes Definition~4.21
in~\cite{BarSza_sequentially_2016}, except that the said definition is  only for separable \ca s and countable groups.
\end{rem}

Let $\beta\colon \Gamma\to\Aut(B)$ be an action of a discrete group $\Gamma$ on a \ca\ $B$. For $\gamma\in \Gamma$, we let $v_\gamma\in M(B\rtimes_\beta\Gamma)$
be the canonical unitary implementing $\beta_\gamma$. We define an (inner) action
\[\lambda^\beta\colon \Gamma\to\Aut(B\rtimes_\beta\Gamma)\]
by $\lambda^\beta_\gamma=\Ad(v_\gamma)$ for $\gamma\in\Gamma$. Whenever $B\rtimes_\beta\Gamma$ is regarded as a $\Gamma$-algebra, it is with respect to $\lambda^\beta$.
Finally, for $\gamma\in\Gamma$ we write $u_\gamma\in C^*(\Gamma)$ for the canonical unitary.

In the next two results, the assumption that the acting group be amenable
is unnecessary if the algebra on which it acts is unital. However, we will need these results
for dual actions of infinite compact abelian groups, which always act on nonunital algebras (see \autoref{thm:duality}).

\begin{thm}\label{thm:eqAppRepColors}
Let $\Gamma$ be an amenable countable group, let $B$ be a
\ca, let $\beta\colon \Gamma\to\Aut(B)$ be an action, and let $d\in\N$.
Then the following are equivalent:
\be\item $\dim_{\mathrm{rep}}(\beta)\leq d$;
\item There exist completely positive contractive order zero maps
\[\rho_0,\ldots,\rho_d\colon C^*(\Gamma)\to B_\I\]
satisfying the following conditions for all $\gamma,\delta\in \Gamma$ and all $j=0,\ldots,d$:
\be\item[(2.a)] $(\beta_\I)_\gamma(\rho_j(u_\delta))b=\rho_j(u_{\gamma\delta\gamma^{-1}})b$ for all $b\in B\subseteq B_\infty$;
\item[(2.b)] $\left(\sum\limits_{j=0}^d\rho_j(1)^2\right)b=b$ for all $b\in B\subseteq B_\infty$;
\item[(2.c)] $\rho_j(u_\gamma)b=(\beta_\I)_\gamma(b)\rho_j(u_\gamma)$ for all $b\in B\subseteq B_\infty$.\ee
\item There exist completely positive contractive, $\Gamma$-equivariant order zero maps
\[\psi_0,\ldots,\psi_d\colon B\rtimes_\beta\Gamma \to B_\I\]
satisfying the following conditions for all $j=0,\ldots,d$, for all $b\in B$, and for all $x\in B\rtimes_\beta\Gamma$:
\be\item[(3.a)] $\psi_j(bx)=b\psi_j(x)$;
\item[(3.b)] $\sum\limits_{j=0}^d\psi_j(b)=b$.
\ee\ee

There is an analogous statement for representability dimension with commuting colors:
in (2) above, the maps $\rho_j$ must have commuting ranges, while in (3) we must have $[\psi_j(x),\psi_k(y)]=0$
for all $j,k=0,\ldots,d$, whenever $[x,y]=0$.
\end{thm}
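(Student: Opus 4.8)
The plan is to establish the two equivalences (1)$\Leftrightarrow$(2) and (2)$\Leftrightarrow$(3) separately, carrying the extra commutation condition~(6) along throughout in order to treat the commuting-colors variant in parallel. Condition~(2) serves as the hub: it is the \emph{exact} reformulation, inside the sequence algebra $B_\I$, of the approximate local data in~(1), while~(3) repackages the same information on the crossed product $B\rtimes_\beta\Gamma$.

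For (2)$\Rightarrow$(1) I would simply evaluate the exact identities of~(2) on finite subsets, lift the elements $\rho_j(u_\gamma)$ to bounded sequences in $\ell^\I(\N,B)$, and read off the approximate relations (1)--(5); this direction is routine. For (1)$\Rightarrow$(2) I would run the standard reindexing (Kirchberg's $\ep$-test): choosing increasing finite sets $F_n$ with dense union, $K_n\uparrow\Gamma$, and $\ep_n\to 0$, the contractions $x_{\gamma,n}^{(j)}$ assemble into elements $\overline x_\gamma^{(j)}=\eta_B\big((x_{\gamma,n}^{(j)})_n\big)\in B_\I$ for which conditions (1), (2), (4), (5) hold exactly after multiplication by any $b\in B$. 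The delicate point is that an honest completely positive contractive order zero map $\rho_j\colon C^*(\Gamma)\to B_\I$ requires the order zero representation axioms of \autoref{df:ozRepGp} to hold on the nose, and that the equivariance (2.a) is stated with no testing element $b$. I would resolve this by first producing, through the polar-type decomposition of \autoref{prop:ozRepozMor}, a genuine order zero representation $\gamma\mapsto\rho_j(u_\gamma)$ whose unitary part is an honest representation, and by choosing this representation equivariantly (using that $\iota\colon B\to B_\I$ is $\Gamma$-equivariant for $\beta$ and $\beta_\I$) so that (2.a) holds exactly. Conditions (4) and (5) then yield (2.b) and (2.c), since for any order zero representation one has $\rho_j(u_\gamma)^*\rho_j(u_\gamma)=\rho_j(u_\gamma)\rho_j(u_\gamma)^*=\rho_j(1)^2$.

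For (2)$\Leftrightarrow$(3) the key observation is that (2.c) already encodes genuine covariance. Replacing $b$ by $b^*$ in (2.c) and taking adjoints gives $\rho_j(u_\gamma)b\rho_j(u_\gamma)^*=(\beta_\I)_\gamma(b)\rho_j(1)^2$, so $\rho_j(1)^2$—and hence $\rho_j(1)$—commutes with $B$, and on the hereditary subalgebra cut out by $\rho_j(1)$ the unitary part of $\rho_j$ implements $\beta_\gamma$. With $\rho_j(1)$ central relative to $B$, I would apply \autoref{prop:cpcozCP} to the inclusion $\varphi=\iota\colon B\to M(B_\I)$ and the order zero representation $\gamma\mapsto\rho_j(1)\rho_j(u_\gamma)$, whose value at the identity is $\rho_j(1)^2$. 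This produces an equivariant completely positive contractive order zero map $\psi_j\colon B\rtimes_\beta\Gamma\to B_\I$ with $\psi_j(bv_\gamma)=b\,\rho_j(1)\rho_j(u_\gamma)$, for which the module property (3.a) is immediate and $\sum_j\psi_j(b)=b\sum_j\rho_j(1)^2=b$ is precisely (2.b); thus the quadratic normalization of (2.b) is absorbed into the value of the representation at the identity. Conversely, given~(3) I would extend each $\psi_j$ to $M(B\rtimes_\beta\Gamma)$, restrict to the canonical copy of $C^*(\Gamma)$, and take an equivariant square root via \autoref{lma:FunctCalcOz} to define $\rho_j$ with $\rho_j(1)=\psi_j(1)^{1/2}$; equivariance of $\psi_j$ gives (2.a), condition (3.b) gives (2.b), and (3.a) together with the order zero relation gives (2.c). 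Amenability of $\Gamma$ enters here to identify the maximal and reduced crossed products and to guarantee that $C^*(\Gamma)\subseteq M(B\rtimes_\beta\Gamma)$ and the universal property of \autoref{prop:cpcozCP} interact correctly with the equivariant structure.

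The main obstacle I anticipate is the exactification in (1)$\Rightarrow$(2): upgrading relations that hold only after multiplication by elements of $B$ to the genuine order zero property and, above all, to the strictly exact equivariance (2.a), for which the reindexing argument by itself is insufficient and one must build the order zero representations equivariantly from the outset. The commuting-colors statement follows from the same arguments while tracking condition~(6): as above it becomes commutation of the ranges of the $\rho_j$ (first tested against $B$, then made exact), and on the crossed product it translates into $[\psi_j(x),\psi_k(y)]=0$ whenever $[x,y]=0$, since the $\psi_j$ are assembled from the mutually commuting order zero representations $\gamma\mapsto\rho_j(1)\rho_j(u_\gamma)$ over the common equivariant inclusion of $B$.
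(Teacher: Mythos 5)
The step (3)$\Rightarrow$(2) of your proposal contains a genuine gap: you cannot obtain the maps $\rho_j$ by extending $\psi_j$ to $M(B\rtimes_\beta\Gamma)$ and restricting to $C^*(\Gamma)$, because the strict (multiplier) extension of a completely positive contractive order zero map takes values in a multiplier algebra of the hereditary subalgebra generated by its image, not in $B_\I$ itself, and statement (2) demands maps into $B_\I$. A concrete obstruction: let $\Gamma=\Z$ act trivially on $B=c_0$, take $d=0$, fix points $\zeta_n\in\T$, and let $\psi\colon C(\T,c_0)\cong B\rtimes_\beta\Z\to B_\I$ be $\psi(f)=\eta_B\bigl((f(\zeta_n))_{n\in\N}\bigr)$. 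This is an equivariant homomorphism satisfying (3.a) and (3.b), but the value any multiplicative extension must assign to the canonical unitary $u_1$ is the class of the sequence $(\zeta_n 1_{M(c_0)})_{n\in\N}$, which has no representative in $(c_0)_\I$. (Relatedly, your formula $\rho_j(1)=\psi_j(1)^{1/2}$ presupposes a unit that $B\rtimes_\beta\Gamma$ does not have.) The elements $\rho_j(u_\gamma)$ must carry a cutoff by an element of $B$ --- this is exactly why (2.b) and (2.c) are only required to hold against $b\in B$. The paper circumvents this by proving (3)$\Rightarrow$(1) instead: it sets $\varphi_j=\psi_j^{1/2}$ (equivariant by \autoref{lma:FunctCalcOz}) and takes $x_\gamma^{(j)}=\varphi_j(ev_\gamma)\in B_\I$, where $e\in B$ is an approximate unit element made almost $\beta$-invariant by averaging over a F\o lner set. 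This is also where amenability genuinely enters the proof --- not, as you suggest, to identify maximal and reduced crossed products or to place $C^*(\Gamma)$ inside the multiplier algebra.

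Your implication (1)$\Rightarrow$(2) correctly isolates the delicate point (exactness of the order zero relations and of (2.a)), but the proposed fix is circular: \autoref{prop:ozRepozMor} \emph{consumes} an honest order zero representation and decomposes it; it provides no mechanism for upgrading relations that, after reindexing, hold only modulo the annihilator of $B$ in $B_\I$ (every condition in \autoref{df:repdim} carries the test element $b$, and no reindexing removes it) to relations holding on the nose, and ``choosing the representation equivariantly'' is the conclusion you need, not an argument. The paper is admittedly terse on (1)$\Leftrightarrow$(2) as well, but since your route to (2) from (3) also fails, condition (2) --- the hub of your whole scheme --- is never actually reached. For the record, your (2)$\Rightarrow$(3) does agree with the paper's proof: since $\rho_j(1)\rho_j(u_\gamma)=\rho_j^2(u_\gamma)$, your order zero representation is exactly the one the paper feeds into \autoref{prop:cpcozCP}, and your observation that (2.c) and its adjoint force $\rho_j(1)$ to commute with $B$ is a correct and useful addition.
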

\begin{proof}
We only prove the theorem for the case of $\dim_{\mathrm{rep}}(\beta)\leq d$; the case of commuting colors is analogous.

That (1) implies (2) follows immediately from \autoref{prop:ozRepozMor} and the universal property
of $C^*(\Gamma)$. We show that (3) implies (1). Let $\ep>0$, and let $F\subseteq B$ and $K\subseteq \Gamma$ be
finite subsets. Without loss of generality, we assume that $F$ contains only contractions.
For $j=0,\ldots,d$, define $\varphi_j=\psi_j^{1/2}$ using functional calculus for order zero maps
(see Corollary~3.2 in~\cite{WinZac_completely_2009}). If $\psi_j$ has the
form $\psi_j(z)=h_j\pi_j(z)$ for all $z\in C^*(\Gamma)$, where $h_j$ is a positive contraction and $\pi_j$ is a
homomorphism as in Theorem~2.3 in~\cite{WinZac_completely_2009}, then $\varphi_j$ has the form $\varphi_j(z)=h_j^{1/2}\pi_j(z)$
for all $z\in C^*(\Gamma)$. By \autoref{lma:FunctCalcOz}, the map $\varphi_j$ is also equivariant.

Recall that any approximate unit in $B$ is also an approximate unit in $B\rtimes_\beta\Gamma$.
Choose a positive contraction $e\in B$ satisfying the following conditions for all $k=1,2$, for all $b\in F$,
and for all $\gamma\in K$:
\[\|e^kb-b\|< \ep, \ \|be^k-b\|<\ep, \ \mbox{and} \ \|\beta_{\gamma^{-1}}(e^k)-e^k\|<\ep.\]
(One can take $e$ to be a suitable
element in an approximate unit for $B$, and average its images under
$\beta$ over a F\o lner set $L\subseteq \Gamma$ satisfying $\frac{|KL\triangle L|}{|L|}<\ep$. Recall also that
if $(a_\lambda)_{\lambda\in\Lambda}$ is an approximate unit, then so is $(a_\lambda^2)_{\lambda\in\Lambda}$.)

For $j=0,\ldots,d$ and $\gamma\in\Gamma$, set $x_\gamma^{(j)}=\varphi_j(ev_\gamma)\in B_\I$. We check the first three conditions
in \autoref{df:ozRepGp}, since the other ones are similar.

For $\gamma\in K$, for $b\in B$, and for $j=0,\ldots,d$, we have
\begin{align*} \left\|b\left((x_{\gamma}^{(j)})^*x_{\gamma}^{(j)}-x_{\gamma}^{(j)}(x_{\gamma}^{(j)})^*\right)\right\|&
= \left\|b\left( \varphi_j(ev_\gamma)^*\varphi_j(ev_\gamma)-\varphi_j(ev_\gamma)\varphi_j(ev_\gamma)^*\right)\right\|\\
&= \left\|h_j^{1/2}\varphi_j(bv_\gamma^*e^2v_\gamma)-h_j^{1/2}\varphi_j(be^2)\right\|\\
&= \left\|h_j^{1/2}\varphi_j(b\beta_{\gamma^{-1}}(e^2)-be^2)\right\|\\
&\leq \left\|h_j^{1/2}\right\|\|\varphi_j(b\beta_{\gamma^{-1}}(e^2)-be^2)\|&\\
&\leq \|b\|\|\beta_{\gamma^{-1}}(e^2)-e^2\|<\ep,
\end{align*}
which proves the first condition.

Recall that $v_1=1$. For $\gamma\in K$, for $\delta\in\Gamma$, for $b\in F$, and for $j=0,\ldots,d$, we have
\begin{align*} \left\|b\left(x_{\gamma}^{(j)}x_{\delta}^{(j)}-x_{1}^{(j)}x_{\gamma\delta}^{(j)}\right)\right\|&
=\| \varphi_j(bev_\gamma)\varphi_j(ev_\delta)-\varphi_j(bev_1)\varphi_j(ev_{\gamma\delta})\|\\
&=\|\varphi_j(be)\varphi_j(v_\gamma ev_\delta)-\varphi_j(be)\varphi_j(ev_{\gamma\delta})\|\\
&=\|\varphi_j(be)\|\|\varphi_j(v_\gamma ev_\delta-ev_{\gamma\delta})\|\\
&\leq \|\beta_{\gamma}(e)-e\|<\ep,
\end{align*}
which proves the second condition.

For $\gamma\in K$, for $\delta\in\Gamma$, for $b\in B$, and for $j=0,\ldots,d$, we use that $\varphi_j$ is equivariant at the second step to get
\begin{align*} \left\|b\left((\beta_\I)_{\gamma}(x_{\delta}^{(j)})-x_{\gamma\delta\gamma^{-1}}^{(j)}\right)\right\|&
=\| b\left((\beta_\I)_\gamma(\varphi_j(ev_\delta))-\varphi_j(ev_{\gamma\delta\gamma^{-1}})\right)\|\\
&=\|b\left(\varphi_j(v_\gamma ev_\delta v_{\gamma^{-1}})-\varphi_j(ev_{\gamma\delta\gamma^{-1}})\right)\|\\
&=\|\varphi_j(bv_\gamma ev_\delta v_{\gamma^{-1}})-\varphi_j(bev_{\gamma\delta\gamma^{-1}})\|\\
&=\|v_\gamma e-ev_\gamma\| = \|\beta_{\gamma}(e)-e\|<\ep,
\end{align*}
which proves the third condition.

We leave the verification of the remaining two conditions to the reader.

We now show that (2) implies (3). Define a linear map $\psi_j\colon B\rtimes_\beta\Gamma\to B_\I$ by
\[\psi_j(bv_\gamma)=b\rho_j^2(u_\gamma)\]
for all $j=0,\ldots,d$, for all $b\in B$ and for all $\gamma\in\Gamma$. (Recall that $\rho_j^2$ is defined
using functional calculus for order zero maps.)
The assignment $\gamma\mapsto \rho_j^2(v_\gamma)$
is an order zero representation of $\Gamma$ in the sense of \autoref{df:ozRepGp}, and
by condition (2.c), it
satisfies the covariance condition $\rho_j^2(v_\gamma)b=\beta_\gamma(b)\rho_j^2(v_\gamma)$
for all $\gamma\in\Gamma$ and $b\in B$ (it suffices to multiply the identity in (2.c) by $\rho_j(1)$
on both sides). Hence, it follows from \autoref{prop:cpcozCP} that
$\psi_j$ is a completely positive, contractive order zero map.
It clearly satisfies $\psi_j(bx)=b\psi_j(x)$ for all $b\in B$ and for all $x\in B\rtimes_\beta\Gamma$.
Using condition (2.a) at the second step, we deduce that
\[\sum_{j=0}^d \psi_j(b)= b\sum_{j=0}^d \rho_j(1)^2=b\]
for all $b\in B$.

Finally, we show that $\psi_j$ is equivariant.
First, note that condition (2.a) implies that
\[(\beta_\I)_{\gamma}(\rho_j^2(u_\delta))b=\rho_j^2(u_{\gamma\delta\gamma^{-1}})b\]
for all $\gamma,\delta\in\Gamma$ and all $b\in B$. Indeed, to obtain the identity above,
it suffices to multiply the equality in (2.a) by $\rho_j(1)$ on both sides, since
$\rho_j(1)\rho_j(x)=\rho_j^2(x)$ for all $x\in C^*(\Gamma)$.
Let $\gamma,\delta\in\Gamma$, and let $b\in B$. We use this observation at the
\begin{align*}
\psi_j(\lambda^\beta_\gamma(bv_\delta))&= \psi_j(v_\gamma bv_\delta v_{\gamma^{-1}})\\
&= \psi_j(\beta_\gamma(b)v_{\gamma\delta\gamma^{-1}})\\
&= \beta_{\gamma}(b)\rho_j^2(u_{\gamma\delta\gamma^{-1}})\\
&= \beta_\gamma(b)\beta_\gamma(\rho_j^2(u_\gamma))\\
&= \beta_\gamma(\psi_j(bv_\delta)),
\end{align*}
as desired. This finishes the proof.
\end{proof}

\begin{lma}\label{lma:StepDuality}
Let $G$ be an amenable second countable locally compact group, let $A$ and $C$ be \ca s, with $C$ unital,
and let $\alpha$ and $\gamma$ be actions
of $G$ on $A$ and $C$, respectively. Given an $\alpha$-invariant, $\sigma$-unital
subalgebra $D\subseteq A$ and $d\in\N$, the following are equivalent:
\be\item There exist equivariant completely positive contractive order zero maps
$\varphi_0,\ldots,\varphi_d\colon C \to F_\alpha(D,A)$ satisfying $\sum\limits_{j=0}^d\varphi_j(1)=1$.
\item There exist equivariant completely positive contractive order zero maps
$\theta_0,\ldots,\theta_d\colon C\otimes_{\mathrm{max}}D \to A_{\I,\alpha}$ satisfying
\bi\item $\theta_j(c\otimes a_1a_2)= a_1\theta_j(c\otimes a_2)$ for all $j=0,\ldots,d$, all $a_1,a_2\in D$ and all $c\in C$; and
\item $\sum\limits_{j=0}^d\theta_j(1\otimes a)=a$ for all $a\in D$.
\ei
\ee
\end{lma}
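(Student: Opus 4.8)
The plan is to route both implications through the canonical multiplication $*$-homomorphism
\[M\colon F_\alpha(D,A)\otimes_{\mathrm{max}}D\longrightarrow A_{\I,\alpha},\qquad M\big(\kappa_{D,A}(x)\otimes a\big)=xa,\]
where $x\in A_{\I,\alpha}\cap D'$ is any lift. This is well defined: elements of $A_{\I,\alpha}\cap D'$ commute with $D$, so $(x,a)\mapsto xa$ is multiplicative and extends to the maximal tensor product; it descends through $\kappa_{D,A}$ because $\Ann(D,A_\I)\cdot D=0$; and its image lies in $A_{\I,\alpha}$, which contains $D$ and is closed under multiplication. A direct computation shows that $M$ intertwines $F(\alpha)\otimes(\alpha|_D)$ with $\alpha_\I$. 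Throughout, $\theta_j(a)$ abbreviates $\theta_j(1_C\otimes a)$, and I freely use the Winter--Zacharias bijection between completely positive contractive order zero maps $E\to E'$ and $*$-homomorphisms out of the cone $C_0((0,1])\otimes E$, together with its functoriality and \autoref{lma:FunctCalcOz}.

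For $(1)\Rightarrow(2)$, I first pass from the $\varphi_j$ to the associated $*$-homomorphisms $\rho_j\colon C_0((0,1])\otimes C\to F_\alpha(D,A)$, form $\rho_j\otimes\id_D$ on maximal tensor products, and compose with $M$. Since $C_0((0,1])$ is nuclear, $C_0((0,1])\otimes C\otimes_{\mathrm{max}}D\cong C_0((0,1])\otimes(C\otimes_{\mathrm{max}}D)$, so $M\circ(\rho_j\otimes\id_D)$ is a $*$-homomorphism out of the cone over $C\otimes_{\mathrm{max}}D$ and hence corresponds to a completely positive contractive order zero map $\theta_j\colon C\otimes_{\mathrm{max}}D\to A_{\I,\alpha}$ with $\theta_j(c\otimes a)=\widetilde{\varphi_j(c)}\,a$ for any lift $\widetilde{\varphi_j(c)}\in A_{\I,\alpha}\cap D'$. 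The bimodule identity $\theta_j(c\otimes a_1a_2)=a_1\theta_j(c\otimes a_2)$ is immediate from $\widetilde{\varphi_j(c)}a_1=a_1\widetilde{\varphi_j(c)}$; equivariance of $\theta_j$ follows from that of $\varphi_j$ and of $M$; and $\sum_j\theta_j(a)=a$ holds because $\sum_j\varphi_j(1)=1$ forces $\sum_j\widetilde{\varphi_j(1)}$ to agree, modulo $\Ann(D,A_\I)$, with a lift $e$ of $1_{F_\alpha(D,A)}$ arising from an approximate unit of $D$, whence $\big(\sum_j\widetilde{\varphi_j(1)}\big)a=ea=a$ for all $a\in D$.

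The substantive direction is $(2)\Rightarrow(1)$, where I must manufacture maps into the quotient $F_\alpha(D,A)$ out of the bimodule-type maps into $A_{\I,\alpha}$. Fix a countable approximate unit $(e_n)_n$ of the $\sigma$-unital algebra $D$ and set $b_{j,c,n}=\theta_j(c\otimes e_n)\in A_{\I,\alpha}$. Taking adjoints in the bimodule identity also yields the right-module identity $\theta_j(c\otimes a_1a_2)=\theta_j(c\otimes a_1)a_2$, so for every $a\in D$ we get $b_{j,c,n}a=\theta_j(c\otimes e_na)\to\theta_j(c\otimes a)$ and $ab_{j,c,n}=\theta_j(c\otimes ae_n)\to\theta_j(c\otimes a)$; in particular $[b_{j,c,n},a]\to0$. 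Thus the $b_{j,c,n}$ asymptotically commute with $D$ and asymptotically implement $\theta_j(c\otimes\,\cdot\,)$ by multiplication. Equivalently, the supporting homomorphism of the order zero map $\theta_j$, extended strictly to multipliers, factors as a pair of commuting homomorphisms --- one, $\pi_j^C$, on $C$ and one on $D$ acting by multiplication --- so that, with $h_j$ the strict limit of $(b_{j,1,n})_n$, one obtains a completely positive contractive order zero map $\varphi_j\colon C\to\mathcal M(\overline{DA_\I D})\cap D'$ with $\varphi_j(c)=h_j\pi_j^C(c)$ and $\varphi_j(c)\,a=\theta_j(c\otimes a)$ for all $a\in D$.

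The goal, and the main obstacle, is to realize this a priori multiplier-valued map as a genuine map into $F_\alpha(D,A)=\big(A_{\I,\alpha}\cap D'\big)/\Ann(D,A_\I)$: I must replace the asymptotically central net $(b_{j,c,n})$ by a single honest element of $A_{\I,\alpha}\cap D'$ whose class implements $\theta_j(c\otimes\,\cdot\,)$. The plan is a diagonal reindexing across the two layers of sequences --- those defining $A_\I$ and the index $n$ --- via Kirchberg's reindexing/$\ep$-test: the relevant conditions, namely that the representative $\ep$-commute with $(e_n)$ and that multiplication by it $\ep$-approximate $\theta_j(c\otimes e_m)$, form a countable family, so one extracts a representative lying in $A_{\I,\alpha}\cap D'$ (membership in the continuous part $A_{\I,\alpha}$, rather than merely $A_\I$, is preserved because each $b_{j,c,n}$ already lies there). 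That $c\mapsto\varphi_j(c)$ is completely positive, contractive, order zero and equivariant is inherited from $\theta_j$ through the cone/functional-calculus description together with \autoref{lma:FunctCalcOz}. Finally $\sum_j\varphi_j(1)=1$ follows from $\sum_j\theta_j(a)=a$, since the element $\sum_j\varphi_j(1)\in A_{\I,\alpha}\cap D'$ then satisfies $\big(\sum_j\varphi_j(1)\big)a=a$ for every $a\in D$ and hence equals $1_{F_\alpha(D,A)}$ modulo $\Ann(D,A_\I)$.
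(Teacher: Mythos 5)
Your direction $(1)\Rightarrow(2)$ is correct and is in substance the paper's own argument: the multiplication homomorphism $M$ you construct is exactly what the paper invokes via Lemma~2.3 of~\cite{Gar_crossed_2014}, and your identification of $1_{F_\alpha(D,A)}$ with the class of (the image of) an approximate unit of $D$ correctly yields $\sum_j\theta_j(a)=a$.

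The direction $(2)\Rightarrow(1)$, however, contains a genuine gap, located precisely at the parenthetical claim that membership in the continuous part $A_{\I,\alpha}$ ``is preserved because each $b_{j,c,n}$ already lies there.'' Membership in $A_{\I,\alpha}$ is an equicontinuity condition on a representing sequence (the maps $g\mapsto\alpha_g(x_k)$ must be continuous \emph{uniformly} in $k$), and this is not preserved by diagonal reindexing: a diagonal assembled from representatives of elements of $A_{\I,\alpha}$ need not lie in $\ell^\I_\alpha(\N,A)$, nor need its class lie in $A_{\I,\alpha}$. For a concrete failure, let $G=\T$ act on $A=C(\T)$ by translation and let $u$ be the canonical unitary: each constant sequence $u^n$ lies in $A_{\I,\alpha}$, but the class $x\in A_\I$ of the diagonal sequence $(u^k)_{k\in\N}$ satisfies $\|(\alpha_\I)_g(x)-x\|=\limsup_k|g^k-1|=2$ whenever $g$ is not a root of unity, so $x\notin A_{\I,\alpha}$ (using the paper's remark that $A_{\I,\alpha}$ is the set of elements on which $\alpha_\I$ acts continuously). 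In your setup the same degeneration can occur, because your approximate unit $(e_n)$ carries no invariance properties: the moduli of continuity of $g\mapsto\alpha_g(e_n)$, hence of $g\mapsto(\alpha_\I)_g(b_{j,c,n})=\theta_j(\gamma_g(c)\otimes\alpha_g(e_n))$, may blow up as $n\to\I$, and the extracted representative then only gives a class in $F(D,A)$, not in $F_\alpha(D,A)$. Symptomatically, your proof never uses the hypothesis that $G$ is amenable.

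That hypothesis is exactly what closes the gap, and it is the key move in the paper's proof: there one takes a far-out element $e$ of an approximate unit of $D$ and averages its $\alpha$-orbit over a F\o lner set, so that $\sup_{g\in K}\|\alpha_g(e)-e\|<\ep$ for a prescribed compact $K\subseteq G$ (for compact $G$ one can average over all of $G$ and get exact invariance). With approximate units that are asymptotically invariant over an exhausting sequence of compact sets, the orbit maps of the elements $\theta_j(c\otimes e_n)$ admit a common modulus of continuity, controlled by $\|\gamma_g(c)-\gamma_{g_0}(c)\|$ plus the invariance defect, and diagonalization then does produce representatives in $A_{\I,\alpha}$. (The paper packages this differently, first reducing via Kirchberg's $\ep$-test to producing maps $\varphi_j=\theta_j(\,\cdot\,\otimes e)$ that are only approximately central, approximately equivariant over $K$, and approximately unital in sum; but the F\o lner-averaged $e$ is the essential common ingredient.) I would add that your argument, as written, does prove the lemma for \emph{discrete} $G$, where $A_{\I,\alpha}=A_\I$ and no invariance is needed --- your observation that the class of a representative $x_c$ is uniquely determined by $x_ca=ax_c=\theta_j(c\otimes a)$, which makes well-definedness, the algebraic properties, and equivariance of $c\mapsto\kappa_{D,A}(x_c)$ essentially automatic, is correct and even dispenses with amenability in that case. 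But the lemma is applied in the paper with $G$ compact abelian, e.g.\ $G=\T$, where the continuity issue is fatal to the argument as it stands.
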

\begin{proof}
That (1) implies (2) is easily seen by tensoring each $\varphi_j$ with the identity on $D$ and using Lemma~2.3 in~\cite{Gar_crossed_2017}.
We show that (2) implies (1).
Using an easy diagonal argument, it is enough to show the following: for every $\ep>0$, for every self-adjoint finite subset $F\subseteq D$,
and for every compact subset $K\subseteq G$, there exist completely positive contractive order zero maps
$\varphi_0,\ldots,\varphi_d\colon C \to A_{\I,\alpha}$ satisfying the following conditions for all $j=0,\ldots,d$, for all
$c\in C$ and for all $a\in F$:
\be
\item[(a)] $\|\varphi_j(c)a-a\varphi_j(c)\|<\ep\|c\|$;
\item[(b)] $\sup\limits_{g\in K}\|(\alpha_\I)_g(\varphi_j(c))-\varphi_j(\gamma_g(c))\|<\ep \|c\|$;
\item[(c)] $\left\|\left(\sum\limits_{j=0}^d\varphi_j(1)\right)a-a\right\|<\ep$.\ee

Let $\ep>0$, $F\subseteq D$ and $K\subseteq G$ as above be given. Find a positive contraction $e\in D$ satisfying
\[\sup\limits_{g\in K}\|\alpha_g(e)-e\|<\ep, \ \|ea-ae\|<\ep, \ \|ea-a\|<\ep \ \mbox{and} \ \|ae-a\|<\ep\]
for all $a\in F$. (It is enough to take a suitable element in an approximate unit for $D$, and average its images under
$\alpha$ over a F\o lner set $L\subseteq G$ satisfying $\frac{|KL\triangle L|}{|L|}<\ep$.)
For $j=0,\ldots,d$, define $\varphi_j\colon C\to A_{\I,\alpha}$ by $\varphi_j(c)=\theta_j(c\otimes e)$ for $c\in C$.
It is clear that $\varphi_j$ is a completely positive contractive order zero map. Moreover, given $a\in F$ and
$c\in C$, we have
\begin{align*}
\|\varphi_j(c)a-a\varphi_j(c)\|&=\|\theta_j(c\otimes e)a-a\theta_j(c\otimes e)\|\\
&=\|\theta_j(c\otimes (ea-ae))\|\\
&\leq \|c\|\|ea-ae\|<\ep\|c\|,\end{align*}
which verifies condition (a). To check condition (b), given $c\in C$ and $j=0,\ldots,d$, we have
\begin{align*}\sup\limits_{g\in K}\|(\alpha_\I)_g(\varphi_j(c))-\varphi_j(\gamma_g(c))\|&=
\sup\limits_{g\in K}\|(\alpha_\I)_g(\theta_j(c\otimes e))-\theta_j(\gamma_g(c)\otimes e)\|\\
&=\sup\limits_{g\in K}\|\theta_j(\gamma_g(c)\otimes \alpha_g(e))-\theta_j(\gamma_g(c)\otimes e)\|\\
&\leq \|\gamma_g(c)\| \sup\limits_{g\in K}\|\alpha_g(e)-e\|<\ep \|c\|,\end{align*}
as desired. To check condition (c), we let $a\in F$ and compute
\begin{align*}
\left\|\left(\sum\limits_{j=0}^d\varphi_j(1)\right)a-a\right\|&=\left\|\left(\sum\limits_{j=0}^d\theta_j(1\otimes e)a\right)-a\right\|\\
&=\left\|\sum\limits_{j=0}^d\theta_j(1\otimes ea)-a\right\|\\
&=\left\|ea-a\right\|<\ep.
\end{align*}
This completes the proof.
\end{proof}

The following is the main result of this section. It generalizes previously known characterizations of duals of Rokhlin actions
of Izumi when $G$ is finite
and $A$ is separable (Theorem~3.8 in~\cite{Izu_finiteI_2004}).
In particular, since we do not make any cardinality assumptions or $G$ or on $A$, we obtain new information
even in the well-studied case of Rokhlin actions (when $d=0$ ).

\begin{thm}\label{thm:duality}
Let $A$ be a \ca, let $G$ be a second countable compact abelian group,
let $\alpha\colon G\to\Aut(A)$ be an action, and
denote by $\widehat{\alpha}\colon \widehat{G}\to\Aut(A\rtimes_\alpha G)$ its dual action. Then
\[\dim_{\mathrm{Rok}}(\alpha)=\dim_{\mathrm{rep}}(\widehat{\alpha}) \ \ \mbox{ and } \ \
\dim^{\mathrm{c}}_{\mathrm{Rok}}(\alpha)=\dim^{\mathrm{c}}_{\mathrm{rep}}(\widehat{\alpha}).\]
\end{thm}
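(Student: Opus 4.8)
The plan is to reformulate each of the two dimensions as the existence of a family of equivariant order zero maps, and then to pass between the two families using the crossed product functor of \autoref{cpcequivCP} together with Takai duality. On the left-hand side, combining \autoref{df:Rdim} with \autoref{lma:StepDuality} (applied with $C=C(G)$ and $\gamma=\texttt{Lt}$) shows that $\dimRok(\alpha)\leq d$ if and only if, for every $\sigma$-unital $\alpha$-invariant $D\subseteq A$, there are equivariant completely positive contractive order zero maps $\theta_0,\dots,\theta_d\colon C(G)\otimes_{\max}D\to A_{\I,\alpha}$ which are $D$-module maps in the second variable and satisfy $\sum_{j}\theta_j(1\otimes a)=a$ for $a\in D$. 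On the right-hand side, \autoref{thm:eqAppRepColors} (with $\Gamma=\widehat{G}$, $B=A\rtimes_\alpha G$ and $\beta=\widehat{\alpha}$) shows that $\dim_{\mathrm{rep}}(\widehat{\alpha})\leq d$ if and only if there are $\widehat{G}$-equivariant completely positive contractive order zero maps $\psi_0,\dots,\psi_d\colon (A\rtimes_\alpha G)\rtimes_{\widehat{\alpha}}\widehat{G}\to (A\rtimes_\alpha G)_\I$ which are $(A\rtimes_\alpha G)$-module maps and satisfy $\sum_j\psi_j(b)=b$ for $b\in A\rtimes_\alpha G$.

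The bridge between these two pictures rests on three identifications, all available because $G$ is compact abelian (so that $G$ and $\widehat{G}$ are amenable and full crossed products coincide with reduced ones). First, the untwisting isomorphism gives $(C(G)\otimes_{\max}D)\rtimes_{\texttt{Lt}\otimes\alpha}G\cong D\otimes(C(G)\rtimes_{\texttt{Lt}}G)\cong D\otimes\K(L^2(G))$. Second, Takai duality gives $(A\rtimes_\alpha G)\rtimes_{\widehat{\alpha}}\widehat{G}\cong A\otimes\K(L^2(G))$, under which the inner action $\lambda^{\widehat{\alpha}}=\Ad(v_\bullet)$ corresponds to $\id_A\otimes\Ad(M)$, where $M\colon\widehat{G}\to\U(L^2(G))$ is the representation by multiplication operators. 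Third, for any action there is a canonical homomorphism from the crossed product of the sequence algebra into the sequence algebra of the crossed product, obtained by viewing an $\alpha$-continuous bounded sequence together with the group variable as an element of $\ell^\I(\N,A\rtimes_\alpha G)$. To prove $\dim_{\mathrm{rep}}(\widehat{\alpha})\leq\dimRok(\alpha)$ I would take $D=A$, apply \autoref{cpcequivCP} to the maps $\theta_j$, and post-compose with this canonical homomorphism; since $G$ is amenable the resulting maps are equivariant for the dual coactions, that is, for $\widehat{G}$, and the matching of actions above shows that this equivariance is precisely the $\widehat{\alpha}$-equivariance demanded of the $\psi_j$. The $D$-module condition and the relation $\sum_j\theta_j(1\otimes a)=a$ transform, after crossing by $G$ and identifying domains, into the $(A\rtimes_\alpha G)$-module condition and the relation $\sum_j\psi_j(b)=b$.

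For the reverse inequality $\dimRok(\alpha)\leq\dim_{\mathrm{rep}}(\widehat{\alpha})$ the same machinery is run one step further. Starting from the maps $\psi_j$, I would apply \autoref{cpcequivCP} with the discrete amenable group $\widehat{G}$; since $\lambda^{\widehat{\alpha}}$ is inner, the domain becomes $\big((A\rtimes_\alpha G)\rtimes_{\widehat{\alpha}}\widehat{G}\big)\otimes C^*(\widehat{G})\cong C(G)\otimes\big(A\otimes\K(L^2(G))\big)$, using Takai duality and $C^*(\widehat{G})\cong C(G)$. Post-composing with the canonical homomorphism into $\big(A\otimes\K(L^2(G))\big)_\I$ produces equivariant order zero maps of exactly the form appearing in \autoref{lma:StepDuality}(2) for the action $\alpha\otimes\Ad(\lambda)$ on $A\otimes\K(L^2(G))$, and hence $\dimRok(\alpha\otimes\Ad(\lambda))\leq d$. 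The identity $\dimRok(\alpha\otimes\Ad(\lambda))=\dimRok(\alpha)$ recorded just before the theorem then yields the claim. Because \autoref{cpcequivCP}, \autoref{lma:StepDuality} and \autoref{thm:eqAppRepColors} all admit versions in which the relevant maps have commuting ranges, and since the crossed product assignment $\sigma(\xi)(g)=\rho(\xi(g))$ manifestly sends commuting maps to commuting maps, the entire argument goes through verbatim to give $\cdimRok(\alpha)=\dim^{\mathrm{c}}_{\mathrm{rep}}(\widehat{\alpha})$.

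I expect the main obstacle to lie in the bookkeeping around the canonical homomorphism from the crossed product of the sequence algebra to the sequence algebra of the crossed product: one must check that it is well defined (this is exactly where the $\alpha$-continuous subalgebra $A_{\I,\alpha}$, rather than all of $A_\I$, is essential), that it intertwines the dual $\widehat{G}$-actions, and, most delicately, that the module and summation conditions survive passage through it. A secondary subtlety is the quantifier over all $\sigma$-unital $\alpha$-invariant $D\subseteq A$ in \autoref{df:Rdim}, which has no literal counterpart on the representability side; this is reconciled by observing that a finite subset of $A\rtimes_\alpha G$ is approximated by elements built from a separable $\alpha$-invariant subalgebra of $A$, so that the local conditions defining $\dim_{\mathrm{rep}}$ (see \autoref{df:repdim}) only ever involve $\sigma$-unital pieces of $A$, matching the maps $\theta_j$ produced for each such $D$.
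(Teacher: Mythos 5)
Your proposal is correct and takes essentially the same route as the paper's proof: both sides are reformulated via \autoref{lma:StepDuality} (with $C=C(G)$) and \autoref{thm:eqAppRepColors}, the bridge is built by crossing with $G$ (resp.\ $\widehat{G}$) using \autoref{cpcequivCP} and Takai duality, and the reverse inequality is closed exactly as you describe, via $\dimRok(\alpha\otimes\Ad(\lambda))=\dimRok(\alpha)$ applied to the stabilized action. The two subtleties you flag---the canonical homomorphism $A_{\I,\alpha}\rtimes_{\alpha_\I}G\to (A\rtimes_\alpha G)_\I$ and the quantifier over $\sigma$-unital invariant $D$ (note that taking $D=A$ outright is not permitted when $A$ fails to be $\sigma$-unital)---are precisely the points the paper handles by keeping the quantifier over $D$ explicit in its intermediate statements (b) and (c).
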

\begin{proof} We only show the statement for the formulation without commuting towers, since the other one is proved
analogously. It is enough to show that for every $d\in\N$, we have
\[\dim_{\mathrm{Rok}}(\alpha)\leq d \ \ \mbox{ if and only if } \ \ \dim_{\mathrm{rep}}(\widehat{\alpha})\leq d.\]

We divide the proof into proving the equivalence of the following statements, where we identify $D$ with the subalgebra
of $C(G,D)$ of constant functions:
\be\item[(a)] $\dim_{\mathrm{Rok}}(\alpha)\leq d$;
\item[(b)] For every $\alpha$-invariant $\sigma$-unital subalgebra $D\subseteq A$, there exist $G$-equivariant completely positive contractive order zero maps
\[\theta_0,\ldots,\theta_d\colon C(G,D) \to A_{\I,\alpha}\]
satisfying
\bi\item $\theta_j(af)= a\theta_j(f)$ for all $j=0,\ldots,d$, all $a\in D$ and all $f\in C(G,D)$; and
\item $\sum\limits_{j=0}^d\theta_j(a)=a$ for all $a\in D$.
\ei
\item[(c)] For every $\alpha$-invariant $\sigma$-unital subalgebra $D\subseteq A$, there exist $\widehat{G}$-equivariant completely positive contractive order zero maps
\[\psi_0,\ldots,\psi_d\colon C(G,D)\rtimes_{\texttt{Lt}\otimes\alpha}G \to (A\rtimes_\alpha G)_{\I}\]
satisfying:
\bi\item $\psi_j(bx)=b\psi_j(x)$ for all $b\in D\rtimes_\alpha G$, where we regard $D\rtimes_\alpha G$ both as a subset of
$A\rtimes_\alpha G$ and of $C(G,D)\rtimes_{\texttt{Lt}\otimes\alpha}G$, and for all
$x\in C(G,D)\rtimes_{\texttt{Lt}\otimes\alpha}G$; and
\item $\sum\limits_{j=0}^d \psi_j(b)=b$ for all $b\in D\rtimes_\alpha G$.
\ei
\item[(d)] $\dim_{\mathrm{rep}}(\widehat{\alpha})\leq d$.\ee

We show that (a) is equivalent to (b).
By definition, $\dim_{\mathrm{Rok}}(\alpha)\leq d$ is equivalent to the existence, for every $\alpha$-invariant $\sigma$-unital subalgebra $D\subseteq A$,
of equivariant completely positive
contractive maps $\varphi_0,\ldots,\varphi_d\colon C(G)\to F_\alpha(D,A)$ satisfying $\sum\limits_{j=0}^d\varphi_j(1)=1$.
By \autoref{lma:StepDuality}, that is equivalent to the statement in (b).

We prove that (b) implies (c). For $j=0,\ldots,d$, let
$\psi_j\colon C(G,D)\rtimes_{\texttt{Lt}\otimes\alpha}G\to (A\rtimes_\alpha G)_{\I}$ be the $\widehat{G}$-equivariant
completely positive contractive order zero map associated to $\theta_j$ as in \autoref{cpcequivCP}.
For $\xi\in L^1(G,C(G,D),\texttt{Lt}\otimes\alpha)$, this map is given by
$\psi_j(\xi)(g)=\theta_j(\xi(g))$ for all $g\in G$. We claim that these maps satisfy the conditions under item (c) above.
To verify the first one, it is enough to assume that $b\in D\rtimes_\alpha G$ has the form
$b=\sum_{k=1}^n f_ka_k$ for some $f_1,\ldots,f_n\in C(G)$ and $a_1,\ldots,a_n\in D$, since elements of this
form are dense in $L^1(G,D,\alpha)$, and hence in $D\rtimes_\alpha G$.
Similarly, we can assume that $x\in C(G,D)\rtimes_{\texttt{Lt}\otimes\alpha}G$ has the form
$x=\sum_{\ell=1}^m h_\ell y_\ell$ for some $h_1,\ldots,h_m\in C(G)$ and $y_1,\ldots,y_m\in C(G,D)$.
Given $g\in G$, we use the second first identity in (b) at the third step to get:
\begin{align*}
\psi_j(bx)(g)&=\psi_j\left(\sum_{k=1}^n \sum_{\ell=1}^m f_ka_kh_\ell y_\ell\right)(g)\\
&= \theta_j\left(\sum_{k=1}^n \sum_{\ell=1}^m f_k(g)a_kh_\ell(g) y_\ell\right)\\
&= \sum_{k=1}^n \sum_{\ell=1}^m f_k(g)h_\ell(g)\theta_j\left(a_k y_\ell\right)\\
&= \sum_{k=1}^n f_k(g)a_k\sum_{\ell=1}^m h_\ell(g)\theta_j\left(y_\ell\right)\\
&= b(g) \theta_j(x(g)) = (b\psi_j(x))(g),
\end{align*}
as desired. Similarly, for $g\in G$, we have
\[\sum_{j=0}^d\psi_j(b)(g)=\sum_{j=0}^d\theta_j(b(g))=b(g),\]
and hence $\sum_{j=0}^d\psi_j(b)=b$. This proves (c).

We now show that (c) implies (a).
We take $\widehat{G}$-crossed products in the statement of (c) and apply Takai duality together with \autoref{cpcequivCP} to obtain
$G$-equivariant completely positive contractive order zero maps
\[\widetilde{\theta}_0,\ldots,\widetilde{\theta}_d\colon C(G, D\otimes\K(L^2(G))) \to (A\otimes\K(L^2(G)))_{\I,\alpha}\]
satisfying the conditions under item (b) above. (The verification of these conditions is
analogous to how we verified that (b) implies (c), by working with appropriate dense subalgebras
of the crossed product.) By the equivalence between (a) and (b) applied to
the $G$-action $\alpha\otimes\Ad(\lambda)$ on $A\otimes\K(L^2(G))$, we conclude that $\dim_{\mathrm{Rok}}(\alpha\otimes\Ad(\lambda))\leq d$.
Since $\dim_{\mathrm{Rok}}(\alpha\otimes\Ad(\lambda))=\dim_{\mathrm{Rok}}(\alpha)$
by part~(3) of Proposition~6.8 in~\cite{GarKalLup_rokhlin_2017}, this shows that (c) implies (a).

That (c) is equivalent to (d) is the content of \autoref{thm:eqAppRepColors}, since the $\widehat{G}$-algebra
$(C(G,D)\rtimes_{\texttt{Lt}\otimes\alpha}G,\widehat{\texttt{Lt}\otimes\alpha})$
is equivariantly isomorphic to $(D\rtimes_\alpha G\rtimes_{\widehat{\alpha}}\widehat{G}, \lambda^{\widehat{\alpha}})$ by Takai duality.
This completes the proof of the theorem.
\end{proof}

We present some applications of \autoref{thm:duality} to the ideal structure of crossed products by actions with finite Rokhlin dimension; see
\autoref{cor: MorEquiv}.
Recall the following definition from \cite{Kis_simple_1980}.

\begin{df}\label{df: StrConnSpectr}
Let $G$ be a second countable compact abelian group, let $A$ be a \ca, and let $\alpha\colon G\to\Aut(A)$ be a continuous action. Given
$\tau$ in $\widehat{G}$, denote by $A_\tau$ its associated eigenspace, this is,
\[A_\tau=\{a\in A\colon \alpha_g(a)=\tau(g) a \ \mbox{ for all } g \in G\}.\]
The \emph{strong Arveson spectrum} of $\alpha$, denoted $\widetilde{\mbox{Sp}}(\alpha)$, is the set
\[\widetilde{\mbox{Sp}}(\alpha)=\{\tau\in \widehat{G}\colon \overline{A_\tau^*A A_\tau}=A\}.\]
If $\widetilde{\mbox{Sp}}(\alpha)=\widehat{G}$, then we say that $\alpha$ is \emph{saturated}.

Denote by $\mathrm{Her}_\alpha(A)$ the set of all $\alpha$-invariant hereditary subalgebras of $A$. Then
the \emph{strong Connes spectrum} of $\alpha$, denoted $\widetilde{\Gamma}(\alpha)$, is the set
\[\widetilde{\Gamma}(\alpha)=\bigcap_{B\in \mathrm{Her}_\alpha(A)}\widetilde{\mbox{Sp}}(\alpha|_B).\]
If $\widetilde{\Gamma}(\alpha)=\widehat{G}$, then we say that $\alpha$ is \emph{hereditarily saturated}.
\end{df}

\begin{prop} \label{prop: FullConnesSpec}
Let $G$ be a compact abelian group, let $A$ be a \ca, and let $\alpha\colon G\to\Aut(A)$ be an action. If
$\dimRok(\alpha)<\I$, then $\alpha$ is hereditarily saturated, this is, $\widetilde{\Gamma}(\alpha)=\widehat{G}$.\end{prop}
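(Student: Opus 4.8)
The plan is to verify the defining condition of \autoref{df: StrConnSpectr} directly: I fix a character $\tau\in\widehat{G}$ and an $\alpha$-invariant hereditary subalgebra $B\subseteq A$, and show that $\tau\in\widetilde{\mbox{Sp}}(\alpha|_B)$, i.e.\ $\overline{B_\tau^* B B_\tau}=B$. Since $\tau$ and $B$ are arbitrary, this gives $\widetilde{\Gamma}(\alpha)=\widehat{G}$. Rather than first establishing a permanence statement for $\dimRok$ under passage to invariant hereditary subalgebras, I would extract the spectral elements inside $A$ from the Rokhlin towers and then localize them into $B$ using an $\alpha$-invariant approximate unit, so that all $B$ are handled at once. (Alternatively one may pass to the dual via \autoref{thm:duality} and \autoref{thm:eqAppRepColors} and read the required elements off $\dim_{\mathrm{rep}}(\widehat{\alpha})<\I$ inside $A=(A\rtimes_\alpha G)^{\widehat{\alpha}}$; the combinatorial core is identical.)

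Write $d=\dimRok(\alpha)$ and let $\chi_\sigma\in C(G)$ denote $\chi_\sigma(g)=\sigma(g)$, so that $\texttt{Lt}_h(\chi_\sigma)=\overline{\sigma(h)}\chi_\sigma$. Choosing a $\sigma$-unital $\alpha$-invariant $D\subseteq A$ containing the finite data at hand and applying \autoref{df:Rdim}, I obtain equivariant order zero maps $\varphi_0,\dots,\varphi_d\colon (C(G),\texttt{Lt})\to (F_\alpha(D,A),F(\alpha))$ with $\sum_j\varphi_j(1)=1$. Set $e_j=\varphi_j(1)$ and $w_j=\varphi_j(\chi_{\overline{\tau}})$. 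Equivariance gives $F(\alpha)_h(w_j)=\tau(h)w_j$, and the order zero relations give $w_j^*w_j=w_jw_j^*=e_j^2$. The crucial point is that the ``defect'' $f:=\sum_j e_j^2$ is invertible: since the $e_j$ are positive with $\sum_j e_j=1$, the operator identity $(d+1)\sum_j e_j^2-\big(\sum_j e_j\big)^2=\sum_{j<k}(e_j-e_k)^2\ge 0$ shows $f\ge\tfrac{1}{d+1}\cdot 1$ in the unital algebra $F_\alpha(D,A)$.

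Next I pass from $F_\alpha(D,A)$ back to $A$. Lifting $w_j$ to a sequence $(b_{j,n})_n\in A_{\I,\alpha}\cap D'$, the eigenvector relation holds modulo the annihilator of $D$, which suffices since the elements used below lie in $D$. Applying the spectral projection $P_\tau(a)=\int_G\overline{\tau(h)}\alpha_h(a)\,dh$ produces genuine spectral elements $c_{j,n}:=P_\tau(b_{j,n})\in A_\tau$ which represent the same classes $w_j$, remain asymptotically central relative to $D$, and satisfy $\kappa_{D,A}\big((\sum_j c_{j,n}^*c_{j,n})_n\big)=f\ge\tfrac{1}{d+1}\cdot1$. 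Thus $s_n:=\sum_j c_{j,n}^*c_{j,n}\in A^\alpha$ is asymptotically central and asymptotically bounded below, so $g(s_n)$ approximates $s_n^{-1}$ for a suitable $g\in C_0((0,1])$, with $g(s_n)s_n\to 1$ on $D$.

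Finally, to obtain the strong Arveson condition inside $B$, I choose an $\alpha$-invariant approximate unit for $B$ (so its elements lie in $B\cap A^\alpha$ and in $D$) and pick such a $u$ with $\|ubu-b\|$ small and $u$ asymptotically commuting with the $c_{j,n}$. Then $\gamma_j:=u\,c_{j,n}\,u\in B\cap A_\tau=B_\tau$, and using asymptotic centrality of $c_{j,n}$ and $g(s_n)$ together with $g(s_n)s_n\to 1$ one rewrites $b\approx ubu\approx\sum_j\gamma_j^*\,x_j\,\gamma_j$ with $x_j\in B$, placing $b$ in $\overline{B_\tau^*BB_\tau}$. The main obstacle is precisely this last step: keeping the produced elements inside the hereditary subalgebra $B$ while they stay in the spectral subspace $A_\tau$ forces one to interleave approximate-unit factors carefully, and converting the asymptotic, central-sequence relations into the exact equality $\overline{B_\tau^*BB_\tau}=B$ is where the care lies; the invertibility of $f$ established above is what makes the approximation close up.
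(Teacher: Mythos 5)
Your route is genuinely different from the paper's, and considerably longer. The paper never verifies the Arveson condition on hereditary subalgebras at all: it invokes Kishimoto's Lemma~3.4 of \cite{Kis_simple_1980}, which identifies $\widetilde{\Gamma}(\alpha)$ with the set of $\tau\in\widehat{G}$ such that $\widehat{\alpha}_\tau$ preserves every ideal of $A\rtimes_\alpha G$, and then applies \autoref{thm:duality}: from $\dim_{\mathrm{rep}}(\widehat{\alpha})=\dimRok(\alpha)=d<\infty$ one gets, for $b$ in an ideal $I$, contractions $x^{(j)}$ with $\bigl\|\widehat{\alpha}_\tau(b)-\sum_{j=0}^d(x^{(j)})^*bx^{(j)}\bigr\|<1/m$; since the sum lies in $I$, so does $\widehat{\alpha}_\tau(b)$. (Your parenthetical ``dual'' alternative is closer to this, but still differs: the paper replaces the hereditary-subalgebra quantifier by Kishimoto's ideal-theoretic characterization, which is what makes the proof a few lines.) Your direct $A$-side argument buys independence from that lemma and from duality, at the price of the localization bookkeeping; its preliminary steps are sound: $\texttt{Lt}_h(\chi_{\overline{\tau}})=\tau(h)\chi_{\overline{\tau}}$, the order zero identities $w_j^*w_j=w_jw_j^*=e_j^2$, the operator-convexity bound $(d+1)\sum_j e_j^2\ge\bigl(\sum_j e_j\bigr)^2=1$ (valid for non-commuting self-adjoint elements), the spectral projection $P_\tau$, and the observation that quantifying \autoref{df:Rdim} over all $\sigma$-unital invariant $D$ lets you absorb $b$ and an invariant approximate unit of $B$ into $D$, so no permanence of $\dimRok$ under passage to hereditary subalgebras is needed.

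There is, however, one genuine flaw in the last step: the \emph{diagonal} sum $\sum_j\gamma_j^*x_j\gamma_j$ cannot be made to approximate $b$ by ``asymptotic centrality of $c_{j,n}$ and $g(s_n)$''. The element $g(s_n)$ is built from all the $c_{k,n}$, and since the towers need not commute with one another (you are working with $\dimRok$, not $\cdimRok$), $g(s_n)$ is asymptotically central only relative to $D$, not relative to the $c_{j,n}$; moreover, with $T_j=c_{j,n}^*c_{j,n}$, the diagonal expression $\sum_j T_jXT_j$ is not close to $s_nXs_n$, so the cross terms are genuinely missing. The repair stays inside your framework, because $\overline{B_\tau^*BB_\tau}$ is a closed \emph{span}: take $\theta(t)=tg(t)$ equal to $1$ on $[\tfrac{1}{d+1},\,d+1]$, so that $\theta(s_n)d\to d$ for all $d\in D$, and use the full double sum
\[
b\;\approx\;u^3\,\theta(s_n)\,b\,\theta(s_n)\,u^3\;\approx\;\sum_{j,k}\bigl(u c_{j,n}^* u\bigr)\,\bigl(u\, c_{j,n}\, g(s_n)\, b\, g(s_n)\, c_{k,n}^*\, u\bigr)\,\bigl(u c_{k,n} u\bigr),
\]
whose middle factors lie in $u\widetilde{A}u\subseteq B$ by hereditarity; the only asymptotic relations needed are $[c_{j,n},u^2]\to0$ (with $u^2\in D$), $\theta(s_n)d\to d$ for $d\in D$, and $\|u^3bu^3-b\|$ small, all of which you have established. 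Alternatively, replace $\varphi_j$ by $\varphi_j^{1/2}$ (via \autoref{lma:FunctCalcOz}) at the outset, so that $\sum_j w_j^*w_j=\sum_j\varphi_j(1)=1$ exactly and no inverse $g(s_n)$ is needed at all; this square-root normalization is precisely the trick the paper uses inside \autoref{thm:eqAppRepColors}, and it is what makes the single-sum approximation $\sum_j(x^{(j)})^*bx^{(j)}$ legitimate there.
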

\begin{proof} By Lemma 3.4 in \cite{Kis_simple_1980}, we have
\[\widetilde{\Gamma}(\alpha)=\{\tau\in\widehat{G}\colon \widehat{\alpha}_\tau (I)\subseteq I\ \mbox{ for all ideals } I\subseteq A\rtimes_\alpha G\}.\]
Let $I$ be an ideal in $A\rtimes_\alpha G$, and let $\tau$ be an element in $\widehat{G}$. Fix $b$ in $I$ and set
$d=\dimRok(\alpha)$. For every $m$ in $\N$, use \autoref{thm:duality} to find contractions $x_\gamma^{(j),m}$, for $\gamma\in\Gamma$ and $j=0,\ldots,d$
such that, in particular,
\[\left\|\widehat{\alpha}_\tau(b)-\sum_{j=0}^d (x_\gamma^{(j),m})^*bx_\gamma^{(j),m}\right\|<\frac{1}{m}.\]
Since $I$ is an ideal, it follows that $\sum\limits_{j=0}^d (x_\gamma^{(j),m})^*bx_\gamma^{(j),m}$ belongs to $I$.
We conclude that
$\widehat{\alpha}_\tau(b)$ is the limit in norm of elements of $I$, so it belongs to $I$ itself. Hence
$\widehat{\alpha}_\tau(I)\subseteq I$. Since $I$ and $\tau$ are arbitrary, we conclude that $\widetilde{\Gamma}(\alpha)=\widehat{G}$.\end{proof}

A similar result for $\R$-actions has been proved in Proposition~3.11 in~\cite{HirSzaWinWu_rokhlin_2017}.

\begin{cor}\label{cor: MorEquiv}
Let $G$ be a second countable compact abelian group, let $A$ be a \ca, and let $\alpha\colon G\to\Aut(A)$ be an action. If
$\dimRok(\alpha)<\I$, then every ideal $J$ in $A\rtimes_\alpha G$ has the form $J=I\rtimes_\alpha G$ for some
$\alpha$-invariant ideal $I$ in $A$. Moreover, $A^G$ is Morita equivalent to $A\rtimes_\alpha G$. In particular, if $A$ is simple,
then so is $A\rtimes_\alpha G$. \end{cor}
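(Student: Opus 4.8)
The plan is to deduce \autoref{cor: MorEquiv} from the full strong Connes spectrum established in \autoref{prop: FullConnesSpec}, invoking the standard theory of Connes spectra for compact abelian group actions. First I would recall that, by a theorem of Olesen--Pedersen (and as encapsulated in Kishimoto's \cite{Kis_simple_1980}), for a compact abelian group action $\alpha$ the condition $\widetilde{\Gamma}(\alpha)=\widehat{G}$ is precisely the condition under which the crossed product has no ideals other than those induced from $\alpha$-invariant ideals of $A$. Concretely, the dual description
\[\widetilde{\Gamma}(\alpha)=\{\tau\in\widehat{G}\colon \widehat{\alpha}_\tau(I)\subseteq I \ \mbox{ for all ideals } I\subseteq A\rtimes_\alpha G\}\]
cited in the previous proof says exactly that every ideal $J$ of $A\rtimes_\alpha G$ is invariant under the \emph{entire} dual action $\widehat{\alpha}$. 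The key step is then to observe that a $\widehat{G}$-invariant ideal in the crossed product is automatically of the form $I\rtimes_\alpha G$ for an $\alpha$-invariant ideal $I\subseteq A$: one sets $I=J\cap A$ (intersecting with the canonical copy of $A$, or equivalently applying the canonical conditional expectation / spectral projection onto the fixed-point part), checks that $I$ is $\alpha$-invariant, and verifies $J=I\rtimes_\alpha G$ by a spectral-subspace argument using that $J$ decomposes along the eigenspaces of $\widehat{\alpha}$.

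Next I would address the Morita equivalence. Here I would appeal to the general fact that for a compact group $G$ acting on $A$, the fixed-point algebra $A^G$ is Morita equivalent to a hereditary subalgebra of $A\rtimes_\alpha G$; when the Connes spectrum is full (equivalently, when $\alpha$ has the sort of freeness/saturation detected by finite Rokhlin dimension), this hereditary subalgebra is full, so $A^G$ is Morita equivalent to all of $A\rtimes_\alpha G$. The cleanest route is to invoke the saturation results for actions with full strong Connes spectrum: $\widetilde{\Gamma}(\alpha)=\widehat{G}$ forces the action to be saturated, and saturation is exactly the condition guaranteeing that the imprimitivity bimodule between $A^G$ and $A\rtimes_\alpha G$ (arising from the canonical $A^G$--$(A\rtimes_\alpha G)$ correspondence) is full on both sides, yielding the Morita equivalence.

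Finally, the simplicity assertion is an immediate corollary: if $A$ is simple, its only $\alpha$-invariant ideals are $0$ and $A$, so by the first part the only ideals of $A\rtimes_\alpha G$ are $0$ and $A\rtimes_\alpha G$, hence $A\rtimes_\alpha G$ is simple.

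I expect the main obstacle to be the Morita equivalence claim, since it requires more than the ideal-correspondence statement: one must identify the precise reference or argument linking ``full strong Connes spectrum'' to ``saturation'' for compact (not merely finite) abelian groups, and confirm that no separability or unitality hypotheses are needed in that step. The ideal-structure part is essentially a formal consequence of \autoref{prop: FullConnesSpec} together with the cited description of $\widetilde{\Gamma}(\alpha)$, and the simplicity statement is then automatic; so the crux of the argument is assembling the correct saturation/Morita machinery at the generality of \autoref{cor: MorEquiv}.
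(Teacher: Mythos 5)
Your proposal follows essentially the same route as the paper: \autoref{prop: FullConnesSpec} gives $\widetilde{\Gamma}(\alpha)=\widehat{G}$, the ideal correspondence is exactly the standard Kishimoto/Olesen--Pedersen result the paper cites (Theorem~5.14 in \cite{Phi_freeness_2009}, with \cite{Kis_simple_1980} for the proof), and the Morita equivalence is obtained just as you describe, via full strong Connes spectrum implying (hereditary) saturation (the result of \cite{GooLazPel_spectra_1994}, reproduced in \cite{Phi_freeness_2009}) and then the canonical imprimitivity bimodule of Proposition~7.1.3 in \cite{Phi_equivariant_1987}. The simplicity assertion is immediate in both treatments.
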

\begin{proof} We have $\widetilde{\Gamma}(\alpha)=\widehat{G}$ by \autoref{prop: FullConnesSpec}. The
first claim now follows from Theorem~5.14 in~\cite{Phi_freeness_2009} (see \cite{Kis_simple_1980} for a proof).\\
\indent It follows from a result in \cite{GooLazPel_spectra_1994} (reproduced as Theorem 5.10 in \cite{Phi_freeness_2009}) that $\alpha$
is hereditarily saturated. Hence it is saturated, and the second claim follows from Proposition 7.1.3 in
\cite{Phi_equivariant_1987}.\end{proof}

The following application was announced in Example~5.1 of~\cite{Gar_regularity_2017}.

\begin{eg}
Fix $\theta\in\R\setminus\Q$, and let $A_\theta$ be the associated irrational
rotation algebra, that is, the universal \ca\ generated by two unitaries $u$ and $v$ subject
to the relation $uv=e^{\pi i \theta}vu$. Define an action $\gamma\colon\T\to\Aut(A_\theta)$
by $\gamma_\zeta(u)=\zeta u$ and $\gamma_\zt(v)=v$ for all $\zeta\in\T$. We claim that
$\dimRok(\gamma)=\I$. Indeed, the crossed product $A_\theta\rtimes_\gamma\T$ is easily seen
to be isomorphic to $C(S^1)\otimes\K$, so it is not simple. Since $A_\theta$ is simple, the
result follows from \autoref{cor: MorEquiv}.

A similar argument shows that other related actions, such as $\beta\colon\T^2\to\Aut(A_\theta)$
given by $\beta_{(\zeta,\omega)}(u)=\zeta u$ and $\beta_{(\zeta,\omega)}(v)=\omega v$ for
$(\zeta,\omega)\in\T^2$, have infinite Rokhlin dimension. This is in stark contrast with the
fact, proved in \cite{HirPhi_rokhlin_2015}, that the restrictions of $\gamma$ and $\beta$ to
finite subgroups of $\T$ have finite Rokhlin dimension with commuting towers.
(See also \autoref{prop:ZnAThetaRdim1} for a more general result.)
\end{eg}

\section{Tracial properties}

In this section, we show how actions of finite groups with finite Rokhlin dimension with commuting
towers enjoy a weak form of the tracial Rokhlin property from \cite{Phi_tracial_2011}, where
projections are replaced by positive elements, and the remainder is assumed to be small in all
tracial states. This notion is called the \emph{weak tracial Rokhlin property}; see \autoref{df: tRp wtRp}
for the precise definition. We point out that similar notions have been considered by a number of other
authors (\cite{Arc_crossed_2008, MatSat_stability_2012, Wan_tracial_2013}), and that all of these notions
agree for reasonably well-behaved $C^*$-algebras (for example, those which have strict comparison; see
\autoref{df:StrComp} below).

If $X$ is a simplicial complex and $k\in\N$, we denote its $k$-th skeleton by $X^{(k)}$.

\begin{thm}\label{thm:FreeActionCellComplex}
Let $G$ be a finite group, let $X$ be a finite simplicial complex.
Let $G$ act freely on $X$ in such a way that for all $k$ in $\N$,
no point in one $k$-cell of $X$ is mapped to another point in the same
$k$-cell. Let $(\mu_n)_{n\in\N}$ be a sequence of finite Borel
measures on $X$. Then there exists an open set $U\subseteq X$ such that
\be\item[(a)] $gU\cap hU=\emptyset$ for all $g,h\in G$ with $g\neq h$;
\item[(b)] $\mu_n\left(X\setminus \bigcup\limits_{g\in G}gU\right)=0$ for all $n$ in $\N$.\ee
\end{thm}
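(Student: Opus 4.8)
The plan is to avoid any cell-by-cell fundamental-domain construction and instead produce $U$ as an ``orbit-wise argmax'' of a cleverly chosen continuous function; the simplicial and cell hypotheses will play no role beyond guaranteeing a free action on a compact finite-dimensional space. First I would reduce to a single measure: replacing $(\mu_n)$ by $\mu=\sum_n 2^{-n}\mu_n/(1+\mu_n(X))$ gives a finite Borel measure with $\mu(E)=0\Rightarrow\mu_n(E)=0$ for all $n$, so it suffices to make $X\setminus\bigcup_g gU$ a $\mu$-null set; averaging, $\mu'=\sum_{g\in G}g_*\mu$ is a finite $G$-invariant Borel measure dominating $\mu$. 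Next I would linearize the action: since $X$ is a compact, finite-dimensional, metrizable $G$-space, it admits a $G$-equivariant embedding into a finite-dimensional orthogonal representation $V$ (take any embedding $\phi\colon X\hookrightarrow\R^m$ and form $x\mapsto(\phi(g^{-1}x))_{g\in G}$ into $\R^{m|G|}$, with $G$ permuting the blocks). Identifying $X$ with its image, note $\dim V\geq 2$ unless $G$ is trivial (in which case $U=X$ works), and that freeness gives $(g-1)x\neq 0$ for every $x\in X$ and every $g\neq 1$.

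For the construction, fix a unit vector $\ell\in V$, put $f=\langle\ell,\cdot\rangle|_X$, and set
\[U=\{x\in X:\ f(x)>f(gx)\ \text{ for all } g\in G\setminus\{1\}\}.\]
Then $U=\bigcap_{g\neq 1}\{x: f(x)-f(gx)>0\}$ is a finite intersection of open sets, hence open. Disjointness of translates is purely formal: if $x\in U$ and $gx\in U$ for some $g\neq 1$, then $f(x)>f(gx)$ (from $x\in U$) and $f(gx)>f(x)$ (from $gx\in U$, applied with $g^{-1}$), a contradiction; hence $gU\cap U=\E$ for $g\neq 1$, and so $gU\cap hU=\E$ for $g\neq h$. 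This disposes of condition~(a) for every choice of $\ell$.

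Everything then reduces to condition~(b). I would introduce the coincidence sets $E_g:=\{x\in X: f(gx)=f(x)\}=\{x:\langle\ell,(g-1)x\rangle=0\}$. If $x\notin\bigcup_g gU$, the maximum of $f$ over the orbit $Gx$ is attained at two distinct points $ax,bx$, whence $ax\in E_{ba^{-1}}$; since $X\setminus\bigcup_g gU$ is $G$-invariant, this yields $X\setminus\bigcup_g gU\subseteq\bigcup_{g\neq 1}G\,E_g$. As $\mu'$ is $G$-invariant, $\mu'\big(\bigcup_{g\neq1}G E_g\big)\le|G|\sum_{g\neq1}\mu'(E_g)$, so it suffices to arrange $\mu'(E_g)=0$ for each $g\neq 1$.

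The hard part is exactly this nullity, and it is where finiteness of the measure and freeness enter. For fixed $g\neq 1$ I would push $\mu'$ forward under the linear map $v\mapsto(g-1)v$ to a finite measure $\nu_g$ on $V$; by freeness $\nu_g$ is supported on $V\setminus\{0\}$, so radial projection produces a finite measure $\tilde\nu_g$ on the unit sphere $S=S(V)$, and $\mu'(E_g)=\nu_g(\ell^\perp)=\tilde\nu_g(S\cap\ell^\perp)$ is the $\tilde\nu_g$-mass of the equator orthogonal to $\ell$. Integrating over directions by Tonelli,
\[\int_{S}\tilde\nu_g(S\cap\ell^\perp)\,d\sigma(\ell)=\int_{S}\sigma(S\cap u^\perp)\,d\tilde\nu_g(u)=0,\]
because each equator $S\cap u^\perp$ has vanishing surface measure once $\dim V\geq 2$. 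Hence $\tilde\nu_g(S\cap\ell^\perp)=0$ for $\sigma$-almost every $\ell$, and intersecting these finitely many full-measure conditions over $g\neq 1$ leaves a co-null, in particular nonempty, set of admissible $\ell$. Choosing any such $\ell$ gives $\mu'(E_g)=0$ for all $g\neq 1$, whence $\mu\big(X\setminus\bigcup_g gU\big)\le\mu'\big(X\setminus\bigcup_g gU\big)=0$ and therefore $\mu_n\big(X\setminus\bigcup_g gU\big)=0$ for every $n$. I expect the only genuinely substantive step to be this integral-geometric averaging; the rest is bookkeeping, and it is worth flagging that the simplicial and ``no point to the same cell'' hypotheses appear unnecessary for this argument.
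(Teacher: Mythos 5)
Your proof is correct, and it takes a genuinely different route from the paper's. The paper uses the simplicial hypotheses in an essential way: it passes to the quotient $Y=X/G$, which is again a simplicial complex precisely because of the assumption that no point of a $k$-cell is sent to another point of the same $k$-cell, and then inducts over the skeleta of $Y$, building local cross-sections and trimming the cells along carefully chosen boundary levels; the choice of levels uses the fact that countably many finite measures have only countably many atoms, and $U$ is finally obtained as the image of a cross-section over the resulting open set $V\subseteq Y$. Your argument replaces all of this by soft, non-combinatorial steps: reduction to a single $G$-invariant finite measure, an equivariant embedding of $X$ into an orthogonal representation $V$, the orbit-wise strict argmax set $U=\{x: \langle\ell,x\rangle>\langle\ell,gx\rangle \ \forall g\neq 1\}$ (which makes condition~(a) formal), and a Crofton-type averaging over the sphere to choose $\ell$ so that each coincidence set $E_g=\{x:\langle\ell,(g-1)x\rangle=0\}$ is null --- the only place where freeness ($\,(g-1)x\neq 0$) and finiteness of the measures enter. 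I checked the delicate points: the orbit of any $x\notin\bigcup_g gU$ has $|G|$ distinct points by freeness, so its maximum for $\langle \ell,\cdot\rangle$ is attained twice, giving $X\setminus\bigcup_g gU\subseteq\bigcup_{g\neq 1}\bigcup_{h\in G}hE_g$; the preimage of the equator $S\cap\ell^\perp$ under radial projection is $\ell^\perp\setminus\{0\}$ because $\ell^\perp$ is a cone; and Tonelli applies since the incidence set $\{(\ell,u):\langle\ell,u\rangle=0\}$ is closed and both measures are finite. You are also right that your argument never uses the simplicial structure or the cell-mapping hypothesis, only that $G$ is finite, the action is free, and $X$ is compact and embeds in some $\R^m$; so your statement is strictly more general. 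Two remarks on the trade-off: the paper's constructive approach produces uncountably many explicit such sets $U$ and meshes with the explicit simplicial description of the spaces $X_{G,d}$ used later in the paper, whereas your approach is shorter and more flexible; and note that both proofs genuinely need the measure family to be countable (yours through the convex-combination reduction and the a.e.\ choice of $\ell$, the paper's through the atom-avoidance argument), so your method does not resolve the question, raised after \autoref{thm:FreeActionCellComplex}, of whether countability can be dropped.
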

\begin{proof}
We abbreviate the orbit space $X/G$ to $Y$ throughout, and observe that the assumptions on the action imply that
$Y$ is also a
simplicial complex of the same dimension of $X$, and that $Y^{(k)}=X^{(k)}/G$ for
$k=0,\ldots,\dim(X)$. We write $\pi\colon X\to Y$ for the quotient map, which is a local
homeomorphism since the action is free and the group is finite. This means that for every $y\in Y$,
there exist an open set $O_y$ in $Y$ containing $y$, and a continuous function $s\colon O_y\to X$
satisfying $\pi\circ s=\id_{O_y}$.
(The function $s$ is called a \emph{local cross-section}.)
Set $d=\dim(X)$. By the assumptions on the action, every $d$-cell in $Y$ is contained in an open
subset where a local cross section is defined. Similarly, there is a local cross
section defined on all of $Y^{(d)}\setminus Y^{(d-1)}$. Finally, for $n\in\N$, we will denote by $\nu_n$
the measure on $Y$ defined as the push-forward of $\mu_n$, that is, $\nu_n=\pi_\ast(\mu_n)$.

Assume first that $\dim(X)=0$, so that $X$ is finite. We claim that $X$ is equivariantly homeomorphic to
$Y \times G$, where $G$ acts trivially on $Y$ and by translation on $G$.  Since $Y$ is finite,
there exists a section $s\colon Y \to X$. Define a map $\lambda\colon X\to G$ by
$x=\lambda(x)s(\pi(x))$. Note that the assignment $x\mapsto \lambda(x)$ is well-defined because the action
is free.
The map $\phi\colon X \to Y\times G$
given by
$\phi(x)=(\pi(x),\lambda(x))$
for $x\in X$, is easily seen to be an equivariant homeomorphism, with inverse given by $(y,g)\mapsto gs(y)$ for
$y\in Y$ and $g\in G$. This proves the claim. With $\phi$ as before, set $U=\phi^{-1}\left(Y\times\{e\}\right)$.
It is straightforward to check that $\bigcap\limits_{g\in G} gU=\emptyset$ and that
$\bigcup\limits_{g\in G}gU=X$.

We may assume, from now on, that $\dim(X)>0$.
We will show that
there exists an open set $V\subseteq Y$ satisfying:
\be
\item[(A)] $\nu_n(Y^{(k)}\setminus V)=0$ for all $n\in\N$ and all $k=0,\ldots, \dim(X)$, and
\item[(B)] there is a local cross section defined on all of $V$. \ee

Once this is proved, let $U$ be the image of $V$ under some local cross
section. Then $U$ is an open set in $X$ which clearly satisfies condition (a) in the statement, and moreover
\[X\setminus \bigcup_{g\in G}gU=X\setminus \pi^{-1}(V)=\pi^{-1}(Y \setminus V).\]
In particular, $\mu_n\left(X\setminus \bigcup_{g\in G}gU\right)=0$ for all $n\in\N$, as desired.

We obtain this set by constructing its intersection with the lo\-wer-di\-men\-sio\-nal skeleta, and
using induction. (In fact, our construction yields uncountably many pairwise distinct open sets
satisfying conditions (A) and (B) above.)
Recall that $Y$ has only finitely many cells of each dimension.
For $y\in Y^{(0)}$, let $E_1^{(y)},\ldots, E_{d_y}^{(y)}$ denote the collection of 1-cells
in $Y$ whose boundaries contain $y$, and let $O_y$ be an open set in $Y$ which contains $y$
and is contained in the domain of some local cross-section for the quotient map $X\to Y=X/G$.

\textbf{Claim:} there exist continuous functions
$f_{y,j}\colon [0,1]\to E_j^{(y)}$, for $y\in Y^{(0)}$ and $j=1,\ldots,d_y$,
satisfying:

\be\item The image of $f_{y,j}$ is contained in $O_y$, for all $y\in Y^{(0)}$ and all $j=1,\ldots,d_y$;
\item $f_{y,j}$ is a homeomorphism onto its image, for all $y\in Y^{(0)}$ and all $j=1,\ldots,d_y$;
\item $f_{y,j}(0)=y$, for all $y\in Y^{(0)}$ and all $j=1,\ldots,d_y$;
\item $\nu_n(f_{y,j}(1))=0$, for all $n\in\N$, all $y\in Y^{(0)}$ and all $j=1,\ldots,d_y$;
\item $f_{y,j}([0,1])\cap f_{z,k}([0,1])=\emptyset$ for all $y,z\in Y^{(0)}$ with $y\neq z$, and
for all $j=1,\ldots,d_y$ and $k=1,\ldots,d_z$.
\ee

To construct these functions, start with any collection of functions $h_{y,j}\colon [0,1]\to E^{(y)}_j$
satisfying (1), (2) and (3). By restricting them to an initial segment of the form $[0,r]$, for $r\in (0,1)$,
we can ensure that condition (5) also holds. We now explain how to fulfill condition (4). Set
\[T=\{t\in (0,1)\colon \nu_n(h_{y,j}(t))=0 \ \mbox{ for all } n\in\N, y\in Y^{(0)}, j=1,\ldots,d_y\}.\]
Then the complement of $T$ is countable (and in particular $T$ is uncountable).
Indeed, if $[0,1]\setminus T$
were uncountable, there would exist $m\in \N$, $z\in Y^{(0)}$ and
$k\in \{1,\ldots,d_y\}$ such that $\nu_m(h_{z,k}(t))>0$ for uncountably many $t\in [0,1]$. Since $h_{z,k}$ is
a homeomorphism, it follows that $\nu_m$ has an uncountable set of atoms, which contradicts the fact that
it is a probability measure. Hence the complement $T$ is countable.

The claim then follows by fixing $t_0\in T$, and letting $f_{y,j}$ be given by
$f_{y,j}(t)=h_{y,j}(t/t_0)$ for all $t\in [0,1]$.

Set
\[V=Y\setminus \left(\bigcup_{y\in Y^{(0)}}\bigcup_{j=1}^{d_y}\left\{f_{y,j}(1)\right\}\right),\]
which is an open subset of $Y$ satisfying $\nu_n(Y^{(1)}\setminus V)=0$ for all $n\in\N$.
By the choice of the
functions (specifically, by condition (1)), there exists a local cross section defined on all of
$V$ (obtained by considering cross sections on the connected components of $V$).
The base step of the induction is complete.


Let $m\leq \dim(X)$, and suppose we have constructed an open $W$ satisfying conditions (A) and (B) above,
for $k=1,\ldots,m-1$. We will construct an open set $V$ satisfying conditions (A) and (B) for $k=1,\ldots,m$.
Denote by $C_1,\ldots, C_{\ell}$ the connected components
of $W_i$. For $k=1,\ldots,\ell$, let $E_{k,1},\ldots, E_{k,d_k}$ denote the
collection of all $m$-cells in $Y$ whose boundaries intersect $C_k$.

We write $\Delta_m$ for the $m$-dimensional simplex, which we identify as
\[\Delta_m=\left\{(\lambda_1,\ldots,\lambda_m)\in \R_+^m\colon \sum_{k=1}^m \lambda_k\leq 1\right\}.\]
We also identify $\Delta_{m-1}$ with the subset of $\Delta_m$ of vectors whose last coordinate vanishes.
For $t\in [0,1]$, set
\[\Delta_m^{(t)}=\{(\lambda_1,\ldots,\lambda_m)\in \R^m_+\colon (\lambda_1,\ldots,\lambda_m/t)\in \Delta_m\}.\]
Note that $\Delta_m^{(1)}=\Delta_m$, and $\Delta_m^{(0)}=\Delta_{m-1}$ if $m\geq 3$.
When $m=2$, the subsimplex $\Delta_2^{(t)}$ can be identified with the subinterval $[0,t]$.
For $m=3$, the subsimplex $\Delta_3^{(t)}$ is shown in the picture above.
\begin{figure}
\centering
\begin{tikzpicture}
\draw (0,0) node[anchor=east]{$(0,1,0)$} -- (4,0) node[anchor=west]{$(1,0,0)$} -- (2,3.46) node[anchor=south]{$(0,0,1)$} --cycle;
\draw (0,0) node[anchor=east]{$(0,1,0)$} -- (4,0) node[anchor=west]{$(1,0,0)$} -- (2,1.6) node[anchor=south]{$(0,0,t)$} --cycle;
\end{tikzpicture}
\end{figure}

For a set $Z$, we write $Z^\circ$ for its interior.

\textbf{Claim:} there exist continuous functions $f_{k,j} \colon\Delta_m \to E_{k,d_k}$,
for $k=1,\ldots,\ell$ and $j=1,\ldots,d_k$, satisfying the following conditions:
\be\item[(1')] The image of $f_{k,j} $ is contained in $W_i$, for all $k=1,\ldots,\ell$ and all $j=1,\ldots,d_k$;
\item[(2')] $f_{k,j} $ is a homeomorphism onto its image, for all $k=1,\ldots,\ell$ and all $j=1,\ldots,d_k$;
\item[(3')] $f_{k,j} (\Delta_{m-1}^\circ)\subseteq C_k \cap E _{k,d_k}$, for all $k=1,\ldots,\ell$ and all $j=1,\ldots,d_k$;
\item[(4')] $\nu_n(f_{k,j} (\partial\Delta_{m}\setminus \Delta_{m-1}^\circ))=0$, for all $n\in\N$, all $k=1,\ldots,\ell$ and all $j=1,\ldots,d_k$;
\item[(5')] $f_{k,j} (\Delta_m)\cap f_{k',i} (\Delta_m)=\emptyset$ for all $k,k'=1,\ldots,\ell$ with $k\neq k'$, and
for all $j=1,\ldots,d_k$ and $i=1,\ldots,d_{k'}$.
\ee

To prove the existence of these functions, one argues similarly as in the inductive step:
start with any set $\{h_{k,j}\colon k=1,\ldots,\ell, j=1,\ldots,d_k\}$
of functions satisfying (1'), (2'), (3'). Since this collection of functions is finite,
there exists $r\in (0,1]$ such that the restrictions of the functions $h_{k,j}$ to $\Delta_m^{(r)}$
have disjoint ranges (so that (5') is satisfied).
Arguing as before, and using that $\nu_n$ is a probability measure for all $n\in\N$, it follows that the set
\[T=\bigcap_{n=1}^\I\bigcap_{k=1}^\ell \bigcap_{j=1}^{d_k}\left\{t\in (0,1)\colon \nu_n\left(h_{k,j}\left(\partial \Delta_m^{(t)}\setminus \Delta^\circ_{m-1}\right)\right)=0\right\}
\]
has countable complement in $[0,1]$.
Fix $t_0\in T$, and let $f_{\ell,j}$ be the function given by
$f_{y,j}(\lambda_1,\ldots,\lambda_m)=h_{y,j}(\lambda_1,\ldots,\lambda_m/t_0)$ for all $(\lambda_1,\ldots,\lambda_m)\in \Delta_m$.
Then conditions (1') through (5') are satisfied, and the claim is proved.

Set
\[V=Y\setminus \left(\bigcup_{k=1}^{\ell}\bigcup_{j=1}^{d_k}f_{k,j}(\partial \Delta_m\setminus \Delta_{m-1}^\circ)\right),\]
which is an open subset of $Y$ satisfying $\nu_n(Y\setminus V)=0$ for all $n\in\N$.
By the choice of the
functions (specifically, by condition (1')), there exists a local cross section defined on all of
$V$ (obtained by considering cross sections on the connected components of $V$). This finishes the induction, and proves the theorem.
\end{proof}

We remark that our argument in the proof of \autoref{thm:FreeActionCellComplex} depends on having at most
countably many finite Borel measures. We do not know whether the conclusion of \autoref{thm:FreeActionCellComplex} holds
if this assumption is dropped.

The following is a variant of Definition~5.2 in~\cite{HirOro_tracially_2013}. It is closely related to the projectionless
free tracial Rokhlin property of Archey (\cite{Arc_crossed_2008}), and the weak Rohlin properties of Matui-Sato (\cite{MatSat_stability_2012})
and Wang \cite{Wan_tracial_2013}.

\begin{df}\label{df: tRp wtRp}
Let $A$ be a simple unital \ca, let $G$ be a finite group and let $\alpha\colon G\to\Aut(A)$ be an action.
\be\item We say that $\alpha$ has the \emph{weak tracial Rokhlin property} if for every finite set $F\subseteq A$,
for every $\ep>0$, and for every positive element $x\in A$ of norm one, there exist positive contractions $a_g$ in $A$ for $g\in G$,
such that:
\be\item $\|\alpha_g(a_h)-a_{gh}\|<\ep$ for all $g,h\in G$;
\item $\|a_ga_h\|<\ep$ for all $g,h\in G$ with $g\neq h$;
\item $\|a_gb-ba_g\|<\ep$ for all $g\in G$ and all $b\in F$;
\item With $a=\sum\limits_{g\in G}a_g$, the element $1-a$ is Cuntz-subequivalent to $x$;
\item $\|axa\|>1-\ep$.
\ee
\item We say that $\alpha$ has the \emph{tracial Rokhlin property} if the positive contractions in (1) can be chosen
to be orthogonal projections.\ee
\end{df}

Condition (e) is automatically satisfied whenever $A$ is finite and infinite dimensional;
see Lemma~1.16 in~\cite{Phi_tracial_2011}.

\vspace{0.3cm}

We now want to elaborate on an observation made in~\cite{HirPhi_rokhlin_2015}. Fix a finite group $G$ and a nonnegative
integer $d\in\N$. By Lemma~1.9 in~\cite{HirPhi_rokhlin_2015} (see also
\autoref{thm:XRpRdim} below for a more general argument), there exists a compact free $G$-space $Y$
(depending on both $d$ and $G$) that is universal for actions with $\cdimRok\leq d$, in the following sense:
An action $\alpha\colon G\to\Aut(A)$
of $G$ on a \uca\ $A$ has $\cdimRok(\alpha)\leq d$ if and only if there is a unital equivariant homomorphism
$C(Y)\to A_\infty\cap A'$.

The space $Y$ has a very concrete description, which we proceed to describe.
Denote by $C$ the universal commutative \uca\ generated by positive contractions $f_g^{(j)}$, for $g\in G$
and $j=0,\ldots,d$, satisfying the following relations:
\be
\item $f_g^{(j)}f_h^{(j)}=0$ whenever $h\neq g$, for all $j=0,\ldots,d$;
\item $\sum\limits_{g\in G}\sum\limits_{j=0}^d f_g^{(j)}=1$.
\ee
Define an action $\gamma\colon G\to\Aut(C)$ on generators by $\gamma_g(f_h^{(j)})=f_{gh}^{(j)}$ for all $g,h\in G$
and all $j=0,\ldots,d$. Set $Y=\widehat{C}$, the maximal ideal space of $C$.
Then $Y$ is a free $G$-space, and it is readily checked that an action $\alpha\colon G\to\Aut(A)$
of $G$ on a \uca\ $A$ has $\cdimRok(\alpha)\leq d$ if and only if there is a unital equivariant homomorphism
$C(Y)\to A_\infty\cap A'$. (This homomorphism may not be injective.)

The description of $C(Y)$ as a universal \ca\ allows one to identify the space $Y$ as a simplicial complex.
We briefly describe this structure: each of the contractions $f_g^{(j)}$ determines a 0-simplex. Moreover, there is a 1-simplex between
$f_g^{(j)}$ and $f_h^{(k)}$ whenever $f_g^{(j)}f_h^{(k)}\neq 0$. In general, for $n\in\N$, there is an $n$-simplex with 0-dimensional
boundary $\left\{f_{g_0}^{(j_0)},\ldots,f_{g_n}^{(j_n)}\right\}$ whenever $f_{g_0}^{(j_0)}\cdots f_{g_n}^{(j_n)}\neq 0$.
In particular, $Y$ is a $d$-dimensional simplicial complex. (An explicit computation of the $G$-space $Y$ when $G=\Z_2$ is given in
\autoref{lma:S1rotation}: one gets $S^d$ with the antipodal map.)
Moreover, the action of $G$ on $Y$ is easily seen to be compatible with its simplicial structure,
and has the property that for all $k=0,\ldots,d$, no point in one $k$-cell of $Y$ is mapped to another point in the same $k$-cell.
It follows that actions constructed in this way satisfy the assumptions of \autoref{thm:FreeActionCellComplex}.

In the following, we work with quasitraces because we do not assume the algebra to be exact. For a quasitrace
$\tau$ on a \ca\ $A$, we denote by $d_\tau\colon (A\otimes\K)_+\to [0,\I]$ its associated dimension function, which is
given by $d_\tau(a)=\lim\limits_{n\to\I}\tau(a^{1/n})$ for all $a\in (A\otimes\K)_+$.

\begin{df}\label{df:StrComp}
We say that a \ca\ $A$ has \emph{strict comparison of positive elements by quasitraces}, usually referred to as
``strict comparison'' for short, if for every $a,b\in (A\otimes \mathcal{K})_+$ satisfying
$d_\tau(a)<d_\tau(b)$ for all quasitraces $\tau$ on $A$, then $a\precsim b$.
\end{df}

Let $\alpha\colon G\to\Aut(A)$ be an action of a finite group $G$ on a simple, unital \ca\ $A$.
If $A$ has strict comparison, then the weak tracial
Rokhlin property and the tracial Rokhlin property for $\alpha$ can be reformulated using quasitracial states.
Indeed, it is easy to see that under these assumptions, condition (d) in \autoref{df: tRp wtRp} can be replaced
by the condition that for every $\ep>0$, there is a positive contraction $a$ satisfying
$d_\tau(1-a)<\ep$ for all $\tau\in QT(A)$. Equivalently, we may only require $d_\tau(1-a)<\ep$ for all
\emph{extreme} quasitracial states on $A$.

\begin{thm} \label{thm: RdimwTRp}
Let $A$ be an infinite dimensional, simple, finite, unital \ca\ with strict comparison
and at most countably many extreme quasitraces. Let $G$ be a finite group and let $\alpha\colon G\to
\Aut(A)$ be an action. If $\cdimRok(\alpha)<\I$, then $\alpha$ has the weak tracial Rokhlin property.\end{thm}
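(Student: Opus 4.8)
The plan is to combine the concrete simplicial model of the universal free $G$-space with \autoref{thm:FreeActionCellComplex}. Write $d=\cdimRok(\alpha)<\I$ and let $Y$ be the finite $d$-dimensional simplicial complex described in the paragraph preceding the theorem, so that there is a unital equivariant homomorphism $\phi\colon C(Y)\to A_\infty\cap A'$ and the (free) $G$-action on $Y$ satisfies the hypotheses of \autoref{thm:FreeActionCellComplex}. Unravelling \autoref{df: tRp wtRp}, for a fixed finite set $F\subseteq A$, a fixed $\ep>0$ and a fixed nonzero positive element $x\in A$, I must produce positive contractions $a_g\in A$ for $g\in G$. Conditions (a), (b), (c) will be essentially free: taking $a_g$ to be a lift of $\phi(\gamma_g(h))$ for a suitable $h\in C(Y)$, (a) follows from equivariance of $\phi$, (b) from the fact that disjointly supported functions are orthogonal, and (c) from the range of $\phi$ lying in $A'$; condition (e) is automatic here since $A$ is finite and infinite dimensional (by the cited Lemma~1.16 in~\cite{Phi_tracial_2011}). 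By the remark following \autoref{df: tRp wtRp}, strict comparison reduces the remaining condition (d) to arranging that $d_\tau(1-a)<\delta_x:=\min_\tau d_\tau(x)$ for every extreme quasitrace $\tau$, where $\delta_x>0$ because $x\neq 0$ and $d_{(\cdot)}(x)$ is lower semicontinuous on the weak-$*$ compact set $QT(A)$. This uniform tracial smallness of the defect is the real content of the theorem.

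To bring in \autoref{thm:FreeActionCellComplex} I would attach to each of the at most countably many extreme quasitraces $\tau_m$ of $A$ a Borel probability measure $\mu_m$ on $Y$, obtained by composing $\phi$ with a limit quasitrace $\tau_{m,\infty}$ on $A_\infty$ (along a fixed free ultrafilter, say) and observing that the resulting state on the commutative algebra $C(Y)$ is given by such a measure: concretely $\int_Y f\,d\mu_m=\tau_{m,\infty}(\phi(f))$ for $f\in C(Y)$. Feeding the sequence $(\mu_m)_{m\in\N}$ into \autoref{thm:FreeActionCellComplex} produces an open set $U\subseteq Y$ whose $G$-translates are pairwise disjoint and with $\mu_m\big(Y\setminus\bigcup_{g\in G}gU\big)=0$ for every $m$. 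Since $\tau\mapsto\mu_\tau$ is affine and, by the countability of the extreme boundary, every quasitrace is a countable convex combination of the $\tau_m$, the same vanishing $\mu_\tau\big(Y\setminus\bigcup_{g}gU\big)=0$ then holds for every $\tau\in QT(A)$.

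Next I would choose a positive contraction $h\in C(Y)$ whose support is a compact subset of $U$ and which equals $1$ on a large subset, set $a_g=\phi(\gamma_g(h))$ and $a=\sum_{g}a_g$, so that the defect is $1-a=\phi(k)$ with $k=1-\sum_g\gamma_g(h)\geq 0$. As $h$ increases to $\chi_U$ through such functions, the open sets $\{k>0\}$ decrease to $Y\setminus\bigcup_{g}gU$, which has $\mu_\tau$-measure $0$; since $d_\tau(\phi(k))=\mu_\tau(\{k>0\})$, each of these dimension-function values tends to $0$ for every $\tau$. The crucial point is to make this decrease \emph{uniform} in $\tau$. I expect to obtain this from a compactness argument over the weak-$*$ compact simplex $QT(A)$: sandwiching the lower semicontinuous quantity $\mu_\tau(\{k>0\})$ between integrals of continuous functions associated to a monotone family, one reduces the uniform vanishing to a Dini-type theorem. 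Choosing $h$ so that $\sup_\tau d_\tau(\phi(k))<\delta_x$, strict comparison then yields that $1-a$ is Cuntz subequivalent to $x$ (first in $A_\infty$, then in $A$ after lifting).

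Finally I would lift the central elements $\phi(\gamma_g(h))\in A_\infty\cap A'$ to positive contractions $a_g^{(n)}\in A$; for $n$ large these satisfy (a)--(c) to within $\ep$ (from exact equivariance, orthogonality and centrality in $A_\infty$), and $1-\sum_g a_g^{(n)}$ inherits Cuntz subequivalence to $x$ from the previous step via the standard $f_\ep$-lifting of Cuntz comparison, yielding (d); together with the automatic condition (e) this gives the weak tracial Rokhlin property. The main obstacle, and the place where all three hypotheses (at most countably many extreme quasitraces, strict comparison, and \autoref{thm:FreeActionCellComplex}) are used, is exactly the uniform-in-$\tau$ tracial smallness of the defect established in the third paragraph; the rest of the argument is soft.
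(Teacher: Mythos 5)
Your skeleton is the same as the paper's: the universal free $G$-complex $Y$ with its simplicial structure, a unital equivariant homomorphism $C(Y)\to A_\infty\cap A'$ (the paper works in an ultrapower $A_\omega$, which changes nothing essential), Borel probability measures on $Y$ induced by the countably many extreme quasitraces via limit quasitraces on the sequence algebra, the open set $U$ from \autoref{thm:FreeActionCellComplex}, and Rokhlin elements obtained by lifting translates of a positive function supported on $U$; you have also correctly isolated that everything hinges on a uniform-in-$\tau$ bound on $d_\tau(1-a)$. The gap lies exactly in the mechanism you propose for that crucial step. Your Dini-type argument needs the functions $\tau\mapsto\int_Y w\,d\mu_\tau=\sigma_\tau(\phi(w))$, for $w$ in a decreasing family in $C(Y)$, to be continuous (or at least upper semicontinuous) on the compact set $QT(A)$. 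They are not: if $(b_i)_i$ is a representative sequence of $\phi(w)$, then $\sigma_\tau(\phi(w))=\lim_{i\to\omega}\tau(b_i)$ is a pointwise (ultra)limit of weak-$*$ continuous functions of $\tau$, and such limits are in general neither continuous nor semicontinuous in the direction Dini requires. The assignment $\tau\mapsto\mu_\tau$ is simply not weak-$*$ continuous, so compactness of $QT(A)$ cannot be brought to bear in this way.

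Worse, the uniform statement you are reducing to is not a consequence of what you have established at that point, so no soft repair will close the gap: there exist countable families of Borel probability measures on $Y$, each vanishing on the closed set $Y\setminus\bigcup_{g\in G}gU$ (together with all their countable convex combinations), for which $\sup_n\mu_n(U\setminus K)\geq\tfrac12$ for \emph{every} compact $K\subseteq U$ --- for instance $\mu_n=\tfrac12(\nu+\delta_{x_n})$ with $\nu$ non-atomic and $(x_n)_{n\in\N}$ dense in $U$, or point masses marching from inside $U$ to a boundary point. Hence uniform vanishing of $\mu_\tau(\{k>0\})$ cannot be deduced from the null-set conclusion of \autoref{thm:FreeActionCellComplex} plus compactness of $QT(A)$ alone; some input beyond those two facts is needed. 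The paper's proof handles this step quite differently: it never leaves the countable family $\{\tau_n\}$, it chooses a single continuous $f$ supported on $U$ satisfying $\mu_n(\{x\in U\colon f(x)\neq 1\})<\ep$ for all $n$, and it then demands that the Choi--Effros lift $\psi\colon C(Y)\to A$ satisfy, besides the finitely many approximate commutation and equivariance conditions, the estimates $d_{\tau_n}\bigl(1-\sum_{g\in G}\psi(f_g)\bigr)<\ep$ for all $n$; that is, the uniform tracial bound is imposed on the honest elements of $A$ at the lifting stage rather than derived from a topological argument over $QT(A)$. A secondary wrinkle in your last paragraph: proving $\phi(k)\precsim x$ in $A_\infty$ and then lifting only yields $(1-a-\delta)_+\precsim x$ in $A$, since Cuntz subequivalence does not pass exactly to lifts; this, too, is avoided by placing the dimension-function bounds directly on the values of $\psi$ in $A$, as the paper does.
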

\begin{proof}
Let $\{\tau_n\}_{n\in\N}$ be an enumeration of the set of extreme quasitraces on $A$,
allowing for repetition if this set is finite. Let $\omega\in \beta\N\setminus\N$ be a free
ultrafilter, and, for $n\in\N$, denote by $\sigma_n\colon A_\omega\to\C$ the quasitrace obtained from $\tau_n$.
Denote by $Y$ the free $G$-simplicial complex from the discussion before this
theorem; see also Lemma~1.9 in~\cite{HirPhi_rokhlin_2015} and \autoref{thm:XRpRdim}. Let
$\varphi\colon C(Y)\to A_\omega \cap A'$ be a unital equivariant homomorphism. For $n\in\N$, the map
$\sigma_n\circ\varphi\colon C(Y)\to \C$ is a quasitrace, and since $C(Y)$ is commutative, it is a
tracial state. By the Riesz Representation Theorem, there exists a Borel probability measure
$\mu_n$ on $Y$ such that $(\sigma_n\circ\varphi)(f)=\int\limits_Y f(y)\ d\mu_n(y)$ for all $f\in C(Y)$.

To prove that $\alpha$ has the weak tracial Rokhlin property, we will show that there exists
a completely positive contractive, $G$-equivariant, order zero map
\[\psi\colon (C(G),\texttt{Lt})\to (A_\omega\cap A',\alpha_\omega)\]
such that $\sigma_n(1-\psi(1))=0$ for all $n\in\N$.

Fix $m\in\N$.
Let $U\subseteq Y$ be an open subset as in the conclusion of \autoref{thm:FreeActionCellComplex} for
the sequence $(\mu_n)_{n\in\N}$.
Choose a continuous function $0\leq f^{(m)}\leq 1$ supported on $U$ satisfying:
\[\mu_n\left(\{x\in U\colon f^{(m)}(x)\neq 1\}\right)<\frac{1}{m}\]
for all $n=1,\ldots,m$. For $g\in G$, set $f^{(m)}_g=g\cdot f^{(m)}\in C(Y)$.

Let $(F_n)_{n\in\N}$ be an increasing sequence of finite subsets of $A$ with dense union.
Use the Choi-Effros Lifting Theorem for $\varphi\colon C(Y)\to A_\omega\cap A'$
to find a unital completely positive linear map
$\psi_m\colon C(Y)\to A$ such that the following conditions hold:
\be\item $\|\psi_m(f^{(m)}_g)\psi_m(f^{(m)}_h)-\psi_m(f^{(m)}_gf^{(m)}_h)\|<\frac{1}{m}$ for all $g,h\in G$;
\item $\|\alpha_h(\psi_m(f^{(m)}_g))-\psi_m(f^{(m)}_{hg})\|<\frac{1}{m}$ for all $g,h\in G$;
\item $\|\psi_m(f^{(m)}_g)b-b\psi_m(f^{(m)}_g)\|<\frac{1}{m}$ for all $g\in G$ and all $b\in F_m$;
\item for every $n=1,\ldots,m$, we have
\[d_{\tau_n}\left(1-\sum\limits_{g\in G}\psi_m(f^{(m)}_g)\right)<\frac{1}{m}.\]
\ee
For $g\in G$, set $a^{(m)}_g=\psi_m(f^{(m)}_g)\in A$ for $m\in\N$, and set
\[a_g=\eta_A((a_g^{(m)})_{m\in\N})\in A_\omega.\]
(See \autoref{df:SeqAlgs} for the definition of the canonical map $\eta_A\colon \ell^\I(A)\to A_\omega$.)
It is then easy to verify that the assignment $g\mapsto a_g$ extends to a completely positive contractive
order zero map $\psi\colon C(G)\to A_\omega\cap A'$ which is $G$-equivariant and satisfies
$\sigma_n(1-\psi(1))=0$ for all $n\in\N$. This finishes the proof.
\end{proof}

\begin{rem} In the theorem above, simplicity is only needed because it is required in the definition
of the weak tracial Rokhlin property. If the algebra is not assumed to be simple or to have strict
comparison, the conclusion is as follows: for every $\ep>0$ and for every finite subset $F\subseteq A$,
there exist positive contractions $f_g \in A$, for $g\in G$, satisfying
\be\item $\|\alpha_g(f_h)-f_{gh}\|<\ep$ for all $g,h\in G$;
\item $\|f_gf_h\|<\ep$ for all $g,h\in G$ with $g\neq h$;
\item $\|f_ga-af_g\|<\ep$ for all $g\in G$ and for all $a\in F$;
\item with $f=\sum\limits_{g\in G}f_g$, we have $d_\tau(1-f)<\ep$ for all $\tau\in T(A)$.\ee\end{rem}

\begin{cor}\label{cor: RdimTRp}
Let $A$ be a simple, unital, infinite dimensional \ca\ with tracial rank zero and at most countably
many extreme tracial states. Let $G$ be a finite group and let $\alpha\colon G\to
\Aut(A)$ be an action. If $\cdimRok(\alpha)<\I$, then $\alpha$ has the tracial Rokhlin property.\end{cor}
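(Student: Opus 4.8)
The plan is to derive this from \autoref{thm: RdimwTRp} by promoting the positive contractions furnished by the weak tracial Rokhlin property to mutually orthogonal projections, exploiting the extra regularity that tracial rank zero provides. First I would record the structural consequences of $\dim_{\mathrm{Rok}}$-unrelated hypothesis $\mathrm{TR}=0$ that will be used: a simple, unital \ca\ of tracial rank zero has real rank zero and stable rank one, has strict comparison of positive elements, and has at most countably many extreme $2$-quasitraces, these coinciding with its extreme tracial states. Hence the hypotheses of \autoref{thm: RdimwTRp} are met and $\alpha$ has the weak tracial Rokhlin property. Given $\ep>0$, a finite set $F\subseteq A$, and a nonzero positive $x\in A$, I would first replace $x$ by a nonzero positive $x'\precsim x$ chosen to leave Cuntz-room below $x$, and then invoke the weak tracial Rokhlin property for a small tolerance and an enlarged finite set to obtain positive contractions $a_g$, $g\in G$, that are approximately $\alpha$-equivariant, approximately central, satisfy $\|a_ga_h\|$ small for $g\neq h$, and have $1-a\precsim x'$, where $a=\sum_g a_g$.

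The delicate point is that the projections must simultaneously be honest projections, be \emph{exactly} mutually orthogonal, be approximately central, and leave a Cuntz-small remainder; the cleanest way to decouple these is to work in a central sequence algebra. Passing to an ultrapower $A_\omega\cap A'$ along a free ultrafilter $\omega$ (as in the proof of \autoref{thm: RdimwTRp}), the $a_g$ assemble into positive contractions $\bar a_g$ that are exactly orthogonal, exactly $\alpha_\omega$-equivariant, and central, with $\bar a=\sum_g\bar a_g$ controlled by the countably many limit quasitraces $\sigma_n$ induced by the extreme traces. Using that $A$ has real rank zero, I would produce a projection $\bar e_1\in\overline{\bar a_1(A_\omega\cap A')\bar a_1}$ with $(\bar a_1-\eta)_+\precsim\bar e_1\precsim\bar a_1$ for small $\eta>0$ (hereditary subalgebras of a real-rank-zero algebra have approximate units of projections). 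Being in the relative commutant, $\bar e_1$ is automatically central, which is exactly why the centrality condition becomes free in this picture; concretely $\bar e_1$ is lifted from projections $e_1^{(m)}\in\overline{a_1^{(m)}Aa_1^{(m)}}$ chosen by real rank zero and made to almost commute with the finite test sets, the Cuntz sandwich being arranged against each $\sigma_n$ by a diagonal argument over $\omega$.

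Next I would translate, setting $\bar e_g=(\alpha_\omega)_g(\bar e_1)$, so that $(\alpha_\omega)_g(\bar e_h)=\bar e_{gh}$ holds exactly. Since $\bar e_g\in\overline{\bar a_g(A_\omega\cap A')\bar a_g}$ and the $\bar a_g$ are orthogonal, the hereditary subalgebras are orthogonal and hence $\bar e_g\bar e_h=0$ exactly, so $\bar e=\sum_g\bar e_g$ is a projection. For the remainder, the elementary inequality $1-d_\tau(a)\le d_\tau(1-a)$ (valid because the kernel projection of $a$ is dominated by the support projection of $1-a$), together with the sandwich $(\bar a_g-\eta)_+\precsim\bar e_g\precsim\bar a_g$, gives for each $n$
\[\sigma_n(1-\bar e)=1-\sum_g\sigma_n(\bar e_g)\approx 1-\sum_g d_{\sigma_n}(\bar a_g)=1-d_{\sigma_n}(\bar a)\le d_{\sigma_n}(1-\bar a)<d_{\sigma_n}(x').\]
By strict comparison $1-\bar e\precsim x'$, and after reflecting back to $A$ the slack between $x'$ and $x$ absorbs the approximation errors.

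Finally, choosing a representative sequence turns the exact relations in $A_\omega\cap A'$ into approximate relations in $A$: the $e_g^{(m)}$ are approximately $\alpha$-equivariant, approximately central, approximately orthogonal projections with $1-\sum_g e_g^{(m)}\precsim x$. A standard perturbation replaces an almost-orthogonal family of projections by an exactly orthogonal one at the cost of an arbitrarily small norm change, preserving the other approximate conditions; condition (e) of \autoref{df: tRp wtRp} is automatic since $A$ is finite and infinite dimensional. This yields the tracial Rokhlin property. I expect the main obstacle to be precisely the bookkeeping in the projection-extraction step: obtaining a genuine projection of the correct dimension-function value \emph{uniformly} over the (possibly infinitely many) traces while keeping it central and compatible with the group translation. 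The two facts that make this work are the real-rank-zero/stable-rank-one structure of tracial-rank-zero algebras (which supplies the projections and the identity $d_\tau(a)=\sup\{\tau(p):p\in\overline{aAa}\ \text{a projection}\}$) and the countability of the extreme traces together with the ultrafilter, which converts the uniform trace estimate into a countable diagonalization.
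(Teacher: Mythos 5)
Your first step is exactly the paper's: verify that tracial rank zero supplies the hypotheses of \autoref{thm: RdimwTRp} (finiteness, strict comparison, identification of quasitraces with traces) and conclude the weak tracial Rokhlin property. But from that point the paper's proof is a single citation: Theorem~1.9 of \cite{Phi_finite_2015} states precisely that for an infinite dimensional, simple, unital \ca\ with tracial rank zero, the weak tracial Rokhlin property implies the tracial Rokhlin property. What you propose instead is a from-scratch proof of that cited theorem, and the sketch has a genuine gap at its central step.

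The gap is the projection-extraction claim. You assert that, since $A$ has real rank zero, you can find a projection $\bar e_1\in\overline{\bar a_1(A_\omega\cap A')\bar a_1}$ with $(\bar a_1-\eta)_+\precsim\bar e_1\precsim\bar a_1$, and that centrality is then ``free'' because $\bar e_1$ lies in the relative commutant. This presupposes that the hereditary subalgebra of $A_\omega\cap A'$ generated by $\bar a_1$ contains (large) projections, i.e.\ that real rank zero, or at least property (SP) with dimension-function control, passes from $A$ to $A_\omega\cap A'$. That is not a formal consequence of $\mathrm{RR}(A)=0$: real rank zero does not pass to relative commutants in sequence algebras in any obvious way. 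Your concrete fallback --- choose projections $e_1^{(m)}\in\overline{a_1^{(m)}Aa_1^{(m)}}$ by real rank zero in $A$ and then have them ``made to almost commute with the finite test sets'' --- is exactly where the argument breaks: a projection in the hereditary subalgebra generated by an almost-central positive element has no reason to be almost central itself, and you offer no mechanism to repair this. The statement that almost-central positive elements in a tracial-rank-zero algebra dominate almost-central projections with the right trace values is true, but it is a theorem whose proof requires the tracial (TAF) approximation structure --- approximating by finite dimensional subalgebras with small complement and building the projections there --- and it is, in essence, the content of the very result of Phillips that the paper cites. As written, your proof assumes the hard step it set out to prove; the remaining ingredients (exact equivariance by translation in the ultrapower, orthogonality from orthogonal hereditary subalgebras, the inequality $1-d_\tau(\bar a)\leq d_\tau(1-\bar a)$, strict comparison applied back in $A$ after a diagonal argument over the countably many extreme traces) are fine, but they all hang on that missing lemma.
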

\begin{proof} It follows from \autoref{thm: RdimwTRp} that $\alpha$ has the weak tracial Rokhlin property.
Since $A$ has tracial rank zero and is infinite dimensional, Theorem~1.9 in \cite{Phi_finite_2015} implies that
$\alpha$ has the tracial Rokhlin property.
\end{proof}

\autoref{thm: RdimwTRp} and \autoref{cor: RdimTRp} hold in greater generality for actions on Kirchberg algebras. Indeed, in this case, finite Rokhlin dimension
(without commuting towers) is in fact equivalent to the (weak) tracial Rokhlin property, and moreover
equivalent to pointwise outerness; see \autoref{thm:outertRp}.

\begin{cor}\label{cor:RdimTAF} Finite group actions with finite Rokhlin dimension with commuting towers preserve the class of simple, nuclear,
unital, separable \ca s with tracial rank zero, as long as the algebra has at most countably many extreme tracial states.\end{cor}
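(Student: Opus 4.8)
The plan is to reduce, via \autoref{cor: RdimTRp}, to the tracial Rokhlin property, and then to invoke the permanence of tracial rank zero under the tracial Rokhlin property proved by Phillips in~\cite{Phi_tracial_2011}. Before doing so I would dispose of the finite-dimensional case. If $A$ is finite dimensional and simple, then $A\cong M_n$ for some $n$; every automorphism of $A$ is then inner, so the induced action $F(\alpha)$ on $F(A)=A_\I\cap A'$ is trivial. Any equivariant order zero map $\varphi_j\colon(C(G),\texttt{Lt})\to (F(A),F(\alpha))$ would send the minimal projections of $C(G)$ to mutually orthogonal positive elements which, by equivariance and triviality of $F(\alpha)$, are all equal; hence they vanish, forcing $\varphi_j(1)=0$ and contradicting $\sum_{j}\varphi_j(1)=1$ unless $G$ is trivial. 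Thus $\cdimRok(\alpha)<\I$ forces $G=\{1\}$ in this case, where $A\rtimes_\alpha G=A$ is already in the class. We may therefore assume $A$ is infinite dimensional.

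For infinite-dimensional $A$, the hypotheses match those of \autoref{cor: RdimTRp}: $A$ is simple, unital, infinite dimensional, of tracial rank zero, and has at most countably many extreme tracial states. That corollary shows $\alpha$ has the tracial Rokhlin property in the sense of \autoref{df: tRp wtRp}(2), with genuine projections. I would then apply the crossed-product permanence results of~\cite{Phi_tracial_2011}: for a finite group action with the tracial Rokhlin property on a simple, separable, unital $C^*$-algebra of tracial rank zero, the crossed product $A\rtimes_\alpha G$ is again simple, unital, and of tracial rank zero. Separability of $A\rtimes_\alpha G$ is clear since $G$ is finite, and nuclearity follows from nuclearity of $A$ together with finiteness (hence amenability) of $G$. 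Thus $A\rtimes_\alpha G$ lies in the class. For the fixed-point algebra, I would use the isomorphism $A^\alpha\cong e(A\rtimes_\alpha G)e$, where $e=\tfrac{1}{|G|}\sum_{g\in G}u_g$ is the averaging projection; since $A\rtimes_\alpha G$ is simple, this is a full unital corner, and simplicity, nuclearity and tracial rank zero all pass to it, so $A^\alpha$ is in the class as well.

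The substantive mathematics is carried by the two cited results, so the only real work is aligning their hypotheses, and I expect no essential obstacle beyond this bookkeeping. The points requiring care are the infinite-dimensional reduction carried out above, and the verification that the tracial Rokhlin property delivered by \autoref{cor: RdimTRp} is exactly the projection version required by Phillips's theorem, which it is.
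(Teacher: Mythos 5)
Your proof is correct and follows essentially the same route as the paper: reduce via \autoref{cor: RdimTRp} to the tracial Rokhlin property and then invoke Theorem~2.6 of~\cite{Phi_tracial_2011}. Your additional handling of the finite-dimensional case (needed since \autoref{cor: RdimTRp} assumes infinite dimensionality, a point the paper's one-line proof glosses over) and of the fixed-point algebra as a full corner are correct refinements, not a different method.
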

\begin{proof} The result follows immediately from \autoref{cor: RdimTRp},
together with Theorem~2.6 in \cite{Phi_tracial_2011}.\end{proof}

One advantage of \autoref{cor: RdimTRp} is that finite Rokhlin dimension with commuting towers is sometimes
easier to establish than the tracial Rokhlin property. Cyclic group actions on higher dimensional noncommutative
tori provide one instance where this is the case; see \autoref{prop:ZnAThetaRdim1}. We need some preparation first.

Let $d\in\N$ and let $\Theta=(\theta_{jk})_{1\leq j,k\leq d} \in M_d(\R)$ be a skew symmetric matrix. The higher
dimensional noncommutative torus
$A_\Theta$ is the universal \uca\ generated by unitaries $u_j$, for $1\leq j\leq d$, satisfying the commutation
relations $u_ju_k=e^{2\pi i \theta_{jk}}u_ku_j$ for $1\leq j,k\leq d$. Define a rotation map $h_\Theta\colon \Z^d\to \T^d$
by
\[h_\Theta(m)=\left(e^{2\pi i \sum\limits_{j=1}^d \theta_{1j}m_j},\ldots, e^{2\pi i \sum\limits_{j=1}^d \theta_{dj}m_j}\right)\]
for $m=(m_1,\ldots,m_d)\in\Z^d$. It is clear that $h_\Theta$ is a group homomorphism.

Recall that a matrix $\Theta\in M_d(\R)$ as above is said to be \emph{nondegenerate} if whenever $x\in\Z^d$ satisfies
$e^{2\pi i \langle x,\theta y\rangle }=1$ for all $y\in\Z^d$, then $x=0$. (This condition is
equivalent to the rows of $\Theta$ forming a rationally linearly independent set in $\R^d$.)
The case $d=1$ of the following lemma is well known. Recall that given $n\in\Z^d$, there exists a (continuous) group homomorphism $\gamma_n\colon \T^d\to \C$,
given by $\gamma_n(z_1,\ldots,z_d)=z_1^{n_1}\cdots z_d^{n_d}$ for all $z=(z_1,\ldots,z_d)\in \T^d$, and that every continuous homomorphism $\T^d\to \C$
has this form (this is a restatement of Pontryagin duality for the group $\T^d$). Moreover, $\gamma_n$ is nontrivial if and only if $n\neq 0$.

The following lemma is folklore, but since we were unable to find a reference, we include a proof for the convenience of the reader.

\begin{lma}\label{lma:hThetaDense}
Adopt the notation of the discussion above. Then $\Theta$ is nondegenerate if and only if $h_\Theta$ has dense range.
\end{lma}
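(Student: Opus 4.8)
The plan is to deduce the statement from Pontryagin duality for the compact abelian group $\T^d$, converting the density of the range of $h_\Theta$ into a condition on the characters that annihilate it. Since $h_\Theta$ is a group homomorphism, its image $\range(h_\Theta)$ is a subgroup of $\T^d$, so its closure $H=\overline{\range(h_\Theta)}$ is a closed subgroup, and $h_\Theta$ has dense range precisely when $H=\T^d$. The whole argument is then a matter of computing which characters kill $H$.

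First I would invoke the annihilator duality: a closed subgroup $H\subseteq \T^d$ equals $\T^d$ \ifo the only continuous character of $\T^d$ restricting trivially to $H$ is the trivial one. The nontrivial implication here is that a \emph{proper} closed subgroup admits a nonzero character vanishing on it; this holds because $\T^d/H$ is then a nontrivial compact abelian group, which therefore carries a nontrivial character, and this character pulls back to a nontrivial character of $\T^d$ trivial on $H$. Using the description of characters recalled just before the lemma, every continuous character of $\T^d$ has the form $\gamma_n$ for a unique $n\in\Z^d$, and $\gamma_n$ is trivial on $H$ \ifo the homomorphism $\gamma_n\circ h_\Theta\colon\Z^d\to\C$ is trivial (continuity lets me pass between $H$ and the dense subgroup $\range(h_\Theta)$).

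The computational heart is then to evaluate $\gamma_n\circ h_\Theta$. For $m\in\Z^d$ one obtains
\[\gamma_n(h_\Theta(m))=\prod_{k=1}^d \left(e^{2\pi i \sum_{j=1}^d \theta_{kj}m_j}\right)^{n_k}=e^{2\pi i \langle n,\Theta m\rangle},\]
so $\gamma_n\circ h_\Theta$ is trivial \ifo $e^{2\pi i\langle n,\Theta m\rangle}=1$ for all $m\in\Z^d$. Combining this with the previous step, $h_\Theta$ has dense range \ifo the only $n\in\Z^d$ with $e^{2\pi i\langle n,\Theta m\rangle}=1$ for all $m\in\Z^d$ is $n=0$. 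Setting $x=n$ and $y=m$, this is verbatim the definition of nondegeneracy of $\Theta$, which completes the equivalence in both directions simultaneously.

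I do not expect a serious obstacle: once the problem is phrased via the closure of the range, the biconditional falls out symmetrically from duality and the one-line character computation. The only step requiring care is the annihilator/duality input — specifically the claim that a proper closed subgroup of $\T^d$ admits a nontrivial character vanishing on it — and the bookkeeping of staying with \emph{continuous} characters throughout, so that the triviality of $\gamma_n$ on the dense image matches its triviality on the closed subgroup $H$.
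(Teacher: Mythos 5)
Your proof is correct and follows essentially the same route as the paper's: both reduce density of the range to the closure $H=\overline{\range(h_\Theta)}$ being all of $\T^d$, invoke Pontryagin/annihilator duality to characterize properness of $H$ by the existence of a nontrivial character $\gamma_n$ killing it, and then compute $\gamma_n\circ h_\Theta$ to match the definition of nondegeneracy. The only differences are cosmetic: you supply the proof of the annihilator step (via the quotient $\T^d/H$) that the paper merely observes, and you skip the paper's transposition/basis-vector bookkeeping since your expression $e^{2\pi i\langle n,\Theta m\rangle}$ already matches the definition verbatim.
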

\begin{proof}
Set $G=\overline{h_{\Theta}(\Z^d)}$, which is a closed subgroup of $\T^d$.
Observe that $G$ is proper if and only if there is $n\in\Z^d\setminus\{0\}$ such that $\gamma_n(G)=\{1\}$. Continuity of $\gamma_n$
implies that this is in turn equivalent to $\gamma_n(h_{\theta}(m))=1$ for all $m\in\Z^d$. By definition, we have
\begin{align*}
\gamma_n(h_\Theta(m))&=\gamma_n\left(e^{2\pi i \sum\limits_{j=1}^d \theta_{1j}m_j},\ldots, e^{2\pi i \sum\limits_{j=1}^d \theta_{dj}m_j}\right)\\
&=e^{2\pi i \left[n_1\sum\limits_{j=1}^d \theta_{1j}m_j+\cdots +n_d\sum\limits_{j=1}^d \theta_{dj}m_j\right]}\\
&=e^{2\pi i \left[m_1\sum\limits_{k=1}^d n_k\theta_{k1}+\cdots +m_d\sum\limits_{k=1}^d n_k\theta_{kd}\right]}.
\end{align*}
For $j=1,\ldots,d$, let $e_j\in\Z^d$ be the canonical $j$-th basis vector.
Since $h_\Theta$ is a group homomorphism, one has $\gamma_n(h_{\theta}(m))=1$ for all $m\in\Z^d$ if and only if
$\gamma_n(h_{\theta}(e_j))=1$ for all $j=1,\ldots,d$. Using the computation above, one sees that this is the case if
and only if $\sum\limits_{k=1}^d n_k\theta_{kj}$ is an integer for all $j=1,\ldots,d$.

Summing up, we argued that $h_\Theta$ has dense range if and only if there does not exist a nonzero $n\in\Z^d$ such that
$n\Theta$ belongs to $\Z^d$, which is precisely the definition of nondegeneracy for $\Theta$. This finishes the proof.
\end{proof}




Next, we use ideas from \cite{Kis_oneparameter_1996} to show that certain quasifree actions of
finite cyclic groups on $A_\Theta$ have Rokhlin dimension one with commuting towers. Our result
contains Example~1.12 in~\cite{HirPhi_rokhlin_2015} as a particular case. For $d\geq 2$, we write
the coordinates of a vector $r\in \Z^d$ as $(r^{(1)},\ldots,r^{(d)})$.

\begin{prop}\label{prop:ZnAThetaRdim1}
Let $\Theta\in M_d(\R)$ be a nondegenerate skew symmetric matrix, let $m_1,\ldots,m_d\in\N$ be mutually coprime positive integers, and set $m=m_1\cdots m_d$.
Suppose that $m>1$. Let $\alpha$ be the automorphism of $A_\Theta$ determined by $\alpha(u_j)=e^{2\pi i/m_j}u_j$ for $j=1,\ldots,d$. Denote
also by $\alpha$ the action of $\Z_m$ on $A_\Theta$ that it determines. Then $\cdimRok(\alpha)=1$.

In particular, $\alpha$ has the tracial Rokhlin property.
\end{prop}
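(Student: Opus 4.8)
The plan is to establish the upper bound $\cdimRok(\alpha)\le 1$ by producing, in the central sequence algebra $F_\alpha(A_\Theta)$, a single unitary that is an eigenvector for the induced action with eigenvalue $\zeta=e^{2\pi i/m}$, and then to observe that $\alpha$ is not Rokhlin, so that $\cdimRok(\alpha)=1$. The conceptual core is this: if there is a unitary $w\in F_\alpha(A_\Theta)$ with $F(\alpha)(w)=\zeta w$, then $C^*(w,1)\cong C(S^1)$ sits equivariantly inside $F_\alpha(A_\Theta)$, with $\Z_m$ acting by rotation by $1/m$. Since this free rotation action on $S^1$ admits a two-coloured system of Rokhlin towers (as $\dim(S^1)=1$ and the action is free), transporting these towers through the inclusion yields order zero maps witnessing $\cdimRok(\alpha)\le 1$. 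Equivalently, in the concrete formulation of \autoref{df:RdimFinGp}, one would set $f_g^{(\ell)}=\phi_\ell(\zeta^{-g}w)$ for suitable positive $\phi_0,\phi_1\in C(S^1)$; the exact relation $F(\alpha)(w)=\zeta w$ makes $\alpha$ rotate the functional calculus, so that the required relations hold exactly and in the commuting-tower sense.

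Building $w$ is where the hypotheses enter. I would work with the monomials $u^n=u_1^{n_1}\cdots u_d^{n_d}$, which are unitaries, and record that $\alpha(u^n)=e^{2\pi i\sum_j n_j/m_j}\,u^n$ and $\|[u^n,u_l]\|=|1-h_\Theta(n)_l|$ for each generator $u_l$. Thus $u^n$ is an eigenvector with eigenvalue $\zeta$ exactly when $\sum_j n_j/m_j\equiv 1/m\pmod 1$, and it is asymptotically central precisely when $h_\Theta(n)\to 1$ in $\T^d$. Setting $M_j=m/m_j$, the eigenvalue condition reads $\sum_j n_j M_j\equiv 1\pmod m$. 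First I would check that this congruence is solvable: since the $m_j$ are mutually coprime, no prime divides more than one of them, whence $\gcd(M_1,\dots,M_d)=1$ and $\sum_j n_j^{(0)}M_j=1$ for some $n^{(0)}\in\Z^d$; this $n^{(0)}$ already gives an exact $\zeta$-eigenunitary. It remains to correct $u^{n^{(0)}}$ so as to make it asymptotically central while preserving the eigenvalue, that is, to move within the coset $n^{(0)}+L$, where $L=\{n:\sum_j n_jM_j\equiv0\pmod m\}\supseteq m\Z^d$, driving $h_\Theta$ towards $1$.

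The main obstacle is this density statement, and it is exactly where nondegeneracy of $\Theta$ is used. I claim $h_\Theta(m\Z^d)$ is dense in $\T^d$. Indeed, the restriction of $h_\Theta$ to $m\Z^d$ is, after the reparametrisation $n\mapsto mn$, the rotation map $h_{m\Theta}$ of the skew-symmetric matrix $m\Theta$, and $m\Theta$ is again nondegenerate: if $p\in\Z^d$ satisfies $p(m\Theta)\in\Z^d$, then $(mp)\Theta\in\Z^d$, so $mp=0$ and $p=0$ by nondegeneracy of $\Theta$. Hence $h_{m\Theta}$ has dense range by \autoref{lma:hThetaDense}, proving the claim. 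Now I fix $n^{(0)}$ as above and choose $p^{(k)}\in\Z^d$ with $h_\Theta(mp^{(k)})\to h_\Theta(n^{(0)})^{-1}$; then $n^{(k)}:=n^{(0)}+mp^{(k)}$ lies in the coset (so $\alpha(u^{n^{(k)}})=\zeta\,u^{n^{(k)}}$) and satisfies $h_\Theta(n^{(k)})\to 1$. Since almost-commutation of $u^{n^{(k)}}$ with the generators $u_1,\dots,u_d$ propagates to almost-commutation with every monomial, and hence by density with all of $A_\Theta$, the sequence $(u^{n^{(k)}})_k$ defines the desired eigenunitary $w$ in $F_\alpha(A_\Theta)$ (continuity being automatic since $G$ is finite). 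This completes the proof that $\cdimRok(\alpha)\le 1$.

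Finally, for the lower bound I would verify $\cdimRok(\alpha)\neq 0$, i.e.\ that $\alpha$ does not have the Rokhlin property; this is the content (for $d=1$) of Example~1.12 in~\cite{HirPhi_rokhlin_2015}, and in general it follows from the standard $K$-theoretic obstruction that the Rokhlin property for a $\Z_m$-action on a \uca\ forces $[1]$ to be divisible by $m$ in $K$-theory, which fails for $A_\Theta$. Combining the two bounds gives $\cdimRok(\alpha)=1$. The last assertion then follows from \autoref{cor: RdimTRp}: a nondegenerate $A_\Theta$ is simple, unital, infinite dimensional, has a unique trace (hence at most countably many extreme traces) and has tracial rank zero, so finiteness of $\cdimRok(\alpha)$ yields the tracial Rokhlin property.
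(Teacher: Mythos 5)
Your upper bound is essentially the paper's argument: both proofs take monomials $u_1^{n_1}\cdots u_d^{n_d}$ with exponents running through a fixed coset of $m\Z^d$, use nondegeneracy of $m\Theta$ together with \autoref{lma:hThetaDense} to make these unitaries asymptotically central while keeping the exact eigenvalue relation, and then feed the resulting equivariant unital copy of $(C(S^1),\text{rotation})$ inside the central sequence algebra into Lemma~3.2 of~\cite{HirPhi_rokhlin_2015} (equivalently \autoref{thm:XRpRdim}) to get $\cdimRok(\alpha)\leq 1$; the deduction of the tracial Rokhlin property via \autoref{cor: RdimTRp} is also the same. If anything, your choice of coset $n^{(0)}+m\Z^d$ with $\sum_j n_j^{(0)}M_j=1$ is slightly cleaner than the paper's coset $(1,\ldots,1)+m\Z^d$, since it yields the eigenvalue $e^{2\pi i/m}$ on the nose.

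There is, however, a genuine gap in your lower bound. You invoke the ``standard obstruction'' that the Rokhlin property for a $\Z_m$-action on a \uca\ forces $[1]$ to be divisible by $m$ in $K_0$. That statement is false at this level of generality: the translation action of $\Z_2$ on $\C\oplus\C$ has the Rokhlin property (take $e_1=(1,0)$, $e_{-1}=(0,1)$), yet $[1]=(1,1)$ is not divisible by $2$ in $K_0=\Z^2$. The point is that Rokhlin projections satisfy $e_g\approx\beta_g(e_1)$, so one only gets $[1]=\sum_{g}K_0(\beta_g)([e_1])$, which equals $m[e_1]$ only if the action is trivial on $K_0$ --- something you neither state nor verify for $\alpha$ (it does hold here, since $\alpha$ is homotopic to $\id$ through the automorphisms $u_j\mapsto e^{2\pi i t/m_j}u_j$, but this needs to be said). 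The paper avoids this by using that $A_\Theta$ has a unique tracial state $\tau$ (\cite{Sla_factor_1972}); invariance of $\tau$ gives $\tau(e_g)=\tau(e_h)$ for all $g,h$, so the Rokhlin property would force $1/m\in\tau_*(K_0(A_\Theta))$. Moreover, your assertion that divisibility ``fails for $A_\Theta$'' is exactly the nontrivial input that the paper justifies by Elliott's computation of the range of the trace on $K_0(A_\Theta)$ \cite{Ell_ktheory_1984}; it cannot simply be asserted. (A smaller omission of the same kind: tracial rank zero of $A_\Theta$ is not free of charge either --- the paper gets it from Phillips' theorem that every simple higher dimensional noncommutative torus is an A$\T$-algebra \cite{Phi_every_2006}.) So the skeleton of your lower bound is right, but the two assertions carrying its weight are unjustified, and one of them is false as stated.
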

\begin{proof}
We claim that there exists a sequence $(r_n)_{n\in\N}$ in $\Z^d$ such that
\be
\item $\lim\limits_{n\to\I}\mbox{dist}(\Theta\cdot r_n,\Z^d)=0$; and
\item $r_n^{(j)}=1$ mod $m_j$ for all $j=1,\ldots,d$ and for all $n\in\N$.
\ee

As a first step, we show that $h_\Theta\left(m\Z^d+(1,\ldots,1)\right)$ is dense in $\T^d$. Since
$h_\Theta(m\Z^d)$ equals $h_{m\Theta}(\Z^d)$ and $m\Theta$ is also nondegenerate, \autoref{lma:hThetaDense}
implies that this set is dense in $\T^d$. Finally, $h_\Theta\left(m\Z^d+(1,\ldots,1)\right)=h_\Theta(m\Z^d)h_\Theta(1,\ldots,1)$ is
just a translate of $h_\Theta(m\Z^d)$, so it is also dense.

Let $(r_n)_{n\in\N}$ be a sequence in $m\Z^d+(1,\ldots,1)$ such that
$\lim\limits_{n\to\I}h_\Theta(r_n)=(1,\ldots,1)$. It is immediate that $(r_n)_{n\in\N}$ satisfies
conditions (1) and (2) above, and the claim is proved.

Let $(r_n)_{n\in\N}$ be a sequence as in the claim above. Set
\[
v_n=u_1^{r_n^{(1)}}\cdots u_d^{r_n^{(d)}},
\]
which is a unitary in $A_\Theta$. For $1\leq j\leq d$, it is easy to check that
\[v_nu_j= e^{2\pi i \sum\limits_{k=1}^d \theta_{jk}m_k}u_jv_n.\]
Since $\sum\limits_{k=1}^d \theta_{jk}m_k=(\Theta\cdot r_n)^{(j)}$, condition (1) above implies that
$(v_n)_{n\in\N}$ is a central sequence of unitaries in $A_\Theta$.

Denote by $\gamma\in\Aut(C(\T))$ the automorphism induced by rotation by the angle $e^{2\pi i (\frac{1}{m_1}+\cdots+\frac{1}{m_d})}$.
It is clear that $\gamma^m=\id$, so it determines an action of $\Z_m$ on $C(\T)$, which we also denote by $\gamma$. Using that
the integers $m_1,\ldots,m_d$ are coprime, it is easy to verify that $\gamma$ is free.

In the following computation, we use condition (2) at the second step to get
\begin{align*}
\alpha(v_n)&=\left(e^{2\pi i r_n^{(1)}/m_1}u_1^{r_n^{(1)}}\right)\cdots \left(e^{2\pi i r_n^{(d)}/m_d}u_d^{r_n^{(d)}}\right)\\
&=\left(e^{2\pi i /m_1}u_1^{r_n^{(1)}}\right)\cdots \left(e^{2\pi i /m_d}u_d^{r_n^{(d)}}\right)\\
&=e^{2\pi i \left(\frac{1}{m_1}+\cdots+\frac{1}{m_d}\right)}v_n.
\end{align*}

We conclude that $(v_n)_{n\in\N}$ determines a $\Z_m$-equivariant homomorphism
\[(C(\T),\gamma)\to (A_\I\cap A',\alpha_\infty).\]
Since $\gamma$ is free, Lemma~1.9 in~\cite{HirPhi_rokhlin_2015} (see also \autoref{thm:XRpRdim}) implies that $\cdimRok(\alpha)\leq 1$.

To show that $\cdimRok(\alpha)=1$, we must argue that $\alpha$ does not have the Rokhlin property. In fact,
we show that no nontrivial finite group can act on $A_\Theta$ with the Rokhlin property. To this end,
suppose that $G$ is a finite group and $\beta\colon G\to\Aut(A_\Theta)$ is an action with the Rokhlin property.
Find projections $e_g$ in $A_\Theta$, for $g\in G$, satisfying
\[\|\beta_g(e_h)-e_{gh}\|<1 \ \ \mbox{ and } \ \ \sum_{g\in G}e_g=1\]
for all $g,h\in G$. In particular, $\beta_g(e_h)$ is Murray-von Neumann equivalent to $e_{gh}$.
Observe that, by Lemma~3.1 in~\cite{Sla_factor_1972}, $A_\Theta$ has a unique tracial state $\tau$, which
must therefore be invariant under $\beta$. Thus $\tau(e_g)=\tau(e_h)$ for all $g,h\in G$.
We denote by $\tau_* \colon K_0(A_\Theta)\to\R$ the group homomorphism induced by $\tau$.
We deduce that $\tau([1])$ is divisible
by the cardinality of $G$ in $\tau(K_0(A_\Theta))$. However, the range of $\tau$ on projections has been computed by Elliott
in \cite{Ell_ktheory_1984}, and his computation yields that the class of the unit of $A_\Theta$ is not divisible in $K_0(A_\Theta)$.
This forces the cardinality of $G$ to be one, and the assertion follows.

For the last claim, observe that $A_\Theta$ is a simple A$\T$-algebra by the main result of \cite{Phi_every_2006}, and hence it has
tracial rank zero. Since $A_\Theta$ has a unique trace, the result is a consequence of \autoref{cor: RdimTRp}.
\end{proof}

It is not in general true that a finite group action with the tracial Rokhlin property has finite Rokhlin dimension with
commuting towers, since there are $K$-theoretic obstructions to admitting such an action.
For example, it is easy to show that the order two automorphism
$\bigotimes\limits_{n\in\N}\Ad(\diag(1,1,-1))$ of the
UHF-algebra of type $3^\I$, determines a $\Z_2$-action $\alpha\colon\Z_2\to\Aut(M_{3^\I})$ with the tracial Rokhlin property;
see \cite{Phi_tracial_2011}.
It is clear that $\alpha$ does not have the Rokhlin property since the unit of $M_{3^\I}$ is not 2-divisible in $K$-theory.
It then follows from Theorem~4.19 in \cite{Gar_rokhlin_2017} that $\alpha$ does not have finite Rokhlin dimension
with commuting towers. More generally, it is a consequence of part (2) in Corollary~4.8 in \cite{HirPhi_rokhlin_2015}
that there are
\emph{no} $\Z_2$-actions on $M_{3^\I}$ with finite Rokhlin dimension with commuting towers.

There are also more subtle obstructions, related to the equivariant $K$-theory of an action (as opposed to the $K$-theory
of the algebra), as we show in the next example. We refer the reader to Section~2 of~\cite{Phi_equivariant_1987} for the
definitions of equivariant $K$-theory and the augmentation ideal $I_G\subseteq R(G)$ of a compact group $G$.
(A quicker introduction containing the notions that are needed here, is given in Section~III.3 of~\cite{Gar_thesis_2015}.)

\begin{eg}\label{eg: BlackadarExNotXRp} In \cite{Bla_symmetries_1990}, Blackadar constructed an example of a $\Z_2$-action on the UHF-algebra $A$ of type
$2^\I$, whose crossed product is not AF. Denote this action by $\alpha\colon \Z_2\to\Aut(A)$.
Phillips showed in Proposition~3.4 of~\cite{Phi_finite_2015} that $\alpha$ has the tracial Rokhlin property.
Since $A\rtimes_\alpha\Z_2$ is not AF, the action $\alpha$ does not have the Rokhlin property, because this would otherwise
contradict Theorem~2.2 in \cite{Phi_tracial_2011}.
We claim that $\alpha$ has infinite Rokhlin dimension with commuting towers.

In order to show that $\cdimRok(\alpha)=\I$, we will show that $\alpha$ does not have discrete $K$-theory. Once we show this,
the result will then follow from Corollary~4.2 in \cite{HirPhi_rokhlin_2015}. In order to arrive at a contradiction, assume that
there exists $n\in\N$ such that $I_{\Z_2}^n\cdot K_\ast^{\Z_2}(A,\alpha)=0$. Denote by $\widehat{\alpha}\colon\Z_2\to\Aut(A\rtimes_\alpha\Z_2)$
the dual action of $\alpha$, and, by a slight abuse of notation, denote also by $\widehat{\alpha}\in \Aut(A\rtimes_\alpha \Z_2)$
the generating order two automorphism. It is immediate to check that the condition $I_{\Z_2}^n\cdot K_\ast^{\Z_2}(A,\alpha)=0$ is equivalent to
\[ \left(\id_{K_\ast(A\rtimes_\alpha G)}-K_\ast(\widehat{\alpha})\right)^n=0.\]

The proof of Proposition~3.5 in \cite{Phi_finite_2015}
shows that there exists $x\in K_1(A\rtimes_\alpha\Z_2)$ with $x\neq 0$ such that $K_1(\widehat{\alpha})(x)=-x$.
It is shown in Proposition~5.4.1 in \cite{Bla_symmetries_1990} that $A\rtimes_\alpha \Z_2$ is isomorphic to the tensor product of the
Bunce-Deddens algebra of type $2^\I$ with $A$, so in particular $K_1(A\rtimes_\alpha\Z_2)$ does not have any 2-torsion.
We conclude that
\[\left(\id_{K_\ast(A\rtimes_\alpha \Z_2)}-K_\ast(\widehat{\alpha})\right)^m(x)=2^mx\neq 0\]
for all $m\in\N$, contradicting the fact that $\alpha$ has discrete $K$-theory. This contradiction, together with
Corollary~4.2 in \cite{HirPhi_rokhlin_2015}, shows that $\alpha$ does not have finite Rokhlin dimension with commuting towers.
\end{eg}

Next, we show that \autoref{thm: RdimwTRp} and \autoref{cor: RdimTRp} hold in greater generality for finite group
actions on Kirchberg algebras. We emphasize that there are no UCT assumptions in the next result.

\begin{thm}\label{thm:outertRp}
Let $G$ be a finite group, let $A$ be a unital Kirchberg algebra, and let $\alpha\colon G\to\Aut(A)$ be an action. Then
the following are equivalent:
\be
\item $\alpha$ has the tracial Rokhlin property;
\item $\alpha$ has the weak tracial Rokhlin property;
\item $\dimRok(\alpha)\leq 1$;
\item $\alpha$ is pointwise outer (that is, $\alpha_g$ is not inner for all $g\in G\setminus\{1\}$).
\ee
\end{thm}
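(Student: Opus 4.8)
The plan is to establish the cyclic chain $(1)\Rightarrow(2)\Rightarrow(4)\Rightarrow(1)$ together with the equivalence $(3)\Leftrightarrow(4)$. The implication $(1)\Rightarrow(2)$ is immediate, since orthogonal projections are positive contractions and conditions (a)--(e) of \autoref{df: tRp wtRp} are literally the same for both properties. The essential analytic input throughout is Kishimoto's lemma from \cite{Kis_outer_1981}: for a pointwise outer action of a finite group on a simple \uca, given a finite set $F$, a tolerance $\ep>0$, and any nonzero hereditary subalgebra, it produces a norm-one positive contraction $e$ lying in that subalgebra which almost commutes with $F$ and satisfies $\|e\,\alpha_g(e)\|<\ep$ for all $g\neq 1$ simultaneously.

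Both implications into $(4)$ are elementary. For $(2)\Rightarrow(4)$, suppose $\alpha_{g_0}=\Ad(u)$ with $u\in\U(A)$ and $g_0\neq 1$, and apply the weak tracial Rokhlin property with $u\in F$. Conditions (a) and (c) give $\|a_{g_0}-a_1\|<2\ep$ (using $\alpha_{g_0}(a_1)=ua_1u^*\approx a_1$), while (b) gives $\|a_{g_0}a_1\|<\ep$; hence $\|a_1\|^2=\|a_1^2\|<3\ep$, so $\|a_g\|<\ep+\sqrt{3\ep}$ for all $g$ by (a), forcing $\|axa\|\leq\|a\|^2\|x\|\to 0$ and contradicting (e). For $(3)\Rightarrow(4)$, observe that if $\alpha_{g_0}=\Ad(u)$ then the induced automorphism $F(\alpha)_{g_0}$ of $F_\alpha(A)$ is trivial, so for any equivariant order zero map $\varphi_j\colon(C(G),\texttt{Lt})\to F_\alpha(A)$ the images $p_{j,h}=\varphi_j(\delta_h)$ of the point masses satisfy both $p_{j,g_0h}=F(\alpha)_{g_0}(p_{j,h})=p_{j,h}$ and $p_{j,g_0h}p_{j,h}=0$; thus $p_{j,h}^2=0$, all $p_{j,h}$ vanish, and $\sum_j\varphi_j(1)=0\neq 1$, contradicting $\dimRok(\alpha)<\I$.

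For $(4)\Rightarrow(1)$ I would apply Kishimoto's lemma inside $\overline{p_0Ap_0}$, where $p_0=\chi_{(1-\delta,1]}(x)$ is a top spectral projection of the given $x$, enlarging $F$ to $\bigcup_g\alpha_g(F)$; then replace $e$ by a projection $e_1\leq p_0$ using \rrz\ and set $e_g=\alpha_g(e_1)$, perturbing to genuinely orthogonal projections. These are exactly equivariant and almost central, giving (a)--(c). Condition (d) is automatic since $A$ is purely infinite simple, so the nonzero remainder is Cuntz subequivalent to $x$; and condition (e) follows from the support choice, because $e_1\leq p_0$ yields $e_1xe_1\geq(1-\delta)e_1$ and $e_1a\approx e_1$ (orthogonality), whence $\|axa\|\geq\|e_1(axa)e_1\|=\|(e_1a)x(ae_1)\|\geq 1-\delta-\ep'$. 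Together with $(1)\Rightarrow(2)\Rightarrow(4)$ this gives $(1)\Leftrightarrow(2)\Leftrightarrow(4)$.

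The remaining implication $(4)\Rightarrow(3)$ is the step I expect to be the main obstacle. The first tower is routine: a Kishimoto sequence represents a positive contraction $E\in F_\alpha(A)$ with orthogonal translates, and $\varphi_0(\delta_g)=F(\alpha)_g(E)$ is an equivariant order zero map whose unit image $S_0=\sum_gF(\alpha)_g(E)$ is $F(\alpha)$-invariant. The difficulty is the second colour: one must cover the invariant remainder $1-S_0$ by a second equivariant tower whose translates are orthogonal, which amounts to equipping the corner $(1-S_0)F_\alpha(A)(1-S_0)$ with an honest free $G$-Rokhlin partition. This is impossible globally --- there are $K$-theoretic obstructions to the Rokhlin property --- and the point is to exploit pure infiniteness, which supplies infinite projections and enough room to realize such a partition on the remainder, mirroring how the $1$-dimensional free $G$-complex $X_{G,1}$ can be $2$-coloured so that within each colour the pieces are freely permuted and disjoint. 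Making this covering precise, so that the two towers sum to $1$ while only the translates within each tower are required to be orthogonal, is the crux; the rest is then routine, and $(3)\Rightarrow(4)$ closes the equivalence.
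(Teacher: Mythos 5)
Your implications $(1)\Rightarrow(2)$, $(2)\Rightarrow(4)$ and $(3)\Rightarrow(4)$ are all correct, and your direct arguments for the latter two are pleasant substitutes for the citations the paper uses (your observation that an inner $\alpha_{g_0}$ acts trivially on $F_\alpha(A)$, so that order zero images of the point masses $\delta_h$ square to zero and vanish, is exactly the right mechanism). The fatal problem is $(4)\Rightarrow(3)$: you do not prove it, and you say so yourself. The paper does not prove it from scratch either --- it quotes the equivalence of (3) and (4) from Theorem~4.20 of~\cite{Gar_rokhlin_2017}, which is a substantial external result. Without either that citation or an actual construction of the second tower covering the invariant remainder $1-S_0$ (which is precisely where all the work lies), what you have established is only $(1)\Rightarrow(2)\Rightarrow(4)$ and $(3)\Rightarrow(4)$, so the theorem is not proved.

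There is also a genuine flaw in your $(4)\Rightarrow(1)$ step. You pass from the Kishimoto element $e$ to a projection $e_1$ ``using real rank zero'' and claim $e_1$ is still almost central. Almost-centrality does not survive this passage: if $e'$ is a finite-spectrum perturbation of $e$, the spectral projection of $e'$ at the top of its spectrum is a function of $e'$ whose modulus of continuity is governed by the spectral gaps of $e'$, over which you have no control, so $\|[e_1,a]\|$ cannot be estimated in terms of $\|[e,a]\|$. The paper's proof is engineered to avoid exactly this point: by Goldstein--Izumi \cite{GolIzu_quasifree_2011} there is an equivariant isomorphism $(A,\alpha)\cong\big(A\otimes\bigotimes_{n\geq 1}\OI,\ \alpha\otimes\bigotimes_{n\geq 1}\gamma\big)$ with $\gamma$ pointwise outer; the finite set $F$ and the element $x$ are approximated by elements of a finite tensor stage, and Kishimoto's lemma \cite{Kis_outer_1981} is applied inside a fresh $\OI$ tensor factor. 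The spectral projection $q$ taken there (using real rank zero of $\OI$) commutes \emph{exactly} with the approximants, so no commutator estimate has to survive the spectral cut-off; the only estimate that must survive is the orthogonality $\|q\gamma_g(q)\|<\ep$, and it does because $q=fqf$. If you insist on a direct argument inside $A$, the viable route is to work in $F(A)=A_\I\cap A'/\Ann(A,A_\I)$, which is purely infinite and simple for Kirchberg algebras by Kirchberg \cite{Kir_central_2006}, take there an exact (not approximate) Kishimoto element with orthogonal translates, and choose an honest projection in the hereditary subalgebra it generates; but that is a different argument from the one you wrote.
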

\begin{proof}
It is clear that (1) implies (2). That (2) implies (4) is well known; see for example
Lemma~VI.8 in~\cite{Arc_crossed_2008} or Proposition~5.3 in~\cite{HirOro_tracially_2013}.
(These implications hold without restrictions on the
algebra $A$.) In turn, the equivalence between (3) and (4) was proved in
Theorem~4.20 of~\cite{Gar_rokhlin_2017}. It remains to show that (3) implies (1).

Assume that $\alpha$ is pointwise outer. By Theorem~5.1 in~\cite{GolIzu_quasifree_2011}, there are a pointwise outer action $\gamma\colon G\to\Aut(\OI)$ and
an equivariant isomorphism
\[(A,\alpha)\cong \left(A\otimes\bigotimes\limits_{n=1}^\I \OI,\alpha\otimes\bigotimes\limits_{n=1}^\I \gamma\right).\]
(In fact, any outer quasifree action $\gamma$ will do.) We identify these two $G$-algebras in the sequel. Also, for $k\in\N$, we
regard $A\otimes\bigotimes_{n=1}^k \OI$ canonically as a subalgebra of $A\otimes\bigotimes_{n=1}^\I \OI$.

Let $\ep>0$, let $F\subseteq A\otimes\bigotimes\limits_{n=1}^\I \OI$ be a finite set, and let $x\in A\otimes\bigotimes\limits_{n=1}^\I \OI$
be a nonzero positive contraction.
Find $m\in\N$, a finite set $\widetilde{F}\subseteq A\otimes \bigotimes\limits_{n=1}^m\OI$ and a positive contraction $y\in
A\otimes \bigotimes\limits_{n=1}^m\OI$
such that:
\bi\item for every $a\in F$ there exists $b\in \widetilde{F}$ with $\|a-b\|<\ep$, and
\item $\|x-y\|<\ep$.\ei

Since $\gamma_g$ is outer for all $g\in G\setminus\{1\}$, Lemma~1.9 in~\cite{Kis_outer_1981} implies that there exists a positive contraction $f\in \OI$
with $\|f\gamma_g(f)\|<\ep$ for all $g\in G\setminus\{1\}$. Since $\OI$ has real rank zero, we may assume that $f$ has finite spectrum,
and that $1\in \spec(f)$. Let $q\in \OI$ be the spectral projection corresponding to $1\in \spec(f)$. Then $qfq=q$ and thus
\[\|q\gamma_g(q)\|=\|fqf\gamma_g(f)\gamma_g(q)\gamma_g(f)\|<\ep\]
for all $g\in G\setminus\{1\}$. By slightly perturbing $q$, we may assume without loss of generality that $q\gamma_g(q)=0$
It follows that $\|\gamma_h(q)\gamma_g(q)\|<\ep$ for all $g,h\in G$ with $g\neq h$.

Given $g\in G$, set
\[e_g=1_A\otimes \underbrace{1_{\OI}\otimes\cdots\otimes 1_{\OI}}_\text{$m$ times}\otimes \gamma_g(q)\in A\otimes\bigotimes\limits_{n=1}^{m+1} \OI\subseteq A\otimes\bigotimes\limits_{n=1}^\I \OI.\]
We claim that the projections $e_g$, for $g\in G$, satisfy the conditions in part~(2) of~\autoref{df: tRp wtRp} for $\ep$,
$F$ and $x$. It is clear that $\alpha_g(e_h)=e_{gh}$ for all $g,h\in G$, so the first condition
is satisfied. Clearly $\|e_ge_h\|<\ep$ for all $g,h\in G$, so condition (b) follows.
Given $a\in F$, choose $b\in \widetilde{F}$ satisfying $\|a-b\|<\ep$. For $g\in G$, we have
\[\|e_ga-ae_g\|\leq \|a-b\|+\|e_gb-be_g\|<\ep,\]
because $e_g$ commutes with $b$, so condition (c) is also satisfied.
Condition (d) is automatic since any two nonzero positive elements in a purely infinite simple \ca\ are
Cuntz equivalent. Finally, to check condition (e), recall that $y\in A\otimes \bigotimes\limits_{n=1}^m\OI$ satisfies $\|x-y\|<\ep$. Set $e=\sum\limits_{g\in G}e_g$. Then
$e$ is a nonzero projection in $\OI$, and $eye=ey$ can be represented as
\[\sum\limits_{g\in G}y\otimes q_g\in A\otimes\bigotimes\limits_{n=1}^{m} \OI\otimes \OI\subseteq A.\]
We deduce that
\[\|exe\|\geq \left|\|eye\|-\|x-y\|\right|>\|ey\|-\ep=\|e\|\|y\|-\ep=1-\ep,\]
as desired. This shows that $\alpha$ has the tracial Rokhlin property, and finishes the proof.
\end{proof}

\section{Structure of the crossed product}

In this section, we study crossed products by actions of compact groups with finite
Rokhlin dimension with commuting towers. The main result of this section, \autoref{thm:preservationCP},
shows that a number of structural properties are inherited under formation of crossed products by
such actions. We also point out, in \autoref{rmk:nonCommTws}, that the formulation of Rokhlin dimension
without commuting towers is not enough to obtain most of these conclusions. Strict comparison is also inherited,
as long as the action has Rokhlin dimension with commuting towers at most one, and the $C^*$-algebra has ``no $K_1$-obstructions'';
see \autoref{thm:StrCompRdim1} for the precise formulation.
Our results are used to construct an action $\alpha$ of $\Z_2$ on a UCT Kirchberg algebra with
$\cdimRok(\alpha)=2$ and $\dimRok(\alpha)=1$; see \autoref{eg:cdimRokdimRokDifferent}. To our best knowledge, this is
the first example of a group action on a simple \ca\ with Rokhlin dimension other than 0, 1 or $\infty$.

At the core of these results is the fact that if $\alpha\colon G\to\Aut(A)$ has finite Rokhlin
dimension with commuting towers, then $A\rtimes_\alpha G$ can be locally approximated by a certain
continuous $C(X)$-algebra with fibers isomorphic to $A\otimes\K(L^2(G))$; see \autoref{prop:ApproxCP}.
Moreover, the space $X$ can be chosen to satisfy $\dim(X)<\I$ whenever $\dim(G)<\I$; see \autoref{thm:XRpRdim}.


\vspace{0.3cm}

The following is well known, but we include a proof for the convenience of the reader.

\begin{prop}\label{prop:CGAconjugate}
Let $A$ be a \ca, let $G$ be a compact group, and let $\alpha\colon G\to\Aut(A)$ be a continuous action.
Then there is an equivariant isomorphism
\[\theta\colon (C(G,A),\texttt{Lt}\otimes\alpha)\to (C(G,A),\texttt{Lt}\otimes\id_A)\]
given by $\theta(f)(g)=\alpha_{g^{-1}}(f(g))$ for all $f\in C(G,A)$ and all $g\in G$.

In particular, there is a natural identification
\[C(G,A)\rtimes_{\texttt{Lt}\otimes\alpha}G\cong A\otimes\K(L^2(G)).\]\end{prop}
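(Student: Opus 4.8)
The plan is to verify directly that the stated map $\theta$ is an equivariant $*$-isomorphism, and then to deduce the identification of the crossed product by passing to the untwisted picture and invoking the classical computation of $C(G)\rtimes_{\texttt{Lt}}G$.

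First I would check that $\theta$ is well defined, that is, that $\theta(f)$ lies in $C(G,A)$. Continuity of $g\mapsto\alpha_{g^{-1}}(f(g))$ does not require joint continuity of the action: writing $\alpha_{g^{-1}}(f(g))-\alpha_{g_0^{-1}}(f(g_0))=\alpha_{g^{-1}}\big(f(g)-f(g_0)\big)+\big(\alpha_{g^{-1}}-\alpha_{g_0^{-1}}\big)\big(f(g_0)\big)$, the first summand has norm $\|f(g)-f(g_0)\|$ since each $\alpha_{g^{-1}}$ is isometric, and the second is controlled by point-norm continuity of the action at the fixed element $f(g_0)$ together with continuity of inversion. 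That $\theta$ is a $*$-homomorphism is then immediate, as each $\alpha_{g^{-1}}$ is a $*$-homomorphism and $\theta$ acts fibrewise; and $\theta$ is bijective with inverse $f\mapsto\big(g\mapsto\alpha_g(f(g))\big)$, so it is a $*$-isomorphism.

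Next I would verify equivariance. Unwinding the definitions, for $f\in C(G,A)$ and $h,g\in G$ one has $(\texttt{Lt}\otimes\alpha)_h(f)(g)=\alpha_h\big(f(h^{-1}g)\big)$ and $(\texttt{Lt}\otimes\id_A)_h(f)(g)=f(h^{-1}g)$. A short computation then gives
\[\theta\big((\texttt{Lt}\otimes\alpha)_h(f)\big)(g)=\alpha_{g^{-1}}\alpha_h\big(f(h^{-1}g)\big)=\alpha_{g^{-1}h}\big(f(h^{-1}g)\big),\]
while, using $(h^{-1}g)^{-1}=g^{-1}h$,
\[(\texttt{Lt}\otimes\id_A)_h\big(\theta(f)\big)(g)=\theta(f)(h^{-1}g)=\alpha_{g^{-1}h}\big(f(h^{-1}g)\big).\]
Since the two agree for all $g$, this proves $\theta\circ(\texttt{Lt}\otimes\alpha)_h=(\texttt{Lt}\otimes\id_A)_h\circ\theta$, so $\theta$ is an equivariant isomorphism; conceptually, it simply untwists the $\alpha$-part of the diagonal action.

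For the last statement, the equivariant isomorphism $\theta$ induces an isomorphism $C(G,A)\rtimes_{\texttt{Lt}\otimes\alpha}G\cong C(G,A)\rtimes_{\texttt{Lt}\otimes\id_A}G$ (compactness, hence amenability, of $G$ makes the maximal crossed product coincide with the reduced one, so no regularity issues arise). Because the second action is trivial on the $A$-factor and $C(G)$ is nuclear, this crossed product factors as $\big(C(G)\rtimes_{\texttt{Lt}}G\big)\otimes A$. I would then invoke the classical identification $C(G)\rtimes_{\texttt{Lt}}G\cong\K(L^2(G))$, realized through the covariant pair consisting of multiplication operators and the left regular representation on $L^2(G)$, whose integrated form has range exactly $\K(L^2(G))$, to conclude $C(G,A)\rtimes_{\texttt{Lt}\otimes\alpha}G\cong A\otimes\K(L^2(G))$. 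The only genuinely non-formal input is this last identification, which I would cite rather than reprove; everything preceding it is a routine fibrewise verification.
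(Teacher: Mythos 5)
Your proposal is correct and follows essentially the same route as the paper: verify directly that $\theta$ is a $\ast$-isomorphism, check equivariance by the same fibrewise computation, and then obtain the crossed product identification by untwisting via $\theta$, factoring out the trivially-acted-upon tensor factor $A$, and invoking the classical isomorphism $C(G)\rtimes_{\texttt{Lt}}G\cong\K(L^2(G))$. The paper merely states these steps more tersely (declaring the isomorphism ``clear'' and the factorization ``immediate''), whereas you supply the routine details such as continuity of $\theta(f)$, the explicit inverse, and the nuclearity remark justifying the tensor factorization.
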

\begin{proof}
It is clear that $\theta$ is an isomorphism. To check that it is equivariant, let
$f\in C(G,A)$ and $g,h\in G$ be given. Then
\begin{align*} \theta\left((\texttt{Lt}_g\otimes\alpha_g)(f)\right)(h)&=\alpha_{h^{-1}}\left((\texttt{Lt}_g\otimes\alpha_g)(f)(h)\right)\\
&=\alpha_{h^{-1}}\left(\alpha_g(f(g^{-1}h))\right)\\
&=\theta(\xi)(g^{-1}h)\\
&= (\texttt{Lt}_g\otimes\id_A)(f)(h).
\end{align*}

The second claim is immediate since there are natural isomorphisms
\[(C(G)\otimes A)\rtimes_{\texttt{Lt}\otimes\alpha}G\cong (C(G)\rtimes_{\texttt{Lt}}G)\otimes A\cong  \K(L^2(G))\otimes A.\]
\end{proof}

With the notation from the above proposition, it follows that the canonical equivariant inclusion
$A\to C(G)\otimes A$ induces an injective homomorphism
\[\iota\colon A\rtimes_\alpha G\to A\otimes \K(L^2(G)).\]

Denote by $\lambda\colon G\to \U(L^2(G))$ the left regular representation, and identify
$A\rtimes_\alpha G$ with its image under $\iota$. It is then a consequence of noncommutative
duality for crossed products that
\[A\rtimes_\alpha G= \left(A\otimes\K(L^2(G))\right)^{\alpha\otimes\Ad(\lambda)}.\]

The following definitions are standard, and have been independently defined
by Kasparov and Phillips.
For a \ca\ $A$, we denote its center by $Z(A)$.

\begin{df}\label{df:CX-alg}
Let $X$ be a locally compact Hausdorff space.

\be
\item A \emph{$C_0(X)$-algebra} is a pair $(A,\mu)$ consisting of a \ca\ $A$ and
a homomorphism $\mu\colon C_0(X)\to Z(M(A))$ satisfying $\mu(C_0(X))A=A$.
We usually suppress $\mu$ from the notation, and simply say that $A$ is a $C_0(X)$-algebra.
\ee

If $A$ is a $C_0(X)$-algebra and $U\subseteq X$ is open, then $\mu(C_0(U))A$
is an ideal in $A$. Given $x\in X$, the \emph{fiber over $x$} is the quotient
\[A(x)=A/C_0(X\setminus\{x\})A.\]
(In principle, the fiber $A(x)$ depends on $\mu$, but we do not incorporate it into
the notation.) For $a\in A$, we write $a(x)$ for its image in $A(x)$.

\be\item[(2)] We say that $A$ is a \emph{continuous $C_0(X)$-algebra} if it is a $C_0(X)$-algebra
and for every $a\in A$, the map $X\to \R$, given by $x\mapsto \|a(x)\|$, is continuous.
\item[(3)] If $A$ is a continuous $C_0(X)$-algebra, we say that it is \emph{locally trivial} if for
every $x\in X$, there exist an open set $U\subseteq X$ containing $x$ and an isomorphism
$\mu(C_0(U))A \cong C_0(U,A(x))$. (In particular, $A(x)\cong A(y)$ for every $y\in U$.)
\ee
\end{df}

Let $G\curvearrowright X$ be a continuous action of a group $G$ on a space $X$. We write
$\pi\colon X\to X/G$ for the canonical quotient map. Given $x\in X$,
we denote by $G\cdot x$ the set $\{g\cdot x\colon g\in G\}$.
The \emph{stabilizer} of $x\in X$ is the closed subgroup $G_x=\{g\in G\colon g\cdot x=x\}$, and there
is a canonical homeomorphism $G\cdot x\cong G/G_x$.

\begin{prop}\label{prop:CXX/Galg}
Let $G$ be a compact group, let $X$ be a locally compact Hausdorff space, and let $G$ act continuously on $X$.
Then $C_0(X)$ is naturally a continuous $C_0(X/G)$-algebra.
\end{prop}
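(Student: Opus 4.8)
The plan is to equip $C_0(X)$ with the evident pullback structure and verify the conditions of \autoref{df:CX-alg} directly. Write $Y=X/G$ and let $\pi\colon X\to Y$ be the quotient map. Since $G$ is compact and $X$ is locally compact Hausdorff, $Y$ is locally compact Hausdorff and $\pi$ is a proper, continuous, open surjection; in particular each orbit $\pi^{-1}(y)=G\cdot x$ is compact. Define $\theta\colon C_0(Y)\to C_b(X)$ by $\theta(f)=f\circ\pi$. Because $C_0(X)$ is commutative we have $Z(M(C_0(X)))=M(C_0(X))=C_b(X)$, so $\theta$ lands in the correct algebra, and it is visibly a $\ast$-homomorphism. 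The nondegeneracy condition $\theta(C_0(Y))C_0(X)=C_0(X)$ is the first thing to check: given $a\in C_0(X)$ and $\ep>0$, choose a compact set $K\subseteq X$ with $|a|<\ep$ off $K$. As $\pi(K)$ is compact, pick $f\in C_0(Y)$ with $0\le f\le 1$ and $f\equiv 1$ on $\pi(K)$; then $\theta(f)a$ agrees with $a$ on $K$ and differs from it by at most $|a|<\ep$ elsewhere, so $\|\theta(f)a-a\|<\ep$. This makes $C_0(X)$ a $C_0(Y)$-algebra.

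It then remains to prove continuity of the fiberwise norm. First I would identify the fibers. For $y\in Y$ with $y=\pi(x)$, the ideal $\theta(C_0(Y\setminus\{y\}))C_0(X)$ is exactly the set of functions in $C_0(X)$ vanishing on the orbit $\pi^{-1}(y)=G\cdot x$, namely $C_0(X\setminus G\cdot x)$. Hence the fiber is
\[
C_0(X)(y)=C_0(X)/C_0(X\setminus G\cdot x)\cong C(G\cdot x),
\]
the quotient map being restriction, and therefore $\|a(y)\|=\max_{g\in G}|a(g\cdot x)|$ for every $a\in C_0(X)$.

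The main point --- and the only place where compactness of $G$ is essential --- is then to show that the function $\widetilde{a}\colon X\to\R$ defined by $\widetilde{a}(x)=\max_{g\in G}|a(g\cdot x)|$ is continuous. The map $(g,x)\mapsto |a(g\cdot x)|$ is continuous on $G\times X$, being the composite of the action, of $a$, and of the modulus; since $G$ is compact, a standard maximum-over-a-compact-parameter argument shows that the pointwise maximum $\widetilde{a}$ is continuous on $X$. Finally $\widetilde{a}$ is $G$-invariant, since $\widetilde{a}(h\cdot x)=\max_{g\in G}|a(gh\cdot x)|=\widetilde{a}(x)$, so it descends to a continuous function on $Y$ which, by the fiber computation, is precisely $y\mapsto\|a(y)\|$. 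This yields the continuity required in part~(2) of \autoref{df:CX-alg} and completes the argument. The principal obstacle is this continuity-of-the-maximum step; everything else is a routine unwinding of definitions.
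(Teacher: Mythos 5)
Your proof is correct and follows essentially the same route as the paper: pull back along $\pi\colon X\to X/G$ to get the structure map, identify the fiber over $\pi(x)$ with $C(G\cdot x)$, and reduce continuity of the bundle to continuity of $x\mapsto\max_{g\in G}|f(g\cdot x)|$, which holds by compactness of $G$. The only cosmetic differences are that the paper obtains the $C_0(X/G)$-algebra structure by identifying $C_0(X/G)$ with the fixed-point algebra $C_0(X)^G$ (where you verify nondegeneracy of $\theta$ directly) and writes out the maximum-over-a-compact-group continuity estimate that you invoke as a standard fact.
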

\begin{proof}
We denote by $\pi\colon X\to X/G$ the quotient map. Observe that, since $G$ is compact, $C_0(X/G)$ can be identified
with the fixed point algebra of $C_0(X)$ via the map $\pi^*\colon C_0(X/G)\to C_0(X)$. Hence $C_0(X)$ is a $C_0(X/G)$-algebra,
and we only need to show continuity of the bundle. Let $U\subseteq X/G$ be an open subset, and set $V=\pi^{-1}(U)$. Using the
notation from \autoref{df:CX-alg}, there is a natural identification of the ideal $C_0(X)C_0(U)$ with $C_0(V)$. In particular,
for $x\in X$, the fiber of $C_0(X)$ over $\pi(x)$ is naturally identified with $C(G\cdot x)$. Thus, in order to check continuity,
we must show that for all $f\in C_0(X)$, the assignment $\pi(x)\mapsto \sup\limits_{g\in G}|f(g\cdot x)|$ is a continuous map $X/G\to\R$.

Let $f\in C_0(X)$, let $\ep>0$ and let $x\in X$. Use continuity of $f$ to find, for every $g\in G$, an open neighborhood
$W_g$ of $g\cdot x$ such that $|f(g\cdot x)-f(y)|<\ep$ for all $y\in W_g$. By compactness of $G$, there exists an
open neighborhood $W$ of $x$ such that $g\cdot W\subseteq W_g$ for all $g\in G$. Set $U=\pi(W)$, which is an open neighborhood
of $\pi(x)$ in $X/G$. Let $y\in W$. Then
\[\sup_{h\in G} |f(h\cdot y)|-\ep \leq \sup_{g\in G} |f(g\cdot x)|\leq \sup_{h\in G} |f(h\cdot y)|+\ep.\]

We conclude that $\sup\limits_{\pi(y)\in U}\left|\sup\limits_{h\in G} |f(h\cdot y)|-\sup\limits_{g\in G} |f(g\cdot x)| \right|<\ep$, and hence $C_0(X)$ is a continuous $C_0(X/G)$-algebra.
\end{proof}

\begin{rem} Note that if $X$ is a compact Hausdorff space and $Y$ is some quotient of $X$, then
 $C(X)$ is not necessarily a continuous $C(Y)$-algebra.
\end{rem}

We will need the following fact about compact group actions on algebras of the form $C_0(X,A)$. We
will only need it when $X$ is compact and the action is free, but the proof in the
general case does not take more work.

\begin{prop}\label{thm:CXGalgebra}
Let $G$ be a compact group, let $X$ be a locally compact space, let $A$ be a \ca, let $G\curvearrowright X$
be a continuous action, and let $\alpha\colon G\to \Aut(A)$ be another action.
Endow $C_0(X,A)$ with the diagonal $G$-action $\gamma$. Then
$C_0(X,A)\rtimes_\gamma G$ is a continuous $C_0(X/G)$-algebra. Moreover, with $\pi\colon X\to X/G$ denoting the
quotient map, the fiber over $\pi(x) \in X/G$ is canonically isomorphic to
\[(A\rtimes_\alpha G_x)\otimes\K(L^2(G/G_x)).\]
\end{prop}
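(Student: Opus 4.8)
The plan is to realize $C_0(X,A)\rtimes_\gamma G$ as a continuous $C_0(X/G)$-algebra by producing the central structure homomorphism, identifying the ideal cut out by each point of $X/G$, computing the resulting quotient fiberwise, and finally addressing continuity of the bundle. For the structure map, note that since $G$ is compact, $\pi^*$ identifies $C_0(X/G)$ with the fixed-point algebra $C_0(X)^G$, which is central in $M(C_0(X,A))$ and is fixed pointwise by $\gamma$. Thus $\gamma$ is $C_0(X/G)$-linear, and $C_0(X)^G$ commutes both with the canonical copy of $C_0(X,A)$ and with the unitaries implementing $G$ inside $M(C_0(X,A)\rtimes_\gamma G)$; hence it maps into $Z(M(C_0(X,A)\rtimes_\gamma G))$. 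Nondegeneracy follows because every compact subset of $X$ lies in a $G$-invariant compact set (as $G$ is compact), so $C_0(X)^G$ contains an approximate identity for $C_0(X,A)$, and therefore for the crossed product.

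\textbf{Fibers via exactness.} Fix $x\in X$, so that $\pi^{-1}(\pi(x))=G\cdot x$. Writing $J=C_0(X\setminus G\cdot x,A)$, a $\gamma$-invariant ideal, one checks that the ideal of $C_0(X,A)\rtimes_\gamma G$ cut out by $(X/G)\setminus\{\pi(x)\}$ is exactly $J\rtimes_\gamma G$, using that $C_0((X/G)\setminus\{\pi(x)\})\cdot C_0(X,A)=J$. Applying the crossed-product functor — which is exact, and coincides with the reduced one since $G$ is amenable — to the $\gamma$-equivariant extension
\[0\to J\to C_0(X,A)\to C(G\cdot x,A)\to 0\]
(the quotient being $C(G\cdot x,A)$ because the orbit is compact) shows that the fiber over $\pi(x)$ is $C(G\cdot x,A)\rtimes_\gamma G$. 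This is the crossed-product counterpart of \autoref{prop:CXX/Galg}.

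\textbf{The orbit crossed product.} Identifying $G\cdot x\cong G/G_x$ equivariantly, $\gamma$ restricts to the diagonal action (left translation on $G/G_x$ together with $\alpha$). The assignment $\Phi(F)(g)=\alpha_{g^{-1}}(F(gG_x))$ defines a $G$-equivariant isomorphism of $C(G/G_x,A)$, with this diagonal action, onto the induced algebra $\mathrm{Ind}_{G_x}^G(A,\alpha|_{G_x})$ equipped with left translation: one verifies $\Phi(F)(gh)=\alpha_{h^{-1}}(\Phi(F)(g))$ for $h\in G_x$, and that $\Phi$ intertwines the two actions. Green's imprimitivity theorem, in its stabilized form (available since the compact space $G/G_x$ carries a $G$-invariant probability measure), then gives
\[\mathrm{Ind}_{G_x}^G(A,\alpha|_{G_x})\rtimes_{\texttt{Lt}}G\cong (A\rtimes_\alpha G_x)\otimes\K(L^2(G/G_x)),\]
which is the asserted fiber; for $G_x=\{1\}$ this recovers \autoref{prop:CGAconjugate}.

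\textbf{Continuity, the main obstacle.} Upper semicontinuity of $\pi(x)\mapsto\|a(\pi(x))\|$ is automatic for any $C_0(X/G)$-algebra, so only lower semicontinuity is at issue. I would derive it from the fact that $C_0(X,A)\cong C_0(X)\otimes A$ is already a \emph{continuous} $C_0(X/G)$-algebra (by \autoref{prop:CXX/Galg} tensored with $A$), together with the general principle that the reduced crossed product of a continuous $C_0(Y)$-algebra by a $C_0(Y)$-linear action of an amenable group is again a continuous $C_0(Y)$-algebra with the expected fibers. This is the step I expect to be the hardest: since the stabilizers $G_x$, and hence the isomorphism types of the fibers, genuinely vary with $x$, continuity cannot be read off from any local triviality, and must instead be extracted from semicontinuity theorems for crossed-product bundles (applicable precisely because $G$ is amenable and fixes the base algebra $C_0(X/G)$). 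A natural first reduction is to the dense subalgebra $C_c(G,C_0(X,A))$, on which lower semicontinuity would then propagate to the norm closure via the uniform estimate $\bigl|\,\|a(y)\|-\|a'(y)\|\,\bigr|\le\|a-a'\|$.
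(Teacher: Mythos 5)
Your proposal is correct, and it rests on the same two external pillars as the paper's proof: Green's imprimitivity theorem for identifying the fibers, and the Kirchberg--Wassermann results on crossed products of bundles for everything else. Where you differ is the route to the fiber computation. The paper runs this entirely through the Kirchberg--Wassermann formalism: $C_0(X,A)$ is a continuous $C_0(X/G)$-algebra with fibers $C(G/G_x,A)$ by Theorem~B of \cite{KirWas_operations_1995}, the diagonal action is a \emph{fiber-wise action} in the sense of \cite{KirWas_exact_1999}, and the discussion there already yields that $C_0(X,A)\rtimes_\gamma G$ is a $C_0(X/G)$-algebra whose fiber over $\pi(x)$ is $C(G/G_x,A)\rtimes G$, which Green's theorem converts to $(A\rtimes_\alpha G_x)\otimes\K(L^2(G/G_x))$. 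You instead construct the structure map by hand (the paper leaves this implicit) and identify the fiber by applying exactness of the full ($=$ reduced) crossed-product functor to the equivariant extension $0\to C_0(X\setminus G\cdot x,A)\to C_0(X,A)\to C(G\cdot x,A)\to 0$, then pass through the induced algebra $\mathrm{Ind}_{G_x}^G(A,\alpha|_{G_x})$ before invoking Green; this is more self-contained, makes the ideal structure of the bundle explicit, and your identification of the diagonal action on $C(G/G_x,A)$ with translation on the induced algebra is the correct generalization of the paper's \autoref{prop:CGAconjugate}. The one caution concerns your final step: the ``general principle'' you invoke for lower semicontinuity --- that reduced crossed products by amenable groups acting $C_0(Y)$-linearly on continuous $C_0(Y)$-algebras are again continuous with the expected fibers --- is a genuine theorem, not something to extract by hand from semicontinuity arguments on $C_c(G,C_0(X,A))$ (that extraction is essentially where exactness of the group must enter, and it is substantial). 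It is Theorem~4.1 of \cite{KirWas_exact_1999}, applicable because amenable groups are exact, and this is precisely the citation the paper uses at this point. With that reference in place of your appeal to unspecified semicontinuity theorems, your argument is complete.
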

\begin{proof}
Observe that $X/G$ is Hausdorff because $X$ is Hausdorff and $G$ is compact. Also, for $x,y\in X$,
the algebras $(A\rtimes_\alpha G_x)\otimes\K(L^2(G/G_x))$ and $(A\rtimes_\alpha G_y)\otimes\K(L^2(G/G_y))$
are isomorphic whenever $\pi(x)=\pi(y)$.

Regard $C_0(X)$ as a continuous
$C_0(X/G)$-algebra as in \autoref{prop:CXX/Galg}. By Theorem~B in~\cite{KirWas_operations_1995}, the tensor product
$C_0(X,A)$ is a continuous $C_0(X/G)$-algebra, and the fiber of $C_0(X,A)$ over $\pi(x)\in X/G$ can be identified with
$C(G\cdot x,A)\cong C(G/G_x,A)$.
Moreover, the action $\gamma$ is a fiber-wise action, in the sense of page~194 of~\cite{KirWas_exact_1999}, and the
induced action on the fiber $C(G/G_x,A)$ over $\pi(x)$ is $\texttt{Lt}\otimes\alpha$. The discussion there shows that $C_0(X,A)\rtimes_\gamma G$ is a
$C_0(X/G)$-algebra with fibers isomorphic to $C(G/G_x,A)\rtimes_{\texttt{Lt}\otimes\alpha}G$. By Green's Imprimitivity Theorem,
the fiber over $\pi(x)$ is thus isomorphic to $(A\rtimes_\alpha G_x)\otimes\K(L^2(G/G_x))$.
Finally, continuity of this $C(X/G)$-algebra follows from Theorem~4.1 in~\cite{KirWas_exact_1999}, since $G$ is amenable.
\end{proof}

We now specialize to the case of free actions, and show that when local cross sections for the action exist, then the $C^*$-bundle from
the theorem above is locally trivial. Recall that for an action $\alpha\colon G\to\Aut(A)$ of
a compact group, $\iota\colon A\rtimes_\alpha G\to A\otimes\K(L^2(G))$ denotes the canonical inclusion.

\begin{cor}\label{cor:freeCXGalgebra}
Let $G$ be a compact group, let $X$ be a compact Hausdorff space, let $A$ be a \ca, let $G\curvearrowright X$
be a continuous free action, and let $\alpha\colon G\to \Aut(A)$ be another action.
Endow $C(X,A)$ with the diagonal $G$-action $\gamma$.
Then $C(X,A)\rtimes_\gamma G$ is a continuous $C(X/G)$-algebra with fibers canonically isomorphic to
$A\otimes\K(L^2(G))$.

Let $\theta\colon A\rtimes_\alpha G\to C(X,A)\rtimes_\gamma G$ be the map induced by the canonical
inclusion $A\to C(X,A)$.
For $z\in X/G$, denote by
\[\pi_z\colon C(X,A)\rtimes_\gamma G\to (C(X,A)\rtimes_\gamma G)_z\cong A\otimes\K(L^2(G))\]
the canonical quotient map onto the fiber over $z$. Then the following diagram is commutative
\beqa\xymatrix{
A\rtimes_\alpha G\ar[r]^-{\theta}\ar[dr]_-{\iota} & C(X,A)\rtimes_\gamma G\ar[d]^-{\pi_z}\\
&  A\otimes\K(L^2(G)).}\eeqa

Finally, if there exist local cross sections for the canonical quotient map $\pi\colon X\to X/G$
(for example, if $G$ is a Lie group; see Theorem~8 in \cite{Mos_sections_1956}),
then $C(X,A)\rtimes_\gamma G$ is a locally trivial bundle over~$X/G$.
\end{cor}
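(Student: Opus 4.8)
The plan is to prove the three assertions of the corollary in turn, observing that the first two are immediate specializations of results already in hand, so that the genuine content lies in the final local triviality statement. The claim that $C(X,A)\rtimes_\gamma G$ is a continuous $C(X/G)$-algebra with fibers isomorphic to $A\otimes\K(L^2(G))$ is exactly \autoref{thm:CXGalgebra} applied to a free action: freeness forces every stabilizer $G_x$ to be trivial, so that $A\rtimes_\alpha G_x=A$ and $G/G_x=G$, and the general fiber $(A\rtimes_\alpha G_x)\otimes\K(L^2(G/G_x))$ collapses to $A\otimes\K(L^2(G))$. For the commutative diagram, the map $\iota$ is induced by the equivariant inclusion of $A$ as constant functions in $C(X,A)$, while $\theta$ is the canonical identification $A\rtimes_\alpha G\cong (A\otimes\K(L^2(G)))^{\alpha\otimes\Ad(\lambda)}\hookrightarrow A\otimes\K(L^2(G))$ recalled after \autoref{prop:CGAconjugate}; commutativity is then checked directly on $\xi\in L^1(G,A,\alpha)$, where evaluating $\iota(\xi)$ in the fiber over $z=\pi(x)$ reduces, under the fiber identification of \autoref{prop:CGAconjugate}, to $\theta(\xi)$.

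For local triviality, I would fix $z\in X/G$ and, using the hypothesis, choose an open neighborhood $U$ of $z$ together with a continuous local cross section $s\colon U\to X$ for $\pi$. The first step is to trivialize the action over $U$. Because the action is free, every $x\in\pi^{-1}(U)$ can be written uniquely as $x=g\cdot s(\pi(x))$ for some $g\in G$, and the resulting map $\phi\colon\pi^{-1}(U)\to U\times G$, $\phi(x)=(\pi(x),g)$, is an equivariant homeomorphism, where $G$ acts trivially on $U$ and by left translation on the second factor; this is precisely the argument used in the zero-dimensional case of \autoref{thm:FreeActionCellComplex}, with continuity of $x\mapsto g$ coming from freeness and compactness of $G$. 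Since $\pi(h^{-1}\cdot x)=\pi(x)$ and the $G$-coordinate transforms by left translation, $\phi$ induces an equivariant isomorphism $C_0(\pi^{-1}(U),A)\cong C_0(U)\otimes C(G,A)$ carrying the diagonal action $\gamma$ to $\id_U\otimes(\texttt{Lt}\otimes\alpha)$.

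Taking crossed products by $G$ and using that the action is trivial on the $C_0(U)$ factor, I would then identify
\[C_0(\pi^{-1}(U),A)\rtimes_\gamma G\cong C_0(U)\otimes\bigl(C(G,A)\rtimes_{\texttt{Lt}\otimes\alpha}G\bigr)\cong C_0\bigl(U,A\otimes\K(L^2(G))\bigr),\]
the last isomorphism being \autoref{prop:CGAconjugate}. It remains to match this with the ambient bundle: the $C(X/G)$-structure on $C(X,A)\rtimes_\gamma G$ is induced by $C(X/G)=C(X)^G$ acting by multiplication, so $\pi^*(C_0(U))$ cuts out exactly the $\gamma$-invariant ideal $C_0(\pi^{-1}(U),A)$ before the crossed product, and hence the ideal of $C(X,A)\rtimes_\gamma G$ associated with $U$ is $C_0(\pi^{-1}(U),A)\rtimes_\gamma G$. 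Under $\phi$ the central elements $\pi^*(C_0(U))$ correspond to $C_0(U)\otimes\C 1$, so the displayed isomorphism is $C_0(U)$-linear and intertwines the two $C_0(U)$-structures. This exhibits the restriction of the bundle to $U$ as the trivial bundle with fiber $A\otimes\K(L^2(G))$, which is exactly the definition of local triviality; since $z$ was arbitrary, the conclusion follows.

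The step I expect to be the main obstacle is not the trivialization $\phi$ itself, which is routine, but the verification that the resulting isomorphism of crossed products is compatible with the global $C(X/G)$-algebra structure, i.e.\ that it is $C_0(U)$-linear and realizes the restriction over $U$ rather than merely an abstractly isomorphic algebra; once this compatibility is pinned down, everything else is bookkeeping layered on top of \autoref{thm:CXGalgebra} and \autoref{prop:CGAconjugate}.
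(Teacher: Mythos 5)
Your proposal is correct and follows essentially the same route as the paper: the first claim via \autoref{thm:CXGalgebra} with trivial stabilizers, the diagram checked on the dense subalgebra $L^1(G,A,\alpha)$, and local triviality by using a local cross section to identify $\pi^{-1}(U)$ equivariantly with $U\times G$ and then applying \autoref{prop:CGAconjugate} to get $C_0(U)\otimes A\otimes\K(L^2(G))$. Your extra verification that the trivializing isomorphism is $C_0(U)$-linear (so that it realizes the restriction of the bundle rather than an abstract isomorphism) is a point the paper leaves implicit, but it is the same argument.
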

\begin{proof}
The first assertion is a consequence of \autoref{thm:CXGalgebra}, since $G_x=\{1\}$ for all $x\in X$ because the action is free.
(Also, in this case, the computation of the fibers can be performed using \autoref{prop:CGAconjugate}.)

In order to show that the diagram in the statement is commutative, observe first
that the map $\iota$ is induced by the map $\widetilde\iota\colon C(G,A)\to C(G\times X,A)$ given
by $\widetilde{\iota}(f)(g,x)=f(g)$ for $f\in C(G,A)$, for $g\in G$ and for $x\in X$. It is then easy to see
that for $z\in X/G$, the continuous function $\pi_z(\iota(f))\colon G\to C(G,A)$ is given by
\[\pi_z(\iota(f))(g)(h)=f(g)\]
for $g,h\in G$. It follows that the restrictions of $\pi_z\circ \iota$ and $\theta$ to $C(G,A)$ agree.
By density in the crossed product, we deduce that $\pi_z\circ\iota=\theta$, as desired.

We prove the last statement about local triviality.
Let $x\in X$. Using the existence of local cross sections, find an open set
$U\subseteq X/G$ containing $G\cdot x$, and a local section $s\colon U\to X$. In particular, the open
subset
\[\{g\cdot s(U)\colon g\in G\}\subseteq X\]
is equivariantly homeomorphic to $U\times G$ with the trivial action on $U$ and translation on $G$.
The crossed product of this invariant ideal, which is the ideal of the bundle associated to the open
set $U$, is isomorphic to $C_0(U\times G ,A)\rtimes_{\id\otimes\texttt{Lt}\otimes\alpha} G$, which is itself
isomorphic to $C_0(U)\otimes A\otimes\K(L^2(G))$ by \autoref{prop:CGAconjugate}. The claim follows.
%
\end{proof}

The following is a slight variant of Theorem~4.4 of~\cite{Gar_rokhlin_2017}, extended to non-Lie groups.
(The case of unital $A$ and finite $G$ is implicit in Lemma~1.9 of~\cite{HirPhi_rokhlin_2015}.)
The main difference between the theorem below and Theorem~4.4 of~\cite{Gar_rokhlin_2017}, is that the free
$G$-space is independent of the algebra and the action, and the cost is that the map $\varphi$ is not
necessarily injective. We denote by $\dim(G)$ the covering dimension of the group $G$.

\begin{thm}\label{thm:XRpRdim}
Let $G$ be a compact group. For every $d\in\N$, there exists
a free $G$-space $X$ with local cross sections with the following property.
Given an action $\alpha\colon G\to\Aut(A)$ of $G$ on a \ca\ $A$, we have $\cdimRok(\alpha)\leq d$
\ifo for every $\sigma$-unital $\alpha$-invariant subalgebra $D\subseteq A$ there exists an equivariant unital homomorphism
\[\varphi\colon C(X)\to F_\alpha(D,A).\]
Moreover, we have the following relations between the covering dimension of $X$ and the Rokhlin dimension of $\alpha$:
\begin{align*} \dim(X)&\leq (\cdimRok(\alpha)+1)(\dim(G)+1)-1 \\
\cdimRok(\alpha)&\leq \dim(X)-\dim(G).
\end{align*}

In particular, we can always take the space $X$ above to be finite dimensional when $G$ is finite dimensional.
\end{thm}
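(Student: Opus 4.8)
The plan is to take $X$ to be the $(d+1)$-fold topological join
\[X = G^{\ast(d+1)} = \underbrace{G\ast\cdots\ast G}_{d+1},\]
equipped with the diagonal left-translation action $g\cdot[t_0y_0,\ldots,t_dy_d]=[t_0(gy_0),\ldots,t_d(gy_d)]$, where a point of the join is written $[t_0y_0,\ldots,t_dy_d]$ with $t_j\geq 0$, $\sum_{j=0}^d t_j=1$ and $y_j\in G$, the coordinate $y_j$ being forgotten whenever $t_j=0$. First I would check freeness: a point is fixed by $g$ only if $gy_j=y_j$ for every $j$ with $t_j>0$, and since at least one such $j$ exists and left translation on $G$ is free, this forces $g=1$. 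For the local cross sections, set $U_j=\{[t_0y_0,\ldots,t_dy_d]:t_j>0\}$; these form a $G$-invariant open cover of $X$, and on each $U_j$ the assignment $x\mapsto y_j$ is a well-defined continuous equivariant map $U_j\to G$. This trivializes the orbit map over $U_j/G$ and yields a continuous local cross section (sending each orbit to the unique point of its fibre whose $j$-th coordinate is the identity), so $X$ admits local cross sections regardless of whether $G$ is a Lie group.

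The heart of the argument is to identify $C(X)$ with a universal $C^*$-algebra. I would show that $C(X)$ is the universal unital commutative \ca\ generated by $d+1$ pairwise-commuting completely positive contractive order zero maps $\varphi_0,\ldots,\varphi_d\colon C(G)\to(\,\cdot\,)$ subject to $\sum_{j=0}^d\varphi_j(1)=1$. The cleanest route is via characters: by the structure theorem for order zero maps (Theorem~2.3 in~\cite{WinZac_completely_2009}), a completely positive contractive order zero map $C(G)\to\C$ is either zero or of the form $f\mapsto t\,f(y)$ for a unique $(t,y)\in(0,1]\times G$, so a character of the universal algebra is precisely a tuple $(t_0,y_0,\ldots,t_d,y_d)$ with $t_j\geq 0$ and $\sum_j t_j=1$ (the latter being the sum relation) — that is, a point of $G^{\ast(d+1)}$. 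Since the generating images are each commutative and pairwise commute, the universal algebra is commutative, and it is unital by the sum relation; hence it coincides with the algebra of continuous functions on its spectrum, namely $C(X)$. Transporting the simultaneous $\texttt{Lt}$-action through this identification gives exactly the diagonal join action above. Granting this, the equivalence is immediate: by the universal property, an equivariant unital homomorphism $C(X)\to F_\alpha(D,A)$ is the same datum as pairwise-commuting equivariant completely positive contractive order zero maps $\varphi_0,\ldots,\varphi_d\colon(C(G),\texttt{Lt})\to(F_\alpha(D,A),F(\alpha))$ with $\sum_j\varphi_j(1)=1$, and by \autoref{df:Rdim} the existence of such maps for every $\sigma$-unital $\alpha$-invariant $D\subseteq A$ is exactly the condition $\cdimRok(\alpha)\leq d$. (Equivariance of the maps induced by functional calculus, and the passage between the mapping cone and $C(G)$, is handled as in \autoref{lma:FunctCalcOz}.)

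Finally I would establish the dimension estimates with $d=\cdimRok(\alpha)$ and $X=G^{\ast(d+1)}$. The upper bound follows from the standard addition inequality $\dim(Y_0\ast\cdots\ast Y_d)\leq \sum_{j=0}^d\dim(Y_j)+d$ for joins, which with each $Y_j=G$ gives
\[\dim(X)\leq (d+1)\dim(G)+d=(\cdimRok(\alpha)+1)(\dim(G)+1)-1.\]
For the lower bound, fixing a point in $d$ of the $d+1$ join factors exhibits the $d$-fold cone $G\ast\Delta^{d-1}$ as a closed subspace of $X$; since $\dim(G\ast\Delta^{d-1})=\dim(G)+d$ and covering dimension is monotone under closed subspaces of a (compact Hausdorff, hence normal) space, we obtain $\dim(X)\geq\dim(G)+d$, i.e. $\cdimRok(\alpha)\leq\dim(X)-\dim(G)$. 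In particular $\dim(X)<\infty$ whenever $\dim(G)<\infty$.

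I expect the main obstacle to be the universal-property identification of $C(X)$: beyond matching spectra set-theoretically, one must verify that the $d+1$ commuting order zero maps genuinely assemble into a single $\ast$-homomorphism out of $C(G^{\ast(d+1)})$ and that the two $G$-actions correspond under this identification. A secondary delicate point is the join dimension theory for a general, possibly non-metrizable, compact group $G$, where one must invoke the appropriate forms of the addition and subspace theorems for covering dimension; this is unproblematic in the finite-dimensional case, which is the one relevant to the final assertion.
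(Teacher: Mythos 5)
Your proposal is correct, and at its core it is the same construction the paper uses: the paper also takes $X$ to be the Milnor join $G\ast\cdots\ast G$ with the diagonal action, citing \cite{Mil_construction_1956} for freeness and local cross sections, and then disposes of both implications by an ``inspection'' of Lemma~4.3 and Theorem~4.4 of \cite{Gar_rokhlin_2017}. What you do differently is make the equivalence self-contained: you identify $C(G^{\ast(d+1)})$ as the universal unital commutative \ca\ generated by $d+1$ pairwise-commuting order zero images of $C(G)$ subject to the sum relation --- the same device the paper deploys for finite groups (see the discussion before \autoref{thm: RdimwTRp} and \autoref{lma:S1rotation}) --- after which both directions of the ``if and only if'' drop out of \autoref{df:Rdim}. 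Your flagged caveats are the right ones and are resolvable: a continuous bijection from a compact space to a Hausdorff one settles the topology in the Gelfand identification, and the dimension-theoretic inputs you need (finite closed sum theorem, the product inequality for compact Hausdorff spaces, and essentiality of $f\times\id$ on cubes for the lower bound) all hold in the non-metrizable setting. The one substantive difference is the converse direction. Your converse is tautological by universality but works only for the join itself, whereas the argument behind Theorem~4.4 of \cite{Gar_rokhlin_2017} --- trivialize $X\to X/G$ over a finite open cover of $X/G$ with a colored refinement and use a subordinate partition of unity to assemble the order zero towers --- proves the converse for an \emph{arbitrary} free $G$-space with local cross sections, yielding $\cdimRok(\alpha)\le \dim(X/G)\le \dim(X)-\dim(G)$ for any such witness $X$. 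That stronger form is what the second displayed inequality is really recording, and it is what the paper invokes later: in \autoref{prop:ZnAThetaRdim1} the witnessing space is $\T$ with the rotation action of $\Z_m$, which for $m\ge 3$ is not the join $\Z_m\ast\Z_m$, so your version of the theorem would not suffice there. In short, you do verify the theorem as literally stated (both inequalities for the join, with $d=\cdimRok(\alpha)$), but the paper's route buys a genuinely more flexible converse.
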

\begin{proof}
For the ``only if'' implication, set $d=\cdimRok(\alpha)<\infty$.
An inspection of the proof of Lemma~4.3 of~\cite{Gar_rokhlin_2017} shows that the space
$X$ can be chosen to be the $d$-join $G\ast \cdots\ast G$ with
diagonal action (see definition in~\cite{Mil_construction_1956}). This action is free, and has local cross sections regardless of whether $G$ is Lie or not; see
Section~3 in~\cite{Mil_construction_1956}. For the converse implication, an inspection of the proof of Theorem~4.4 of~\cite{Gar_rokhlin_2017}
reveals that existence of local cross sections is all that is needed to obtain $\cdimRok(\alpha)<\infty$.
\end{proof}

We use the above result to show that compact group actions
with finite Rokhlin dimension with commuting towers preserve a number of properties upon
forming crossed products; see \autoref{thm:preservationCP}. The way it is used is
through \autoref{df:criterionCP}. We need an auxuliary notion first.

\begin{df}\label{df:posexistemb}
Let $A$ and $B$ be \ca s, and let $\iota\colon A\to B$ be an injective homomorphism. We say
that $\iota$ is a \emph{positively existential embedding} if for all separable subalgebras
$A_0\subseteq A$ and $B_0\subseteq B$ with $\iota(A_0)\subseteq B_0$, there exists a
homomorphism $\varphi\colon B_0\to A_\I$ making the following diagram commute:
\beqa
\xymatrix{
A\ar@{^{(}->}[rr]\ar[dr]_-{\iota} && A_\infty,\\
A_0\ar@{^{(}->}[u] \ar[dr]_-{\iota} &B & \\
& B_0\ar@{^{(}->}[u]\ar[uur]_-\varphi &
}
\eeqa
where all the inclusions are the canonical ones.
\end{df}

When $A$ and $B$ are separable, this notion agrees with that of a 
\emph{sequential splitting}; see~\cite{BarSza_sequentially_2016}. In general, however,
being a positively existential embedding is a weaker condition; 
see Subsection~4.3 in~\cite{GarLup_equivariant_2016}. This terminology agrees with the 
usual one in model theory for $C^*$-algebras; see, for example, 
Definition~2.11 in~\cite{GarLup_equivariant_2016} and the comments after it, although we do not use any results from model theory for $C^*$-algebras in this paper.


\begin{rem} It is clear that if $\iota\colon A\to B$ is a positively existential embedding, then so are all its amplifications
 $\iota\otimes\id_{M_n}\colon M_n(A)\to M_n(B)$, for $n\in\N$, as well as its unitization $\widetilde{\iota}\colon\widetilde{A}\to\widetilde{B}$.
\end{rem}

\begin{rem}\label{rem:subalgCP}
We need two easy observations about separable subalgebras.
\bi\item Let $X$ be a compact space and let $A$ be a \ca. Then any separable subalgebra of $C(X,A)$
is contained in an algebra of the form $C(X,D)$ for some separable subalgebra $D\subseteq A$.
\item Let $E$ be a \ca, let $G$ be a second countable locally compact group, and let $\beta\colon G \to \Aut(E)$ be an action. If $B_0\subseteq E\rtimes_\beta G$ is a separable subalgebra, then
there exists a $\beta$-invariant separable subalgebra $E_0\subseteq E$ such that $B_0\subseteq E_0\rtimes_\beta G$. For example, find a countably family
$\{f_n\colon n\in\N\}$ of elements in $C_c(G,E,\beta)$ whose closure in $E\rtimes_\beta G$ contains $B_0$. Since the image of each $f_n$
is a compact subset of $E$ and $G$ is second countable, the $\beta$-invariant subalgebra $E_0$ of $E$ generated by $\{f_n(G)\colon n\in\N\}$ is separable, and clearly
$B_0\subseteq E_0\rtimes_\beta G$.
\ei
\end{rem}

The combination of \autoref{thm:XRpRdim} with the following proposition constitutes our main technical tool in the study
of crossed product by actions with finite Rokhlin dimension with commuting towers.

\begin{prop} \label{prop:ApproxCP}
Let $A$ be a \ca, let $G$ be a second countable compact group,
let $\alpha\colon G\to\Aut(A)$ be an
action, and let $X$ be a compact free $G$-space. Suppose that for every $\sigma$-unital $\alpha$-invariant
subalgebra $D\subseteq A$,
there exists a unital equivariant homomorphism $\psi\colon C(X)\to F_{\alpha}(D,A)$.
Denote by $\iota\colon A\rtimes_\alpha G\to C(X,A)\rtimes G$ the canonical embedding
induced by the equivariant inclusion $A\hookrightarrow C(X,A)$, where $C(X,A)$ carries the diagonal $G$-action.
Then $\iota$ is a positively existential embedding.
\end{prop}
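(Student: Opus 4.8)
The plan is to verify Definition~\ref{df:posexistemb} directly. Fix separable subalgebras $A_0\subseteq A\rtimes_\alpha G$ and $B_0\subseteq C(X,A)\rtimes_\gamma G$ with $\iota(A_0)\subseteq B_0$; I must produce a homomorphism $\varphi\colon B_0\to (A\rtimes_\alpha G)_\infty$ compatible with the canonical inclusions. The first step is a reduction to a single separable coefficient algebra. Using both parts of \autoref{rem:subalgCP}, together with the fact that $G$ is second countable (so that the $\alpha$-invariant $C^*$-subalgebra generated by a separable set, being generated by the norm-compact orbits of a countable dense subset, is again separable), I would choose a separable, $\alpha$-invariant subalgebra $D\subseteq A$ such that $A_0\subseteq D\rtimes_\alpha G$ and $B_0\subseteq C(X,D)\rtimes_\gamma G$; here $C(X,D)\rtimes_\gamma G$ is viewed as a subalgebra of $C(X,A)\rtimes_\gamma G$, which is legitimate because $G$ is amenable. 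As $D$ is $\sigma$-unital, the hypothesis supplies a unital equivariant homomorphism $\psi\colon C(X)\to F_\alpha(D,A)$.

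Second, I would manufacture an equivariant homomorphism $\Phi\colon C(X,D)\to A_{\infty,\alpha}$ whose restriction to the constant functions $1_X\otimes D$ is the canonical inclusion of $D$. For this I would use Kirchberg's canonical multiplication homomorphism $\mu\colon F_\alpha(D,A)\otimes_{\max}D\to A_{\infty,\alpha}$, obtained from the two commuting inclusions $A_{\infty,\alpha}\cap D'\hookrightarrow A_{\infty,\alpha}$ and $D\hookrightarrow A_{\infty,\alpha}$: the assignment $\bar x\otimes d\mapsto xd$ is well defined because $\Ann(D,A_\infty)$ annihilates $D$, and it lands in the continuous part because both factors do. A direct check shows $\mu$ is equivariant for $F(\alpha)\otimes\alpha$ and $\alpha_\infty$. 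Since $C(X)$ is nuclear, $C(X,D)=C(X)\otimes_{\max}D$, so I can set $\Phi=\mu\circ(\psi\otimes\id_D)$; this is equivariant for the diagonal action $\gamma$ because $\psi$ is, and unitality of $\psi$ gives $\Phi(1_X\otimes d)=d$.

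Third, I would pass to crossed products. Functoriality yields an equivariant homomorphism $\Phi\rtimes G\colon C(X,D)\rtimes_\gamma G\to A_{\infty,\alpha}\rtimes_{\alpha_\infty}G$, and I would compose it with a natural comparison homomorphism $\eta\colon A_{\infty,\alpha}\rtimes_{\alpha_\infty}G\to (A\rtimes_\alpha G)_\infty$. Restricting the resulting map $\varphi_0\colon C(X,D)\rtimes_\gamma G\to (A\rtimes_\alpha G)_\infty$ to $B_0$ gives the desired $\varphi$. To see the diagram commutes, recall that $\iota$ is induced by $d\mapsto 1_X\otimes d$, so that by the previous step $\Phi\circ\iota$ restricts to the inclusion of $D$ into $A_{\infty,\alpha}$ as constant sequences; tracing $\eta$ through constant sequences then shows that $\varphi_0\circ\iota$ agrees with the canonical inclusion on $D\rtimes_\alpha G$, hence on $A_0$.

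The main obstacle is the construction of the comparison map $\eta$, which carries all the analytic content. I would build it from the evaluation homomorphisms $\ev_n\colon \ell^\infty_\alpha(\N,A)\to A$: each $\ev_n$ is equivariant, hence induces $\ev_n\rtimes G$ on crossed products, and these assemble into a homomorphism $\ell^\infty_\alpha(\N,A)\rtimes_{\alpha^\infty}G\to \ell^\infty(\N,A\rtimes_\alpha G)$ sending $\xi$ to the sequence $\big((\ev_n\rtimes G)(\xi)\big)_n$. The uniform bound $\sup_n\|(\ev_n\rtimes G)(\xi)\|\le\|\xi\|$ is exactly what makes the target well defined. The delicate point is descent modulo $c_0$: one must check that the ideal $\big(\ell^\infty_\alpha(\N,A)\cap c_0(\N,A)\big)\rtimes G$ maps into $c_0(\N,A\rtimes_\alpha G)$, which follows because a norm-compact subset of $c_0(\N,A)$ (here the continuous image of the compact group $G$) has uniformly decaying tails. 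Amenability of $G$, via exactness of the full crossed-product functor, then identifies the quotient with $A_{\infty,\alpha}\rtimes_{\alpha_\infty}G$ and produces $\eta$.
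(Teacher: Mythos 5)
Your proof is correct and follows essentially the same route as the paper's: reduce to a separable $\alpha$-invariant coefficient algebra $D$ via \autoref{rem:subalgCP}, tensor the hypothesized map $\psi$ with $\id_D$ to obtain an equivariant homomorphism $C(X,D)\to A_{\I,\alpha}$ restricting to the identity on $D$, and pass to crossed products to land in $(A\rtimes_\alpha G)_\I$. The only difference is that you spell out the two ingredients the paper cites or leaves implicit --- Lemma~2.3 of~\cite{Gar_crossed_2014} (your multiplication map $\mu$) and the comparison homomorphism $A_{\I,\alpha}\rtimes_{\alpha_\I}G\to(A\rtimes_\alpha G)_\I$ --- and your verifications of both are sound.
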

\begin{proof}
Let $B_0\subseteq C(X,A)\rtimes G$ and $A_0\subseteq A\rtimes G$ be separable subalgebras satisfying
$\iota(A_0)\subseteq B_0$. Use \autoref{rem:subalgCP} to find an $\alpha$-invariant separable subalgebra
$D\subseteq A$ satisfying $B_0\subseteq C(X,D)\rtimes G$ and $A_0 \subseteq D\rtimes G$.
Let $\psi\colon C(X)\to F_{\alpha}(D,A)$ be a unital equivariant homomorphism
as in the statement. Upon tensoring with $\id_D$ and using Lemma~2.3 in~\cite{Gar_crossed_2017}, we obtain an equivariant
homomorphism
\[\theta \colon C(X)\otimes D \to A_{\I,\alpha},\]
which is the identity map on $D$. We obtain a unital homomorphism
\[\varphi\colon C(X,D)\rtimes G \to A_{\I,\alpha}\rtimes_{\alpha_\I}G\hookrightarrow (A\rtimes_\alpha G)_\I\]
which makes the following diagram commute:
\beqa
\xymatrix{
A\rtimes_\alpha G\ar[rrr]\ar[dr]_-{\iota} &&& (A\rtimes_\alpha G)_\infty.\\
D\rtimes_\alpha G\ar@{^{(}->}[u] \ar[dr]_-{\iota} &C(X,A)\rtimes G && \\
A_0  \ar[dr]_-{\iota} \ar@{^{(}->}[u]& C(X,D)\rtimes G\ar@{^{(}->}[u]\ar[uurr]_-{\varphi}&& \\
& B_0 \ar@{^{(}->}[u]&
}
\eeqa
This finishes the proof. \end{proof}

\begin{df}\label{df:criterionCP}
Let $\mathcal{C}$ be a class of \ca s. Consider the following permanence properties:
\be\item[(P1)] \emph{Passage to section algebras:} If $Y$ is a finite-dimensional compact
metrizable space and $A$ is a continuous
$C(Y)$-algebra with $A_y\in\mathcal{C}$ for all $y\in Y$, then $A\in \mathcal{C}$.
\item[(P2)] \emph{Passage to positively existential subalgebras:}
If $A$ is a \ca\ such that there exist another \ca\ $B\in\mathcal{C}$, and a positively existential embedding
$A\hookrightarrow B$, then $A\in\mathcal{C}$.\ee
\end{df}

Recall (\cite{TomWin_strongly_2007}) that a unital, infinite-dimensional, separable $C^*$-algebra $\mathcal{D}$
is said to be \emph{strongly self-absorbing} if there exists an isomorphism 
$\mathcal{D}\cong \mathcal{D}\otimes_{\mathrm{min}}\mathcal{D}$ that is approximately unitarily
equivalent to the first factor embedding $d\mapsto d\otimes 1_{\mathcal{D}}$. A \ca\ $A$ is said to be 
\emph{$\mathcal{D}$-stable} if $A\cong A\otimes \mathcal{D}$.





We begin by recording the fact that a number of classes of \ca s are closed under passing to section algebras with finite
dimensional bases (property (P1)). Most of these
are well known: \autoref{thm:P1} just collects the relevant references, and provides a proof where one is needed.
The only original results are (7), (8), and (9). Recall that a \ca\ $A$ is said to be \emph{stable} if
$A\cong A\otimes\K$.

\begin{thm}\label{thm:P1}
Let $Y$ be a compact metrizable space of finite covering dimension, and let $A$ be a $C(Y)$-algebra.
\be
\item Let $\mathcal{D}$ be a strongly self-absorbing \ca. If $A_y$ is separable and $\mathcal{D}$-absorbing
for all $y\in Y$, then the same is true for $A$.
\item If $A_y$ is stable and separable for all $y\in Y$, then so is $A$.
\item We have
\[\dimnuc(A)+1\leq(\dim(Y)+1)\left(\sup\limits_{y\in Y}\dimnuc(A_y)+1\right).\]
\item We have
\[\dr(A)+1\leq(\dim(Y)+1)\left(\sup\limits_{y\in Y}\dr(A_y)+1\right).\]
\item We have
\[\mathrm{sr}(A)\leq \dim(Y)+ \sup_{y\in Y}\mathrm{sr}(A_y).\]
\item We have
\[\mathrm{RR}(A)\leq \dim(Y)+ \sup_{y\in Y}\mathrm{RR}(A_y).\]
\item Let $n\in\N$. If $A_y$ is nuclear, satisfies the UCT, and $K_\ast(A_y)$ is uniquely $n$-divisible for all $y\in Y$, then the same
is true for $A$.
\item If $A_y$ is nuclear, satisfies the UCT, and $K_\ast(A_y)$ is rational for all $y\in Y$, then the same
is true for $A$.
\item If $A_y$ is nuclear, satisfies the UCT, and $K_\ast(A_y)=\{0\}$ for all $y\in Y$, then the same
is true for $A$.
\item If $A_y$ is nuclear, separable and satisfies the UCT for all $y\in Y$, then the same
is true for $A$.
\ee
\end{thm}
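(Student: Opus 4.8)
The plan is to split the ten assertions into the routine ones, which I would dispatch by citing the literature, and the four genuinely new ones --- (2), (7), (8) and (9) --- which require arguments. The $\mathcal D$-absorption and stability statement (1) is the permanence of these properties under passage to section algebras of continuous fields over a finite-dimensional base, and I would quote it from the theory of $C(Y)$-algebras and strongly self-absorbing $C^*$-algebras. The nuclear dimension and decomposition rank bounds (3) and (4) are the standard estimates for continuous $C(Y)$-algebras, while the stable- and real-rank estimates (5) and (6) I would attribute to \cite{NagOsaPhi_ranks_2001} and \cite{Sud_stable_2005}. Finally (10), the joint permanence of nuclearity and the UCT, I would obtain by citing the corresponding result for continuous fields over finite-dimensional bases; that $A$ is itself nuclear (needed throughout (7)--(10)) is a standard fact about continuous fields with nuclear fibers over a finite-dimensional base.

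For (2) I would use the reformulation recorded in the remark following \autoref{df:potDabs}: a $C^*$-algebra is potentially $\mathcal D$-absorbing if and only if each of its separable subalgebras sits inside a separable $\mathcal D$-absorbing subalgebra. So, given a separable $S\subseteq A$, the goal is to produce a separable \emph{continuous $C(Y)$-subalgebra} $A'\supseteq S$ all of whose fibers are separable and $\mathcal D$-absorbing; item (1) then forces $A'$ to be $\mathcal D$-absorbing, and we are done. I would build $A'$ by a one-sided intertwining: starting from $S$, alternate between closing up under the separable algebra $C(Y)$ (so that the result is again a $C(Y)$-algebra with well-defined fibers) and enlarging so that the images in each fiber fall inside separable $\mathcal D$-absorbing subalgebras of the fibers --- possible because the fibers are potentially $\mathcal D$-absorbing --- lifting generators of the latter back to global sections. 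Passing to the closure of the union and invoking the central-sequence characterization of $\mathcal D$-absorption to see that the limit fibers are genuinely $\mathcal D$-absorbing completes the construction.

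For the $K$-theoretic items (7)--(9) I would first translate each hypothesis into the vanishing of a single continuous, homotopy-invariant, half-exact functor on the fibers: unique $n$-divisibility of $K_*$ is equivalent to $K_*(\,\cdot\,;\Z/n)=0$, trivial $K$-theory is $K_*(\,\cdot\,)=0$, and rationality is the vanishing of $K_*(\,\cdot\,;\Z/n)$ for every $n$. Writing $\mathcal F$ for the relevant functor, the substance is then the general statement: if $A$ is a continuous $C(Y)$-algebra over a finite-dimensional compact metrizable $Y$ with $\mathcal F(A_y)=0$ for all $y\in Y$, then $\mathcal F(A)=0$. I would prove this by induction on $\dim(Y)$. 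The zero-dimensional case is handled directly using continuity of $\mathcal F$ and the six-term sequences attached to clopen decompositions. For the inductive step I would choose arbitrarily fine finite closed covers that are $(\dim(Y)+1)$-colorable, and combine the ideal--quotient six-term sequences coming from $0\to A(U)\to A\to A(Z)\to 0$ (for $Z\subseteq Y$ closed, $U=Y\setminus Z$) with Mayer--Vietoris for the pullback $A\cong A(Y_1)\times_{A(Y_1\cap Y_2)}A(Y_2)$ attached to a closed decomposition $Y=Y_1\cup Y_2$, so as to express $\mathcal F(A)$ through $\mathcal F$ of restrictions to lower-dimensional closed sets and to direct sums over the disjoint pieces of the color classes, then pass to the limit over refinements. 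Equivalently, one may import the spectral sequence computing $K_*$ of a continuous $C(Y)$-algebra over a finite-dimensional base from the sheaf cohomology of $Y$ with coefficients in the fiberwise $K$-groups, whose $E_2$-page vanishes under our hypothesis.

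The main obstacle is exactly this general vanishing statement behind (7)--(9): organizing the dimension induction so that the Mayer--Vietoris and ideal--quotient sequences terminate coherently at the fibers. The delicate point is that restricting to a closed subset does not lower $\dim(Y)$, so the induction must be driven by the fineness of the colorable covers together with continuity of $\mathcal F$, and finite-dimensionality of $Y$ is precisely what makes the assembly converge --- this is the bookkeeping that the spectral-sequence formulation, if one is willing to quote it, performs automatically. For (2), by contrast, the only real care is in verifying that the fibers of the intertwined subalgebra are honestly $\mathcal D$-absorbing rather than merely exhausted by $\mathcal D$-absorbing subalgebras, which the central-sequence reformulation settles.
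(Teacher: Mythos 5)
Your handling of items (1), (3)--(6) and (10) coincides with the paper's, which likewise disposes of them by citation (to \cite{HirRorWin_algebras_2007}, \cite{Car_classification_2011}, \cite{Sud_stable_2005}, \cite{NagOsaPhi_ranks_2001}, \cite{Dad_fiberwise_2009}); the one wrinkle is (6), which is not literally in the literature: the paper reruns Sudo's stable-rank argument with the inequality $\RR(B/I)\le\RR(B)$ of \cite{Elh_rang_1995} in place of his inequality (F2). Your proof of (2) is also essentially the paper's: there, one invokes the Blanchard--Kirchberg construction of separable sub-$C(Y)$-algebras \cite{BlaKir_global_2004} and observes that the closing-up procedure can be arranged to keep the fibers (potentially) $D$-absorbing; your intertwining is that observation written out, and both arguments then conclude by applying (1) to the resulting separable subfield.

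The genuine divergence is in (7)--(9), and there your proposal has a gap. The paper proves these in one stroke: the first-factor embedding $A\to A\otimes\mathcal{O}_2$ (resp.\ $A\to A\otimes M_{n^\infty}$) is a $C(Y)$-morphism which is fiberwise a $KK$-equivalence, and Theorem~1.1 of \cite{Dad_fiberwise_2009} (separable nuclear fields over a finite-dimensional base) promotes it to a $KK_Y$-equivalence, whence the $K$-theory statements. Your reduction of (7)--(9) to the vanishing of $K_\ast(\,\cdot\,;\Z/n)$, resp.\ $K_\ast$, on the fibers is correct, but your proof of the local--global vanishing principle does not close as described. Concretely: continuity of $K$-theory together with $A_y=\varinjlim_{Z\ni y}A(Z)$ shows that a fixed class $x$ dies on all sufficiently small closed sets; but when you patch along a fine closed cover, Mayer--Vietoris only yields that the restriction of $x$ to a union of pieces is a boundary $\partial(z)$ with $z$ in $K_{\ast+1}$ of the field over the pairwise intersections (the ``walls''), and this term cannot be discarded unless $K_{\ast+1}$ vanishes over the walls --- that is, unless the walls are spaces of strictly smaller dimension, so that an induction on $\dim(Y)$, quantified over all fields with the given fiber condition, applies to them. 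Your named inputs do not deliver this: covers of small mesh and order $\le d+1$ (``colorable'') can perfectly well have $d$-dimensional pairwise overlaps, and no amount of refining plus continuity of the functor removes the boundary terms. What the induction actually needs is a specific dimension-theoretic lemma: every compact metrizable $Y$ with $\dim Y\le d$ admits arbitrarily fine finite \emph{closed} covers whose pairwise intersections have dimension $\le d-1$. This is true --- shrink a fine open cover $\{U_i\}$ to a closed cover $\{F_i\}$, use $\dim=\mathrm{Ind}$ to choose open $V_i$ with $F_i\subseteq V_i\subseteq\overline{V_i}\subseteq U_i$ and $\dim(\overline{V_i}\setminus V_i)\le d-1$, and take as cover the ``bricks'' $\bigcap_{i\in S}\overline{V_i}\cap\bigcap_{i\notin S}(Y\setminus V_i)$: any two distinct bricks meet inside $\bigcup_i(\overline{V_i}\setminus V_i)$, which has dimension $\le d-1$ by the sum theorem --- but it is a different ingredient from the one you cite, and without it the argument stalls. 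Your fallback of quoting a sheaf-cohomology spectral sequence is not available either: such spectral sequences are statements about locally trivial fields, and the fields here are precisely not locally trivial. I would add that completing your route is worthwhile: it uses only Mayer--Vietoris and continuity of $K$-theory, so it would give the $K$-theoretic conclusions of (7)--(9) for arbitrary $C(Y)$-algebras with no nuclearity hypothesis, and it bypasses the step in the paper's argument where $K_\ast(A_y)=0$ is used to conclude that $A_y\to A_y\otimes\mathcal{O}_2$ is a $KK$-equivalence --- a deduction which, for an algebra with trivial $K$-theory, amounts to the UCT for the fibers.
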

\begin{proof}
(1) and (2) are proved in Theorem~4.6 of~\cite{HirRorWin_algebras_2007}.

For (4), this is Lemma~3.1 in~\cite{Car_classification_2011}, while the proof of the said lemma is easily adapted to show (3).
For (5), this is a consequence of Theorem~1.3 in~\cite{Sud_stable_2005} and the fact that $\mathrm{sr}(C(Y,A))\leq \dim(Y)+\mathrm{sr}(A)$
for every compact space $Y$ and every \ca\ $A$, by Theorem~1.13 in~\cite{NagOsaPhi_ranks_2001}.
For (6), the same argument in the proof of Theorem~1.3 in~\cite{Sud_stable_2005} applies to real rank, showing that
\[\RR(A)\leq \sup_{y\in Y}\RR(C(Y,A_y).\]
(In the proof, one has to use the fact that if $I$ is an ideal in a \ca\ $B$, then
$\RR(B/I)\leq \RR(B)$ as a replacement of the inequality (F2) from~\cite{Sud_stable_2005}. This is
Theorem~1.4 in~\cite{Elh_rang_1995}.)
Since $\RR(C(Y,A_y))\leq \dim(Y)+\RR(A_y)$ for all $y\in Y$ by Corollary~1.12 in \cite{NagOsaPhi_ranks_2001},
the result follows.

We prove (9). By passing to separable sub-$C(Y)$-algebras
as in the proof of (2), we may assume that each $A_y$ is separable. Then $A$ is separable and nuclear.
We will prove that $K_\ast(A)=\{0\}$ by showing that
$A$ is $KK_Y$-equivalent to $A\otimes\Ot$. We regard $A\otimes\Ot$ as a $C(Y)$-algebra in the obvious way, so that the
fiber over $y\in Y$ is $A_y\otimes\Ot$. Since $A$ is exact, this is a \emph{continuous} $C(Y)$-algebra, by Theorem~B in~\cite{KirWas_operations_1995}.
Let $\rho\colon A\to A\otimes\Ot$ denote the first tensor factor
embedding. Then $\rho$ is a $C(Y)$-homomorphism whose fiber map $\rho_y\colon A_y\to A_y\otimes\Ot$ is also the first tensor
factor embedding. Since $A_y$ has trivial $K$-theory and satisfies the UCT, we deduce that $\rho_y$ induces a $KK$-equivalence
$A_y\sim_{KK} A_y\otimes\Ot$. Since $Y$ is finite dimensional and $A$ is separable and nuclear, it follows from
Theorem~1.1 in~\cite{Dad_fiberwise_2009} that $\xi=KK(\rho) \in KK_Y(A, A\otimes\Ot)$ is invertible. The result follows.

For (7), one uses an identical argument as for (9), replacing $\Ot$ by $M_{n^\infty}$ and noting that unique
$n$-divisibility of an abelian group is equivalent to absorbing $\Z[1/n]$ tensorially. Also, (8) follows by
applying (7) for all $n\in\N$.

Finally, for (10), this is Theorem~1.4 in~\cite{Dad_fiberwise_2009}.
\end{proof}

The next result lists a number of classes that are closed under positive existential embeddings (property (P2)).
Some of these observations
were made, in the separable case, in~\cite{Gar_crossed_2017}, and later independently in~\cite{BarSza_sequentially_2016}, and are also implicit
in various earlier papers. 
For the classes in (6), (7) and (8), the result is new even in the separable case.

\begin{prop}\label{prop:P2}
Let $A$ and $B$ be \ca s, and let $\iota\colon A\hookrightarrow B$ be a positive existential embedding.
\be
\item Let $\mathcal{D}$ be a strongly self-absorbing \ca. If $B$ is separable and $\mathcal{D}$-absorbing,
then the same holds for $A$.
\item If $B$ is separable and stable, then so is $A$.
\item We have
\[\dimnuc(A)\leq \dimnuc(B).\]
\item We have
\[\dr(A)\leq \dr(B).\]
\item We have
\[\mathrm{sr}(A)\leq \mathrm{sr}(B).\]
\item We have
\[\RR(A)\leq \RR(B).\]
\item The map $K_\ast(\iota)\colon K_\ast(A)\to K_\ast(B)$ is injective.
In particular, if either $K_0(B)$ or $K_1(B)$ has any of the following
properties, then so does $K_0(A)$ or $K_1(A)$: being trivial; being free; being torsion-free.
\item Let $n\in \N$. If either $K_0(B)$ or $K_1(B)$ is uniquely $n$-divisible, then so is
$K_0(A)$ or $K_1(A)$. In particular, if either $K_0(B)$ or $K_1(B)$ is rational, then so is
$K_0(A)$ or $K_1(A)$.
 \ee\end{prop}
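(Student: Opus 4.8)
The plan is to treat the statement in two groups. Items (1)--(6) are all instances of a single transfer principle: each of the properties involved defines an $\forall\exists$-axiomatizable class of \cas\ in the sense of continuous logic (see \cite{FHLRTVW_model_2015} and \cite{GarLup_equivariant_2016}), and such classes are inherited by $A$ whenever $B$ lies in them and $\iota\colon A\to B$ is a positively existential embedding. Items (7) and (8) concern $K$-theory, which is \emph{not} of this form, so they require a direct argument using the splitting map furnished by \autoref{df:posexistemb}.

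For the first group I would isolate the transfer mechanism explicitly. Write $\delta_A\colon A\to A_\I$ for the canonical constant-sequence embedding; by \autoref{df:posexistemb}, for all separable $A_0\subseteq A$ and $B_0\subseteq B$ with $\iota(A_0)\subseteq B_0$ there is a homomorphism $\varphi\colon B_0\to A_\I$ with $\varphi\circ\iota|_{A_0}=\delta_A|_{A_0}$. Each property $P$ in (1)--(6) can be phrased as: for every finite $F$ and every $\ep>0$ there exist elements, in some matrix amplification, satisfying finitely many norm inequalities in $F$ and the new elements. Given $F\subseteq A$ and $\ep>0$, I would use $P$ in $B$ to produce witnesses $\bar w$ over $B$, set $A_0=C^\ast(F)$ and let $B_0$ be the separable subalgebra generated by $\iota(A_0)$ and the entries of $\bar w$, and then transport $\bar w$ to $\bar z=\varphi(\bar w)$ over $A_\I$. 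Since $\varphi$ is a contractive homomorphism fixing $F$ through $\delta_A$, every upper-bound inequality $\|\cdot\|<\ep$ is preserved; the invertibility (lower-bound) conditions needed for $\mathrm{sr}$ and $\RR$ are preserved by passing to the unital homomorphism $\widetilde\varphi$ between unitizations, which is legitimate because amplifications and unitizations of $\iota$ are again positively existential. Finally a witness over $A_\I$ satisfying finitely many approximate relations lifts, for a suitable index of a representing sequence, to a witness over $A$, giving $P$ for $A$. For the concrete classes I would invoke the relevant axiomatizations: finite nuclear dimension, decomposition rank, and finite stable rank from \cite{GarLup_equivariant_2016}, real rank by the same scheme, and (potential) $\mathcal{D}$-absorption from \cite{FarHarRorTik_relative_2017} and \cite{FHLRTVW_model_2015} (the non-separable ``potentially'' case being handled by the Remark after \autoref{df:potDabs}, and (1) being also contained in Theorem~4.6 of~\cite{HirRorWin_algebras_2007}).

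For the $K$-theory items I would argue directly. To prove (7), let $x\in K_\ast(A)$ with $K_\ast(\iota)(x)=0$. Choosing a separable $A_0\subseteq A$ carrying a class $x_0$ mapping to $x$, the triviality of $K_\ast(\iota)(x)$ in $K_\ast(B)$ is witnessed in a separable $B_0\supseteq\iota(A_0)$, so applying $\varphi$ gives $K_\ast(\delta_A)(x)=K_\ast(\varphi)\bigl(K_\ast(\iota)(x_0)\bigr)=0$. It then suffices to know that $K_\ast(\delta_A)\colon K_\ast(A)\to K_\ast(A_\I)$ is injective, which is the standard fact that projections and unitaries over $A_\I$ lift to sequences and that Murray--von Neumann relations valid in $A_\I$ already hold over $A$ at a large index. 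Injectivity of $K_\ast(\iota)$ yields the ``in particular'' clause, since subgroups of trivial, free, or torsion-free abelian groups keep those properties. For (8), uniqueness of $n$-division follows from injectivity of $K_\ast(\iota)$ together with injectivity of multiplication by $n$ on $K_\ast(B)$; for divisibility, given $x$ as above, an equation $K_\ast(\iota)(x)=nz$ in $K_\ast(B)$ is witnessed in some $B_0$, whence $K_\ast(\delta_A)(x)=n\,K_\ast(\varphi)(z)$ is $n$-divisible in $K_\ast(A_\I)$. The same lifting argument shows that $K_\ast(\delta_A)$ \emph{reflects} $n$-divisibility, so $x$ is $n$-divisible in $K_\ast(A)$; rationality follows by applying this for all $n$.

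The main obstacle will be items (7) and (8): unlike (1)--(6) these are not captured by any $\forall\exists$-axiomatization, so the clean sentence-level transfer is unavailable and one must carry out the explicit computation with the splitting $\varphi$. The delicate points there are verifying that $K_\ast(\delta_A)$ is injective and, for (8), that it reflects $n$-divisibility; both rest on the same lifting-and-large-index argument in the sequence algebra, and keeping track of matrix sizes and of the auxiliary (liftable) projections is the only real work. A secondary technical point, in the first group, is the correct handling of invertibility conditions for $\mathrm{sr}$ and $\RR$ through unitizations, since the splitting $\varphi$ need be neither injective nor unital.
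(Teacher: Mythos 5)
Your proposal is correct and follows essentially the same route as the paper: items (1)--(5) are delegated to the model-theoretic transfer results of \cite{FHLRTVW_model_2015} and \cite{GarLup_equivariant_2016}, while the remaining items are handled by hand with the splitting homomorphism $\varphi$ from the definition of a positively existential embedding. One correction to your framing, though: the paper explicitly remarks that it is \emph{not} known whether ``real rank at most $n$'' is $\forall\exists$-axiomatizable, so (6) is not an instance of your ``single transfer principle.'' What saves your write-up is that the direct transport you describe for (6) --- witnesses $b_0,\ldots,b_n$ in $B$ with $\sum_j b_j^2$ invertible, pushed into $A_\infty$ by the unitized splitting (unitizations and amplifications of $\iota$ being again positively existential), then lifted to $A$ at a large index of representing sequences --- never invokes axiomatizability, and is precisely the paper's proof of (6); you should just not advertise it as a consequence of $\forall\exists$-axiomatization. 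Your treatment of (7)--(8) differs only cosmetically from the paper's: you pivot on injectivity of $K_\ast(A)\to K_\ast(A_\infty)$ (and its reflection of $n$-divisibility), together with continuity of $K$-theory to witness the relevant equations inside a separable $B_0\supseteq\iota(A_0)$, whereas the paper reduces directly to Murray-von Neumann equivalence of projections and works with approximately multiplicative lifts of $\varphi$, quoting Lemma~2.5.3 of \cite{Lin_introduction_2001}; the underlying content (lift projections and partial isometries from $A_\infty$ and pass to a large index) is identical. If anything, your version of (8) --- lifting a single projection representing the class $w$ with $K_\ast(\delta_A)(x)=nw$ and then taking $n$ copies of that one lift --- is cleaner than the paper's, whose stated reduction elides the point that the lifted projection must decompose into $n$ mutually equivalent pieces.
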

\begin{proof}
The proof for (1) is contained in the proofs of Theorem~4.3 of~\cite{Gar_crossed_2017} (see also Remark~2.12 there),
and we briefly sketch the proof. Let $\iota\colon A\to B$ be a positively existential embedding, and let $\varphi\colon B\to A_\infty$
be a left inverse.
Find a sequence
$(\varphi_n)_{n\in\N}$ of contractive linear maps $\varphi_n\colon B\to A$ which are asymptotically $\ast$-multiplicative,
and such that $\lim\limits_{n\to\I}\varphi_n(\iota(a))=a$ for all $a\in A$.

Let $\ep>0$, and let $F\subseteq A$ and $E\subseteq \D$
be finite subsets. Use Theorem~4.2 in~\cite{Gar_crossed_2017} to choose a completely
positive map $\psi\colon D\to B$ satisfying
\be
\item $\|\iota(a)\psi(d) -\psi(d)\iota(a)\| <\ep$ for all $a\in F$ and for all $d\in E$;
\item $\|\psi(de)\iota(a) - \psi(d)\psi(e)\iota(a)\|<\ep$ for every $d, e \in E$ and every $a \in F$;
\item $\|\psi(1)\iota(a) - \iota(a)\| <\ep$ for all $a \in F$.
\ee

Consider the completely positive contractive map $\theta=\varphi\circ\psi \colon \D\to A_\infty$. Using the Choi-Effros Lifting Theorem,
we can find a completely positive contractive lift $\pi\colon D\to A$ satisfying
\be
\item $\|a\pi(d) -\pi(d)a\| <\ep$ for all $a\in F$ and for all $d\in E$;
\item $\|\pi(de)a - \pi(d)\psi(e)a\|<\ep$ for every $d, e \in E$ and every $a \in F$;
\item $\|\pi(1)a - a\| <\ep$ for all $a \in F$.
\ee
Another application of Theorem~4.2 in~\cite{Gar_crossed_2017} implies that $A$ is $\D$-stable, as desired.

We prove (2). By the equivalence between (a) and (c) in Theorem~2.1 of~\cite{HjeRor_stability_1998}, and the equivalence
between (b) and (c) in ~2.1 of Proposition~\cite{HjeRor_stability_1998}, a separable \ca\ $C$ is stable if and only if for 
every positive contraction $c\in C$ and every $\ep>0$ there exists a contraction $x\in C$ with $\|cxx^*\|<\ep$ and $\|c-xx^*\|<\ep$. 
Now let $a\in A$ be a positive contraction and let $\ep>0$. Since $B$ is stable, there exists a contraction $x\in B$ with 
$\|\iota(a)xx^*\|<\ep$ and $\|\iota(a)-xx^*\|<\ep$. Find a sequence
$(\varphi_n)_{n\in\N}$ of contractive linear maps $\varphi_n\colon B\to A$ which are asymptotically $\ast$-multiplicative,
and such that $\lim\limits_{n\to\I}\varphi_n(\iota(a))=a$ for all $a\in A$. Then there exists $n_0\in\N$ such that
$\|a\varphi_n(x)\varphi_n(x)^*\|<\ep$ and $\|a-\varphi_n(x)\varphi_n(x)^*\|<\ep$ for all $n\geq n_0$, so
$A$ is stable. 

Parts (3) and (4) are almost identical, so we only prove (3). (The definition of nuclear dimension
is given on page 3 of the introduction.) Set $n=\dimnuc(B)$. Let $F\subseteq A$ be a finite set consisting of
contractions, and let $\ep>0$. Find finite-dimensional $C^*$-algebras $C_0,\ldots,C_n$, a completely positive contractive map
$\varphi\colon B\to C=\oplus_{j=0}^n$ and completely positive contractive order zero maps 
$\psi_j\colon C_j\to B$, for $j=0,\ldots,n$, satisfying $\left\|\sum_{j=0}^n\psi_j(\varphi(\iota(a)))-\iota(a)\right\|<\ep$ for all
$a\in F$. Set 
\[\ep_0=\ep-\max_{a\in F}\left\|\sum_{j=0}^n\psi_j(\varphi(\iota(a)))-\iota(a)\right\|>0.\]
Using that the cones over the finite-dimensional \ca s $C_0,\ldots, C_n$ are projective and the correspondence between order zero maps from
$C_j$ and homomomorphisms from its cone, find $\delta>0$ such that for all $j=0,\ldots,n$, whenever 
$\kappa\colon C_j\to A$ is a completely positive contractive map which is $\delta$-order zero (meaning that $\|\kappa(c)\kappa(d)\|<\delta$ for all 
positive orthogonal contractions $c,d\in C_j$), then there exists a completely positive contractive order zero map 
$\rho\colon C_j\to A$ satisfying $\|\rho-\kappa\|<\ep_0/2(n+1)$. 

Let $A_0\subseteq A$ be the separable subalgebra generated by $F$ and let $B_0\subseteq B$ be the separable subalgebra
generated by $F$ together with the images of $\psi_0,\ldots,\psi_n$. 
Denote by $C_j^{(1)}$ the unit ball of $C_j$, for $j=0,\ldots,n$.
Since $\iota\colon A\to B$ is positively existential, and arguing as in part (1) above, we can find a completely positive
contractive map $\theta\colon B_0\to A$ which is $\delta$-multiplicative on $\bigcup_{j=0}^n\psi_j(C_j^{(1)})$ and satisfies
$\|\theta(\iota(a))-a\|<\ep_0/2$. The maps $\theta\circ\psi_0,\ldots,\theta\circ\psi_n$ are thus $\delta$-order zero in the 
sense above, and hence there exist completely positive contractive order zero maps 
$\rho_j\colon C_j\to A$, for $j=0,\ldots,n$, satisfying $\|\rho_j-\theta\circ\psi_j\|<\ep_0/2(n+1)$. 
For $a\in F$, we have 
\begin{align*}
 \left\|\sum_{j=0}^n\rho_j(\varphi(\iota(a)))-a\right\|&\leq \ep_0/2+ \left\|\sum_{j=0}^n\theta(\psi_j(\varphi(\iota(a))))-a\right\| \\
 &\leq \ep_0/2+\ep_0/2 +\left\|\sum_{j=0}^n\theta(\psi_j(\varphi(\iota(a))))-\theta(a)\right\|\\
 &\leq \ep_0 +\left\|\sum_{j=0}^n\psi_j(\varphi(\iota(a)))-a\right\|<\ep,
\end{align*}
thus showing that $\dimnuc(A)\leq n$, as desired.

The proofs of (5) and (6) are almost identical, and for the sake of brevity we only prove the result for the real rank. 
Let $n\in\N$ and suppose that $\RR(B)\leq n$.
By passing to unitizations, we may assume that $A$, $B$ and $\iota$ are unital. Given
self-adjoint elements $a_0,\ldots,a_n$ in $A$ and $\ep>0$, find self-adjoint elements $b_0,\ldots,b_n\in B$ such that $\sum\limits_{j=1}^n b_j^2$ is invertible in $B$ and
$\left\|\sum\limits_{j=1}^n(\iota(a_j)-b_j)^2\right\|<\ep$. Denote by $A_0\subseteq A$ the (separable) subalgebra generated by $\{a_0,\ldots,a_n\}$, and denote by
$B_0\subseteq B$ the (separable) subalgebra generated by $\iota(A_0)\cup \{b_0,\ldots,b_n\}$. Let $\varphi\colon B_0\to A_\I$ be a homomorphism as
in \autoref{df:posexistemb}. For $j=0,\ldots,n$, set $x_j=\varphi(b_j)$. Then $x_j$ is self-adjoint, satisfies $\|\iota(a_j)-x_j\|<\ep$, and $\sum\limits_{j=1}^n x_j^2$ is
invertible in $A_\I$. By choosing representing sequences for the $x_j$ and picking suitable elements in these sequences, we obtain self-adjoint elements
$y_0,\ldots,y_n$ in $A$ with $\sum\limits_{j=0}^n y_j^2$ invertible and satisfying
$\left\|\sum\limits_{j=1}^n(a_j-y_j)^2\right\|<\ep$.
We conclude that $\RR(A)\leq n$, and hence $\RR(A)\leq \RR(B)$.

We prove (7). Observe that $\iota\otimes\id_{M_n}\colon M_n(A)\to M_n(B)$ is also positive existential, and so is its unitization.
Thus, in order to show that $K_0(\iota)$ is injective, it is enough to show that if $p$ and $q$ are projections in $A$
such that $\iota(p)$ and $\iota(q)$ are Murray-von Neumann equivalent in $B$, then $p$ and $q$ are
Murray-von Neumann equivalent in $A$.

Let $p,q\in A$ satisfy $\iota(p)\sim_{\mathrm{M-vN}}\iota(q)$ in $B$, and choose a partial isometry
$v\in B$ satisfying $v^*v=\iota(p)$ and $vv^*=\iota(q)$. Set $A_0=C^*(p,q)$ and $B_0=C^*(\iota(p),\iota(q),v)$.
Find a homomorphism $\varphi\colon B_0\to A_\infty$ as in \autoref{df:posexistemb}, and
choose linear maps $\varphi_n\colon B_0\to A$ representing $\varphi$. These maps are approximately
multiplicative and approximately $\ast$-preserving, and are approximately the identity on $\iota(A)$.
Thus, for $n$ large enough we have
\bi\item
$\|\varphi_n(v)^*\varphi_n(v)-p\|<\frac{1}{2}$;
\item $\|\varphi_n(v)\varphi_n(v)^*-q\|<\frac{1}{2}$;
\item $\|q\varphi_n(v)p-\varphi_n(v)\|<\frac{1}{2}$.\ei
For $s=q \varphi_n(v) p$, we have $\|s^*s-p\|<1$ and $\|ss^*-q\|<1$. By Lemma~2.5.3 in~\cite{Lin_introduction_2001},
we conclude that $p$ and $q$ are Murray-von Neumann equivalent in $A$. This shows that $K_0(\iota)$ is injective.
The argument for $K_1(\iota)$ is analogous, or can be deduced from the statement for $K_0$ by taking suspensions.

We show (8). Without loss of generality, we assume that $K_0(B)$ is uniquely $n$-divisible, and will show that
so is $K_0(A)$. For this, it suffices to show that $K_0(A)$ is $n$-divisible, since uniqueness follows from injectivity
of $K_0(\iota)$ (by part~(7)). By taking unitizations, we may assume that $A$, $B$ and $\iota$ are unital. As before,
upon taking matrix amplifications, it suffices to show that if $p\in A$ is a projection, and there exists a projection
$q\in B$ such that $\diag(q,\ldots,q)$ is Murray-von Neumann equivalent to $\diag(\iota(p),0,\ldots,0)$ in $M_n(B)$, then there
exists a projection $\widetilde{q}\in M_n(A)$ which is Murray-von Neumann equivalent to $\diag(p,0,\ldots,0)$ in $M_n(A)$
and such that $\iota(\widetilde{q})$ is Murray-von Neumann equivalent to
$\diag(q,\ldots,q)$ in $M_n(B)$.
We fix a partial isometry $s\in M_n(B)$ implementing the equivalence $\diag(q,\ldots,q)\sim_{\mathrm{M-vN}}\diag(\iota(p),0,\ldots,0)$.

Let $A_0$ be the subalgebra of $M_n(A)$ generated $\diag(p,0,\ldots,0)$ and the unit, and let $B_0$ be the subalgebra of
$M_n(B)$ generated by $\diag(q,\ldots,q)$ and $s$. Observe that $\iota\otimes\id_{M_n}\colon M_n(A)\to M_n(A)$ is also
positive existential, and let $\varphi\colon B_0\to (M_n(A))_\I$ be a homomorphism as in
\autoref{df:posexistemb}. Then $\varphi(s)$ implements a Murray-von Neumann equivalence between $\diag(p,0,\ldots,0)$ and
$\varphi(q,\ldots,q)$. If $(q_k)_{k\in\N}\in \ell^\I(M_n(A))$ is a lift of $\varphi(q,\ldots,q)$ consisting of projections, and
$(s_k)_{k\in\N}\in \ell^\I(M_n(A))$ is a lift of $\varphi(s)$ consisting of partial isometries, an argument similar to the one used in
part~(7) shows that for $k$ large enough, $\widetilde{q}=q_k$ is Murray-von Neumann equivalent, in $M_n(A)$, to $\diag(p,0,\ldots,0)$,
as witnessed by $s_k$.
\end{proof}


Our next goal is to show that nuclear $C^*$-algebras satisfying the Universal Coefficient Theorem (UCT)
are closed under existential embeddings and thus satisfy (P2). This was observed, in the simple case, in Theorem~3.13 of~\cite{Gar_crossed_2017},
and was then generalized in Theorem~2.12 of~\cite{BarSza_sequentially_2016}. We give an alternative proof below, using
the following well-known consequence of the Arveson Extension Theorem. Since we were not able to find this formulation in
the literature, we sketch a proof here.

\begin{thm}\label{thm:ArvExt} (Arveson).
Let $A$ and $B$ be \ca s, with $A$ unital and $B$ nuclear, let $E\subseteq A$ be an operator subsystem, and let
$\rho\colon E\to B$ be a completely positive contractive map. Let $F\subseteq E$ be a finite set and let $\ep>0$.
Then there exists a completely positive contractive map $\theta\colon A\to B$ making the following
diagram commute on $F$ up to $\epsilon$:
\begin{align*}
\xymatrix{
E\ar@{^{(}->}[d]\ar[r]^-\rho & B\\
A.\ar[ur]_-{\theta} &}
\end{align*}
\end{thm}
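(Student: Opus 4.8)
The plan is to combine Arveson's extension theorem, which only produces extensions whose range lies in $\B(\Hi)$ rather than in $B$, with the nuclearity of $B$, which is exactly what lets one push such maps back into $B$. First I would fix a faithful nondegenerate representation $B\subseteq \B(\Hi)$. Since $E$ is an operator subsystem of the unital \ca\ $A$ and $\rho\colon E\to B\subseteq \B(\Hi)$ is completely positive contractive, Arveson's extension theorem yields a completely positive contractive map $\widehat\rho\colon A\to \B(\Hi)$ with $\widehat\rho|_E=\rho$. In particular $\widehat\rho(F)=\rho(F)\subseteq B$, so on the finite set $F$ the extension already takes values in $B$; the only problem is that $\widehat\rho$ may map elsewhere in $A$ into $\B(\Hi)\setminus B$.

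The key step is to manufacture an approximate retraction onto $B$: for the finite set $G=\rho(F)\subseteq B$ and the given $\ep>0$, I would produce a completely positive contractive map $\Phi\colon \B(\Hi)\to B$ with $\|\Phi(b)-b\|<\ep$ for all $b\in G$. To build $\Phi$, I would invoke nuclearity of $B$ in the form of the completely positive approximation property, which furnishes completely positive contractive maps $\phi\colon B\to M_n$ and $\psi\colon M_n\to B$ with $\|\psi(\phi(b))-b\|<\ep$ for all $b\in G$. Since $M_n=\B(\C^n)$, Arveson's theorem applies a second time to extend $\phi$ to a completely positive contractive map $\widehat\phi\colon \B(\Hi)\to M_n$ (handling the possible non-unitality of $B$ in the standard way, by first extending $\phi$ across the operator system $B+\C 1_{\B(\Hi)}$). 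Setting $\Phi=\psi\circ\widehat\phi$ gives a completely positive contractive map $\B(\Hi)\to B$, and for $b\in G$ one has $\Phi(b)=\psi(\widehat\phi(b))=\psi(\phi(b))$, which differs from $b$ by less than $\ep$.

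Finally I would set $\theta=\Phi\circ\widehat\rho\colon A\to B$, which is completely positive contractive as a composition of two such maps. For $x\in F$ the extension is exact, $\widehat\rho(x)=\rho(x)\in G$, so $\theta(x)=\Phi(\rho(x))$ and hence $\|\theta(x)-\rho(x)\|<\ep$. Thus the triangle commutes on $F$ up to $\ep$, as required; note that the only error entering the estimate is the single approximation coming from the completely positive approximation property, so there is no accumulation of errors.

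The main obstacle, and really the only nontrivial point, is precisely that Arveson's theorem delivers extensions valued in $\B(\Hi)$ and not in $B$. Overcoming this is where nuclearity is essential: the completely positive approximation property approximately factors $\mathrm{id}_B$ through a matrix algebra $M_n$, and because $M_n$ is itself of the form $\B(\C^n)$ one may extend the \emph{first} leg $\phi$ of this factorization off of $B$ to all of $\B(\Hi)$ using Arveson once more, yielding the approximate retraction $\Phi$. The minor technical wrinkle is the possible non-unitality of $B$ when extending $\phi$, which I would dispatch by passing to the operator system $B+\C 1_{\B(\Hi)}$ and using the standard extension of completely positive maps across this unitization.
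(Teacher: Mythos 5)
Your proof is correct, and it uses the same two ingredients as the paper (Arveson's extension theorem plus the completely positive approximation property of $B$), but it arranges them in a genuinely different order. You extend $\rho$ first, landing in $\B(\Hi)$ for a chosen faithful representation of $B$, and then repair the range by building an approximate retraction $\Phi=\psi\circ\widehat\phi\colon\B(\Hi)\to B$, which costs a second application of Arveson and forces you to deal with the possible non-unitality of $B$ via the operator system $B+\C 1_{\B(\Hi)}$. The paper instead composes first and extends second: it applies the CPAP to get $\varphi\colon B\to M_n$ and $\psi\colon M_n\to B$ good on $\rho(F)$, then extends the single map $\varphi\circ\rho\colon E\to M_n$ to $\widetilde\theta\colon A\to M_n$ by Arveson, and sets $\theta=\psi\circ\widetilde\theta$. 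Since the target of the extension is $M_n=\B(\C^n)$ from the start, the paper needs only one application of Arveson, no auxiliary representation of $B$, and no unitization argument. What your route buys in exchange is a reusable gadget: a completely positive contractive map $\B(\Hi)\to B$ that is an approximate retraction on a prescribed finite subset of $B$, which is of independent interest; both arguments incur exactly one $\ep$ of error, coming from the CPAP.
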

\begin{proof} Using nuclearity of $B$, find a sufficiently large $n\in\N$, and
completely positive contractive maps $\varphi\colon B\to M_n$ and $\psi\colon M_n\to B$ such that
$\|\psi(\varphi(b))-b\|<\ep$ for all $b\in \rho(F)$. By Arveson's Extension Theorem (see, for example,
Theorem~1.6.1 in~\cite{BroOza_algebras_2008}), there exists a
completely positive contractive map $\widetilde{\theta}\colon A\to M_n$ making the following diagram commute:
\begin{align*}
\xymatrix{
E\ar@{^{(}->}[d]\ar[r]^-\rho & B\ar[dr]_-{\varphi} \ar[rr]^{\id} & & B\\
A\ar[rr]_-{\widetilde{\theta}} & & M_n.\ar[ur]_-{\psi} &}
\end{align*}

The proof is completed by taking $\theta=\psi\circ\widetilde{\theta}$.
\end{proof}



In the following, we denote by $\mathcal{O}_\infty^{\mathrm{st}}$ the unique unital Kirchberg
algebra satisfying the UCT with scaled $K$-theory given by $(\Z,0,\{0\})$.

\begin{thm}\label{thm:UCTP2}
The class of separable, nuclear $C^*$-algebras satisfying the UCT is closed under positively existential embeddings (property (P2)).
That is, if $B$ is separable, nuclear and satisfies the UCT, and $A\hookrightarrow B$ is a positively existential embedding,
then $A$ is separable, nuclear and satisfies the UCT.
\end{thm}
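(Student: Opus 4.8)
The plan is to prove the three assertions --- separability, nuclearity, and the UCT --- in turn, the last being the only substantial one. Since $\iota\colon A\hookrightarrow B$ is injective and $B$ is separable, $A$ is automatically separable. For nuclearity I would verify the completely positive approximation property of $A$ by pulling it back from $B$ through $\iota$: given a finite set $F\subseteq A$ and $\ep>0$, nuclearity of $B$ provides $n\in\N$ and completely positive contractive maps $\psi\colon B\to M_n$ and $\phi\colon M_n\to B$ with $\|\phi(\psi(\iota(a)))-\iota(a)\|<\ep$ for $a\in F$. Applying \autoref{df:posexistemb} with $A_0=C^*(F)$ and $B_0=C^*(\iota(A_0)\cup\phi(M_n))$ yields $\varphi\colon B_0\to A_\infty$ with $\varphi\circ\iota=\id_A$ on $A_0$, so that $(\varphi\circ\phi)\circ(\psi\circ\iota)$ approximates the canonical map $A\to A_\infty$ on $F$ through $M_n$. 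Lifting the completely positive contractive map $\varphi\circ\phi\colon M_n\to A_\infty$ to a sequence of completely positive contractive maps $M_n\to A$ (possible since $M_n$ is finite dimensional) and evaluating at a far enough index produces an honest factorization of $\id_A$ on $F$ through $M_n$ up to $\ep$; hence $A$ is nuclear.

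For the UCT, the first step is to observe that, $A$ and $B$ being separable, we may take $A_0=A$ and $B_0=B$ in \autoref{df:posexistemb} to obtain a genuine homomorphism $\varphi\colon B\to A_\infty$ with $\varphi\circ\iota=\iota_A$, the canonical embedding of $A$ into its sequence algebra; that is, $\iota$ is \emph{sequentially split}. The heart of the matter is to promote this to a one-sided inverse in bivariant $K$-theory. Using that $B$ is nuclear, the Choi--Effros lifting theorem --- a consequence of the Arveson extension theorem recorded in \autoref{thm:ArvExt} --- lets me lift $\varphi$ to a completely positive contractive asymptotic morphism $B\to A$, representing a class $y\in E(B,A)$. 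Since $A$ and $B$ are separable and $B$ is nuclear, $E(B,A)=KK(B,A)$, and the relation $\varphi\circ\iota=\iota_A$ translates into the statement that the composite $A\xrightarrow{\ x\ }B\xrightarrow{\ y\ }A$, with $x=[\iota]\in KK(A,B)$, equals $\id_A$ in $KK(A,A)$, because any asymptotic morphism lifting $\iota_A$ represents the identity. Thus $A$ is a $KK$-retract of $B$. (One could alternatively tensor with $\OIst$, which is $KK$-equivalent to $\C$, to pass to $\mathcal{O}_\infty$-absorbing algebras where such classes are represented by honest $\ast$-homomorphisms, sidestepping $E$-theory.)

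It then remains to deduce the UCT for $A$ from the retract together with the UCT for $B$. For any separable $D$, composing with the fixed classes $x$ and $y$ exhibits the Rosenberg--Schochet natural sequence for $A$, with terms $\Ext(K_*(A),K_{*+1}(D))$, $KK(A,D)$ and $\Hom(K_*(A),K_*(D))$, as a natural direct summand of the corresponding sequence for $B$; the splitting is compatible with the index map since it is induced by $x$ and $y$. As the sequence for $B$ is exact and a direct summand of an exact sequence is exact, the sequence for $A$ is exact, i.e.\ $A$ satisfies the UCT; combined with separability and nuclearity, this places $A$ in the bootstrap class. The main obstacle is the second paragraph: the splitting $\varphi$ a priori only lands in the non-separable algebra $A_\infty$, and converting it into a bona fide element of $KK(B,A)$ is precisely where separability of $A$, nuclearity of $B$, and the Arveson/Choi--Effros lifting are all used; everything afterward is formal homological algebra in $KK$.
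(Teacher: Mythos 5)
Your first paragraph (separability and nuclearity) is correct and is essentially the argument the paper gives: take $A_0=A$, $B_0=B$, lift the splitting $\varphi\colon B\to A_\I$ via Choi--Effros, and compose with the finite-dimensional factorizations of $B$. The problem is the second paragraph, which is the heart of your UCT argument and contains a genuine gap: the promotion of the sequential splitting $\varphi\colon B\to A_\I$ to a class $y\in E(B,A)\cong KK(B,A)$ with $y\circ[\iota]=1_A$. What Choi--Effros produces is a sequence $(\varphi_n)_{n\in\N}$ of completely positive contractive maps $B\to A$ that is asymptotically multiplicative, i.e.\ a \emph{discrete} asymptotic morphism. $E$-theory classes are defined by asymptotic morphisms parametrized continuously by $[0,\I)$, and a discrete asymptotic morphism does not determine one: the quotient $\ell^\I(\N,A)\to A_\I$ only identifies sequences modulo $c_0(\N,A)$, so nothing relates $\varphi_n$ to $\varphi_{n+1}$ (for instance, if $\psi_0,\psi_1\colon B\to A$ were two homomorphisms splitting $\iota$, the alternating sequence $(\psi_0,\psi_1,\psi_0,\dots)$ is a perfectly good lift of a homomorphism into $A_\I$), and linear interpolation between consecutive maps destroys asymptotic multiplicativity. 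Hence there is no canonical class $y$ attached to $\varphi$, and your parenthetical remark about tensoring with $\OIst$ does not repair this: Kirchberg--Phillips technology represents $KK$-classes by honest homomorphisms into the target algebra, whereas your problem is the converse one of extracting a $KK$-class from a homomorphism into the sequence algebra.

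This is not a removable technicality. If your step were valid, every positively existential (indeed every sequentially split) embedding with $B$ separable nuclear would exhibit $A$ as a $KK$-retract of $B$; in particular $K_*(\iota)$ would be \emph{split} injective and $K_*(A)$ a direct summand of $K_*(B)$. Nothing of the sort is known: what one can actually extract from the splitting is injectivity of $K_*(\iota)$ (part~(7) of \autoref{prop:P2}) and an order embedding of Cuntz semigroups (\autoref{lma:PosExEmbOrdEmbCu}), both proved by lifting finitely many relations and evaluating at a large index -- an argument that cannot produce a coherent group homomorphism $K_*(B)\to K_*(A)$, let alone a $KK$-class. This is precisely why the paper's proof (and the earlier proofs of Gardella and Barlak--Szab\'o) avoids any retract argument: it first uses Dadarlat's stationary inductive limit construction to replace $\iota$ by a positively existential embedding $A^\sharp\hookrightarrow B^\sharp$ of unital Kirchberg algebras $KK$-equivalent to $A$ and $B$ (with \autoref{thm:ArvExt} used to keep the embedding positively existential along the limit), and then runs a local approximation argument: $B^\sharp$, being a Kirchberg algebra satisfying the UCT, is an inductive limit of weakly semiprojective UCT Kirchberg algebras, and weak semiprojectivity lets one pull the approximating maps back through the splitting, exhibiting $A^\sharp$ itself as such a limit. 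Your third paragraph (that the UCT passes to $KK$-retracts) is fine as formal homological algebra, but the retract it requires is never produced.
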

\begin{proof}
Let $A$ be a \ca, let $B$ be a nuclear \ca\ satisfying the UCT, and
let $\iota\colon A\to B$ be a positively existential embedding.
It is clear that $A$ is separable. Nuclearity of $A$ can be deduced as follows. Take $A_0=A$ and $B_0=B$ in \autoref{df:posexistemb}
and apply Choi-Effros to $\varphi\colon B\to A_\I$ to obtain a sequence $(\varphi_n)_{n\in\N}$ of completely positive contractive
linear maps $\varphi_n\colon B\to A$ satisfying $\lim\limits_{n\to\I}\varphi_n(a)=a$ for all $a\in A$. Let $\ep>0$ and let $F\subseteq A$
be a finite subset. Since $B$ is nuclear, there exist a finite dimensional \ca\ $E$ and completely positive contractive maps
$\xi\colon B\to E$ and $\eta\colon E\to B$ satisfying $\|(\eta\circ\xi)(\iota(a))-\iota(a)\|<\ep$ for all $a\in F$. Thus, for large enough
$n$, we have $\|(\varphi_n\circ \eta\circ\xi\circ\iota)(a)-a\|<\ep$ for all $a\in F$, so $A$ is nuclear as well.

For the statement about the UCT, we make some standard reductions first,
so as to fit within the framework of Theorem~5.2 in~\cite{Dad_fiberwise_2009}. We
identify $A$ with its image under $\iota$. By taking unitizations, we may assume that $A$ is a unital
subalgebra of $B$. Since for a \ca\ $C$, the tensor product $C\otimes \mathcal{O}_\infty^{\mathrm{st}}$ is $KK$-equivalent to $C$, it follows
that $C$ satisfies the UCT if and only if $C\otimes \mathcal{O}_\infty^{\mathrm{st}}$ does.
Thus, by tensoring $A$ and $B$ with $\mathcal{O}_\infty^{\mathrm{st}}$,
we may assume that $A$ and $B$ have unital embeddings of $\Ot$.
By the proof of Theorem~2.5 of~\cite{Dad_fiberwise_2009} applied to the one point space $X$, there
exists a homomorphism $\varphi \colon B\to B$
satisfying $\varphi(A)\subseteq A$, such that the stationary inductive limits
\[B^\sharp=\varinjlim (B,\varphi) \ \mbox{ and } \ A^\sharp=\varinjlim (A,\varphi|_A)\]
are unital Kirchberg algebras which are $KK$-equivalent to $B$ and $A$, respectively. Thus, $B^\sharp$ satisfies
the UCT because $B$ does, and it is enough to show that $A^\sharp$ satisfies the UCT.

We claim that the induced inclusion $A^\sharp \hookrightarrow B^\sharp$ is a positively existential embedding.
To show this, we will construct a homomorphism $\widetilde{\psi}\colon B^\sharp \to (A^\sharp)_\infty$ which restricts to the
canonical embedding of $A^\sharp$. Since $A^\sharp$ and $B^\sharp$ are separable, we may choose increasing families
$(F_n)_{n\in\N}$ and $(G_n)_{n\in\N}$ of finite subsets of $A^\sharp$ and $B^\sharp$ with dense union.
For $n\in\N$, we write $A_n$ for $A$ when we regard it as the $n$-th stage of the
stationary inductive limit defining $A^\sharp$, and similarly with $B$. Without loss of generality, we may assume
that $F_n\subseteq A_n$ and $G_n\subseteq B_n$ for all $n\in\N$.
It is enough to construct a sequence $\theta_n\colon B^\sharp \to A^\sharp$ of unital completely positive maps
satisfying $\|\theta_n(b_1b_2)-\theta_n(b_1)\theta_n(b_2)\|<1/n$ for all $b_1,b_2\in G_n$ and $\|\theta_n(a)-a\|<1/n$
for all $a\in F_n$.

Fix $n\in\N$. Using that $A\hookrightarrow B$ is a positively existential embedding, find a homomorphism
$\psi\colon B\to A_\infty$ as in \autoref{df:criterionCP}. Using nuclearity of $A$ and the Choi-Effros Lifting Theorem,
find a unital completely positive map $\rho\colon B\to A$ which satisfies
\bi
\item $\|\rho(b_1b_2)-\rho(b_1)\rho(b_2)\|<\frac{1}{n}$ for all $b_1,b_2\in G_n$, and
\item $\|\rho(a)-a\|<\frac{1}{n}$ for all $a\in F_n$.\ei

Regard $\rho$ as a unital completely positive map $B_n\to A_n\hookrightarrow A^\sharp$. Since $A^\sharp$ is nuclear,
Arveson's Extension Theorem (in the form of \autoref{thm:ArvExt}) implies that there is a
unital completely positive map $\theta_n\colon B^\sharp\to A^\sharp$ which agrees with $\rho$ on $G_n$ up to
$1/n$. The induced map
\[\theta=(\theta_n)_{n\in\N}\colon B^\sharp \to (A^\sharp)_\infty\]
is clearly a homomorphism which restricts to the standard inclusion of $A^\sharp$ into its sequence algebra. This
proves the claim.

The rest of the proof is identical to that of Theorem~3.13 in~\cite{Gar_crossed_2017}.
Denote by $\mathcal{C}$ the class of weakly semiprojective unital Kirchberg algebras satisfying the UCT.
Use Lemma~3.12 in~\cite{Gar_crossed_2017} to write $B^\sharp$ as a direct limit of algebras in $\mathcal{C}$.
Then $A^\sharp$ is a generalized local $\mathcal{C}$-algebra (Definition~3.1 in~\cite{Gar_crossed_2017}).
By Proposition~3.9 (which is a straightforward consequence of Theorem~3.9 in~\cite{Thi_inductive_2018}),
$A^\sharp$ is a unital approximate $\mathcal{C}$-algebra, that is, a direct limit of \ca s in $\mathcal{C}$.
It follows that $A^\sharp$ satisfies the UCT, and the proof is finished.
\end{proof}

The following combines the results obtained in this section, proving Theorem~D in the introduction.

\begin{thm}\label{thm:preservationCP}
Let $G$ be a compact group of finite covering dimension, let $A$ be a \ca, and let $\alpha\colon G\to\Aut(A)$ be an action
with $\cdimRok(\alpha)<\infty$. If $A$ belongs to any of the following classes, then so do $A\rtimes_\alpha G$ and $A^\alpha$:
\be
\item For $\mathcal{D}$ being a strongly self-absorbing \ca, if $A$ is separable and $\mathcal{D}$-absorbing,
then so are $A\rtimes_\alpha G$ and $A^\alpha$.
\item If $A$ is separable and stable, then so are $A\rtimes_\alpha G$ and $A^\alpha$.
\item We have
\[\dimnuc(A^\alpha)=\dimnuc(A\rtimes_\alpha G)\leq(\cdimRok(\alpha)+1)(\dimnuc(A)+1)-1.\]
\item We have
\[\dr(A^\alpha)=\dr(A\rtimes_\alpha G)\leq(\cdimRok(\alpha)+1)(\dr(A)+1)-1.\]
\item We have
\[
\mathrm{sr}(A\rtimes_\alpha G)\leq \mathrm{sr}(A^\alpha)\leq (\cdimRok(\alpha)+1)(\dim(G)+1)+\mathrm{sr}(A)-1.
\]
\item We have
\[\RR(A\rtimes_\alpha G)\leq \RR(A^\alpha)\leq (\cdimRok(\alpha)+1)(\dim(G)+1)+\RR(A)-1.\]
\item Let $n\in\N$. If $A$ is nuclear, satisfies the UCT, and has uniquely $n$-divisible $K$-theory, then the same is true for $A\rtimes_\alpha G$ and
$A^\alpha$.
\item If $A$ is nuclear, satisfies the UCT, and has rational $K$-theory, then the same is true for $A\rtimes_\alpha G$ and
$A^\alpha$.
\item If $A$ is nuclear, satisfies the UCT, and has trivial $K$-theory, then the same is true for $A\rtimes_\alpha G$ and
$A^\alpha$.
\item If $A$ is separable, nuclear, and satisfies the UCT, then the same is true for $A\rtimes_\alpha G$ and
$A^\alpha$.
\ee
\end{thm}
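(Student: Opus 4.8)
The plan is to combine the approximation result \autoref{prop:ApproxCP} with the permanence properties collected in \autoref{thm:P1} and \autoref{prop:P2}. Set $d=\cdimRok(\alpha)$ and let $X$ be the free $G$-space furnished by \autoref{thm:XRpRdim}, so that $\dim(X)\leq(d+1)(\dim(G)+1)-1<\I$, the quotient $X/G$ is a finite-dimensional (metrizable) compact space, and for every $\sigma$-unital $\alpha$-invariant $D\subseteq A$ there is a unital equivariant homomorphism $C(X)\to F_\alpha(D,A)$. By \autoref{prop:ApproxCP}, the canonical map $\iota\colon A\rtimes_\alpha G\to C(X,A)\rtimes G$ is a positively existential embedding, and by \autoref{cor:freeCXGalgebra} the target is a continuous $C(X/G)$-algebra whose fibers are all isomorphic to $A\otimes\K(L^2(G))$. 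Thus $A\rtimes_\alpha G$ is realized as a positively existential subalgebra of a continuous section algebra over a finite-dimensional base with fibers $A\otimes\K(L^2(G))$. (If $G$ is not second countable, I would first reduce to that case by an inductive-limit/separability argument, since every separable subalgebra is contained in a separable invariant one.)

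For each class $\mathcal{C}$ in the list I would argue as follows. First, $A\otimes\K(L^2(G))\in\mathcal{C}$ whenever $A\in\mathcal{C}$: this is immediate for the $K$-theoretic conditions and for nuclearity together with the UCT (the $K$-groups and the UCT are unchanged by $\otimes\K$), and for $\mathcal{D}$-absorption and finite nuclear dimension or decomposition rank it follows from the stability of these notions. Second, $\mathcal{C}$ satisfies property (P1) by \autoref{thm:P1}, so the section algebra $C(X,A)\rtimes G$ lies in $\mathcal{C}$. Third, $\mathcal{C}$ satisfies property (P2) by \autoref{prop:P2} (respectively \autoref{thm:UCTP2} for nuclearity and the UCT), so the positively existential subalgebra $A\rtimes_\alpha G$ lies in $\mathcal{C}$ as well. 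The quantitative bounds in (3) and (4) are read off by feeding $\dim(X/G)$ into the (P1)-inequalities of \autoref{thm:P1} together with the dimension estimate from \autoref{thm:XRpRdim}.

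For the fixed-point algebra $A^\alpha$ I would cut down by the averaging projection. The embedding $\iota$ carries the canonical conditional-expectation projection of $A\rtimes_\alpha G$ to the corresponding projection of $C(X,A)\rtimes G$, and compressing both sides identifies $A^\alpha$ with a (full) corner of $A\rtimes_\alpha G$ and $C(X,A)^G$ with the corresponding corner of the section algebra. A corner of a positively existential embedding is again positively existential, so $A^\alpha\hookrightarrow C(X,A)^G$ is positively existential; crucially, $C(X,A)^G$ is a continuous $C(X/G)$-algebra whose fibers are isomorphic to $A$ (not $A\otimes\K(L^2(G))$), since the fixed-point algebra of $(C(G,A),\texttt{Lt}\otimes\alpha)$ is just $A$. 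Running the same (P1)$+$(P2) argument over this bundle yields the conclusions for $A^\alpha$, and in particular the stable-rank and real-rank estimates $\mathrm{sr}(A^\alpha)\leq\dim(X/G)+\mathrm{sr}(A)$ and $\RR(A^\alpha)\leq\dim(X/G)+\RR(A)$, where the fibers being $A$ rather than $A\otimes\K(L^2(G))$ is exactly what makes these come out correctly. The remaining inequalities $\mathrm{sr}(A\rtimes_\alpha G)\leq\mathrm{sr}(A^\alpha)$ and $\RR(A\rtimes_\alpha G)\leq\RR(A^\alpha)$ follow from the Morita equivalence $A^\alpha\sim A\rtimes_\alpha G$ (\autoref{cor: MorEquiv}) together with the monotonicity of stable and real rank under matrix amplification.

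The main obstacle is the correct bookkeeping of the fibers: for the Morita-invariant and $K$-theoretic classes one may pass freely between $A\rtimes_\alpha G$ and $A^\alpha$, but for stable rank and real rank---which are not Morita invariant---the fixed-point bundle with fibers $A$ must be used, and one has to verify that compressing by the averaging projection indeed preserves the positively existential property and produces the correct fibers. A secondary technical point is checking that each class is genuinely stable under tensoring by $\K(L^2(G))$ and, for non-second-countable $G$, justifying the reduction needed to apply \autoref{prop:ApproxCP} and the metrizability hypothesis of \autoref{thm:P1}.
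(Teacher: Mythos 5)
Your proposal runs on the same engine as the paper's proof: \autoref{thm:XRpRdim} produces the finite-dimensional free $G$-space $X$, \autoref{prop:ApproxCP} and \autoref{cor:freeCXGalgebra} produce a positively existential embedding into a continuous $C(X/G)$-algebra, and the conclusion follows from properties (P1) and (P2) via \autoref{thm:P1}, \autoref{prop:P2} and \autoref{thm:UCTP2}. The difference is the direction of transfer between $A\rtimes_\alpha G$ and $A^\alpha$. The paper treats $A^\alpha$ first, implicitly using the fixed-point analogue of \autoref{prop:ApproxCP}: the proof of that proposition restricts to fixed-point algebras, since the equivariant map $\theta\colon C(X)\otimes D\to A_{\I,\alpha}$ restricts to fixed points and $(A_{\I,\alpha})^{\alpha_\I}=(A^\alpha)_\I$ for compact $G$ (average representative sequences over $G$); the crossed-product statements are then deduced from Morita equivalence/stabilization. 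You go the other way: crossed product first (which is what \autoref{prop:ApproxCP} literally gives), then $A^\alpha$ by compressing with the averaging projection, over the bundle $C(X,A)^G$ with fibers $A$ --- your fiber bookkeeping here is exactly right. However, your assertion that a corner of a positively existential embedding is again positively existential is left unproved. It is a short argument when $A$ is unital: enlarge $B_0$ and $C_0$ so that they contain $p$ and $\iota(p)$; any $\varphi$ as in \autoref{df:posexistemb} then satisfies $\varphi(\iota(p))=p$, hence maps $C_0$ into $p(A\rtimes_\alpha G)_\I\, p=(A^\alpha)_\I$. But the theorem allows non-unital $A$, in which case $p$ is only a multiplier projection, this enlargement is unavailable, and the compression argument needs genuine repair; the paper's fixed-point route avoids the issue entirely.

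The one genuine gap is your final step. The inequalities $\mathrm{sr}(A\rtimes_\alpha G)\le\mathrm{sr}(A^\alpha)$ and $\RR(A\rtimes_\alpha G)\le\RR(A^\alpha)$ do not follow from ``Morita equivalence plus monotonicity under matrix amplification''. Stable rank and real rank are not Morita invariants --- this is precisely the point you stress when insisting on fibers $A$ rather than $A\otimes\K(L^2(G))$ --- and Morita equivalence of $\sigma$-unital algebras only yields $(A\rtimes_\alpha G)\otimes\K\cong A^\alpha\otimes\K$, from which the inequality $\mathrm{sr}(B\otimes\K)\le\mathrm{sr}(B)$ points the wrong way on the crossed-product side. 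What is actually available is stronger than Morita equivalence: $A^\alpha\cong p(A\rtimes_\alpha G)p$ is a \emph{corner}, and it is full because the action is saturated (\autoref{cor: MorEquiv}); one then needs a full-corner comparison theorem, namely Blackadar's result that $\mathrm{sr}(B)\le\mathrm{sr}(pBp)$ for a full corner, together with a real-rank counterpart. (In fairness, the paper is equally terse at this point, invoking Morita equivalence with $A^\alpha\otimes\K(L^2(G))$ and the inequality $\mathrm{sr}(B\otimes\K)\le\mathrm{sr}(B)$.) If you prefer to avoid the corner theorems altogether, note that your own bundle arguments already give the substantive bounds $\mathrm{sr}(A\rtimes_\alpha G)\le\cdimRok(\alpha)+\mathrm{sr}(A)$ and $\mathrm{sr}(A^\alpha)\le\cdimRok(\alpha)+\mathrm{sr}(A)$ directly (the crossed-product bundle has fibers $A\otimes\K(L^2(G))$, and $\mathrm{sr}(A\otimes\K)\le\mathrm{sr}(A)$), so only the intermediate comparison between the two algebras is at stake.
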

\begin{proof}
We argue for the fixed point algebra $A^\alpha$ first.
By \autoref{prop:ApproxCP}, it is enough to check that the classes in the statement
satisfy conditions (P1) and (P2) of \autoref{df:criterionCP}.
For the former, this is the content of \autoref{thm:P1}, and for the latter, this is the content of
\autoref{prop:P2} and \autoref{thm:UCTP2}.

Recall that the crossed product $A\rtimes_\alpha G$ is Morita equivalent to $A^\alpha\otimes\K(L^2(G))$.
Now, the properties in (1)--(2) and (7)--(10) are invariant under
stable isomorphism, and the nuclear dimension and decomposition rank of a \ca\ and its stabilization
are identical, so the result for the crossed product follows.
It remains to observe that for any \ca\ $B$, one has $\mathrm{sr}(B\otimes \K)\leq \mathrm{sr}(B)$
and $\RR(B\otimes\K)\leq \RR(B)$, so the proof is finished.
\end{proof}

\begin{rem} \label{rmk:nonCommTws}
We make some comments about preservation of the classes in \autoref{thm:preservationCP}
by actions with finite Rokhlin dimension (not necessarily with commuting towers).
\be
\item[(a)] For a strongly self-absorbing \ca\ $\mathcal{D}$, denote by $\mathcal{A}_\mathcal{D}$ the class
of $\mathcal{D}$-absorbing separable \ca s. Then $\mathcal{A}_{\mathcal{O}_2}$ and $\mathcal{A}_{M_{p^\infty}}$,
for a prime $p\geq 2$, are not preserved by actions with finite Rokhlin dimension. Indeed, Example~4.8
in~\cite{Gar_rokhlin_2017} shows that $\Ot$-absorption is not preserved (this action was originally constructed
in \cite{Izu_finiteI_2004}). It also shows that absorption of UHF-algebras
other than the CAR algebra $M_{2^\infty}$ is not preserved. In order to rule out preservation of absorption of $M_{2^\infty}$,
one may adapt Izumi's construction to produce an approximately representable action of $\Z_3$ on $\Ot$ whose
equivariant $K$-theory is not uniquely 2-divisible. (This is done, for example, in \cite{GarLup_conjugacy_2016}.)
Whether the classes $\mathcal{A}_{\mathcal{O}_\infty}$ or $\mathcal{A}_{\mathcal{Z}}$ are preserved, is still unknown.
\item[(b)] The examples mentioned above also show that the classes in (7), (8) and (9) are not preserved.
\item[(c)] Whether the class of nuclear algebras satisfying the UCT is preserved by actions with finite Rokhlin dimension,
is in fact equivalent to the UCT problem for separable, nuclear \ca s. Indeed, Barlak has observed that the UCT
problem for separable, nuclear \ca s can be reduced to the question of whether the crossed products of certain
pointwise outer actions of finite groups on $\mathcal{O}_2$ satisfy the UCT; see Theorem~4.17 in~\cite{BarSza_rokhlin_2017}
for the precise statement. By Theorem~4.19 in~\cite{Gar_rokhlin_2017},
any such action has Rokhlin dimension at most one, which proves our claim. \ee\end{rem}

In reference to part~(1) of \autoref{thm:preservationCP},
we show next that stability (that is, $\K$-absorption) of $A\rtimes_\alpha G$ is automatic whenever $G$ is infinite,
and regardless of $A$. A similar result for $\R$-actions has been obtained in Theorem~D of~\cite{HirSzaWinWu_rokhlin_2017}.

\begin{cor}\label{cor:GinfRRsr} Let $G$ be a second-countable compact group, let $A$ be a separable \ca, and let $\alpha\colon
 G\to \Aut(A)$ be an action with $\cdimRok(\alpha)<\infty$. If $G$ is infinite, then $A\rtimes_\alpha G$ is stable.
\end{cor}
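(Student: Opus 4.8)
The plan is to embed $A\rtimes_\alpha G$ positively existentially into an algebra that is manifestly stable, and then pull stability back across the embedding. First I would set $d=\cdimRok(\alpha)$ and take $X=E_dG$ to be the free $G$-space furnished by \autoref{thm:XRpRdim}. Being a finite join of copies of the compact group $G$, this space is compact; it is metrizable since $G$ is second countable; and it carries local cross sections for the quotient map $X\to X/G$. Since $\cdimRok(\alpha)\le d$, \autoref{thm:XRpRdim} supplies, for every $\sigma$-unital $\alpha$-invariant subalgebra $D\subseteq A$, a unital equivariant homomorphism $C(X)\to F_\alpha(D,A)$, so the hypotheses of \autoref{prop:ApproxCP} are met and the canonical map $\iota\colon A\rtimes_\alpha G\to C(X,A)\rtimes_\gamma G$ (with $\gamma$ the diagonal action) is a positively existential embedding. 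Both algebras are separable, since $X$ is compact metrizable, $A$ is separable, and $G$ is second countable.

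Next I would identify the target algebra. By \autoref{cor:freeCXGalgebra}, $C(X,A)\rtimes_\gamma G$ is a continuous $C(X/G)$-algebra with fibers isomorphic to $A\otimes\K(L^2(G))$, and it is \emph{locally trivial} because $X\to X/G$ admits local cross sections. This is precisely where infiniteness of $G$ enters: when $G$ is infinite and second countable, $L^2(G)$ is a separable infinite-dimensional Hilbert space, so $\K(L^2(G))\cong\K$ and each fiber $A\otimes\K$ is stable. Granting for the moment that $C(X,A)\rtimes_\gamma G$ is itself stable, the conclusion follows quickly: applying \autoref{prop:P2}(1) with $D=\K$ to the positively existential embedding $\iota$ shows that the separable algebra $A\rtimes_\alpha G$ is $\K$-absorbing, hence stable.

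Thus the corollary reduces to the claim that a separable, locally trivial, continuous $C(Y)$-algebra over the compact metric space $Y=X/G$ with stable fibers is stable, and I expect this to be the main obstacle. The difficulty is that $Y$ need not be finite dimensional, as the statement places no restriction on $\dim(G)$; consequently the familiar principle ``a continuous field with stable fibers over a finite-dimensional base is stable'' is unavailable, and one must genuinely exploit local triviality rather than mere continuity of the field. The argument I would give uses that $Y$ is \emph{compact}: choose a finite trivializing open cover $U_1,\dots,U_n$, over which $B|_{U_i}\cong C_0(U_i)\otimes(A\otimes\K)$ is stable, together with a subordinate partition of unity. I would then globalize using the local characterization of stability of Hjelmborg and Rørdam for $\sigma$-unital algebras: given $a\in B_+$ and $\ep>0$, cut $a$ by the partition of unity, solve the stability condition on each chart to produce suitable orthogonal translates $b_i$, and patch them into a single $b\in B_+$ with $a\precsim b$ and $\|ab\|<\ep$. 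The finiteness of the cover is exactly what makes the patching and the accompanying norm estimates work uniformly, independently of $\dim(Y)$, and so takes the place of any finite-dimensionality hypothesis. Combining the resulting stability of $C(X,A)\rtimes_\gamma G$ with \autoref{prop:P2}(1) completes the proof.
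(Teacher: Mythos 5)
Your proposal follows the paper's proof skeleton exactly: $X=E_dG$ from \autoref{thm:XRpRdim}, the positively existential embedding from \autoref{prop:ApproxCP}, the identification of $C(X,A)\rtimes_\gamma G$ as a locally trivial continuous $C(X/G)$-algebra with fibers $A\otimes\K(L^2(G))\cong A\otimes\K$ from \autoref{cor:freeCXGalgebra}, and \autoref{prop:P2} to pull stability back. The divergence is at the stability of $C(X,A)\rtimes_\gamma G$. There the paper takes $X$ \emph{finite dimensional} and simply cites Proposition~3.4 of~\cite{HirRorWin_algebras_2007}; this is legitimate only when $\dim(G)<\infty$, since every free $G$-space contains $G$ as a closed subspace and so has dimension at least $\dim(G)$. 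You are right that the corollary as printed carries no such hypothesis, and its hypotheses are not vacuous for infinite-dimensional $G$ (left translation of $\T^\infty$ on $C(\T^\infty)$ has the Rokhlin property). The intended reading, consistent with \autoref{thm:preservationCP} and with the corollary immediately following (which says ``with finite dimension'' explicitly), is that $\dim(G)<\infty$, and under that reading the paper's one-line citation settles the matter with none of your extra work. So up to the key lemma your route is the paper's; past it, you are attempting a strictly more general statement.

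For that more general statement, your patching step is the crux and, as sketched, it has a genuine gap. ``Solving the stability condition on each chart'' cannot mean invoking stability of $B_i:=B|_{U_i}$ as an abstract \ca: the translate produced over $U_i$ must be made almost orthogonal to the pieces $f_ja$ of $a$ and to the translates built over the \emph{other} charts, and these are not elements of the ideal $B_i$. Abstract stability of an ideal gives no such control (in $\K\subseteq\B(\ell^2)$ one cannot move a positive element of $\K$ almost orthogonal to $1$), and some input beyond ``stable fibers over a compact base'' is unavoidable: R{\o}rdam's non-stable full hereditary subalgebra of $C(Z)\otimes\K$, with $Z=\prod_{n\in\N}S^2$, is a continuous $C(Z)$-algebra all of whose fibers are isomorphic to $\K$, which is precisely why the cited Proposition~3.4 has a dimension hypothesis. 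The repair uses local triviality concretely rather than chart-wise stability in the abstract: in a trivialization $B_i\cong C_0(U_i)\otimes A\otimes\K$, shift with the \emph{base-constant} multiplier isometries $1\otimes 1\otimes s^{N}$ ($s$ the unilateral shift). Then $v_i=(1\otimes 1\otimes s^{N_i})(f_ia)^{1/2}$ satisfies $v_i^*v_i=f_ia$ and has base support inside $\supp(f_i)$; choosing $\hat{g}_i\in C_0(U_i)$ with $\hat{g}_i\equiv 1$ on $\supp(f_i)$, one gets $v_i^*c=v_i^*(\hat{g}_ic)$ for every $c\in B$, so orthogonality against global elements localizes to orthogonality against finitely many fixed elements of $B_i$, which large $N_i$ achieves. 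Constructing $v_1,\ldots,v_n$ recursively in this way and setting $v=\sum_iv_i$, $b=vv^*$ verifies the Hjelmborg--R{\o}rdam criterion, with errors controlled by the (finite) number of charts. With this fiberwise-shift/localization step supplied, your argument is correct and proves the corollary with no dimension restriction on $G$; without it, the patching is exactly the point where the proof could fail.
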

\begin{proof} When $G$ is infinite, $L^2(G)$ is the infinite dimensional separable Hilbert space. Let $X$ be the
finite dimensional compact free $G$-space from \autoref{thm:XRpRdim}. By \autoref{cor:freeCXGalgebra}, the $C(X/G)$-algebra
$C(X,A)\rtimes_\gamma G$ has stable fibers. Since $X/G$ is finite dimensional, Proposition~3.4 in~\cite{HirRorWin_algebras_2007} implies
that $C(X,A)\rtimes_\gamma G$ is stable. Since the canonical inclusion
\[A\rtimes_\alpha G\hookrightarrow C(X,A)\rtimes_\gamma G\]
is positively existential by \autoref{prop:ApproxCP}, it follows from part~(2) in~\autoref{prop:P2} that $A\rtimes_\alpha G$ is stable.
\end{proof}

An immediate consequence is the following.

\begin{cor}
Let $G$ be an infinite compact group with finite dimension, let $A$ be a \ca, and let $\alpha\colon G\to\Aut(A)$ be an action with
$\cdimRok(\alpha)<\infty$.
Then $\sr(A\rtimes_\alpha G)\leq 2$ and $\RR(A\rtimes_\alpha G)\leq 1$, regardless of $A$.
\end{cor}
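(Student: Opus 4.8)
The plan is to read this off from the preceding \autoref{cor:GinfRRsr} together with two standard facts about stable C*-algebras. Since $G$ is infinite and compact, $L^2(G)$ is infinite dimensional, so the fibers $A\otimes\K(L^2(G))\cong A\otimes\K$ of the approximating bundle are stable; this is precisely what drives \autoref{cor:GinfRRsr} and makes the crossed product stable \emph{regardless of $A$}. Granting this, I would simply invoke the general bounds: every stable C*-algebra has stable rank at most $2$ (Rieffel) and real rank at most $1$. Applying these to $A\rtimes_\alpha G$ gives $\sr(A\rtimes_\alpha G)\le 2$ and $\RR(A\rtimes_\alpha G)\le 1$.

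If one prefers to stay entirely inside the machinery of this paper and avoid asserting stability of $A\rtimes_\alpha G$ itself, there is a parallel route through the positively existential embedding. Let $X$ be the finite dimensional free $G$-space of \autoref{thm:XRpRdim}, and set $B=C(X,A)\rtimes_\gamma G$, so that by \autoref{prop:ApproxCP} the canonical inclusion $\iota\colon A\rtimes_\alpha G\hookrightarrow B$ is positively existential. By \autoref{cor:freeCXGalgebra}, $B$ is a continuous $C(X/G)$-algebra whose fibers are $A\otimes\K(L^2(G))\cong A\otimes\K$, which are stable; since $X/G$ is finite dimensional, Proposition~3.4 in~\cite{HirRorWin_algebras_2007} shows $B$ is stable, whence $\sr(B)\le 2$ and $\RR(B)\le 1$ by the same two facts. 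Parts~(5) and~(6) of \autoref{prop:P2} then transport these inequalities across the positively existential embedding, yielding $\sr(A\rtimes_\alpha G)\le\sr(B)\le 2$ and $\RR(A\rtimes_\alpha G)\le\RR(B)\le 1$. This version uses only results proved in the excerpt, plus the two external rank bounds.

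I expect the main obstacle to be the real-rank input rather than anything about crossed products. The stable rank bound $\sr\le 2$ for stable algebras is Rieffel's classical theorem and is unproblematic; the sharper assertion that a stable C*-algebra has real rank at most $1$ is the delicate point, since it is strictly better than what the general inequality $\RR\le 2\sr-1$ would give and must be extracted from the real-rank literature. A secondary, purely bookkeeping nuisance is that \autoref{cor:GinfRRsr} (and Proposition~3.4 in~\cite{HirRorWin_algebras_2007}) are phrased for second countable $G$ and separable $A$, so to obtain the stated level of generality I would first reduce to that case, using that both stable rank and real rank do not increase along the increasing union of separable (stable) subalgebras exhausting $A\rtimes_\alpha G$.
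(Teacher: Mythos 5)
Your first paragraph is exactly the paper's proof: the paper offers nothing beyond calling the statement ``an immediate consequence'' of \autoref{cor:GinfRRsr}, i.e.\ stability of $A\rtimes_\alpha G$ combined with the same two standard facts you invoke, $\sr\leq 2$ for stable $C^*$-algebras (Rieffel) and $\RR\leq 1$ for stable $C^*$-algebras. Your alternative route via \autoref{prop:ApproxCP} and parts~(5)--(6) of \autoref{prop:P2}, as well as your separable-exhaustion reduction to reconcile the corollary's generality with the second-countability/separability hypotheses of \autoref{cor:GinfRRsr}, go beyond what the paper records (it silently glosses over that mismatch), but they are refinements of, not departures from, the paper's argument.
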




Adopt the notation of \autoref{prop:ApproxCP}. If one moreover knew that $C(X,A)\rtimes G$ is Morita equivalent to
$(C(X)\rtimes G)\otimes A$, then it would follow from Situation 2 in \cite{Rie_applications_1982} that $C(X,A)\rtimes G$ is
itself Morita equivalent to $C(X/G)\otimes A$, since the action is free and the group is compact.
This is, however, not the case in general, as the next example shows.

\begin{eg} Let the non-trivial element of $G=\Z_2=\{-1,1\}$ act on $X=S^1$ via multiplication by $-1$, and let it act on
$A=\OIst$ by an order two automorphism whose induced map on $K_0(\mathcal{O}_\infty^{\mathrm{st}})\cong\Z$ is multiplication
by $-1$. Denote by $\gamma\colon\Z_2\to\Aut(C(S^1))$ and
$\alpha\colon\Z_2\to\Aut(\OIst)$ the corresponding actions.
Set $B=C(S^1,\OIst)$ and denote by $\beta\colon\Z_2\to \Aut(B)$
the diagonal action $\beta=\gamma\otimes\alpha$, where we, as usual, identify $C(S^1,\mathcal{O}_\infty^{\mathrm{st}})$ with
$C(S^1)\otimes \mathcal{O}_\infty^{\mathrm{st}}$.

Let $z\in C(S^1)$ be the canonical generating unitary, and set
$v=z\otimes 1\in C(S^1)\otimes \OIst\cong B$. Regard $v$ as a unitary in $B\rtimes_\beta\Z_2$ under the canonical unital inclusion $B\to B\rtimes_\beta\Z_2$.

\textbf{Claim:} $\widehat{\beta}_{-1} \in\Aut( B\rtimes_\beta\Z_2)$ is inner, and it is implemented by $v$. (In particular,
$\widehat{\beta}_{-1}$ acts trivially on $K$-theory.) Note first that $\beta_{-1}(v)=-v$. Denote by
$u\in B\rtimes_\beta\Z_2$ the canonical unitary implementing $\beta$. To check that
$\widehat{\beta}_{-1}=\Ad(v)$, it suffices to show that these automorphisms agree on the generating set
$F=\{u\}\cup B$. Since $v$ commutes with the elements of $B$
(and $\widehat{\beta}_{-1}$ fixes these elements), it is enough to show that $\widehat{\beta}_{-1}(u)=v^*uv$.
Since $\widehat{\beta}_{-1}(u)=-u$,
this follows from the fact that $\beta_{-1}(v)=uvu^*=-v$, and the claim follows.

Observe that $\beta_{-1}$ induces the automorphism of multiplication by $-1$ on both $K_0(B)$ and $K_1(B)$ (essentially
because this is the case for $\alpha_{-1}$). We compute the $K$-theory of $B\rtimes_\beta\Z_2$ using its Pimsner-Voiculescu
6-term exact sequence. The $K$-groups of $B\rtimes_\beta\Z$ can be obtained from the following sequence:
\beqa\xymatrix{K_0(B)\ar[rr]^-{1-K_0(\beta_{-1})} &&K_0(B)\ar[rr] &&K_0(B\rtimes_\beta\Z)\ar[d]\\
K_1(B\rtimes_\beta\Z)\ar[u]&& K_1(B)\ar[ll] &&K_1(B)\ar[ll]^-{1-K_1(\beta_{-1})}.}\eeqa
Since $1-K_j(\beta_{-1})$ is multiplication by 2 and $K_j(B)\cong \Z$ for $j=0,1$, it follows that the vertical maps are zero and that
$K_0(B\rtimes_\beta\Z)\cong K_1(B\rtimes_\beta\Z)\cong\Z_2$. To compute the $K$-groups of $B\rtimes_\beta\Z_2$, we
use the following sequence:
\beqa\xymatrix{K_0(B\rtimes_\beta\Z_2)\ar[rr]^-{1-K_0(\widehat{\beta})} &&K_0(B\rtimes_\beta\Z_2)\ar[rr] &&K_1(B\rtimes_\beta\Z)\ar[d]\\
K_0(B\rtimes_\beta\Z)\ar[u]&& K_1(B\rtimes_\beta\Z_2)\ar[ll] &&K_1(B\rtimes_\beta\Z_2)\ar[ll]^-{1-K_0(\widehat{\beta})},}\eeqa
where the vertical maps are the canonical ones induced by the quotient map $B\rtimes_\beta\Z\to B\rtimes_\beta\Z_2$.
The map $K_j(B\rtimes_\beta\Z_2)\to K_j(B\rtimes_\beta\Z_2)$, for $j=0,1$, is zero because
$\widehat{\beta}$ acts by inner automorphisms. Since the $K$-groups of $B\rtimes_\beta\Z$ are both $\Z_2$,
it follows that either
\[K_0(B\rtimes_\beta\Z_2)\cong \Z_2 \ \mbox{ and } \  K_1(B\rtimes_\beta\Z_2)\cong \{0\}\]
or
\[K_0(B\rtimes_\beta\Z_2)\cong \{0\} \ \mbox{ and } \  K_1(B\rtimes_\beta\Z_2)\cong \Z_2.\]
In either case, it follows that
$C(S^1,\mathcal{O}_\infty^{\mathrm{st}})\rtimes_\beta \Z_2$ and $C(S^1/\Z_2)\otimes \mathcal{O}_\infty^{\mathrm{st}}$ are not
Morita equivalent, since $K_0(C(S^1/\Z_2)\otimes \OIst)\cong K_1(C(S^1/\Z_2)\otimes \OIst)\cong\Z$ by the
K\"unneth formula. \end{eg}



Recall that the Toms-Winter conjecture predicts that finiteness of the nuclear dimension, $\mathcal{Z}$-absorption and strict
comparison are equivalent for all simple, separable, infinite dimensional, nuclear \uca s.
We make some connections between Rokhlin dimension and preservation of the properties in the Toms-Winter conjecture.
\autoref{thm:preservationCP} shows that $\mathcal{Z}$-absorption,
finiteness of the nuclear dimension and finiteness of the decomposition rank are preserved under formation of crossed products
(and fixed point algebras) by actions with finite Rokhlin dimension with commuting towers.
A similar result for strict comparison is false in general, although it holds with further assumptions, as we
show in \autoref{thm:StrCompRdim1}. We need an easy lemma first.

\begin{lma}\label{lma:PosExEmbOrdEmbCu}
Let $A$ and $B$ be \ca s, and let $\iota\colon A\to B$ be a positively existential embedding. Then
$\Cu(\iota)\colon \Cu(A)\to \Cu(B)$ is an order embedding. In other words, if $s,t\in\Cu(A)$ satisfy $\Cu(\iota)(s)\leq \Cu(\iota)(t)$ in
$\Cu(B)$, then $s\leq t$ in $\Cu(A)$.
\end{lma}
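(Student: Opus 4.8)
The plan is to represent the given Cuntz classes by positive elements and then transport, through the positively existential embedding, an \emph{exact} Cuntz factorization living in $B\otimes\K$ back into the sequence algebra of $A$. Choose positive elements $a,b\in(A\otimes\K)_{+}$ with $[a]=s$ and $[b]=t$, so that the hypothesis $\Cu(\iota)(s)\leq\Cu(\iota)(t)$ reads $(\iota\otimes\id_\K)(a)\precsim(\iota\otimes\id_\K)(b)$ in $B\otimes\K$, and the goal becomes $a\precsim b$ in $A\otimes\K$. Since $a\precsim b$ follows once $(a-\ep)_{+}\precsim b$ holds for every $\ep>0$, I fix $\ep>0$ and work with $(a-\ep)_{+}$ throughout. (Note that the reverse implication, that $s\leq t$ implies $\Cu(\iota)(s)\leq\Cu(\iota)(t)$, is automatic from functoriality of $\Cu$, so only this direction requires proof.)

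First I invoke the standard exact form of Cuntz comparison (R\o rdam's lemma): from $(\iota\otimes\id_\K)(a)\precsim(\iota\otimes\id_\K)(b)$ I obtain a single element $d\in B\otimes\K$ with
\[ d\,(\iota\otimes\id_\K)(b)\,d^{*}=\big((\iota\otimes\id_\K)(a)-\ep\big)_{+}=(\iota\otimes\id_\K)\big((a-\ep)_{+}\big), \]
the last equality holding because $\iota\otimes\id_\K$ is a homomorphism. The advantage of having a single $d$, rather than an approximating sequence, is that $d$ can be captured inside a separable subalgebra and pushed forward. Accordingly, let $A_{0}\subseteq A\otimes\K$ be the separable subalgebra generated by $a$, $b$ and $(a-\ep)_{+}$, and let $B_{0}\subseteq B\otimes\K$ be the separable subalgebra generated by $(\iota\otimes\id_\K)(A_{0})$ together with $d$, so that $(\iota\otimes\id_\K)(A_{0})\subseteq B_{0}$ and $d\in B_{0}$. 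The main technical point, which I expect to be the only genuine obstacle, is that $\iota\otimes\id_\K$ is \emph{again} positively existential: this follows from positive existentiality of $\iota$ as in the handling of $M_{n}$-amplifications in the remark after \autoref{df:posexistemb}, once one observes that every separable subalgebra of $A\otimes\K$ lies in some $D\otimes\K$ with $D\subseteq A$ separable (and likewise for $B$), and that $\psi\otimes\id_\K$ composed with the canonical homomorphism $A_\I\otimes\K\to(A\otimes\K)_\I$ supplies the required map. Granting this, there is a homomorphism $\varphi\colon B_{0}\to(A\otimes\K)_\I$ with $\varphi\circ(\iota\otimes\id_\K)=\kappa$ on $A_{0}$, where $\kappa\colon A\otimes\K\to(A\otimes\K)_\I$ is the constant-sequence embedding.

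Finally I apply $\varphi$ to the factorization. Setting $e=\varphi(d)$ and using $b,(a-\ep)_{+}\in A_{0}$, the homomorphism property yields
\[ e\,\kappa(b)\,e^{*}=\kappa\big((a-\ep)_{+}\big) \]
in $(A\otimes\K)_\I$. Lifting $e$ to a bounded sequence $(e_{n})_{n}$ in $A\otimes\K$, this identity says exactly that $\|e_{n}b e_{n}^{*}-(a-\ep)_{+}\|\to 0$. Hence for every $\dt>0$ and all large $n$ one has $\big((a-\ep)_{+}-\dt\big)_{+}\precsim e_{n}b e_{n}^{*}\precsim b$, and letting $\dt\to 0$ gives $(a-\ep)_{+}\precsim b$; since $\ep>0$ was arbitrary, $a\precsim b$, that is $s\leq t$. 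Everything beyond the amplification claim is routine manipulation of Cuntz subequivalence together with the defining diagram of a positively existential embedding, so that claim is where I would focus the care.
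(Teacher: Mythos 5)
Your proof is correct and follows essentially the same route as the paper's: both rest on the amplification claim that $\iota\otimes\id_\K$ is again positively existential (the paper uses it to assume $A$ and $B$ stable, you to work in $A\otimes\K$), then capture a witness of the Cuntz subequivalence inside a separable subalgebra $B_0$ containing the image of a separable $A_0$, push it into the sequence algebra via the homomorphism from \autoref{df:posexistemb}, and lift to conclude $a\precsim b$. The only (harmless) difference is that the paper captures the entire approximating sequence $(c_n)_{n\in\N}$ with $c_n\iota(b)c_n^*\to\iota(a)$ and then needs a diagonal argument over representative sequences of the $\varphi(c_n)$, whereas your use of R\o rdam's lemma produces a single exact witness for each $\ep>0$ and thereby avoids the diagonal step.
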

\begin{proof}
Since $\iota\otimes\id_\K\colon A\otimes\K\to B\otimes\K$ is an existential embedding, we may assume that $A$ and $B$ are stable.
Let $s,t\in\Cu(A)$ satisfy $\Cu(\iota)(s)\leq \Cu(\iota)(t)$ in $\Cu(B)$, and choose positive elements $a,b\in A$ with $[a]=s$ and $[b]=t$.
Since $\iota(a)$ is Cuntz subequivalent to $\iota(b)$, there exists a sequence $(c_n)_{n\in\N}$ in $B$ satisfying
$\lim\limits_{n\to\I}c_nbc_n^*=a$. Let $A_0$ and $B_0$ be, respectively, the separable subalgebras of $A$ and $B$ generated by
$\{a,b\}$ and $\{\iota(a),\iota(b),c_n\colon n\in\N\}$. Let $\varphi\colon B_0\to A_\I$ be a homomorphism as in \autoref{df:posexistemb}.
By choosing suitable elements in representative sequences of the $\varphi(c_n)$, we can find a sequence $(d_n)_{n\in\N}$ in $A$
satisfying $\lim\limits_{n\to\I}d_nbd_n^*=a$. Hence $a\precsim b$ in $A$ and thus $s\leq t$.
\end{proof}

We need a convenient definition from \cite{AntBosPer_cuntz_2013}.

\begin{df}\label{df:noK1obstr}
We say that a \ca\ $A$ has \emph{no $K_1$-obstructions} if it has stable rank one and
$K_1(I)=0$ for all ideals $I$ in $A$.\end{df}

The following is our result on preservation of strict comparison.

\begin{thm} \label{thm:StrCompRdim1}
Let $G$ be a compact group, let $A$ be a \ca, and let $\alpha\colon G\to\Aut(A)$ be an action with
$\cdimRok(\alpha)\leq 1$. Suppose further that $A$ has stable rank one and that $K_1(I)=0$ for all ideals $I$ in $A$.
Then the canonical inclusions
\[A^\alpha\hookrightarrow A \ \ \mbox{ and } \ \ A\rtimes_\alpha G\hookrightarrow A\otimes\K(L^2(G))\]
induce order embeddings at the level of the Cuntz semigroup. In particular, strict comparison passes
from $A$ to $A^\alpha$ and to $A\rtimes_\alpha G$.
\end{thm}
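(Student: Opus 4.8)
The plan is to treat the crossed-product inclusion first, using the local approximation machinery of this section, and then to derive the fixed-point inclusion by cutting down to a full corner. Throughout I may assume $G$ is second countable and finite-dimensional, so that the base space $X/G$ below is metrizable and finite-dimensional. Since $\cdimRok(\alpha)\le 1$, I first apply \autoref{thm:XRpRdim} with $d=1$ to obtain the finite-dimensional free $G$-space $X=E_1G$, which has local cross sections, together with unital equivariant homomorphisms $C(X)\to F_\alpha(D,A)$ for every $\sigma$-unital $\alpha$-invariant $D\subseteq A$. By \autoref{prop:ApproxCP}, the canonical inclusion $\iota\colon A\rtimes_\alpha G\to C(X,A)\rtimes_\gamma G$ (with $\gamma$ the diagonal action) is then positively existential, so by \autoref{lma:PosExEmbOrdEmbCu} the induced map $\Cu(\iota)$ is an order embedding. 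By \autoref{cor:freeCXGalgebra}, the algebra $E:=C(X,A)\rtimes_\gamma G$ is a continuous, locally trivial $C(X/G)$-algebra whose fibers are canonically isomorphic to $A\otimes\K(L^2(G))$, and for each $z\in X/G$ the fiber evaluation $\pi_z$ satisfies $\pi_z\circ\iota=\theta$, where $\theta\colon A\rtimes_\alpha G\to A\otimes\K(L^2(G))$ denotes the canonical inclusion. In particular, each $\iota(a)$ is a \emph{constant section}: under the canonical fiber identifications, its Cuntz class $[\iota(a)(z)]=[\theta(a)]$ does not depend on $z$.

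The heart of the matter is to show that $\Cu(\theta)=\Cu(\pi_z)\circ\Cu(\iota)$ is order-reflecting. Let $a,b\in A\rtimes_\alpha G$ be positive with $\theta(a)\precsim\theta(b)$. Since $\iota(a)$ and $\iota(b)$ are constant sections with fiber values $\theta(a)$ and $\theta(b)$, the fiberwise relation $[\iota(a)(z)]\le[\iota(b)(z)]$ holds for every $z\in X/G$ and, being constant, trivially varies continuously in $z$. The fibers $A\otimes\K(L^2(G))$ inherit from $A$ the absence of $K_1$-obstructions: stable rank one is preserved under matrix amplification and stabilization, and every ideal of $A\otimes\K(L^2(G))$ has the form $I\otimes\K(L^2(G))$ with $K_1(I\otimes\K(L^2(G)))\cong K_1(I)=0$. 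Thus $E$ is a continuous $C(X/G)$-algebra over a finite-dimensional base whose fibers have no $K_1$-obstructions, and the fiberwise comparison results of \cite{AntBosPer_cuntz_2013} apply to give $\iota(a)\precsim\iota(b)$ in $E$. As $\Cu(\iota)$ is an order embedding, this forces $a\precsim b$ in $A\rtimes_\alpha G$. Hence $\Cu(\theta)$ is an order embedding, proving the statement for $A\rtimes_\alpha G\hookrightarrow A\otimes\K(L^2(G))$.

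It remains to deduce the fixed-point inclusion $A^\alpha\hookrightarrow A$. Because $\alpha$ has finite Rokhlin dimension it is saturated, so the averaging projection $p=\int_G v_g\,dg\in M(A\rtimes_\alpha G)$ is full and $A^\alpha=p(A\rtimes_\alpha G)p$ is a full corner. Under $\theta$ one has $\theta(p)=1\otimes P$, where $P\in\K(L^2(G))$ is the rank-one projection onto the $G$-invariant vectors of the left regular representation; then $(1\otimes P)(A\otimes\K(L^2(G)))(1\otimes P)\cong A$, and one checks that under these identifications the inclusion $A^\alpha\hookrightarrow A$ is precisely the restriction of $\theta$ to the corner cut down by the full projection $1\otimes P$. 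Since Cuntz comparison of positive elements of a hereditary subalgebra agrees with their comparison in the ambient algebra, the order-embedding property of $\Cu(\theta)$ restricts to the corner, yielding that $A^\alpha\hookrightarrow A$ induces an order embedding on Cuntz semigroups. Finally, strict comparison is equivalent to almost unperforation of the Cuntz semigroup, and almost unperforation passes to any sub-semigroup arising from an order embedding: if $(n+1)x\le ny$ holds in the subobject, it holds for the images in the ambient Cuntz semigroup, and the order embedding pulls the resulting inequality $x\le y$ back. The same reasoning applies to $A\rtimes_\alpha G$, completing the proof.

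The main obstacle is the fiberwise-to-global comparison step: one must invoke, and verify the hypotheses of, the results of \cite{AntBosPer_cuntz_2013} on the Cuntz semigroup of continuous $C(X/G)$-algebras with no $K_1$-obstructions in the fibers. It is exactly here that the no-$K_1$-obstruction assumption on $A$ is used, and that the restriction $\cdimRok(\alpha)\le 1$ enters, both through the choice $X=E_1G$ that makes each $\iota(a)$ a constant section and through keeping $X/G$ within the range where the cited comparison results apply. Secondary points requiring care are the reduction to a second countable, finite-dimensional group (so that \autoref{prop:ApproxCP} and the finite-dimensionality of $X/G$ are available) and the inheritance of saturation, and of the no-$K_1$-obstruction property, under stabilization.
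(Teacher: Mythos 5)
Your proposal assembles exactly the same machinery as the paper (\autoref{thm:XRpRdim}, \autoref{prop:ApproxCP}, \autoref{lma:PosExEmbOrdEmbCu}, \autoref{cor:freeCXGalgebra}, and the fiberwise comparison results of \cite{AntBosPer_cuntz_2013}), but there is a genuine gap at the one step where the hypothesis $\cdimRok(\alpha)\leq 1$ actually does its work. You assert that because $E=C(X,A)\rtimes_\gamma G$ is a continuous $C(X/G)$-algebra \emph{over a finite-dimensional base} with fibers having no $K_1$-obstructions, ``the fiberwise comparison results of \cite{AntBosPer_cuntz_2013} apply.'' This is false: Theorem~2.6 of \cite{AntBosPer_cuntz_2013} requires the base space to have covering dimension at most \emph{one}, and fiberwise comparison genuinely fails over two-dimensional bases even with the best possible fibers. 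For instance, in $C(S^2,M_2)$ the Bott projection $p$ and a constant rank-one projection $q$ satisfy $p(x)\sim q(x)$ in every fiber, but $p\not\precsim q$ globally: Cuntz subequivalence of projections would force $p\sim q'$ for a subprojection $q'\leq q$, hence $p\sim q$ by a rank count in each fiber, contradicting $[p]\neq[q]$ in $K_0(C(S^2))$. The paper's proof turns on the observation that $\dim(X/G)\leq 1$, which is what $\cdimRok(\alpha)\leq 1$ buys (with $X=E_1G=G\ast G$, so that, e.g., for finite $G$ both $X$ and $X/G$ are at most one-dimensional); your proposal never verifies this, your ``reduction'' to finite-dimensional $G$ is not a without-loss-of-generality move and would not suffice anyway, and your stated explanation of where $d=1$ enters --- that it ``makes each $\iota(a)$ a constant section'' --- is a red herring, since $\iota(a)$ is a constant section for every $d$. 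With the missing observation $\dim(X/G)\leq 1$ inserted in place of the finite-dimensionality claim, your argument for the crossed-product inclusion coincides with the paper's.

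Two secondary remarks. Your derivation of the fixed-point inclusion from the crossed-product one, via the corner $p(A\rtimes_\alpha G)p\cong A^\alpha$ and the hereditary-subalgebra invariance of Cuntz comparison, is a valid alternative to the paper, which instead reruns the same fiberwise argument with $C(X,A)^\gamma$ (a $C(X/G)$-algebra with fibers $A$) in place of $C(X,A)\rtimes_\gamma G$; note, however, that the saturation/fullness claim you invoke is both unnecessary for your argument and unproved in this generality (the paper's \autoref{cor: MorEquiv} covers only compact \emph{abelian} groups), so it should be dropped. Your final step, pulling almost unperforation back along the order embeddings to obtain strict comparison, agrees with the paper.
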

\begin{proof}
By tensoring $A$ with $\K$ and $\alpha$ with $\id_\K$, we may assume that $A$, $A^\alpha$, and $A\rtimes_\alpha G$ are stable.
We prove the statement for the second inclusion, since the proof for the first one is identical.
Denote the canonical map $A\rtimes_\alpha G\hookrightarrow A\otimes\K(L^2(G))$ by $\theta$. Let
$a$ and $b$ be positive elements in $A\rtimes_\alpha G$, and assume that $\theta(a)\precsim \theta(b)$ in $A\otimes\K(L^2(G))$.
We want to show that $a\precsim b$ in $A\rtimes_\alpha G$.

Let $X$ be the compact Hausdorff free $G$-space from \autoref{thm:XRpRdim}, and observe that $\dim(X/G)\leq 1$. Denote by
\[\iota\colon A\rtimes_\alpha G \to C(X,A)\rtimes_\gamma G\]
the inclusion induced by the equivariant factor embedding $A\to C(X,A)$. For $z\in X/G$, denote by
\[\pi_z\colon C(X,A)\rtimes_\gamma G\to A\otimes\K(L^2(G))\]
the quotient map onto the fiber over $z$. Then $\pi_z\circ\iota=\theta$ by \autoref{cor:freeCXGalgebra}. It
follows that $\pi_z(\iota(a))\precsim \pi_z(\iota(b))$ for all $z\in X/G$. By Theorem~2.6 in~\cite{AntBosPer_cuntz_2013},
it follows that $\iota(a)\precsim \iota(b)$ in $C(X,A)\rtimes_\gamma G$. Since $\iota$ is an existential embedding
by \autoref{prop:ApproxCP}, it follows from \autoref{lma:PosExEmbOrdEmbCu} that $a\precsim b$ in $A\rtimes_\alpha G$, as desired.

That strict comparison passes from  $A$ to $A^\alpha$ and $A\rtimes_\alpha G$ is an immediate consequence of the canonical inclusions
being order embeddings.
\end{proof}

Obtaining more results may depend on computing $X$ in some concrete cases. We examine the smallest non-trivial case, namely, that of actions of $\Z_2$
with Rokhlin dimension at most one. In the following lemma, we denote by
$C^*(\mathcal{G},\mathcal{R})$ the universal \ca\ generated by a set of generators $\mathcal{G}$, subject to a set
of relations $\mathcal{R}$.

\begin{lma} \label{lma:S1rotation}
Let $d\in\N$. Consider the set of generators
\[\mathcal{G}=\left\{1,f_0^{(0)},f_{1}^{(0)},\ldots,f_0^{(d)},f_{1}^{(d)}\right\}\]
and the set of relations

\[\mathcal{R}=\left\{
\begin{aligned}
& 0\leq f_g^{(j)}\leq 1, \ \left[f_g^{(j)},f_k^{(\ell)}\right]=0,\\
& f_g^{(j)}f_h^{(j)}=0, \sum_{g\in\Z_2}\sum_{j=0}^d f_g^{(j)}=1
\end{aligned}
\colon g,h,k\in\Z_2, g\neq h, j,\ell=0,\ldots,d\right\}.\]
Define an action $\alpha\colon \Z_2\to C^*(\mathcal{G},\mathcal{R})$ by $\alpha_g(f_h^{(j)})=f_{gh}^{(j)}$ for $g,h\in \Z_2$ and $j=0,\ldots,d$.
Then the $\Z_2$-$C^*$-algebra $(C^*(\mathcal{G},\mathcal{R}),\alpha)$ is canonically equivariantly
isomorphic to $C(S^d)$ with the antipodal action.
\end{lma}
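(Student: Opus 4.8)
The plan is to exploit the fact that $C^*(\mathcal{G},\mathcal{R})$ is commutative — this is forced by the relations $\left[f_g^{(j)},f_k^{(\ell)}\right]=0$ together with self-adjointness of the $f_g^{(j)}$ — so that by Gelfand duality it is of the form $C(Y)$, where $Y$ is its character space. The relations are admissible (each generator is a positive contraction, hence bounded by $1$), so the universal $C^*$-algebra exists. I would then identify $Y$, together with the induced $\Z_2$-action, with $S^d$ carrying the antipodal map, which gives the desired equivariant isomorphism $C(Y)\cong C(S^d)$.

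First I would describe $Y$ concretely. By the universal property, a character is the same thing as a tuple of complex numbers $t_g^{(j)}$ assigned to the generators $f_g^{(j)}$ and satisfying the relations. Positivity forces $t_g^{(j)}\in[0,1]$, and the remaining relations translate into $t_0^{(j)}t_1^{(j)}=0$ for each $j$ and $\sum_{g,j}t_g^{(j)}=1$. Thus
\[ Y = \Bigl\{ (t_g^{(j)})_{g,j} \in [0,1]^{2(d+1)} : t_0^{(j)}t_1^{(j)}=0 \ (0\le j\le d), \ \sum_{g,j} t_g^{(j)}=1 \Bigr\}, \]
a closed bounded (hence compact) subset of $\R^{2(d+1)}$, where the weak-$*$ topology coincides with the Euclidean subspace topology since the $f_g^{(j)}$ generate the algebra. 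The nontrivial element of $\Z_2$ acts by swapping $t_0^{(j)}\leftrightarrow t_1^{(j)}$ for each $j$.

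The key step is the change of coordinates $s_j=t_0^{(j)}-t_1^{(j)}$. Since $t_0^{(j)},t_1^{(j)}\ge 0$ with $t_0^{(j)}t_1^{(j)}=0$, at most one of the two is nonzero, so $|s_j|=t_0^{(j)}+t_1^{(j)}$ and the pair is recovered by the continuous formulas $t_0^{(j)}=\max(s_j,0)$, $t_1^{(j)}=\max(-s_j,0)$. The partition-of-unity relation becomes $\sum_{j=0}^d|s_j|=1$, so $(t_g^{(j)})\mapsto(s_j)$ is a homeomorphism of $Y$ onto the $\ell^1$-unit sphere
\[ \Sigma = \Bigl\{ (s_0,\ldots,s_d)\in\R^{d+1} : \sum_{j=0}^d|s_j|=1 \Bigr\}, \]
and the swap $t_0^{(j)}\leftrightarrow t_1^{(j)}$ corresponds to $s_j\mapsto -s_j$, i.e.\ to the antipodal map on $\Sigma$. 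Finally, $\Sigma$ is the boundary of the cross-polytope, which is star-shaped about the origin, so radial projection $x\mapsto x/\|x\|_2$ is a homeomorphism $\Sigma\to S^d$; being odd, it intertwines the two antipodal maps. Composing these manifestly equivariant homeomorphisms yields $Y\cong S^d$ equivariantly, hence $C(Y)\cong C(S^d)$.

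The only genuinely delicate point is the first one: verifying that the character space is \emph{exactly} $Y$ — that the universal relations impose no hidden further conditions and that the weak-$*$ topology agrees with the subspace topology. Everything after the coordinate change $s_j=t_0^{(j)}-t_1^{(j)}$ is an elementary chain of equivariant homeomorphisms requiring no serious work.
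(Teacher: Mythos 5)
Your proof is correct and follows essentially the same route as the paper's: both identify the Gelfand spectrum of the universal algebra with the boundary of the $(d+1)$-dimensional cross-polytope and then with $S^d$ carrying the antipodal action --- the paper phrases this combinatorially as a $d$-dimensional simplicial complex whose vertices are the $f_g^{(j)}$ and whose simplices correspond to nonvanishing products, while you realize it concretely as the $\ell^1$-unit sphere via the coordinate change $s_j=t_0^{(j)}-t_1^{(j)}$. The paper explicitly omits the details of this identification, and your computation of the character space, the coordinate change, and the odd radial projection supply exactly the omitted verification.
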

\begin{proof}
We give a sketch of the proof. Each $f_g^{(j)}$ determines a 0-simplex. Moreover, there is a 1-simplex between
$f_g^{(j)}$ and $f_h^{(k)}$ whenever $f_g^{(j)}f_h^{(k)}\neq 0$. More generally, for $n\in\N$, there is an $n$-simplex connecting
$f_{g_0}^{(j_0)},\ldots,f_{g_n}^{(j_n)}$ whenever $f_{g_0}^{(j_0)}\cdots f_{g_n}^{(j_n)}\neq 0$. In particular, $X$ is a $d$-dimensional
simplicial complex. This description also allows one to see that $X$ is homeomorphic to $S^d$, and that $\alpha$ corresponds to the
antipodal action. We omit the details. \end{proof}

\begin{rem}
For cyclic groups of higher order, the space $X$ from \autoref{lma:S1rotation} is a $d$-dimensional simplicial complex, but its
explicit structure is harder to describe. In particular, it may fail to be a manifold.
\end{rem}

We use \autoref{lma:S1rotation} to provide an explicit computation of the free $G$-space $X$ in the conclusion
of \autoref{thm:XRpRdim}.

\begin{cor}\label{cor:ZnRdim1}
Let $A$ be a unital \ca\ and let $\alpha\colon \Z_2\to\Aut(A)$ be an action. Then $\cdimRok(\alpha)\leq 1$
if and only if there exists a unital equivariant homomorphism
\[\varphi\colon (C(S^1),\texttt{Lt})\to (A_\I\cap A',\alpha_\I).\]

Moreover, if $\cdimRok(\alpha)=1$, then every such map $\varphi$ is automatically injective.
\end{cor}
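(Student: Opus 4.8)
The plan is to read off the equivalence almost directly from the material developed just before \autoref{thm: RdimwTRp}, and then to obtain automatic injectivity by a short topological argument on $S^1$. For the equivalence, recall that the discussion preceding \autoref{thm: RdimwTRp} shows that, for a \uca\ $A$, one has $\cdimRok(\alpha)\leq d$ if and only if there is a unital equivariant homomorphism $C(Y)\to A_\I\cap A'$, where $Y=\widehat{C}$ is the universal free $\Z_2$-space attached to $d$ (see also \autoref{thm:XRpRdim}; here the target $A_\I\cap A'$ is exactly $F_\alpha(A,A)$ for unital $A$ and finite $G$). Specializing to $G=\Z_2$ and $d=1$, \autoref{lma:S1rotation} identifies the $\Z_2$-algebra $C(Y)$ with $(C(S^1),\texttt{Lt})$, the circle with the antipodal action. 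This gives the asserted equivalence between $\cdimRok(\alpha)\leq 1$ and the existence of a unital equivariant homomorphism $\varphi\colon (C(S^1),\texttt{Lt})\to (A_\I\cap A',\alpha_\I)$.

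For the injectivity statement I argue by contradiction: suppose $\cdimRok(\alpha)=1$ but that some unital equivariant $\varphi\colon C(S^1)\to A_\I\cap A'$ fails to be injective. Since $\varphi$ is unital and equivariant, $\ker\varphi$ is a nonzero proper closed ideal of $C(S^1)$ invariant under the antipodal action, so $\ker\varphi=\{f\colon f|_Z=0\}$ for a nonempty proper closed antipodally invariant subset $Z\subsetneq S^1$, and $\varphi(C(S^1))\cong C(Z)$ equivariantly. The goal is then to use the properness of $Z$ to produce an equivariant continuous surjection $Z\to S^0$.

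Concretely, since $U=S^1\setminus Z$ is a nonempty open invariant set, I pick $q\in U$ (so that $-q\in U$ as well) and choose an antipodal pair of disjoint open arcs $W_+\ni q$ and $W_-=-W_+\ni -q$ whose closures are disjoint from $Z$. Then $Z$ lies in the disjoint union of the two complementary closed arcs $J_+\sqcup J_-$ with $J_-=-J_+$, and the map $s\colon Z\to S^0$ equal to $+1$ on $Z\cap J_+$ and $-1$ on $Z\cap J_-$ is continuous (the two pieces are separated) and antipodally equivariant. Dualizing, $s$ induces a unital equivariant homomorphism $C(S^0)\to C(Z)$, which composed with $\varphi$ yields a unital equivariant homomorphism $C(S^0)\to A_\I\cap A'$. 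Since $(C(S^0),\texttt{Lt})$ is precisely the universal free $\Z_2$-space for $d=0$ (again by \autoref{lma:S1rotation}), the discussion preceding \autoref{thm: RdimwTRp}, now applied with $d=0$, forces $\cdimRok(\alpha)\leq 0$, contradicting $\cdimRok(\alpha)=1$. Hence $\varphi$ is injective.

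The remaining verifications are routine: that for finite $G$ the continuous part $A_{\I,\alpha}$ coincides with $A_\I$ and that $F_\alpha(A,A)=A_\I\cap A'$ for unital $A$, and the elementary point-set topology separating $Z$ into two antipodal closed arcs. The one genuinely delicate point is the reduction of non-injectivity to the Rokhlin property ($d=0$): one must check that every proper invariant quotient of $(C(S^1),\texttt{Lt})$ equivariantly dominates $(C(S^0),\texttt{Lt})$, which is exactly what the arc-separation argument accomplishes, and this is the step I expect to require the most care.
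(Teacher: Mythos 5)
Your proof is correct and takes essentially the same route as the paper: the equivalence is read off from the universal-free-space characterization (Lemma~3.2 of~\cite{HirPhi_rokhlin_2015}, i.e.\ the discussion before \autoref{thm: RdimwTRp}) combined with \autoref{lma:S1rotation}, and your injectivity argument—splitting the proper closed invariant set into two antipodal closed pieces—is precisely the paper's, since your dualized unital equivariant homomorphism $C(S^0)\to A_\I\cap A'$ is the same data as the paper's pair of Rokhlin projections $\varphi(\chi_Y)$, $\varphi(\chi_{\gamma(Y)})$. If anything, your explicit arc-separation makes precise a decomposition the paper only asserts.
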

\begin{proof}
Since $\texttt{Lt}\colon \Z_2 \to \Aut(C(S^1))$ has Rokhlin dimension one, the ``if'' implication is clear. The converse
is contained in Lemma~1.9 in~\cite{HirPhi_rokhlin_2015}, once it is combined with \autoref{lma:S1rotation}; see also the comments before \autoref{thm: RdimwTRp}.

For the second statement, assume that there is a unital equivariant homomorphism $\varphi\colon C(S^1)\to A_\I\cap A'$ which
is not injective. We denote also by $\alpha\in \Aut(A)$ the order-two automorphism generating the given $\Z_2$-action.
We also write $\gamma\in \mathrm{Homeo}(X)$ for the antipodal homeomorphism, which equals $\texttt{Lt}_{-1}$.
Let $Y\subseteq S^1$ be a closed subset satisfying $\gamma(Y)\cap Y=\emptyset$, and such
that, with $X=Y\cup \gamma(Y)$, the map $\varphi$ induces an injective unital equivariant homomorphism
$\overline{\varphi}\colon C(X)\to A_\infty\cap A'$. Then $\varphi(\chi_{Y})$ and $\varphi(\chi_{\gamma(Y)})$
are orthogonal projections in $A_\infty\cap A'$
which witness the fact that $\alpha$ has the \Rp, that is, $\cdimRok(\alpha)=0$.
\end{proof}





\autoref{cor:ZnRdim1} can be used to give lower bounds other than 1 for certain actions of cyclic groups. In the
following example, said corollary is used to show that there exists an action $\alpha$ of $\Z_2$ on a simple AF-algebra
with unique trace such that $\cdimRok(\alpha)$ is exactly two. To our knowledge, this is the first example of an action
on a simple \ca\ with $\cdimRok$ different from $0, 1$ or $\infty$. (For the sake of comparison, we mention here that there
is no known example of a simple \ca\ with nuclear dimension or decomposition rank other than 0, 1 or $\infty$.) We do not know
of similar examples for the Rokhlin dimension \emph{without} commuting towers.

Recall that the $K_0$-group of an AF-algebra is torsion free, and that its $K_1$-group is trivial.

\begin{eg}\label{eg:cdimRok2} We review the construction of a particular case of Example~4.1 in \cite{Phi_finite_2015}.
Let $\beta\in\Aut(C(S^2))$ be the automorphism of order 2 induced by the homeomorphism $x\mapsto -x$ on $S^2$.
Set
\[A_n=C(S^2)\otimes M_3\otimes M_5\otimes \cdots\otimes M_{2n+1}.\]
Let $\{x_n\colon n\in\N\}$ be a dense subset of $S^2$, and, for $n\in\N$, define a homomorphism
$\psi_n\colon C(S^2)\to M_{2n+1}(C(S^2))$ by
\[\psi_n(f)=\diag(f,f(x_n),f(-x_n),\ldots,f(x_n),f(-x_n))\]
for $f\in C(S^2)$. Define maps $\varphi_n\colon A_{n-1}\to A_n$ by
\[\varphi_n=\psi_n\otimes\id_{M_3}\otimes\cdots\otimes\id_{M_{2n-1}}.\]
Define an order 2 automorphism $\alpha$ of $A=\varinjlim A_n$ as follows. Let
\[w_n=\diag\left(1,\left(
                     \begin{array}{cc}
                       0 & 1 \\
                       1 & 0 \\
                     \end{array}
                   \right),\ldots,\left(
                     \begin{array}{cc}
                       0 & 1 \\
                       1 & 0 \\
                     \end{array}
                   \right)
\right)\in M_{2n+1},\]
and set
\[\alpha_n=\beta\otimes \Ad\left(w_1\otimes w_2\otimes\cdots\otimes w_n\right).\]
Then $\alpha_n$ has order two, and there is a direct limit automorphism
$\alpha=\varinjlim\alpha_n$, which also has order two. In this example, we will show that $\alpha$
has Rokhlin dimension with commuting towers exactly equal to 2.

The $C^*$-algebra $A$ is unital, simple, separable, nuclear, satisfies the UCT and has tracial rank zero.
Moreover, $K_1(A)=\{0\}$ (since $K_1(A_n)=\{0\}$ for all $n\in\N$), and $K_0(A)$ is a dimension group.
By Lin's classification of tracial rank zero \ca s, it follows that $A$ is an AF-algebra. It was shown in
part~(6) of Proposition~4.2 in~\cite{Phi_finite_2015} that $K_0(A\rtimes_\alpha\Z_2)$ has torsion
isomorphic to $\Z_2$, so in particular $A\rtimes_\alpha\Z_2$ is not an AF-algebra.

First, note that $\beta$ has Rokhlin dimension with commuting towers at most 2, by Lemma~1.9 in \cite{HirPhi_rokhlin_2015}.
It then follows from part (1) of Theorem~3.8 in \cite{Gar_rokhlin_2017} that $\cdimRok(\alpha_n)\leq 2$,
so $\cdimRok(\alpha)\leq 2$
by part (3) of Theorem~3.8 in \cite{Gar_rokhlin_2017}. Since $A\rtimes_\alpha\Z_2$
is not an AF-algebra, it follows from Theorem~2.2 in~\cite{Phi_tracial_2011} that $\cdimRok(\alpha)>0$.

Suppose that $\cdimRok(\alpha)=1$.
By \autoref{cor:ZnRdim1}, there exists a unital equivariant embedding $C(S^1)\to A_\I\cap A'$. Give $C(S^1,A)$ the
diagonal action of $\Z_2$. By \autoref{cor:freeCXGalgebra}, the crossed product $C(S^1,A)\rtimes\Z_2$ is a locally trivial
$C(S^1/\Z_2)$-algebra with fibers $M_2(A)$.

We claim that $K_0(C(S^1,A)\rtimes\Z_2)$ is torsion free. Choose closed connected sets $Y_1$ and $Y_2$ in $S^1$ such that
\bi
\item $C(S^1,A)\rtimes\Z_2$ is trivial over both $Y_1$ and $Y_2$;
\item $Y_1\cup Y_2=S^1$;
\item $Y_1\cap Y_2$ is homotopic to $\{-1,1\}$. \ei

By Proposition~10.1.13 in~\cite{Dix_algebras_1977} (see also Lemma~2.4 in~\cite{Dad_continuous_2009}),
we can write the crossed product $C(S^1,A)\rtimes\Z_2$ as the pullback
\beqa\xymatrix{
C(S^1,A)\rtimes\Z_2\ar[r]\ar[d] & (C(S^1,A)\rtimes\Z_2)_{Y_1}\ar[d]\\
(C(S^1,A)\rtimes\Z_2)_{Y_2}\ar[r] & (C(S^1,A)\rtimes\Z_2)_{Y_1\cap Y_2},
}
\eeqa
where all the maps are the canonical quotient (restriction) maps. Observe that $(C(S^1,A)\rtimes\Z_2)_{Y_j}$ is
homotopic to $M_2(A)$ for $j=1,2$, and that $(C(S^1,A)\rtimes\Z_2)_{Y_1\cap Y_2}$ is homotopic to $M_2(A)\oplus M_2(A)$.
Using homotopy invariance of $K$-theory, the Mayer-Vietoris exact sequence on $K$-theory for this pullback (see Theorem~21.5.1
in~\cite{Bla_ktheory_1998}) yields
\beqa\xymatrix{
K_0(C(S^1,A)\rtimes\Z_2)\ar[r] & K_0(M_2(A))\oplus K_0(M_2(A)) \ar[r] & K_0(M_2(A)\oplus M_2(A))\ar[d]\\
K_1(M_2(A)\oplus M_2(A))\ar[u] & K_1(M_2(A))\oplus K_1(M_2(A)) \ar[l] & K_1(C(S^1,A)\rtimes\Z_2).\ar[l]
}\eeqa

Since $A$ is an AF-algebra, we have $K_1(M_2(A))=\{0\}$. It follows that the first horizontal map
$K_0(C(S^1,A)\rtimes\Z_2)\to K_0(M_2(A))\oplus K_0(M_2(A))$ is injective. Since $K_0(M_2(A))$ is torsion free,
we conclude that $K_0(C(S^1,A)\rtimes\Z_2)$ is torsion free, and the claim is proved.

Recall from \autoref{prop:ApproxCP} that there is a commutative diagram
\beqa\xymatrix{
A\rtimes_\alpha\Z_2\ar[dr]\ar[rr] && (A\rtimes_\alpha\Z_2)_\I.\\
& C(S^1,A)\rtimes\Z_2\ar[ur] & }\eeqa
Since $K_0(C(S^1,A)\rtimes\Z_2)$ is torsion free by the claim above, part~(7) in~\autoref{prop:P2} implies
that $K_0(A\rtimes_\alpha\Z_2)$ is also torsion free. However, this contradicts part~(6) of Proposition~4.2 in~\cite{Phi_finite_2015},
where it is shown that
$K_0(A\rtimes_\alpha\Z_2)$ has torsion isomorphic to $\Z_2$. This contradiction implies that $\cdimRok(\alpha)=2$, as
desired.
\end{eg}

The example above can be modified to produce a $\Z_2$-action $\gamma$ on a unital Kirchberg
algebra that satisfies the UCT satisfying $\cdimRok(\gamma)=2$ and $\dimRok(\gamma)=1$.
This is the first example of an action whose Rokhlin dimensions with and without commuting towers are
both finite but do not agree. (Examples where $\cdimRok$ is infinite but $\dimRok$ is finite were already known; see
Example~4.8 in~\cite{Gar_rokhlin_2017}, and compare Theorem~4.6 in~\cite{HirPhi_rokhlin_2015} with Theorem~4.20
in~\cite{Gar_compact_2018}.)

\begin{eg}\label{eg:cdimRokdimRokDifferent}
Let $\alpha\colon\Z_2\to\Aut(A)$ be the action from \autoref{eg:cdimRok2}. Set $B=A\otimes\OI$ and let $\gamma\colon \Z_2\to \Aut(B)$
be given by $\gamma_g=\alpha_g\otimes\id_{\OI}$ for $g\in\Z_2$. Since $\alpha$ is (pointwise) outer, so is $\gamma$. Thus,
\autoref{thm:outertRp} implies that $\dimRok(\gamma)\leq 1$. Since $K_0(B\rtimes_\gamma \Z_2)=K_0(A\rtimes_\alpha \Z_2)$
has torsion and $K_0(B)=K_0(A)$ is torsion-free,
there cannot be any embedding $K_0(B\rtimes_\gamma \Z_2)\hookrightarrow K_0(B)$.
It then follows from Theorem~B in~\cite{Gar_compact_2018} that $\gamma$ does not have the Rokhlin property.
We conclude that $\dimRok(\gamma)=1$.

We claim that $\cdimRok(\gamma)=2$. Since $\cdimRok(\alpha)=2$, we deduce from part~(1) of Theorem~3.8 in~\cite{Gar_rokhlin_2017} that
$\cdimRok(\gamma)\leq 2$. It suffices to check that $\cdimRok(\gamma)\neq 1$. Since there are $KK$-equivalences
$A\sim_{\mathrm{KK}} B$ and $A\rtimes_\alpha \Z_2 \sim_{\mathrm{KK}} B\rtimes_\gamma \Z_2$, the argument used
in \autoref{eg:cdimRok2} also applies to $\gamma$ and yields $\cdimRok(\gamma)\neq 1$, as desired.
\end{eg}

We now turn to automatic-reduction results for Rokhlin dimension. One such result already appeared
as Theorem~4.19 in~\cite{Gar_rokhlin_2017}, which asserts that for
locally representable AF-actions of finite groups on AF-algebras, finite Rokhlin dimension
with commuting towers implies the Rokhlin property. In other words, $\cdimRok(\alpha)<\I$
implies $\cdimRok(\alpha)=0$. Below we present two more instances of this phenomenon: \autoref{prop:OtwoRp} and \autoref{thm:UHFRp}.

We will need a preparatory result, of independent interest.
Recall that $F(A)$ denotes the quotient $A_\infty\cap A'/\Ann(A, A_\infty)$, and that we write $\kappa_A\colon A_\infty\cap A'\to F(A)$ for
the quotient map.

\begin{prop}\label{prop:EmbDFxPt}
Let $G$ be a second countable compact group, let $A$ be a separable \ca, and let $\alpha\colon G\to\Aut(A)$ be an action with
$\cdimRok(\alpha)<\infty$. Let $\D$ be a strongly self-absorbing \ca, and suppose that $A$ is $\D$-absorbing. Then there exists a unital embedding
\[\D\to \kappa_A(A^\alpha_\infty\cap A').\]
\end{prop}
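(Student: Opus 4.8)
The plan is to start from the non-equivariant fact that $A$ is $\D$-absorbing and then use the commuting Rokhlin towers to average a central copy of $\D$ into a $G$-fixed one. First, since $A$ is separable and $\D$-absorbing, the Toms--Winter characterization provides a unital homomorphism $\Phi\colon\D\to F(A)=\kappa_A(A_\I\cap A')$; note the target $\kappa_A((A_\I)^\alpha\cap A')$ is unital (its unit being the class of a $G$-invariant approximate unit of $A$, available because $G$ is compact). By a standard reindexing argument---using that the central sequence algebra of a $\D$-absorbing algebra again admits unital copies of $\D$ commuting with any prescribed separable subalgebra---I may assume that $\Phi(\D)$ lies in the continuous part $F_\alpha(A)$ and commutes both with the towers produced below and with all of its $G$-translates $F(\alpha)_g(\Phi(\D))$.

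Next I would invoke $\cdimRok(\alpha)<\I$. Working first with $G$ finite for clarity, \autoref{df:RdimFinGp} supplies positive contractions $f_g^{(\ell)}\in F_\alpha(A)$, for $g\in G$ and $\ell=0,\dots,d$, with $F(\alpha)_h(f_g^{(\ell)})=f_{hg}^{(\ell)}$, with $f_g^{(\ell)}f_h^{(\ell)}=0$ for $g\ne h$, with commuting ranges, and with $\sum_{g,\ell}f_g^{(\ell)}=1$. Writing $\sigma_g=F(\alpha)_g\circ\Phi$, I define for each $\ell$
\[\rho_\ell\colon\D\to F_\alpha(A),\qquad \rho_\ell(d)=\sum_{g\in G}f_g^{(\ell)}\,\sigma_g(d).\]
Orthogonality of the $f_g^{(\ell)}$ within a fixed tower, together with their commuting with the ranges $\sigma_g(\D)$, makes each $\rho_\ell$ a completely positive contractive order zero map with $\rho_\ell(1)=\sum_g f_g^{(\ell)}$. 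The relation $F(\alpha)_h(f_g^{(\ell)})=f_{hg}^{(\ell)}$ together with a reindexing of the sum shows each $\rho_\ell$ is $F(\alpha)$-fixed; since fixed elements of $A_\I\cap A'$ automatically lie in the continuous part and descend into $\kappa_A((A_\I)^\alpha\cap A')$, each $\rho_\ell$ in fact maps into the target. Because the towers commute across colors and the translates $\sigma_g(\D)$ mutually commute, the maps $\rho_0,\dots,\rho_d$ have commuting ranges, and $\sum_\ell\rho_\ell(1)=1$. For general compact $G$ the same construction is carried out with the equivariant order zero maps $\varphi_0,\dots,\varphi_d\colon(C(G),\texttt{Lt})\to(F_\alpha(A),F(\alpha))$ of \autoref{df:Rdim}, replacing the discrete average by the integrated expression built from the order zero representation associated to each $\varphi_\ell$ via \autoref{prop:ozRepozMor}.

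Finally I would assemble the $\rho_\ell$ into a genuine homomorphism. At this stage there are $d+1$ completely positive contractive order zero maps $\rho_0,\dots,\rho_d$ from the strongly self-absorbing algebra $\D$ into the unital algebra $\kappa_A((A_\I)^\alpha\cap A')$, with commuting ranges and $\sum_\ell\rho_\ell(1)=1$. A standard fact about strongly self-absorbing algebras---that such a system of commuting order zero maps summing to the unit can be merged into a single unital $\ast$-homomorphism, using $\D\cong\D\otimes\D$---then produces the desired unital embedding $\D\to\kappa_A((A_\I)^\alpha\cap A')$.

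The main obstacle will be this last assembly step, together with the reindexing that precedes the averaging. The averaging alone yields only a unital completely positive, $G$-fixed map: the $\rho_\ell$ fail to be multiplicative once $d\ge 1$, precisely because the $f_g^{(\ell)}$ are contractions rather than projections, so the whole weight of the argument rests on converting commuting order zero copies of $\D$ into an honest homomorphism. This is exactly where strong self-absorption is essential, and where the case $d\ge1$ genuinely differs from the Rokhlin-property case $d=0$, in which a single tower of projections makes the analogous $\rho_0$ directly multiplicative. Securing the commutativity hypotheses---that $\Phi(\D)$ commute with all towers and with all of its $G$-translates, uniformly enough to survive the integration in the compact case---through a diagonal reindexing is the other delicate point.
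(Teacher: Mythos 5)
Your proposal fails at two specific points, and both are fatal as written. First, the reindexing you invoke cannot produce a copy of $\D$ commuting with its own $G$-translates. Reindexing produces a \emph{new} unital copy of $\D$ in $F_\alpha(A)$ commuting with any \emph{prescribed} separable subalgebra; but the condition ``$\Phi(\D)$ commutes with $F(\alpha)_g(\Phi(\D))$ for $g\neq 1$'' is self-referential: replacing $\Phi$ by a better-positioned copy moves its translates as well, so the set you must commute with changes with the candidate. Iterating (each new copy commuting with all previous copies and their translates) never closes up, and a diagonal argument reproduces exactly the bad same-index terms instead of eliminating them. What you are really asking for is an equivariant unital embedding of the Bernoulli shift $(\D^{\otimes G},\mathrm{shift})$ into $F_\alpha(A)$, a statement substantially stronger than the proposition itself and available nowhere. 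The paper's proof is organized precisely to avoid this: it constructs an increasing union $E=\varinjlim E_n$ of copies of $\D$, each commuting with $\varphi(C(X))$, with the \emph{previous} copies and with \emph{their} $G$-translates, so that $E$ is a separable, $G$-invariant, $\D$-absorbing subalgebra --- no single copy of $\D$ ever commutes with its own translates --- and then obtains the $G$-fixed copy of $\D$ from the induced $C(X/G)$-algebra $C\subseteq C(X,E)^\gamma$ with fibers $E$, via Theorem~4.6 of~\cite{HirRorWin_algebras_2007} and $\dim(X/G)<\infty$.

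Second, even granting your commutation hypotheses, the maps $\rho_\ell$ do \emph{not} have commuting ranges, so your assembly lemma has no input. With your assumptions one computes, for $\ell\neq\ell'$,
\[
[\rho_\ell(x),\rho_{\ell'}(y)]\;=\;\sum_{g\in G} f_g^{(\ell)}f_g^{(\ell')}\,[\sigma_g(x),\sigma_g(y)],
\]
since only the $g\neq h$ cross terms cancel: the diagonal terms involve the \emph{same} copy $\sigma_g(\D)$, which does not commute with itself ($\D$ is noncommutative), and $f_g^{(\ell)}f_g^{(\ell')}\neq 0$ in general --- towers are orthogonal only within a color, and if all cross-color products vanished the $f_g^{(\ell)}$ would be pairwise orthogonal positive contractions summing to $1$, hence projections, i.e.\ the Rokhlin property. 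Nor can commutativity of ranges be dropped from the assembly step: a non-commuting version of that fact is false, since otherwise finite Rokhlin dimension \emph{without} commuting towers would preserve $\D$-absorption, contradicting \autoref{rmk:nonCommTws}. The construction can be repaired: take a \emph{different} copy $\theta_\ell(\D)$ for each color, positioned inductively so that all translates of distinct copies pairwise commute (this is non-self-referential, hence achievable; no copy then needs to commute with its own translates, because within a single color the orthogonality $f_g^{(\ell)}f_h^{(\ell)}=0$ already makes $\rho_\ell$ order zero). But even then your final ``standard fact'' requires proof; it is true, yet its proof consists of realizing the commuting order zero images of $\D$ as a quotient of a continuous $C(\Delta_d)$-algebra with $\D$-absorbing fibers and invoking the same Theorem~4.6 of~\cite{HirRorWin_algebras_2007} --- that is, the very ingredient the paper applies directly over $X/G$.
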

\begin{proof}
It is readily checked, by averaging over $G$, that $\kappa_A(A^\alpha_{\alpha,\infty} \cap A')$ coincides with the fixed point algebra $F(A)^{F(\alpha)}$.

Let $X$ be the free $G$-space provided by \autoref{thm:XRpRdim}, and let
\[\varphi\colon C(X)\to F_\alpha(A)\]
be the equivariant unital embedding provided by \autoref{thm:XRpRdim}. Since $G$ is second countable, $C(X)$ is separable. Using $\D$-absorption of $A$,
we may therefore choose a unital embedding $\theta_1\colon \D\to F_\alpha(A)$ whose image commutes with $\varphi(C(X))$. Since $\varphi$ is equivariant, it follows
that $\alpha_g(\theta_1(\D))$ commutes with $\varphi(C(X))$ for all $g\in G$. Define $B_1$ to be the \uca\ generated by $\bigcup\limits_{g\in G}\alpha_g(\theta_1(\D))$.
Then $B_1$ is separable, is $\alpha_\infty$-invariant, commutes with $\varphi(C(X))$, and contains a unital copy of $\D$. Set $E_1=B_1$. By seprability of $E_1$, we can find a unital
embedding $\theta_2\colon \D\to F_\alpha(A)$
whose image commutes with $E_1\cup \varphi(C(X))$. Define $B_2$ to be the \uca\ generated by $\bigcup\limits_{g\in G}\alpha_g(\theta_2(\D))$. Then $B_2$ is separable, $\alpha_\infty$-invariant,
commutes with $E_1\cup \varphi(C(X))$,
and contains a unital copy of $\D$. Set $E_2=C^*(E_1\cup B_2)$. Proceed inductively to construct a separable, $\alpha_\infty$-invariant \uca\ $B_n$, which commutes with $E_{n-1}\cup \varphi(C(X))$
and contains a unital copy
of $\D$. Set $E_n=C^*(E_{n-1}\cup B_n)$. Then the inductive limit $E=\varinjlim E_n$ is a separable, $\alpha_\infty$-invariant unital subalgebra of $F_\alpha(A)$, which commutes with $\varphi(C(X))$
and absorbs $\D$.

Denote by $\gamma$ the diagonal action on $C(X,E)$. Then $\varphi\otimes\id_E\colon C(X,E)\to F_\alpha(A)$ is an equivariant embedding, which
therefore maps $C(X,E)^\gamma$ into $F_\alpha(A)^{F(\alpha)}$.
Consider the algebra
\[C=\{f\in C(X,E)\colon F(\alpha)_g(f(x))=f(g^{-1}\cdot x) \mbox{for all } g\in G, x\in X\}\subseteq C(X,E)^\gamma.\]
Then $C$ is a $C(X/G)$-algebra with fibers isomorphic to $E$. Since $E$ absorbs $\D$ and $X/G$ is finite dimensional, it follows from
Theorem~4.6 in~\cite{HirRorWin_algebras_2007} that $C$ absorbs $\D$. Thus, there exists a unital embedding $\D\to C$. The desired map is obtained as the following composition:
\[\xymatrix{\D\ar[r]& C \ar[r]& C(X,E)^\gamma \ar[rr]_{\varphi\otimes\id_E} &&F_\alpha(A)^{F(\alpha)}}.\]
\end{proof}

A standard argument, as used in \cite{HirWin_rokhlin_2007}, now shows the following.

\begin{cor}
Let $G$ be a second countable compact group, let $A$ be a separable \ca, and let $\alpha\colon G\to\Aut(A)$ be an action with
$\cdimRok(\alpha)<\infty$. Let $\D$ be a strongly self-absorbing \ca, and suppose that $A$ is $\D$-absorbing.
Then $\alpha$ is conjugate to $\alpha\otimes\id_{\mathcal{D}}$.
\end{cor}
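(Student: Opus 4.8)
The plan is to run the equivariant one-sided intertwining argument of \cite{HirWin_rokhlin_2007}, feeding it the unital equivariant embedding produced by \autoref{prop:EmbDFxPt}. First I would record that \autoref{prop:EmbDFxPt} yields a unital $*$-homomorphism $\psi\colon \D\to \kappa_A(A^\alpha_\I\cap A')$, and that, as observed at the start of the proof of that proposition, $\kappa_A(A^\alpha_\I\cap A')$ coincides with the fixed point algebra $F(A)^{F(\alpha)}$. Thus $\psi$ is equivariant in the strongest possible sense: its image is pointwise fixed by $F(\alpha)$. Lifting $\psi$ through the quotient map $\kappa_A$ (using $\sigma$-unitality of the separable algebra $A$, so that $\Ann(A,A_\I)$ is a $\sigma$-ideal, together with separability and nuclearity of $\D$), I obtain a unital $*$-homomorphism $\widetilde{\psi}\colon \D\to A_{\I,\alpha}\cap A'$ whose image is fixed by $\alpha_\I$.

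Next I would assemble the equivariant multiplication homomorphism. Since $\widetilde{\psi}(\D)$ commutes with $A$ inside $A_\I$ and $\D$ is nuclear, the formula $a\otimes d\mapsto a\,\widetilde{\psi}(d)$ defines a $*$-homomorphism $\Theta\colon A\otimes\D\to A_\I$. Because $\widetilde{\psi}(\D)$ is fixed by $\alpha_\I$, one checks directly that $(\alpha_\I)_g(\Theta(a\otimes d))=\alpha_g(a)\widetilde{\psi}(d)=\Theta((\alpha\otimes\id_\D)_g(a\otimes d))$, so that $\Theta$ intertwines $\alpha\otimes\id_\D$ with $\alpha_\I$, that it restricts to the canonical inclusion on $A\otimes 1$, and that it lands in the continuous part $A_{\I,\alpha}$. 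This provides an equivariant embedding of $(A\otimes\D,\alpha\otimes\id_\D)$ into the sequence algebra which restricts to the identity on $A$.

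Then I would carry out the two-sided Elliott intertwining between $(A,\alpha)$ and $(A\otimes\D,\alpha\otimes\id_\D)$. The forward maps are the equivariant first-factor inclusions $\iota\colon a\mapsto a\otimes 1_\D$, and the backward maps are supplied by reifying $\Theta$ into genuine asymptotically multiplicative, asymptotically central, asymptotically $\alpha$-equivariant maps $A\otimes\D\to A$ (choosing representing sequences in $\ell^\I_\alpha(\N,A)$ and evaluating far enough out). That the composites approximate the respective identities on prescribed finite sets is where strong self-absorption of $\D$ enters: the half-flip on $\D\otimes\D$ is approximately inner, so the two embeddings $d\mapsto d\otimes 1$ and $d\mapsto 1\otimes d$ of $\D$ into $\D\otimes\D$ are approximately unitarily equivalent, and composing with $\widetilde{\psi}$ lets me absorb successive copies of $\D$ and reverse $\iota$ up to approximately inner automorphisms. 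Passing to the limit along an exhausting sequence of finite subsets of $A$ and $\D$, tolerances, and compact subsets of $G$ produces mutually inverse $*$-isomorphisms realizing $(A,\alpha)\cong(A\otimes\D,\alpha\otimes\id_\D)$.

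The main obstacle will be keeping the whole intertwining equivariant. The implementing unitaries, both the approximately inner half-flip unitaries and those witnessing the approximate unitary equivalences in the intertwining, must be chosen asymptotically $\alpha_\I$-invariant, so that the automorphisms they induce commute with $\alpha$ in the limit; this is precisely where the fixed-point property of $\widetilde{\psi}(\D)$ and the strong self-absorption of $\D$ are used in tandem, and where one must work throughout inside $A_{\I,\alpha}$ to retain continuity of the induced action. A secondary technical point is the non-unital bookkeeping, handled by the usual approximate-unit replacements as in \cite{HirWin_rokhlin_2007}. Once equivariance of the limiting maps is secured, the remaining estimates are the routine ones from the non-equivariant absorption theorem.
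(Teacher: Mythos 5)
Your overall strategy---feed the embedding from \autoref{prop:EmbDFxPt} into the equivariant intertwining machinery of \cite{HirWin_rokhlin_2007}---is exactly the paper's proof, which consists of the single remark that a standard argument as in \cite{HirWin_rokhlin_2007} applies. However, your second step is genuinely false in the stated generality. The corollary allows non-unital $A$, and a unital homomorphism $\psi\colon \D\to\kappa_A(A^\alpha_\I\cap A')$ need not lift to a $*$-homomorphism $\widetilde{\psi}\colon\D\to A_{\I,\alpha}\cap A'$: such a lift would send $1_\D$ to a projection $p\in A_\I$ with $pa=a$ for all $a\in A$, but if $A$ is projectionless then $A_\I$ contains no nonzero projections at all (a self-adjoint representative $(x_n)_n$ of a projection satisfies $\|x_n^2-x_n\|\to 0$, and spectral-gap functional calculus then produces projections in $A$, forcing $x_n\to 0$). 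Concretely, take $G$ trivial, $\D=\mathcal{Z}$, and $A=C_0(\R)\otimes\mathcal{Z}$: all hypotheses of the corollary hold and $\psi$ exists, but no homomorphic lift does. The tools you invoke cannot produce one either: the $\sigma$-ideal property of $\Ann(A,A_\I)$ yields order zero (cone) lifts, not homomorphism lifts, and strongly self-absorbing algebras such as $\mathcal{Z}$ are not semiprojective.

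The gap is localized, because nothing downstream uses multiplicativity of the lift itself, only of $\Theta$. The repair is the one the paper itself uses in \autoref{prop:ApproxCP}: take a completely positive contractive lift $L\colon\D\to A^\alpha_\I\cap A'$ of $\psi$ (Choi--Effros applies, as $\D$ is separable and nuclear), and observe that $\Theta(a\otimes d)=a\,L(d)$ is nevertheless a well-defined $*$-homomorphism, since $L(d)L(d')-L(dd')$ and $L(d)^*-L(d^*)$ lie in $\Ann(A,A_\I)$ and are annihilated by multiplication against $A$; this is precisely Lemma~2.3 of \cite{Gar_crossed_2014}. Equivariance and the identity on $A\otimes 1_\D$ follow as in your sketch, because $L$ takes values in sequences of $\alpha$-fixed elements. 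One further caution on your intertwining step: since $A$ is not assumed nuclear, Choi--Effros does not apply to $A\otimes\D$, so you cannot ``reify'' $\Theta$ into globally defined asymptotically multiplicative maps $A\otimes\D\to A$. The standard way around this, and what the argument of \cite{HirWin_rokhlin_2007} actually does, is the one-sided \emph{unitary} intertwining: the approximate unitary equivalences are implemented by unitaries coming from the approximately inner half-flip of $\D$, pushed via $\Theta$ into the central sequence algebra of $A\otimes\D$, where they are represented by sequences of fixed unitaries---which is indeed the crux you correctly identify, and requires no lifting of completely positive maps.
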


The result above is really stronger than the fact that $\mathcal{D}$-absorption is preserved by
taking crossed products by actions with finite Rokhlin dimension with commuting towers.

Here is our first dimension reduction result.

\begin{prop} \label{prop:OtwoRp}
Let $G$ be a finite group and let $\alpha\colon G\to\Aut(\mathcal{O}_2)$ be an action. If $\alpha$
has finite Rokhlin dimension with commuting towers, then it has the Rokhlin property.\end{prop}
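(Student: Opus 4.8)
The plan is to produce a unital equivariant $\ast$-homomorphism $(C(G),\texttt{Lt})\to (F_\alpha(\Ot),F(\alpha))$, since the existence of such a map is exactly the statement that $\cdimRok(\alpha)=0$, i.e.\ that $\alpha$ has the Rokhlin property (a unital order zero map out of a unital algebra is automatically a $\ast$-homomorphism). Since $\cdimRok(\alpha)<\I$, \autoref{thm:XRpRdim} applied to the unital algebra $\Ot$ furnishes a finite-dimensional compact free $G$-space $X$ (a finite $G$-simplicial complex, the join $G\ast\cdots\ast G$) together with a unital equivariant homomorphism $\varphi\colon C(X)\to F_\alpha(\Ot)$. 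The only obstruction to collapsing $X$ down to the discrete orbit $G$ is the nontriviality of the principal bundle $X\to X/G$; I would remove it by first enlarging $C(X)$ to $C(X,\Ot)$ and then exploiting the fact that $\Ot$ has ``no homotopy''.

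First I would promote $\varphi$ to a fibered copy of $\Ot$. Since $\Ot$ is strongly self-absorbing and absorbs itself, the proof of \autoref{prop:EmbDFxPt} produces a unital embedding $\theta\colon\Ot\to F_\alpha(\Ot)^{F(\alpha)}$ whose range commutes with $\varphi(C(X))$ (indeed $\varphi(C(X))$ is central in the image of $C(X)\otimes E$ used there, and $\theta$ lands in the fixed-point algebra). As $C(X)\otimes 1$ is central in $C(X,\Ot)=C(X)\otimes\Ot$ and the algebra is nuclear, combining $\varphi$ and $\theta$ gives a unital equivariant homomorphism
\[\Psi\colon (C(X,\Ot),\texttt{Lt}\otimes\id_{\Ot})\to (F_\alpha(\Ot),F(\alpha)),\]
in which the $\Ot$-fiber is fixed by the action.

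The heart of the argument is then a topological step. Let $Z$ denote the space of unital $\ast$-homomorphisms $C(G)\to\Ot$ carrying each minimal projection to a nonzero projection; equivalently, $Z$ is the space of decompositions $1=\sum_{g\in G}p_g$ of the unit of $\Ot$ into $|G|$ mutually orthogonal nonzero projections, on which $G$ acts freely by permuting coordinates through $\texttt{Lt}$. I would show that $Z$ is weakly contractible, and I expect this to be the main obstacle. It should follow from the contractibility of the unitary group and of the space of isometries of $\Ot$ (Cuntz), equivalently from the vanishing of the relevant $KK$-groups, which forces the space of unital embeddings of the finite-dimensional algebra $C(G)$ into $\Ot$ to be weakly contractible. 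Granting this, the associated fiber bundle $X\times_G Z\to X/G$ has weakly contractible fibers over a finite-dimensional compact base, so obstruction theory provides a continuous section, that is, a $G$-equivariant map $F\colon X\to Z$.

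Finally, a point $z\in Z$ is itself a unital homomorphism $\pi_z\colon C(G)\to\Ot$, so $F$ induces a unital equivariant homomorphism $\Phi\colon (C(G),\texttt{Lt})\to (C(X,\Ot),\texttt{Lt}\otimes\id_{\Ot})$ via $\Phi(a)(x)=\pi_{F(x)}(a)$. The composite $\Psi\circ\Phi\colon (C(G),\texttt{Lt})\to (F_\alpha(\Ot),F(\alpha))$ is then a unital equivariant homomorphism, whence $\cdimRok(\alpha)=0$ and $\alpha$ has the Rokhlin property. The two points requiring genuine care are the weak contractibility of $Z$ and the bookkeeping of equivariance conventions ensuring that $\Phi$ and $\Psi$ are equivariant for the matching $G$-actions.
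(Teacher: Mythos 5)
Your argument is correct, but it is genuinely different from the paper's proof, which is essentially two lines: by the equivalence of parts (1) and (3) of Theorem~4.2 in~\cite{Izu_finiteI_2004}, an action of a finite group on $\Ot$ has the Rokhlin property if and only if there is a unital homomorphism $\Ot\to(\mathcal{O}_2^\alpha)_\infty\cap\Ot'$, and such a homomorphism is exactly what \autoref{prop:EmbDFxPt} supplies with $\mathcal{D}=A=\Ot$. In other words, the paper outsources everything specific to $\Ot$ to Izumi's theorem, whose proof rests on classification-type technology for $\Ot$. You keep the first ingredient (the proof, not just the statement, of \autoref{prop:EmbDFxPt}, yielding the equivariant unital map $\Psi\colon C(X,\Ot)\to F_\alpha(\Ot)$ with trivially-acted-upon fiber; your commutation argument via centrality of $C(X)\otimes 1$ is right), but you replace Izumi's theorem by equivariant obstruction theory over $X/G$.

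The one step you flag as the main obstacle — weak contractibility of the space $Z$ of decompositions $1=\sum_{g\in G}p_g$ into nonzero orthogonal projections of $\Ot$ — is true and can be proved exactly along the lines you indicate: $U(\Ot)$ acts transitively on $Z$ (any two nonzero projections of $\Ot$ are Murray--von Neumann equivalent because $\Ot$ is purely infinite simple with $K_0(\Ot)=0$; sum the implementing partial isometries to get a conjugating unitary), the orbit map $u\mapsto (up_gu^*)_g$ admits local cross sections by the standard perturbation argument for nearby projections, and the stabilizer of a point is $\prod_{g\in G}U(p_g\Ot p_g)\cong U(\Ot)^{|G|}$ since corners of $\Ot$ are again isomorphic to $\Ot$. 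All homotopy groups of $U(\Ot)$ vanish by Zhang's computation of $\pi_k(U(A))$ in terms of $K_{1-k}(A)$ for purely infinite simple $A$, so the long exact sequence of the fibration gives $\pi_k(Z)=0$ for all $k$; and since the join model of $X$ from \autoref{thm:XRpRdim} is a finite free $G$-simplicial complex, induction over free $G$-cells produces the equivariant map $X\to Z$ with no obstruction. What your route buys: it avoids Izumi's theorem entirely and isolates precisely what is used about $\Ot$ (pure infiniteness, simplicity, and $K_\ast=0$), so the same proof gives the conclusion for any unital purely infinite simple \ca\ with vanishing $K$-theory, without invoking classification. What it costs is length: the homotopy-theoretic input on $U(\Ot)$ and the bundle argument must be carried out, whereas the paper's citation makes the proposition immediate.
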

\begin{proof}
By the equivalence between parts (1) and (3) of Theorem~4.2 in~\cite{Izu_finiteI_2004}, it is enough to construct a unital map
\[\Ot \to (\mathcal{O}_2^\alpha)_\infty \cap \Ot'.\]
This is an immediate consequence of \autoref{prop:EmbDFxPt}.
\end{proof}

The conclusion of \autoref{prop:OtwoRp} is false if one only
assumes $\dimRok(\alpha)<\I$. See, for example, Example~4.8 in~\cite{Gar_rokhlin_2017}.

For the next reduction result of the Rokhlin dimension, we will need to recall a definition.

\begin{df} (Definition~3.6 in~\cite{Izu_finiteI_2004}).
Let $G$ be a finite abelian group, let $B$ be a unital \ca, and let $\beta\colon G\to\Aut(B)$
be an action. We say that $\beta$ is \emph{strongly approximately inner}, if
there exist unitaries $v_g\in (B^\beta)_\I$, for $g\in G$,
satisfying $\beta_g(b)=v_gbv_g^*$ for all $b\in B$ and all $g\in G$.
\end{df}

In the theorem below, we do not know whether we obtain a similar conclusion
if we only assume that $\cdimRok(\alpha)<\I$, or if we replace $\Z_2$ with a general finite (abelian) group.

We point out that a similar phenomenon was observed in \cite{Gar_circle_2018}.

\begin{thm} \label{thm:UHFRp}
Let $A$ be a separable, unital \ca, and let $\alpha\colon \Z_2 \to\Aut(A)$ be an action. Assume that
$\cdimRok(\alpha)\leq 1$ and that $A$ absorbs the UHF-algebra $M_{2^\I}$ tensorially. Then $\alpha$ has the
Rokhlin property.\end{thm}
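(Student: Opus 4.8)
The plan is to reduce the Rokhlin property to the existence of a single projection $e\in A_\infty\cap A'$ with $\alpha_\infty(e)=1-e$, and to manufacture such a projection by combining the circle coming from $\cdimRok(\alpha)\le 1$ with a copy of $M_2$ sitting in the fixed-point central sequence algebra. Since $A$ is unital and separable and $\Z_2$ is finite, we have $F(A)=A_\infty\cap A'$, $F(\alpha)=\alpha_\infty$, and the Rokhlin property for $\alpha$ is equivalent (by \autoref{df:RdimFinGp} with $d=0$, reformulated in the sequence algebra) to the existence of such a projection $e$. By \autoref{cor:ZnRdim1}, together with \autoref{lma:S1rotation} which identifies the relevant free $\Z_2$-space as $S^1$ with the antipodal action, the hypothesis $\cdimRok(\alpha)\le 1$ furnishes a unital equivariant homomorphism $\varphi\colon (C(S^1),\texttt{Lt})\to (A_\infty\cap A',\alpha_\infty)$, where $\Z_2$ acts antipodally on $S^1$; I would fix this $\varphi$ to be the map provided by \autoref{thm:XRpRdim}.

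Next I would produce a unital copy of $M_2$ inside $(A_\infty\cap A')^{\alpha_\infty}$ whose range commutes with $\varphi(C(S^1))$. This is exactly where $M_{2^\infty}$-absorption enters: applying \autoref{prop:EmbDFxPt} with $\D=M_{2^\infty}$ yields a unital embedding $M_{2^\infty}\to (A_\infty\cap A')^{\alpha_\infty}$, and restricting to the first tensor factor gives the desired $\iota\colon M_2\to (A_\infty\cap A')^{\alpha_\infty}$. The point I must check, and the main obstacle, is that this copy can be taken to commute with $\varphi(C(S^1))$. This does not follow from the bare statement of \autoref{prop:EmbDFxPt}, but an inspection of its proof shows it: the copy of $\D$ is produced inside the image of $\varphi\otimes\id_E$, where $E$ is a separable $\D$-absorbing subalgebra commuting with $\varphi(C(S^1))$; since $\varphi(C(S^1))$ is abelian and commutes with $E$, it is central in the subalgebra they generate, so every element of the image of $\D$ commutes with it. Using that $M_2$ is nuclear, the commuting homomorphisms $\varphi$ and $\iota$ then assemble into a unital homomorphism $\Phi\colon C(S^1)\otimes M_2=C(S^1,M_2)\to A_\infty\cap A'$ with $\Phi(f\otimes m)=\varphi(f)\iota(m)$, and $\Phi$ is equivariant for the action $\texttt{Lt}\otimes\id$ on the domain precisely because $\iota(M_2)$ is $\alpha_\infty$-fixed.

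Finally I would write down an explicit equivariant projection. Setting $\sigma_z=\diag(1,-1)$ and $\sigma_x=\bigl(\begin{smallmatrix} 0 & 1 \\ 1 & 0\end{smallmatrix}\bigr)$, the function
\[
p(e^{i\theta})=\tfrac{1}{2}\bigl(1+\cos\theta\,\sigma_z+\sin\theta\,\sigma_x\bigr)\in C(S^1,M_2)
\]
takes values in rank-one projections, because $\cos\theta\,\sigma_z+\sin\theta\,\sigma_x$ is a self-adjoint unitary; and the antipodal substitution $\theta\mapsto\theta+\pi$ sends $p$ to $1-p$, so $(\texttt{Lt}\otimes\id)(p)=1-p$. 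Applying $\Phi$, the element $e=\Phi(p)$ is a projection in $A_\infty\cap A'$ satisfying $\alpha_\infty(e)=\Phi(1-p)=1-e$, so $e$ and $1-e$ form a Rokhlin tower for $\alpha$ and $\alpha$ has the Rokhlin property. The only genuinely delicate step is the commutation in the second paragraph; the rest is a direct verification. Geometrically it amounts to the fact that the equator inclusion $S^1\hookrightarrow S^2=\mathrm{Proj}_1(M_2)$ is equivariant for the antipodal actions, which is what lets the connected circle be folded onto the two-point space after tensoring with $M_2$, and explains why $2$-divisibility (hence $M_{2^\infty}$-absorption) is the relevant hypothesis for $\Z_2$.
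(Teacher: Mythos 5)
Your proof is correct, and it takes a genuinely different route from the paper's. The paper argues through duality: by Lemma~3.8 of~\cite{Izu_finiteI_2004} it suffices to show that the dual action $\widehat{\alpha}$ on $A\rtimes_\alpha\Z_2$ is approximately representable; the circle from \autoref{cor:ZnRdim1} supplies a unitary $w\in A_\I\cap A'$ with $\alpha_\I(w)=-w$, whence $\widehat{\alpha}=\Ad(w^*)$ is strongly approximately inner; and Izumi's Lemma~3.10, combined with $M_{2^\I}$-absorption and a unital map $M_2\to \left((A\rtimes_\alpha\Z_2)^{\widehat{\alpha}}\right)_\I\cap(A\rtimes_\alpha\Z_2)'$ supplied by \autoref{prop:EmbDFxPt}, then gives approximate representability. (Note that this target equals $(A_\I\cap A')^{\alpha_\I}$, i.e.\ it is the same receptacle for $M_2$ that you use.) You instead bypass Izumi's two lemmas entirely and construct the Rokhlin projection by hand, pushing the equator projection of $C(S^1,M_2)$ through your homomorphism $\Phi$. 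The two arguments consume exactly the same inputs --- \autoref{cor:ZnRdim1} and \autoref{prop:EmbDFxPt} --- but you combine them internally rather than on the dual side, and the price is that you need the copy of $M_{2^\I}$ to commute with the fixed copy of $C(S^1)$, which is a strengthening of the \emph{statement} of \autoref{prop:EmbDFxPt}. You correctly isolate this as the one delicate point, and your justification is sound: in the proof of \autoref{prop:EmbDFxPt} the embedding factors through $(\varphi\otimes\id_E)\left(C(X,E)^\gamma\right)\subseteq C^*(\varphi(C(X))\cup E)$, and since $\varphi(C(X))$ is abelian and commutes elementwise with $E$, it is central in that subalgebra; moreover, for $G=\Z_2$ and $d=1$ the space of \autoref{thm:XRpRdim} is $E_1\Z_2=\Z_2\ast\Z_2\cong S^1$ with the antipodal action (consistently with \autoref{lma:S1rotation}), so your $\varphi$ from \autoref{cor:ZnRdim1} is a legitimate input to that proof run with $X=S^1$. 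As for what each approach buys: the paper's proof is shorter and uses \autoref{prop:EmbDFxPt} only as a black box, at the cost of importing Izumi's duality machinery; yours is self-contained within the paper, fully explicit, and makes the role of $2$-divisibility geometrically transparent through the antipodally equivariant equator inclusion of $S^1$ into $S^2$, viewed as the rank-one projections of $M_2$.
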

\begin{proof}
By Lemma~3.8 in~\cite{Izu_finiteI_2004}, it is enough to show that the dual action
$\widehat{\alpha}\colon\Z_2\to\Aut(A\rtimes_{\alpha}\Z_2)$ is approximately representable.
By a slight abuse of notation, we also denote by $\alpha$ and $\widehat{\alpha}$ the
order-two automorphisms that generate the given actions on $A$ and $A\rtimes_\alpha \Z_2$.

We claim that $\widehat{\alpha}$ is strongly approximately inner. Use \autoref{cor:ZnRdim1}
to find a unital equivariant homomorphism
\[\varphi\colon C(S^1)\to A_{\I}\cap A',\]
and denote by $w\in A_\I\cap A'$
the image of the canonical unitary in $C(S^1)$. Let $u\in A\rtimes_\alpha \Z_2$
denote the unitary implementing $\alpha$. Then
$uwu^*=\alpha_\I(w)=-w$ in $(A\rtimes_\alpha \Z_2)_\I$. In particular,
\[w^*uw=-u=\widehat{\alpha}(u).\]
Moreover, since $w$ commutes with the copy of $A$ in $A_\I$, it follows that
$w^*aw=a=\widehat{\alpha}(a)$ for all $a\in A$. Since $A$ and $u$ generate $B\rtimes_\alpha \Z_2$,
we deduce that $\Ad(w^*)$ coincides with $\widehat{\alpha}$, and thus $\widehat{\alpha}$ is strongly
approximately inner. This proves the claim.

By Lemma~3.10 in~\cite{Izu_finiteI_2004}, and since $A$ absorbs $M_{2^\I}$, it is enough to show that there is a unital map
\[M_2\to \left((A\rtimes_{\alpha}\Z_2)^{\widehat{{\alpha}}}\right)_\I\cap (A\rtimes_{\alpha}\Z_2)'.\]
This is again a consequence of \autoref{prop:EmbDFxPt}, so the proof is finished.
\end{proof}

It is not enough in \autoref{thm:UHFRp} to assume that $\dimRok(\alpha)\leq 1$. See, for example,
the comments in part~(a) of \autoref{rmk:nonCommTws}.

\end{document}